\newcounter{dummy}
\newcommand\myitem[1][]{\item[#1]\refstepcounter{dummy}\def\@currentlabel{#1}}
\newtheorem{theorem}{Theorem}[section]
\newtheorem{conjecture}[theorem]{Conjecture}
\newtheorem{proposition}[theorem]{Proposition}
\newtheorem{lemma}[theorem]{Lemma}
\newtheorem{corollary}[theorem]{Corollary}
\newtheorem{claim}[]{Claim}
\newtheorem*{acknowledgements}{Acknowledgements}
\theoremstyle{definition}
\newtheorem{definition}[theorem]{Definition}
\theoremstyle{remark}
\newtheorem{remark}[theorem]{Remark}
\numberwithin{equation}{section}
\newcommand{\mf}{\mathbf}
\newcommand{\mb}{\mathbb}
\newcommand{\mc}{\mathcal}
\newcommand{\ms}{\mathscr}
\newcommand{\mk}{\mathfrak}
\newcommand{\wti}{\widetilde}
\newcommand{\Vol}{\mathrm{Vol}}
\newcommand{\Area}{\mathrm{Area}}
\newcommand{\Id}{\mathrm{Id}}
\newcommand{\dist}{\operatorname{dist}}
\newcommand{\inj}{\operatorname{inj}}
\newcommand{\codim}{\operatorname{codim}}
\newcommand{\bd}{\partial}
\newcommand{\rom}[1]{\expandafter\romannumeral #1}
\newcommand{\Rom}[1]{\uppercase\expandafter{\romannumeral #1}}
\DeclareMathOperator{\Index}{index}
\DeclareMathOperator{\Ric}{Ric}
\DeclareMathOperator{\Diff}{Diff}
\DeclareMathOperator{\spt}{spt}
\DeclareMathOperator{\interior}{int}
\DeclareMathOperator{\closure}{Clos}
\DeclareMathOperator{\Graph}{Graph}
\newcommand{\cCcG}{\mathcal{C}^{\mathcal{G}}}
\newcommand{\cBcG}{\mathcal{B}^{\mathcal{G}}}
\newcommand{\cScG}{\mathcal{S}_{\mathcal{G}}}
\newcommand{\Ah}{\mathcal{A}^h}
\newcommand{\cGh}{(\mathcal{G},h)}
\DeclareMathOperator{\VarTan}{VarTan}
\newcommand{\red}{\color{red}}
\title{Multiplicity one for equivariant min-max theory in prescribed homology classes}
\author{Tongrui Wang}
\address{School of Mathematical Sciences, Shanghai Jiao Tong University, 800 Dongchuan RD, Minhang District, Shanghai, 200240, China}
\email{wangtongrui@sjtu.edu.cn}
\begin{document}
\maketitle

\begin{abstract}
	For a closed Riemannian manifold $M$ with a compact Lie group $G$ acting by isometries, we show a generic multiplicity one theorem in equivariant min-max theory, and show in generic sense that there are infinitely many $G$-invariant minimal hypersurfaces in a fixed $G$-homology class. 
    We also establish an equivariant min-max theory for $G$-invariant hypersurfaces of prescribed mean curvature with $G$-index upper bounds. 
\end{abstract}

\section{Introduction}

\subsection{Background and motivations}
In Riemannian Geometry, an important theme is to study the submanifolds that are critical points of certain geometric functionals (such as length, area, or volume, with various constraints).
For instance in an $(n + 1)$-manifold $(M, g_{_M})$, the critical point of the $n$-volume functional are {\em minimal hypersurfaces}, the construction of which has been a central topic of research for centuries. 
In the 1960s, Almgren \cite{almgren1962homotopy}\cite{almgren1965theory} initiated a general {\em min-max theory} for the area functional. 
With the improvement made later by Pitts \cite{pitts2014existence} and Schoen-Simon \cite{schoen1981regularity}, this theory shows the existence of a smoothly embedded closed minimal hypersurface in $M^{n+1}$ with $3\leq n+1\leq 7$. 
(We refer to \cite{smith1982minmax}\cite{colding2002min-max}\cite{de2013existence} for other variants of min-max theory.)
Motivated by these results and the Morse theoretic considerations, Yau \cite{yau1982problem} formulated the following conjecture in his celebrated 1982 Problem Section.

\begin{conjecture}[S.-T. Yau \cite{yau1982problem}]\label{Conj: Yau}
	Every closed three-dimensional manifold contains infinitely many (immersed) minimal surfaces.
\end{conjecture}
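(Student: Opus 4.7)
The plan is to produce infinitely many embedded minimal surfaces as the min-max critical points of Almgren--Pitts theory applied to a natural increasing sequence of parameter families. Concretely, for each integer $p \geq 1$ I would consider the class of $p$-sweepouts in the space of mod-$2$ cycles $\mathcal{Z}_2(M;\mathbb{Z}/2)$, define the $p$-width $\omega_p(M)$ as the infimum of the maximum mass over such families, and invoke the Pitts--Schoen--Simon min-max existence and regularity theorem to extract, for each $p$, a smoothly embedded closed minimal surface $\Sigma_p$ (counted with integer multiplicities) realizing $\omega_p(M)$ and having Morse index at most $p$.

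The next input is the Weyl law of Liokumovich--Marques--Neves, which asserts $\omega_p(M) \sim a(3)\,\Vol(M)^{2/3}\,p^{1/3}$. In particular $\omega_p(M) \to \infty$, so the total areas of the $\Sigma_p$'s grow without bound. If only finitely many distinct embedded minimal surfaces existed, every $\Sigma_p$ would be a nonnegative integer combination from this finite list, and combined with Sharp's compactness theorem for minimal surfaces of bounded index and area this would contradict the unbounded growth of $\omega_p$. So the strategy is to convert the growth of $\omega_p$ into infinitely many geometrically distinct supports.

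The main obstacle is exactly the \emph{multiplicity one} phenomenon: a priori the support of the varifold realizing $\omega_p$ could be a single surface counted with high integer multiplicity, in which case the strict growth of $\omega_p$ is not witnessed by new underlying surfaces. For generic (bumpy) metrics this is controlled by the multiplicity one theorem of Marques--Neves (in the one-parameter case) and Zhou, together with the index-to-multiplicity bound, which closes the argument on a $C^\infty$-dense set of metrics. To handle an arbitrary smooth metric $g$ I would approximate by a sequence of bumpy metrics $g_i \to g$, produce infinitely many minimal surfaces for each $g_i$, and pass to the limit via Sharp compactness together with Song's \emph{core} construction, which prevents all the surfaces from collapsing into or accumulating on a single limit.

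I expect the multiplicity one step to be by far the most delicate ingredient, since it rests on a fine analysis of the degeneration of one-parameter sweepouts and of the $L^2$-weighted second variation along would-be multiple sheets; the non-generic case then adds a separate compactness/diagonal-sequence difficulty, where Song's insight into persistent ``core'' surfaces is what avoids the limit metric having all its surfaces collapse.
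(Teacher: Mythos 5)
This item is stated in the paper only as background (a named, externally resolved conjecture); the paper does not offer a proof of it, so there is no internal argument to compare against. What you sketch is an outline of the resolution from the literature that the paper cites, and your overall structure (volume spectrum, multiplicity one, treatment of non-generic metrics) is the right one, but two steps are not correct as written.

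First, the claim that the Weyl law plus Sharp's compactness would contradict finitely many distinct embedded minimal surfaces is not sound. If only finitely many surfaces existed, $\omega_p$ would be a nonnegative integer combination of their areas with \emph{growing} multiplicities, and such sums can be unbounded; Sharp's compactness theorem requires uniformly bounded area and index, whereas your $\Sigma_p$ have index $\le p \to \infty$, so it gives no information here. You in fact identify the real obstruction two sentences later (multiplicity), so the earlier ``contradiction'' is a false start rather than a supporting lemma; but as phrased it is circular: the passage from growth of $\omega_p$ to infinitely many \emph{distinct} supports is exactly what the multiplicity one theorem (together with the index lower bound of Marques--Neves and the bumpy/Sharp finiteness of minimal hypersurfaces with bounded area and index) is needed for, and is not a free consequence of the Weyl law.

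Second, the proposed treatment of arbitrary metrics --- approximate by bumpy $g_i \to g$, extract infinitely many minimal surfaces for each $g_i$, and ``pass to the limit via Sharp compactness together with Song's core construction'' --- does not match how Song's proof works and would not go through as stated. The minimal surfaces you would feed into Sharp's theorem have unbounded area and index (areas $\sim p^{1/3}$, index $\le p$), so the theorem does not apply; and even when it does, distinct surfaces for $g_i$ can converge to a common limit for $g$, so distinctness is not preserved. Song's argument is not a compactness/diagonal argument at all: it splits on whether $(M,g)$ has the Frankel property; in the non-Frankel case one performs min-max in a ``core'' region bounded by two disjoint stable minimal hypersurfaces, working on a non-compact manifold with cylindrical ends, and this is done directly for the fixed metric $g$, not by approximation. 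If you want the generic (bumpy) statement only, your sketch, corrected as above, is essentially the argument cited in the paper; for the full conjecture you would need to replace the approximation step by Song's actual construction.
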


Recent advances in min-max theory have enabled a series of works to resolve this conjecture from various perspectives, where the notion of {\em volume spectrum} plays a crucial role. 
By Almgren \cite{almgren1962homotopy} (see also \cite{marques2021morse}), the space of mod-$2$ boundary-type $n$-cycles $\mc B(M;\mb Z_2)$ is weakly homotopic to $\mb{RP}^\infty$. 
Then the volume spectrum $\{\omega_k(M^{n+1}, g_{_M} )\}_{k\in \mb Z_+}$ is a non-linear geometric quantity introduced by Gromov \cite{gromov1988width}\cite{gromov2003waists}, Guth \cite{guth2009minimax}, and Marques-Neves \cite{marques2017existence} as a non-decreasing sequence of min-max values for the area functional in $\mc B(M;\mb Z_2)$ with certain asymptotic behavior \cite{liokumovich2018weyl}. 

Using Almgren-Pitts min-max theory, Marques-Neves \cite{marques2016morse} showed in closed Riemannian manifold $(M^{n+1}, g_{_M} )$ with $3\leq n+1\leq 7$ that the $k$-th volume spectrum $\omega_k(M^{n+1}, g_{_M} )$ is realized by the area of a disjoint collection of closed, embedded, minimal hypersurfaces $\{\Sigma_i^k\}_{i=1}^{l_k}$ with integer multiplicities $\{m_i^k\}_{i=1}^{l_k}$ and index bounded by $k$, i.e.
\begin{align}
	\omega_k(M, g_{_M})= \sum_{i=1}^{l_k} m^k_i \Area(\Sigma^k_i) \quad{\rm and}\quad \sum_{i=1}^{l_k}\Index(\Sigma^k_i) \leq k .
\end{align}
Note that the potential multiplicity $\{m_i^k\}$ of the min-max minimal hypersurfaces presents one of the central challenges. 
Namely, from the variational point of view, a fixed minimal hypersurface with different integer multiplicities are different critical points. 
However, they cannot make more contributions to Conjecture \ref{Conj: Yau} since they are geometrically the same. 

Marques-Neves proposed the {\em Multiplicity One Conjecture} \cite{marques2021morse}: for {\em bumpy} metrics (that are generic metrics admitting no degenerate immersed closed minimal hypersurface \cite{white1991space}\cite{white2017bumpy}), $m_i^k=1$ and $\Sigma_i^k$ is $2$-sided for each $k\geq 1$ and $1\leq i\leq l_k$. 
This conjecture was confirmed by Zhou \cite{zhou2020multiplicity} using the min-max theory for {\em prescribed mean curvature} (PMC) hypersurfaces established by Zhou-Zhu \cite{zhou2019cmc}\cite{zhou2020pmc}; see also Chodosh-Mantoulidis  \cite{chodosh2020ACmultiplicity}. 
Zhou's multiplicity one result, together with the Morse index lower bound \cite{marques2021morse}, concludes under bumpy metrics that there is a closed embedded minimal hypersurface of Morse index $k$ and area $\omega_k(M, g_{_M})$ for each $k \geq 1$.  
This not only resolved Yau's conjecture in the generic sense, but also established the Morse theory for the area functional in $\mc B(M;\mb Z_2)$. 
We refer to \cite{marques2017existence}\cite{song2023infinite}\cite{irie2018density}\cite{marques2019equidistribution} for more on Conjecture \ref{Conj: Yau}. 

\medskip
Motivated by the recent progress on Yau's Conjecture, we would like to consider the following naturally improved version of Conjecture \ref{Conj: Yau}. 
\begin{conjecture}\label{Conj: Yau Conj in homology class}
	There are infinitely many closed embedded minimal hypersurfaces in a given homology class $[\Sigma_0]\in H_n(M^{n+1};\mb Z_2)$. 
\end{conjecture}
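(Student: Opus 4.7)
The plan is to port the Marques-Neves-Zhou program for Yau's conjecture from the boundary space $\mc B(M;\mb Z_2)$ to the component $\mc Z_n^{[\Sigma_0]}(M;\mb Z_2) \subset \mc Z_n(M;\mb Z_2)$ of cycles homologous to $\Sigma_0$, and to establish the conjecture in the generic sense (i.e.\ for bumpy $g_{_M}$). Translation by a fixed smooth representative of $[\Sigma_0]$ gives a mass-Lipschitz homeomorphism $\mc B \to \mc Z_n^{[\Sigma_0]}$, so this component is also weakly homotopic to $\mb{RP}^\infty$ and carries a canonical generator $\bar\la_{[\Sigma_0]} \in H^1(\mc Z_n^{[\Sigma_0]};\mb Z_2)$. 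Pulling back powers of $\bar\la_{[\Sigma_0]}$ defines $k$-sweepouts and associated widths $\omega_k^{[\Sigma_0]}(M,g_{_M})$; the shift estimate $|\omega_k^{[\Sigma_0]}-\omega_k|\leq \Area(\Sigma_0)$ combined with the Weyl law of Liokumovich-Marques-Neves yields $\omega_k^{[\Sigma_0]}\to\infty$ with the same leading-order asymptotic as $\omega_k$.

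Next I would apply the Marques-Neves min-max existence theorem in this component (the argument uses only the topology of the parameter space together with the mass functional) to realize each $\omega_k^{[\Sigma_0]}$ as the area of a stationary integral varifold $V_k=\sum_i m_i^k|\Sigma_i^k|$, where the $\Sigma_i^k$ are disjoint closed embedded minimal hypersurfaces in dimensions $3\leq n+1\leq 7$, with total index $\sum_i\Index(\Sigma_i^k)\leq k$ and $\sum_i m_i^k[\Sigma_i^k]=[\Sigma_0]$ in $H_n(M;\mb Z_2)$ because the sweepouts lie in the prescribed component. For bumpy $g_{_M}$ I would then apply the PMC-based multiplicity one theorem of Zhou: perturb the area functional by a small prescribing function $h$, run PMC min-max in $\mc Z_n^{[\Sigma_0]}$, and let $h\to 0$ to conclude $m_i^k=1$ and each $\Sigma_i^k$ is two-sided. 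Combined with $\omega_k^{[\Sigma_0]}\to\infty$ and the bumpy-metric fact that only finitely many minimal hypersurfaces satisfy both a uniform index bound and an area bound, one extracts infinitely many geometrically distinct closed embedded minimal hypersurfaces whose disjoint unions represent $[\Sigma_0]$, giving the generic form of the conjecture.

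The main obstacle is transporting the entire PMC min-max apparatus — the almost-minimizing property, the interpolation and deformation theorems, and the regularity of replacements — from $\mc B$ to $\mc Z_n^{[\Sigma_0]}$. The translation homeomorphism is not an isometry for the mass functional (mod-$2$ symmetric difference is not mass-additive), so one cannot simply pull back Zhou's proof. Instead, one parametrizes cycles in $\mc Z_n^{[\Sigma_0]}$ as $\Sigma_0+\bd U$ in the $\mb Z_2$ sense and redefines the prescribed-mean-curvature $\mc A^h$-functional relative to a fixed reference region $U_0$, then reruns the almost-minimizing/replacement machinery with respect to this modified functional. The equivariant min-max theory developed in the body of the paper carries this out at the level of generality needed for $G$-invariant hypersurfaces, from which the statement above is recovered by specializing to $G$ trivial.
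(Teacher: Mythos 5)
Your first paragraph (transporting widths and the Weyl asymptotics to the component $\mc Z_n^{[\Sigma_0]}$, and running pure Almgren--Pitts min-max for the mass functional there) is sound and matches the paper's Theorem \ref{Lem: generalized volume spectrum}. The gap is in the multiplicity-one step. Your proposed fix --- parametrize cycles as $\llbracket\Sigma_0\rrbracket + \bd U$ and ``redefine the $\mc A^h$-functional relative to a fixed reference region $U_0$'' --- does not produce the right variational problem. If one sets $\mc A^h(U) := \mf M(\llbracket\Sigma_0\rrbracket + \bd U) - \int_U h$ and computes the first variation along an ambient flow $\phi_t$ of $U$, the $\Sigma_0$-piece of the cycle stays fixed while only $\bd U$ moves; the Euler--Lagrange equation one obtains is $H_{\bd U} = h$ on $\bd U$ with no condition on the $\Sigma_0$-piece, so critical points are not $h$-PMC representatives of the class. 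More fundamentally, when $[\Sigma_0]\neq 0$ a smooth representative $\Sigma$ may be one-sided, so there is no global unit normal and the equation $H=h$ with $h\neq 0$ has no consistent meaning on $\Sigma$; this is exactly why the paper states that Zhou's PMC theory ``cannot be applied to non-boundary-type hypersurfaces.'' No choice of reference region $U_0$ cures this.

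What the paper actually does --- and what your plan omits --- is a double-cover lifting (Proposition \ref{Prop: lift to 2-cover} and Remark \ref{Rem: lift to 2-cover}): take the double cover $\tau:\wti M\to M$ dual to $[\Sigma_0]^*\in H^1(M;\mb Z_2)$ so that $\tau^{-1}(\Sigma_0)=\bd\wti\Omega$ is a genuine boundary, lift $G$ to $\wti G\cong G\times\mb Z_2$, and impose the \emph{signed-symmetry} constraints: $\wti\Omega\in\mc C^{\wti G_\pm}(\wti M)$ (fixed by $\wti G_+$, sent to its complement by the deck involution in $\wti G_-$) and $\wti h\in C^\infty_{\wti G_\pm}(\wti M)$ (flips sign under $\wti G_-$). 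The product $\wti h\,\nu_{\bd\wti\Omega}$ is then deck-invariant, so the PMC equation is well-posed upstairs and descends to the correct orientation-flipping notion downstairs. This is genuine $\mb Z_2$-equivariant theory even when $G=\{\mathrm{id}\}$; one does not literally ``specialize to $G$ trivial,'' since the nontrivial homology class forces $\wti G\cong\mb Z_2$ to appear in the proof. That lifting plus signed-symmetry is the missing key idea.
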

By Zhou's multiplicity one theorem \cite{zhou2020multiplicity}*{Theorem A}, the above conjecture is valid in the null homology class under bumpy metrics (and metrics with $\Ric_M>0$ \cite{zhou2020multiplicity}*{Remark 0.1}). 
However, Zhou's resolution relies on the min-max theory for PMC hypersurfaces, which cannot be apply to non-boundary-type hypersurfaces in $[\Sigma_0]\neq 0\in H_n(M^{n+1};\mb Z_2)$. 
A natural idea is to consider the double cover $\tau:\wti M\to M$ so that $[\tau^{-1}(\Sigma_0)]=0\in H_n(\wti M;\mb Z_2)$, and apply the boundary-type min-max theory in $\wti M$ under a $\mb Z_2$-equivariant constraint, where $\mb Z_2$ stands for the deck transformation group of $\tau$. 
Indeed, this method has been proven effective in \cite{wang2024RP2} where Li, Yao, and the author showed a multiple existence result for $1$-sided minimal surfaces with low genus in lens spaces. 
We also refer to \cite{wangzc2023FourSpheres} for the multiplicity one theorem in Simon-Smith min-max theory and its application to another Yau's conjecture on minimal $2$-spheres in $S^3$. 

Further motived by the $\mb Z_2$-equivariant constraint, we focus on closed Riemannian manifold $M$ with a compact Lie group $G$ acting by isometries in this paper. 
Combined the above idea with the $G$-symmetries, we consider the following generalized Yau's conjecture.
\begin{conjecture}\label{Conj: Generalized Yau Conjcture}
	$M$ contains infinitely many closed embedded minimal $G$-hypersurfaces $\Sigma$ in a given $G$-homology class, i.e. $\Sigma=\Sigma_0+\bd\Omega$ as mod $2$ cycles for a fixed $G$-hypersurface $\Sigma_0\subset M$ and some open $G$-set $\Omega\subset M$.  
\end{conjecture}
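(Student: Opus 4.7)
\emph{Proof proposal.} The plan is to run an equivariant Marques--Neves--Zhou program directly in the affine space of $G$-invariant cycles homologous to $\Sigma_0$. I would first introduce the affine cycle space
\[
\mc A^G(\Sigma_0) := \{\Sigma_0 + \bd \Omega : \Omega \subset M \text{ an open } G\text{-set of finite perimeter}\} \subset \mc Z_n^G(M;\mb Z_2)
\]
in the flat topology. It is an affine translate of the space $\mc B^G(M;\mb Z_2)$ of $G$-invariant boundary-type cycles and therefore carries the same weak homotopy type. By an equivariant analogue of Almgren's isomorphism $\mc B(M;\mb Z_2)\simeq \mb{RP}^\infty$, or by lifting to a double cover $\widetilde M\to M$ on which $\tau^{-1}(\Sigma_0)$ is null-homologous and invoking Almgren's theorem upstairs under the extended group action, one should be able to extract a rich mod $2$ cohomology ring on $\mc A^G(\Sigma_0)$. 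From it I would define the equivariant volume spectrum $\{\omega_k^G([\Sigma_0],g_{_M})\}_{k\geq 1}$ as the Gromov--Guth--Marques--Neves min-max values over $k$-sweepouts detecting the successive cup powers, and verify $\omega_k^G \to \infty$ via a Lusternik--Schnirelmann argument.

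Next, each $\omega_k^G$ should be realized by a stationary integral $G$-varifold $V_k$ in the affine class $[\Sigma_0]$ through an equivariant Almgren--Pitts discretization and pull-tight scheme. Equivariant regularity (Pitts--Schoen--Simon applied in the quotient, with attention to low-codimensional orbits) yields that $\mr{spt}(V_k)$ is a smooth closed embedded minimal $G$-hypersurface $\Sigma^k$ carrying integer multiplicities, together with an equivariant index bound $\Index_G(\Sigma^k)\leq k$ on $G$-invariant normal variations.

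The decisive step is multiplicity one. Following Zhou, I would approximate by the equivariant PMC min-max theory announced in the paper itself: perturb the area functional by a small $G$-invariant prescribed mean curvature $h$, obtain smooth $G$-invariant PMC hypersurfaces $\Sigma^k_h$ of $G$-index $\leq k$ in $[\Sigma_0]$, and send $h\to 0$. For a Baire-generic $G$-invariant (equivariantly bumpy) metric, Zhou's dichotomy forces every component of the limit to be two-sided and of multiplicity one; the one-sided alternative with stable double cover is excluded by the equivariant White bumpy genericity. Combined with a matching equivariant Morse index lower bound coming from deformation theory, this produces for every $k$ a closed embedded minimal $G$-hypersurface $\Sigma^k \in [\Sigma_0]$ of equivariant index exactly $k$ and area $\omega_k^G$. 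Since a $G$-equivariant Sharp-type compactness theorem allows only finitely many embedded minimal $G$-hypersurfaces with bounded area and $G$-index, the sequence $\{\Sigma^k\}_k$ necessarily contains infinitely many geometrically distinct members.

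The main obstacle I anticipate is the PMC multiplicity one analysis near singular orbits of $G$: Zhou's argument deforms the hypersurface to one side via carefully chosen test functions, but on a non-principal orbit the normal bundle may be constrained by the isotropy, so the local first-variation estimate has to be rewritten in the quotient orbifold and the test deformations must be taken $G$-invariant, which restricts their flexibility. A secondary difficulty is proving a $G$-equivariant version of White's bumpy metric theorem—that generic $G$-invariant smooth metrics admit no degenerate $G$-invariant minimal hypersurface—which demands equivariant transversality in a setting where the allowable metric perturbations must themselves be $G$-invariant, and where fixed-point strata of $G$ may obstruct the standard Sard--Smale argument.
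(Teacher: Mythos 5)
Your proposal follows the same broad strategy as the paper---lift to reduce to a boundary-type cycle space, define an equivariant volume spectrum, run equivariant PMC min-max, prove multiplicity one for generic $G$-invariant metrics, and conclude infinitude from $\omega_k\to\infty$. You also correctly flag the two genuine technical hurdles (PMC regularity near non-principal orbits and an equivariant bumpy metric theorem). However, there is a structural gap in how you set up the lifted problem and the PMC functional that the paper's $G_\pm$-signed symmetry machinery is designed precisely to resolve.

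The issue is that ``lift to a double cover and apply equivariant boundary-type min-max upstairs'' does not by itself produce a situation where the lifted region is $\wti G$-invariant. Concretely, if $T=\llbracket\Sigma_0\rrbracket\in\mc Z_n^G(M;\mb Z_2)$ and you take a double cover $\tau:\wti M\to M$ on which $\tau^{-1}(\Sigma_0)=\bd\wti\Omega$, the region $\wti\Omega$ is generally \emph{not} invariant under the lifted group $\wti G$: the deck involution swaps $\wti\Omega$ and $\wti M\setminus\wti\Omega$, and in fact $\wti G$ splits as $\wti G_+\sqcup\wti G_-$ with $\wti G_+\cdot\wti\Omega=\wti\Omega$ and $\wti G_-\cdot\wti\Omega=\wti M\setminus\wti\Omega$ (Proposition~\ref{Prop: lift to 2-cover}). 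The same phenomenon already occurs downstairs when $T=\bd\Omega$ with $\Omega\notin\mc C^G(M)$. This matters for the PMC step: for a $G_\pm$-signed symmetric $\Omega$, the outward normal $\nu_{\bd\Omega}$ flips sign under $G_-$, so a \emph{$G$-invariant} prescribed mean curvature $h$ would make $h\,\nu_{\bd\Omega}$ fail to be $G$-invariant and the $\mc A^h$-functional fail to be equivariant. You must take $h\in C^\infty_{G_\pm}(M)$ (sign-flipping under $G_-$), and then you need the density of the good prescription set $\mc S_{G_\pm}$ inside $C^\infty_{G_\pm}(M)$, which is only available when the freeness condition $G\cdot p=G_+\cdot p\sqcup G_-\cdot p$ holds for every $p$ (otherwise a $G_\pm$-signed symmetric function is forced to vanish on certain orbits and can never be a Morse function there). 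This freeness is exactly what the paper's ``locally $G$-boundary-type'' hypothesis on $\Sigma_0$ guarantees (Proposition~\ref{Prop: free Gpm}), and your proposal omits both the $G_\pm$ structure and that hypothesis---without them the PMC step as you've written it does not go through. A minor further point: you invoke an equivariant Morse index \emph{lower} bound to show infinitude, but the paper does not need it; multiplicity one together with the asymptotics $\omega_k\sim k^{1/(l+1)}$ and the generic finiteness of bounded-area, bounded-index min-max $G$-hypersurfaces already forces infinitely many distinct hypersurfaces.
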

Here and below, the prefix `$G$-' represents the $G$-invariance for short. 
Note that the above conjecture generalized Conjecture \ref{Conj: Yau} in two aspects: the $G$-homology class constraint and the $G$-equivariant constraint. 
Using the previous idea, the former can be transformed into certain $\mb Z_2$-symmetric constraint. 
Meanwhile, for the latter, the author has established an {\em equivariant Almgren-Pitts min-max theory} in the space of $G$-boundaries $\mc B^G(M;\mb Z_2)$ through a series of works \cite{wang2022min}\cite{wang2023free}\cite{wang2023G-index}, where every $T\in \mc B^G(M;\mb Z_2)$ is induced by the boundary of a $G$-invariant Caccioppoli set. 
In particular, assume (for the regularity theory \cite{schoen1981regularity}) that 
\begin{align}\label{Eq: dimension assuption}
	3 \leq \codim(G\cdot x) \leq 7 \qquad \forall x\in M,
\end{align}
then the equivariant min-max in $\mc B^G(M;\mb Z_2)$ can produce smoothly embedded $G$-invariant minimal hypersurfaces with $G$-index upper bounds \cite{wang2023G-index}.
Additionally, assume \eqref{Eq: dimension assuption} and $\Ric_M>0$, there are infinitely many $G$-invariant minimal hypersurfaces \cite{wang2022min}\cite{wang2023free}, which confirms Conjecture \ref{Conj: Generalized Yau Conjcture} in $\mc B^G(M;\mb Z_2)$ by \cite{wang2024Ricci}*{Theorem 3.8} (see also \cite{zhou2015minmaxRicci}*{Proposition 6.1}).
We also refer to \cite{pitts1987applications}\cite{pitts1988equivariant}\cite{ketover2016equivariant}\cite{liu2021existence}\cite{franz2021equivariant}\cite{wang2025density} for more results on equivariant min-max in various settings.

\subsection{Main results}
In this paper, our first main result is to confirm Conjecture \ref{Conj: Generalized Yau Conjcture} for $G$-bumpy metrics and locally $G$-boundary-type $\Sigma_0$. 
Recall that a $G$-invariant metric is called {\em $G$-bumpy} if every finite cover of a smooth embedded minimal $G$-hypersurface is non-degenerate. 
By \cite{wang2023G-index}*{Theorem 1.3}, the set of $G$-bumpy metrics is generic in Baire sense. 
Also, a $G$-invariant hypersurface $\Sigma_0$ is said to be {\em locally $G$-boundary-type}, if for any $p\in M$, there is a small $G$-invariant neighborhood $U$ of $G\cdot p$ so that $\Sigma_0$ is the boundary of an open $G$-set $\Omega$ restricted in $U$. 
This is a natural generalized assumption since for trivial $G=\{id\}$, every embedded closed hypersurface is locally a boundary (due to the simply connectivity of small geodesic balls).

\begin{theorem}\label{Thm: main 1 generic infinte}
	Let $M$ be a closed manifold, $G$ be a compact Lie group acting by diffeomorphisms on $M$ with \eqref{Eq: dimension assuption} satisfied, and $\Sigma_0\subset M$ be a $G$-invariant hypersurface of locally $G$-boundary-type. 
	\begin{itemize}
		\item[(i)] For each $G$-bumpy metric $g_{_{M}}$, there are infinitely many closed, embedded, $G$-invariant minimal hypersurfaces $\Sigma$ that are $G$-homologous to $\Sigma_0$, i.e. $\Sigma=\Sigma_0+\bd\Omega$ as mod $2$ cycles for some $G$-invariant open set $\Omega\subset M$. 
		\item[(ii)] For a $G$-invariant metric $g_{_{M}}$ with $\Ric_M>0$, there is a sequence of connected, closed, embedded, $G$-invariant minimal hypersurfaces $\{\Sigma_k\}_{k\in\mb Z_+}$ so that 
		\[\mbox{each $\Sigma_k$ is $G$-homologous to $\Sigma_0$} \quad {\rm and}\quad \Area(\Sigma_k) \sim k^{\frac{1}{l+1}} ~ \mbox{as $k\to+\infty$}, \]
		where $l+1 =\min_{x\in M} \codim(G\cdot x)$ is the cohomogeneity of the $G$-actions. 
	\end{itemize}
\end{theorem}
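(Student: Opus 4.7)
The plan is to reduce the problem to a boundary-type equivariant min-max on a double cover of $M$ in which the $G$-homology class of $\Sigma_0$ becomes accessible, and then to invoke the paper's equivariant multiplicity one theorem. Since $\Sigma_0$ is closed, $G$-invariant and locally $G$-boundary-type, Poincar\'e--Lefschetz duality produces a class $\alpha\in H^1(M;\mb Z_2)$ Poincar\'e-dual to $[\Sigma_0]$, which classifies a two-sheeted isometric cover $\tau\colon\wti M\to M$ with deck transformation $\sigma$; on $\wti M$ we have $\tau^{-1}(\Sigma_0)=\bd\wti E_0$ for some open set $\wti E_0\subset\wti M$ satisfying $\sigma(\wti E_0)=\wti M\setminus\oli{\wti E_0}$. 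First I would lift the $G$-action: in general $G$ need not lift as a $G$-action, so one obtains only a compact Lie group $\wti G$ (a possible $\mb Z_2$-extension of $G$) acting isometrically on $(\wti M,\tau^*g_{_M})$ and commuting with $\sigma$, yielding an isometric action of $\widehat G:=\wti G\times\mb Z_2$ on $\wti M$ with $\mb Z_2=\langle\sigma\rangle$. Since $\tau$ is a finite isometric cover, the codimension assumption \eqref{Eq: dimension assuption} and the cohomogeneity $l+1$ transfer to $(\wti M,\tau^*g_{_M})$.

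Next, I would set up the affine (twisted) version of the equivariant Almgren--Pitts min-max of \cite{wang2022min}\cite{wang2023free}\cite{wang2023G-index}, namely over $\wti G$-invariant Caccioppoli sets $\wti\Omega\subset\wti M$ satisfying the twist condition $\sigma(\wti\Omega)=\wti M\setminus\oli{\wti\Omega}$. Such $\wti\Omega$ form the affine translate $\wti E_0+\mc C^{\widehat G}(\wti M;\mb Z_2)$ of the usual space of $\widehat G$-invariant Caccioppoli sets, since for any $G$-invariant open $\Omega\subset M$,
\[
\tau^{-1}(\Sigma_0+\bd\Omega)=\bd\wti E_0+\bd\tau^{-1}(\Omega)=\bd\bigl(\wti E_0\triangle\tau^{-1}(\Omega)\bigr),
\]
so the twisted space is canonically identified with $G$-cycles in $M$ that are $G$-homologous to $\Sigma_0$. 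The standard Almgren--Pitts equivariant machinery then produces a nondecreasing twisted $\widehat G$-volume spectrum $\{\omega_k^{\widehat G,\alpha}(\wti M)\}_{k\geq 1}$ with $\omega_k^{\widehat G,\alpha}\to\infty$, each realized by an embedded $\widehat G$-invariant minimal hypersurface $\wti\Sigma_k\subset\wti M$ whose projection $\Sigma_k:=\tau(\wti\Sigma_k)\subset M$ is $G$-homologous to $\Sigma_0$ by construction.

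For part (i), the $G$-bumpiness of $g_{_M}$ (which, in the sense of \cite{wang2023G-index}, includes non-degeneracy of every finite cover of a smooth embedded minimal $G$-hypersurface) transfers to $\widehat G$-bumpiness of $\tau^*g_{_M}$: each $\widehat G$-invariant minimal hypersurface in $\wti M$ finite-covers a minimal $G$-hypersurface in $M$. The paper's equivariant multiplicity one theorem then forces each $\omega_k^{\widehat G,\alpha}$ to be realized by a \emph{single} closed, embedded, two-sided, $\widehat G$-invariant minimal hypersurface $\wti\Sigma_k$ of multiplicity one, so $\Sigma_k=\tau(\wti\Sigma_k)\subset M$ is a closed embedded $G$-invariant minimal hypersurface $G$-homologous to $\Sigma_0$ with $\Area(\Sigma_k)=\tfrac12\Area(\wti\Sigma_k)\to\infty$, giving infinitely many distinct such hypersurfaces. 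For part (ii), since $\Ric_{\tau^*g_{_M}}>0$, the Weyl-law results of \cite{wang2022min}\cite{wang2023free} applied to the twisted $\widehat G$-equivariant setting on $\wti M$ directly produce a sequence of connected, closed, embedded, $\widehat G$-invariant minimal hypersurfaces with $\Area(\wti\Sigma_k)\sim k^{1/(l+1)}$, whose projections inherit both the Weyl-law asymptotic and the $G$-homology class of $\Sigma_0$.

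The hard parts I anticipate are: (a) constructing the compact Lie group $\widehat G$ and verifying that its action on $\wti M$ retains all structural properties (smoothness, isometry, and orbit codimensions matching \eqref{Eq: dimension assuption}), in particular that $\widehat G$-orbits project either one-to-one or two-to-one to $G$-orbits; (b) rigorously adapting the equivariant Almgren--Pitts machinery to the affine twisted space $\wti E_0+\mc C^{\widehat G}(\wti M;\mb Z_2)$, checking that the pull-tight, deformation, and discretization arguments all survive this shift and that the critical cycles carry the $\sigma$-antisymmetry forcing $[\Sigma_k]=[\Sigma_0]$ rather than the trivial $G$-class; and (c) transferring $G$-bumpiness to $\widehat G$-bumpiness, which relies precisely on the ``all finite covers'' version of bumpiness in \cite{wang2023G-index} and requires care when a $\widehat G$-invariant $\wti\Sigma$ splits into two $\wti G$-invariant components swapped by $\sigma$.
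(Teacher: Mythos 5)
Your high-level strategy does match the paper's: pass to a double cover where the $G$-homology class of $\Sigma_0$ can be represented by a boundary, impose a signed/twisted symmetry under the deck transformation, and then invoke the equivariant multiplicity one theorem and the Weyl-law asymptotic. However, there are several concrete gaps.

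First, your double-cover construction silently assumes $[\Sigma_0]\neq 0\in H_n(M;\mathbb{Z}_2)$. When $\Sigma_0$ is $\mathbb{Z}_2$-null-homologous (which includes the basic case $\Sigma_0\in\mathcal{B}^G(M;\mathbb{Z}_2)$), the Poincar\'e-dual class $\alpha$ vanishes and the associated cover is trivial or disconnected, so the construction breaks down. The paper distinguishes three cases (Remark \ref{Rem: lift to 2-cover}, Proposition \ref{Prop: lift to 2-cover}): $\Sigma_0\in\mathcal{B}^G$, $\Sigma_0\in\mathcal{B}\setminus\mathcal{B}^G$, and $\Sigma_0\notin\mathcal{B}$. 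Only the last requires a genuine double cover; the first two are handled on $M$ directly by working either in $\mathcal{C}^G(M)$ or in the $G_\pm$-signed symmetric class $\mathcal{C}^{G_\pm}(M)$.

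Second, the group $\widehat{G}=\wti{G}\times\mathbb{Z}_2$ is not quite the right object. The paper's Proposition \ref{Prop: lift to 2-cover}(2) shows that the lift $\wti{G}$ of $G$ to $\wti{M}$ already contains the deck transformation as the nontrivial element of $p^{-1}(e)$, and $\wti{G}\cong G\times\mathbb{Z}_2$ with a canonical splitting $\wti{G}=\wti{G}_+\sqcup\wti{G}_-$. More seriously, requiring $\widehat{G}$-invariance of $\wti{\Omega}$ while including $\sigma$ in $\widehat{G}$ contradicts the twist condition $\sigma(\wti{\Omega})=\wti{M}\setminus\wti{\Omega}$: $\sigma$-invariance and $\sigma$-antisymmetry are different constraints. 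The paper resolves this by introducing $\mathcal{C}^{G_\pm}$ (Definition \ref{Def: Gpm-signed symmetry}) rather than forcing an invariance constraint. Crucially, the regularity of the $(\mathcal{G},h)$-PMC min-max in this setting relies on the \emph{free} property \eqref{Eq: free Gpm} (that $G_+$ and $G_-$ move every orbit to disjoint pieces), which Proposition \ref{Prop: free Gpm} derives from the locally $G$-boundary-type hypothesis on $\Sigma_0$. You flag this only as a ``hard part'' in (a), but the locally $G$-boundary-type assumption is exactly what makes the free property hold, and without it the PMC regularity theorem does not apply; your ``affine translate'' language never engages with this.

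Third, part (ii) needs more than an appeal to the Weyl-law results of \cite{wang2022min}\cite{wang2023free}: those papers treat $\mathcal{B}^G(M;\mathbb{Z}_2)$, not the shifted class $[\Sigma_0]^G$. The paper's Theorem \ref{Lem: generalized volume spectrum} transfers the asymptotic via $|\omega_k([\Sigma_0]^G,g_{_M})-\omega_k^G(M,g_{_M})|\leq\mathfrak{M}(\llbracket\Sigma_0\rrbracket)$, citing \cite{wang2025density}. Moreover, the multiplicity one conclusion is proved for $G$-bumpy metrics (Theorem \ref{Thm: main multiplicity one for spectrum}); to get part (ii) for $\Ric_M>0$ one needs the compactness argument of Remark \ref{Rem: for Ric>0} (approximating $g_{_M}$ by $G$-bumpy metrics and passing to the limit using Theorem \ref{Thm: compactness for min-max Gh-hypersurfaces}). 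Finally, connectedness of each $\Sigma_k$ (not merely $G$-connectedness) follows from the Frankel theorem under $\Ric_M>0$, which the paper explicitly invokes and you omit entirely.
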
 

The proof of Theorem \ref{Thm: main 1 generic infinte} is based on a multiplicity one result in equivariant min-max as an equivariant version of the approach in \cite{zhou2020multiplicity}, but multiple challenges have emerged in order to meet our tighter constraints.
The key novelties lie in the following aspects (we will cover them more in the following subsections):  
\begin{itemize}
	\item an equivariant lifting construction to transform the $G$-homologous constraint into the $G_\pm$-signed symmetric constraint (Section \ref{Subsec: lift to signed symmetric});  
	\item an equivariant min-max theory for the existence and regularity of $G$-invariant hypersurfaces with mean curvature prescribed by generic symmetric functions (Section \ref{Sec: PMC min-max});
	\item a genericity result for good symmetric mean curvature prescribing functions that are applicable in the equivariant min-max theory (Section \ref{Subsec: prescription G-functions});
    \item a compactness theorem, a generic countability result, and $G$-index upper bounds for $G$-invariant PMC hypersurfaces produced by equivariant min-max 
    (Section \ref{Sec: compactness and index}); 
\end{itemize}
With these key ingredients, we are able to establish a multiplicity one theorem and apply it to the generalized equivariant volume spectrum for Theorem \ref{Thm: main 1 generic infinte}. 

More precisely, given a $G$-hypersurface $\Sigma_0\subset M$ of locally $G$-boundary-type, we can take the $G$-homology class $[\Sigma_0]^G$
of $\Sigma_0$ as: 
\[[\Sigma_0]^G := \{{\rm mod~} 2 {\rm ~cycles~}\llbracket \Sigma_0 \rrbracket + \bd\Omega: \Omega {\rm ~is~any~} G\mbox{-invariant Caccioppoli set}\},\]
and show $[\Sigma_0]^G$ is weakly homotopic to $\mb {RP}^\infty$. 
Using the min-max procedure, we can similarly define the equivariant volume spectrum $\{\omega_k([\Sigma_0]^G, g_{_M})\}_{k\in\mb Z_+}$ 
in $[\Sigma_0]^G$ with 
\begin{align}\label{Eq: aymptotic volume spectrum}
    \omega_k([\Sigma_0]^G, g_{_M})\sim k^{\frac{1}{l+1}}\quad {\rm as~}k\to+\infty
\end{align}
by \cite{wang2025density}, where $l+1=\min_{x\in M}\codim(G\cdot x)$. 
As a nonlinear version of spectrum for the area functional, 
each $\omega_k([\Sigma_0]^G, g_{_M})$ can be associated by equivariant min-max to a disjoint collection of embedded closed $G$-invariant minimal hypersurfaces $\{\Sigma^k_i\}_{i=1}^{l_k}$ with multiplicities $\{m_i^k\}$ satisfying 
\[ \omega_k([\Sigma_0]^G, g_{_M}) = \sum_{i=1}^{l_k} m^k_i \Area(\Sigma^k_i) \quad{\rm and}\quad \sum_{i=1}^{l_k}\Index_G(\Sigma^k_i) \leq k . \]
(See Proposition \ref{Lem: generalized volume spectrum}). 
The second main theorem of us is the following multiplicity one result generalizing \cite{zhou2020multiplicity}*{Theorem A}. 

\begin{theorem}\label{Thm: main 3 multiplicity one}
    Suppose \eqref{Eq: dimension assuption} is satisfied, and $g_{_M}$ is a $G$-bumpy metric. 
    Fix a $G$-invariant hypersurface $\Sigma_0\subset M$ of locally $G$-boundary-type. 
	Then for each $k\in\mb N$, there exists a disjoint collection of smooth, $G$-connected, embedded, minimal hypersurfaces $\{\Sigma^k_i: i=1,\dots, l_k\}$ so that  $\cup_{i=1}^{l_k}\Sigma^k_i$ is $G$-homologous to $\Sigma_0$,
	\[ \omega_k( [\Sigma_0]^G, g_{_M})= \sum_{i=1}^{l_k} \Area(\Sigma^k_i) \quad{\rm and}\quad \sum_{i=1}^{l_k}\Index_G(\Sigma^k_i) \leq k . \]
	That is to say, the $G$-invariant minimal hypersurfaces produced by equivariant min-max can be chosen in a given $G$-homology class with multiplicity one for $G$-bumpy metrics. 
\end{theorem}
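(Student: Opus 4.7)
The plan is to adapt Zhou's multiplicity one strategy \cite{zhou2020multiplicity} to the equivariant setting, assembling the four ingredients highlighted after Theorem \ref{Thm: main 1 generic infinte}. First, I would invoke the equivariant lifting construction of Section \ref{Subsec: lift to signed symmetric} to pass from the $G$-homology class $[\Sigma_0]^G$ to a signed symmetric boundary-type problem: on an appropriate double cover $\tau:\wti M \to M$ equipped with a deck involution generating an action of $G_\pm$, the lift $\tau^{-1}(\Sigma_0)$ becomes null-homologous, and the $k$-sweepouts of $[\Sigma_0]^G$ correspond to $G_\pm$-equivariant $k$-sweepouts in the boundary-type space $\mc B^{G_\pm}(\wti M;\mb Z_2)$. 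This reduction places the problem in the setting where PMC min-max has a canonical choice of unit normal.

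Second, I would apply the equivariant PMC min-max theory of Section \ref{Sec: PMC min-max} with a generic $G_\pm$-invariant prescribing function $h$ drawn from the class of admissible symmetric functions constructed in Section \ref{Subsec: prescription G-functions}. For each small $t > 0$ and the $k$-th sweepout family, the procedure yields a smooth, embedded, $G$-invariant PMC hypersurface $\Sigma^t$ realized as the reduced boundary of a $G_\pm$-invariant Caccioppoli set, with area close to $\omega_k([\Sigma_0]^G, g_{_M})$ and with $G$-index at most $k$. Because each PMC hypersurface carries a canonical unit normal singled out by the sign of its mean curvature, every such $\Sigma^t$ occurs with multiplicity one.

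The heart of the proof is to pass to the limit $t \to 0^+$. Using the compactness theorem of Section \ref{Sec: compactness and index}, a subsequence of $\Sigma^{t_j}$ converges as varifolds to $V = \sum_{i=1}^{l_k} m_i^k \llbracket \Sigma_i^k \rrbracket$ with each $\Sigma_i^k$ a closed, embedded, $G$-invariant minimal hypersurface in $M$; the $G$-index bound persists by lower semicontinuity, and the union $\cup_i \Sigma_i^k$ is $G$-homologous to $\Sigma_0$ by preservation of the Caccioppoli class under $L^1$-convergence of the enclosing sets. The main obstacle, and the only place where the $G$-bumpy hypothesis is essential, is to upgrade each $m_i^k$ to $1$: following Zhou's argument, if some $m_i^k \geq 2$, then rescaling the graphical sheets of $\Sigma^{t_j}$ over a tubular neighborhood of $\Sigma_i^k$ produces a nonzero Jacobi field of definite sign on the orientation $G$-double cover of $\Sigma_i^k$, which is excluded by $G$-bumpiness applied to a suitable finite $G$-cover. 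The generic countability result from Section \ref{Sec: compactness and index} ensures that this dichotomy applies uniformly over the countable family of possible limits.

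With multiplicity one established, the area identity $\omega_k([\Sigma_0]^G, g_{_M}) = \sum_i \Area(\Sigma_i^k)$ is immediate from the varifold convergence, and the $G$-index inequality is inherited from the PMC approximants; refining into $G$-connected components produces the disjoint collection claimed. I expect the most delicate technical point to be the equivariant curvature-estimate input behind the sheet-separation argument: the standard two-sided blow-up must be carried out $G_\pm$-equivariantly so that the Jacobi field produced is genuinely $G$-invariant, which is what enables the $G$-bumpy hypothesis (as opposed to ordinary bumpiness) to close the argument.
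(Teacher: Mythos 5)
Your overall strategy matches the paper's: reduce via the lifting construction of Section \ref{Subsec: lift to signed symmetric} to the boundary-type cases $\mc G = G$ or $\mc G = G_\pm$, run the equivariant PMC min-max of Section \ref{Sec: PMC min-max} with prescription $\epsilon_j h$ and $\epsilon_j \to 0$, invoke the compactness theorem to extract a minimal $G$-hypersurface in the limit, and then use $G$-bumpiness to exclude higher multiplicity. The assembly into Theorem \ref{Thm: main 3 multiplicity one} via a Lusternik--Schnirelmann-type argument over the volume spectrum (the analogue of \cite{zhou2020multiplicity}*{Theorem 5.2}) is also what the paper does, referencing Lemma \ref{Lem: generalized volume spectrum} and the relative-$(X,Z)$ multiplicity-one theorem \ref{Thm: main multiplicity one}.

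However, there is a genuine gap in your multiplicity-one argument. You claim that whenever $m_i^k \geq 2$, rescaling the graphical sheets of $\Sigma^{t_j}$ produces a nonzero Jacobi field of definite sign, which $G$-bumpiness excludes. That is only correct when the multiplicity is odd and at least $3$, where the paper's Theorem \ref{Thm: main multiplicity one} derives a positive solution of $L_{\Sigma_\infty}\varphi = 0$ from the difference $u_j^m - u_j^1$ of extreme sheets. For \emph{even} multiplicity $m \geq 2$ the analogous limit solves the \emph{inhomogeneous} equation $L_{\Sigma_\infty}\varphi = 2h|_{\Sigma_\infty}$ rather than the Jacobi equation, because the top and bottom sheets are PMC surfaces with mean curvature of opposite sign relative to $\Sigma_\infty$. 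This is not ruled out by $G$-bumpiness alone; one needs the additional genericity statement (the equivariant analogue of \cite{zhou2020multiplicity}*{Lemma 4.2}, which uses the $G$-bumpy finiteness \cite{wang2023G-index}*{Corollary 1.6}) that for a suitable choice of $h \in \cScG(g_{_M})$ every solution of $L_{\Sigma_\infty}\psi = 2h|_{\Sigma_\infty}$ must change sign. Without addressing this case, your proof would fail precisely when an even-multiplicity limit arises, which is the main new phenomenon in PMC approximation compared with the classical stationary situation. You also do not mention that when the limit $\Sigma_\infty$ has no $\mc G$-invariant unit normal the whole argument must be re-run on the unit normal bundle double cover $S\Sigma_\infty$, which is a separate case in the paper's Theorem \ref{Thm: main multiplicity one} (and is not the same as the double cover $\tau:\wti M\to M$ used in the lifting step, so these two uses of double covers should not be conflated).
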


\begin{remark}\label{Rem: for Ric>0}
    Indeed, for a general $G$-invariant metric, we can show that each $\Sigma_i^k$ either has multiplicity one or is degenerate stable. 
    Additionally, we also show $\{\Sigma_i^k\}$ are suitable to apply our compactness theorem (Theorem \ref{Thm: compactness for min-max Gh-hypersurfaces}, \cite{wang2023G-index}*{Theorem 1.5}). 
    Hence, the conclusions in Theorem \ref{Thm: main 3 multiplicity one} are also valid for a $G$-metric of positive Ricci curvature by a compactness analysis. 
\end{remark}


The conclusions in Theorem \ref{Thm: main 1 generic infinte}(i) follow from the above results and \eqref{Eq: aymptotic volume spectrum}. 
To obtain Theorem \ref{Thm: main 1 generic infinte}(ii), we also need to combine the embedded Frankel Theorem \cite{frankel1966fundamental} and Remark \ref{Rem: for Ric>0}. 

Potential application of our results include the Morse theory in the $G$-invariant hypersurfaces space, which could be the further extended to an equivariant Morse-Bott theory for the area functional. 
Additionally, combined our min-max theory in non-trivial homology class with \cite{wang2024RP2}, we expect further advances for $1$-sided minimal hypersurfaces. 
Moreover, using Song's technique for the resolution of Yau's Conjecture \cite{song2023infinite}, we will discuss in an upcoming paper more about its equivariant generalization Conjecture \ref{Conj: Generalized Yau Conjcture}.

\subsection{Lifting constructions and $G_\pm$-signed symmetries}
In previous (equivariant) min-max, the variational methods are mainly applied in the space of boundaries, i.e. $\mc B(M;\mb Z_2)$ and $\mc B^G(M;\mb Z_2)$. 
This is reasonable and convenient since the boundary-type hypersurfaces $\Sigma=\bd\Omega$ generally exist independent to the topology of $M$, and the functional $\mc A^h(\Omega)=\Area(\bd\Omega) - \int_\Omega h$ can be well-defined to construct PMC hypersurfaces \cite{zhou2019cmc}\cite{zhou2020pmc}. 
Additionally, in equivariant settings, the $G$-boundary-type condition also helps to compare the variations with/without equivariant constraints, e.g. the equivalence between stability and $G$-stability \cite{wang2022min}*{Lemma 7}, and the equivalence between minimizing and equivariant minimizing \cite{wang2023s1-stability}*{Proposition 3.4}. 

Integrated the requirement of $G$-homology constraints and the roles of $G$-boundary-type conditions, we set out this paper to consider the min-max theory in space of locally $G$-boundary-type hypersurfaces. 
Let $\mc {LB}^G(M;\mb Z_2)$ be the space of mod $2$ $G$-invariant $n$-cycles that are locally a boundary of a $G$-invariant Caccioppoli set. 
Then every connected component of $\mc {LB}^G(M;\mb Z_2)$ (in the flat topology) can be seen as a $G$-homology class. 
The first case is to consider the component of $\mc {LB}^G(M;\mb Z_2)$ containing $0$, i.e. $\mc B^G(M;\mb Z_2)$. 
If one assume $M$ is orientable, then $T\in \mc B^G(M;\mb Z_2)$ can be regarded as orientation-preserving under $G$-actions. 

The second case is to consider the component of $\mc {LB}^G(M;\mb Z_2)$ containing some $T\in \mc B(M;\mb Z_2)\setminus \mc B^G(M;\mb Z_2)$, i.e. $T=\bd\Omega$ but $\Omega$ is not $G$-invariant. 
In this case, we can separate $G$ into two components $G_+\sqcup G_-$ so that $G_+$ is a $2$-index subgroup of $G$ (Proposition \ref{Prop: lift to 2-cover}(i)) with 
\begin{align}\label{Eq: signed symmetric}
	G_+\cdot \Omega=\Omega \quad {\rm and} \quad G_-\cdot \Omega=M\setminus\Omega \quad \mbox{as Caccioppoli sets.}
\end{align} 
Roughly speaking, $G$-actions can be divided into the orientation-preserving subgroup $G_+$-actions, and the orientation-reversing coset $G_-$-actions. 
Moreover, the locally $G$-boundary-type condition also contributes the `freedom' between $G_+$ and $G_-$ actions in the following sense:
\begin{align}\label{Eq: free}
	 G\cdot p=G_+\cdot p \sqcup G_-\cdot p \quad{\rm and}\quad \mbox{$G_-$ permutes $\{G_+\cdot p,G_-\cdot p\}$}, \quad \forall p\in M. 
\end{align} 
Benefit from \eqref{Eq: free}, we can consider the variations and min-max under the {\em $G_\pm$-signed symmetric} constraints (Definition \ref{Def: Gpm-signed symmetry}) in this $G$-homology class, i.e. in the space $\mc B^{G_\pm}(M; \mb Z_2)$ of mod $2$ $G$-invariant $n$-cycles induced by the boundary of $\Omega$ satisfying \eqref{Eq: signed symmetric}. 
Indeed, \eqref{Eq: free} plays the same important role as the free assumption in \cite{wang2024RP2} (see Section \ref{Subsec: functional}, \ref{Subsec: prescription G-functions}).

The final case is to consider the component of $\mc {LB}^G(M;\mb Z_2)$ containing some $T\notin \mc B(M;\mb Z_2)$. 
For simplicity, assume $T$ is induced by a $G$-invariant hypersurface $\Sigma_0$. 
Then, we can find a double cover $\tau:\wti M\to M$ and lift $\Sigma_0$ to a null-homologous hypersurface $\wti\Sigma_0\subset\wti M$, i.e. $\wti\Sigma_0=\bd\wti\Omega$. 
Additionally, the $G$-actions on $M$ can also be lifted to a $2$-cover $\wti G$-actions on $\wti M$ with the following commutative diagram
\[\begin{tikzcd}[sep=small]
	{\widetilde{G}\times \widetilde{M}} && {\widetilde{M}} \\
	\\
	G\times M && M
	\arrow["\cdot", from=1-1, to=1-3]
	\arrow["{(p,\tau)}", from=1-1, to=3-1]
	\arrow["\tau", from=1-3, to=3-3]
	\arrow["\cdot", from=3-1, to=3-3]
\end{tikzcd}\]
where $p:\wti G\to G$ is the covering map. 
Indeed, $\wti G\cong G \times \mb Z_2$ is a trivial double cover. 
Moreover, similar to the second case, $\wti G$ can be separated into $\wti G_+ = G\times [0]$ and $\wti G_-=G\times [1]$ satisfying \eqref{Eq: signed symmetric} and \eqref{Eq: free} with `~$\wti{~}$~' added to all the objects (Proposition \ref{Prop: lift to 2-cover}(ii), \ref{Prop: free Gpm}). 
Hence, this case also reduces to the $\wti G_\pm$-signed symmetric constraints as before. 

In summary, we only need to consider the min-max constructions in $\mc B^G(M;\mb Z_2)$ and $\mc B^{G_\pm}(M;\mb Z_2)$. 
Hence, for simplicity, we use $\mc G$ to denote 
\[\mc G=G \qquad{\rm or}\qquad \mc G= G_\pm \mbox{ satisfying \eqref{Eq: free}}. \]

\subsection{Min-max theory for PMC hypersurfaces and related results}
Given $h\in C^\infty(M)$, a hypersurface $\Sigma=\bd\Omega$ has prescribed mean curvature (PMC) $h$ if its mean curvature coincides with $h|_\Sigma$, which is a critical point of the functional 
\[\mc A^h(\Omega):= \Area(\Sigma) - \int_\Omega h.\]
The global existence of PMC hypersurfaces was conjectured by Yau in the 1980s (c.f.\cite{yau1982problem}*{Problem 5.9}). 
In \cite{zhou2019cmc}\cite{zhou2020pmc}, Zhou and Zhu established a min-max theory to construct hypersurfaces that have constant mean curvature (CMC) $c$ for arbitrary $c\in \mb R$, and hypersurfaces with PMC $h$ for $h$ in an open dense set $\mc S\subset C^\infty(M)$, which plays a crucial role in Zhou's resolution of multiplicity one conjecture.  
Note that these PMC hypersurfaces are {\em almost embedded}, which means they can be locally decomposed into embedded sheets that touch but do not cross.
Additionally, by their choices of $h$, the touching set has codimension at least $1$. 

With equivariant constraints, the author and Wu also build a min-max theory for $G$-invariant CMC hypersurfaces in \cite{wang2022Gcmc}. 
In this paper, we further extend the min-max construction for $G$-invariant PMC hypersurfaces with $G$-index upper bounds. 
Note that in our $\mc G$-equivariant constraints, the PMC function $h$ can either be $G$-invariant (denoted by $h\in C^\infty_G(M)$), or be $G_+$-invariant and change signs under $G_-$-actions (denoted by $h\in C^\infty_{G_\pm}(M)$).

\begin{theorem}\label{Thm: main 2 equivariant PMC}
	Let $(M^{n+1},g_{_M})$ be a closed Riemannian manifold with a compact Lie group $G$ acting by isometries satisfying \eqref{Eq: dimension assuption}. 
	There exists an open dense set $\mc S_{\mc G}\subset C^\infty_{\mc G}(M)$ so that for each $h\in \mc S_{\mc G}$, 
	we have a closed, smooth, almost embedded, $G$-invariant hypersurface $\Sigma\subset M$ of prescribed mean curvature $h$ (with multiplicity one) satisfying 
	\begin{itemize}
		\item[(i)] $\Sigma$ has no minimal component;
		\item[(ii)] $\Sigma=\bd\Omega$ for an open $\mc G$-set $\Omega$, i.e. $\Omega$ is either $G$-invariant or $G_\pm$-signed symmetric \eqref{Eq: signed symmetric}. 
	\end{itemize}
\end{theorem}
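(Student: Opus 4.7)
The plan is to extend the Zhou--Zhu PMC min-max theory \cite{zhou2019cmc, zhou2020pmc} and the author's equivariant CMC min-max \cite{wang2022Gcmc} to the general $\mc G$-equivariant PMC setting. The variational framework takes place on the space of $\mc G$-invariant Caccioppoli sets: for $\mc G=G$ this is $\mc C^G(M)$, and for $\mc G=G_\pm$ it is the subspace of Caccioppoli sets $\Omega$ satisfying the signed symmetry \eqref{Eq: signed symmetric}. The functional $\mc A^h(\Omega)=\Area(\partial^*\Omega)-\int_\Omega h$ is well-defined and variationally consistent on this space: when $h\in C^\infty_{G_\pm}(M)$, the condition $h\circ g_-=-h$ matches the orientation reversal of $\partial^*\Omega$ under $g_-$, so the Euler--Lagrange equation $H_\Sigma=h$ is compatible with $\mc G$-equivariance.

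First, I would construct a topologically nontrivial one-parameter sweepout $\{\Omega_t\}$ in the relevant space (endpoints $\emptyset,M$ in the $G$-case, or a $G_-$-swap pair in the $G_\pm$-case) and define the min-max width $L^h=\inf_{\{\Omega_t\}}\sup_t \mc A^h(\Omega_t)$. An equivariant pull-tight combined with the discretization-interpolation scheme, as in \cite{wang2022Gcmc, wang2023G-index}, would produce a $\mc G$-invariant stationary varifold $V$ for $\mc A^h$ realizing $L^h$. Schoen--Simon regularity under the dimension hypothesis \eqref{Eq: dimension assuption}, together with the PMC tangent/touching analysis from \cite{zhou2020pmc}, then yields that $V$ is supported on a smooth almost embedded $\mc G$-invariant hypersurface $\Sigma=\partial\Omega$ of prescribed mean curvature $h$; invariance of $\Omega$ (with the appropriate signed symmetry when $\mc G=G_\pm$) is inherited from the sweepout, and the nontriviality of the sweepout class guarantees $L^h$ is realized by a nonempty $\Sigma$.

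Finally, I would define $\mc S_{\mc G}$ as the set of $h\in C^\infty_{\mc G}(M)$ for which the zero locus $\{h=0\}$ is a regular $\mc G$-invariant hypersurface transverse to every almost embedded $\mc G$-invariant PMC hypersurface, no component of the resulting $\Sigma$ is minimal, and $\Sigma$ has multiplicity one. Openness is a consequence of the equivariant compactness result of Section \ref{Sec: compactness and index} applied to small $C^\infty$-perturbations of $h$, while density requires a perturbation argument in $C^\infty_{\mc G}(M)$ together with a Sard--Smale type transversality input carried out in the constrained class. The main obstacle is precisely this density step and ruling out minimal components: any minimal component $\Sigma_0\subset\Sigma$ would be forced to lie in $\{h=0\}$, so I would need to perturb $h$ within $C^\infty_{\mc G}(M)$ (respecting $G$-invariance, or the sign law $h\circ g_-=-h$) to move $\{h=0\}$ off every minimal $\mc G$-invariant hypersurface without destroying equivariance. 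The freeness property \eqref{Eq: free} of the $G_-$-action is the crucial input that makes $C^\infty_{G_\pm}(M)$ a sufficiently rich class of perturbations to achieve the required genericity, paralleling the non-equivariant argument of \cite{zhou2020pmc} and the $\mb Z_2$-equivariant adaptation in \cite{wang2024RP2}.
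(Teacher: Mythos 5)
Your overall min-max structure (one-parameter sweepout in $\mc C^{\mc G}(M)$, pull-tight, discretization-interpolation, Schoen--Simon plus PMC touching analysis) matches the paper's approach in Section~\ref{Sec: PMC min-max}. However, your construction of the set $\mc S_{\mc G}$ has two genuine problems.

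First, your proposed definition of $\mc S_{\mc G}$ is circular: you define it in terms of ``the resulting $\Sigma$'' from the min-max procedure, but $\Sigma$ is not a well-defined function of $h$ (it depends on the choice of sweepout and the subsequence extracted), so one cannot verify openness or density of this set as a subset of $C^\infty_{\mc G}(M)$. The paper instead defines $\mc S_{\mc G}$ entirely by \emph{a priori} conditions on $h$ alone: (1) the critical locus of $h$ is a union of non-degenerate critical $G$-orbits in the sense of Wasserman, and (2) the zero set $\{h=0\}$ is a smooth $G$-hypersurface whose mean curvature vanishes to at most finite order. Openness and density of this set (Proposition~\ref{Prop: generic PMC G-functions}) are then established via equivariant Morse theory plus a level-set-flow argument, with Claim~\ref{Claim: density of Gpm Morse functions} providing the density of $G_\pm$-signed symmetric Morse-type functions under the freeness hypothesis \eqref{Eq: free Gpm} -- you correctly identified this freeness as the crucial input, but for the $G_\pm$-density step rather than for a Sard--Smale argument.

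Second, your mechanism for ruling out minimal components -- perturbing $h$ so that $\{h=0\}$ is transverse to every minimal $\mc G$-hypersurface -- is the wrong tool and would be difficult to carry out (the collection of minimal $\mc G$-hypersurfaces is not a priori countable or parametrizable in a way amenable to Sard--Smale). The paper's route is more direct: condition (2) above is precisely Zhou--Zhu's hypothesis $(\dagger)$, and Remark~\ref{Rem: small touching and no minimal} records that for \emph{any} $h\in\mc S_G\cup\mc S_{G_\pm}$ and \emph{any} almost embedded $h$-PMC $G$-hypersurface $\Sigma$, the touching set is codimension $\geq 1$ and $\Sigma$ has no minimal component. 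The argument is via unique continuation: a minimal component would have $h\equiv 0$ on it, forcing it into $\{h=0\}$, but the finite-order vanishing of $H_{\{h=0\}}$ precludes $\{h=0\}$ from containing any minimal hypersurface. Similarly, multiplicity one is not an additional genericity condition to impose but an output of the regularity theorem (Theorem~\ref{Thm: main regularity}, case $h\in\cScG$). So the genericity only needs to be arranged once, at the level of $h$, before the min-max is run.
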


The prescription functions $h\in\mc S_{\mc G}$ is constructed from equivariant Morse functions generalizing \cite{zhou2020pmc}*{Proposition 0.2}, which provides a size control on the non-embedding part of any $h$-PMC $G$-hypersurfaces and also excludes minimal components in $h$-hypersurfaces (Remark \ref{Rem: small touching and no minimal}). 
These properties for $h$-PMC $G$-hypersurfaces are crucial in the regularity theory of our equivariant min-max extended from \cite{wang2022Gcmc}\cite{zhou2020pmc}.  
Moreover, besides the necessary modifications, we also introduce the notions of {\em good $\cGh$-replacement property} (Definition \ref{Def: good replacement}) and {\em min-max $\cGh$-hypersurfaces} (Definition \ref{Def: min-max Gh-hypersurface}), which not only simplified the proof of the regularity result (Theorem \ref{Thm: main regularity}), but also helps to build a compactness theorem (Theorem \ref{Thm: compactness for min-max Gh-hypersurfaces}). 
Roughly speaking, a min-max $\cGh$-hypersurface is an almost embedded $h$-PMC $G$-hypersurface induced by the boundary of an open $\mc G$-set satisfying the same sufficient condition (i.e. good $\cGh$-replacement property) for regularity as constructed by equivariant min-max. 
One main function of these concepts is to overcome the difficulty that the equivariant index bounds is generally insufficient for compactness theory, which shares the same spirit as \cite{wang2023G-index}*{Section 4.3, Theorem 1.5} for min-max minimal $G$-hypersurfaces (see also \cite{wangzc2023FourSpheres}\cite{liyy2023infinite}). 

In \cite{white1991space}\cite{white2017bumpy}, White showed a {\em bumpy metrics theorem} for the generic non-degeneracy of closed minimal/CMC/PMC hypersurfaces. 
This result has beed extended by the author in \cite{wang2023G-index}*{Theorem 1.3} for embedded $G$-invariant minimal hypersurfaces (see also \cite{white2017bumpy}\cite{franz2021equivariant} for finite $G$), which is also valid for $h$-PMC $\mc G$-boundary-type hypersurfaces (Lemma \ref{Lem: generic good pairs}). 
Together with our compactness theorem, we obtain a {\em generic countability} result for min-max $\cGh$-hypersurfaces (Corollary \ref{Cor: countable min-max hypersurfaces}). 
Although we further expect the generic finiteness similar to the minimal case \cite{wang2023G-index}*{Corollary 1.7}, the generic countability already played an adequate and crucial role to show the index upper bounds in our equivariant min-max. 
Indeed, if $\Sigma$ in Theorem \ref{Thm: main 2 equivariant PMC} is produced by $\mc G$-equivariant min-max over an $k$-dimensional homotopy class, then $\Sigma$ has {\em weak $G$-index} (Definition \ref{Def: weak G-index}) bounded from above by $k$. 
Here, the weak $G$-index is defined similarly to \cite{zhou2020multiplicity}*{Definition 2.1}\cite{marques2016morse}*{Definition 4.1} to deal with hypersurfaces with self-touching.   
In particular, for properly embedded hypersurfaces, it coincides with the classical $G$-equivariant Morse index.

\subsection{Outline}
In Section \ref{Sec: preliminary}, we collect some notations and provide some technical results including the lifting constructions, the comparison between variations with/without equivariant constraints, and genericity of symmetric prescription functions. 
In Section \ref{Sec: PMC min-max}, we build the equivariant min-max theory for PMC hypersurfaces. 
Then we study the compactness theorem and $G$-index upper bounds for min-max $\cGh$-hypersurfaces in Section \ref{Sec: compactness and index}. 
Finally, in Section \ref{Sec: multiplicity one}, we show our main multiplicity one theorem and apply it to equivariant volume spectrum.

\begin{acknowledgements}
	Part of this work was done when T.W. visited Professor Xin Zhou at Cornell University, he would like to thank for their hospitality. 
    T.W. also thanks Xingzhe Li for helpful discussions. 
    T.W. is supported by the Natural Science Foundation of Shanghai 25ZR1402252 and the National Natural Science Foundation of China 12501076. 
\end{acknowledgements}

\section{Preliminary}\label{Sec: preliminary}

In this paper, we always assume that $(M,g_{_M})$ is a closed connected Riemannian $(n+1)$-manifold and $G$ is a compact Lie group acting isometrically on $M$ so that the codimension of every orbit $G\cdot p \subset M$ satisfies $3\leq \codim(G\cdot p)\leq 7$. 
Without loss of generality, we can also assume that $M$ is orientable (by lifting to its orientable double cover, see \cite{wang2023free}*{\S 2}) and the action of $G$ is effective (by using the quotient group $G/\{g\in G: g\cdot x=x~\forall x\in M\}$). 
Let $\mu$ be a bi-invariant Haar
measure on $G$ which has been normalized to $\mu(G) = 1$.

\subsection{Notations for manifolds and group actions}

We now collect some notations about the action of Lie groups and refer \cite{berndt2016submanifolds}\cite{bredon1972introduction}\cite{wall2016differential} for more details. 

By \cite{moore1980equivariant}, $M$ can be $G$-equivariantly isometrically embedded into some $\mb R^L$. 
Hence, we simply denote by $g\cdot p$ the acting of $g\in G$ on $p\in\mb R^L$. 
For a subset (submanifold, hypersurface,...) $A \subset M$ with $G \cdot A = A$, we usually call $A$ a $G$-set ($G$-submanifold, $G$-hypersurface,...) for simplicity. 
Given $p\in M$, we use the following notations. 
\begin{itemize}
    \item $G_p := \{g\in G: g\cdot p = p\}$ is the {\em isotropy group} of $p$ in $G$.
    \item $(G_p)=\{g\cdot G_p\cdot g^{-1}:g\in G\}$ is the conjugate class of $G_p$ in $G$; and $p$ is said to have the {\em orbit type} $(G_p)$. 
    \item $M_{(G_p)}:= \{q\in M: (G_q)=(G_p)\}$ is the $(G_p)$ orbit type stratum, which is a disjoint union of smooth embedded submanifolds of $M$ (see \cite{bredon1972introduction}*{Chapter 6, Corollary 2.5}). 
    \item $M^{prin}:=M_{(P)}$ is the {\em principal orbit type} stratum, where $(P)$ is the (unique) minimal conjugate class of isotropy groups. Note that $M^{prin}$ is an open dense submanifold of $M$. 
    \item ${\rm Cohom}(G)$ is the {\em cohomogeneity} of $G$ defined by the codimension of a principal orbit. 
    \item $\pi: M\to M/G$ is the natural quotient map. 
\end{itemize}

For any $G$-invariant subset $U\subset M$ with connected components $\{U_i\}_{i=1}^I$, we say $U$ is {\em $G$-connected} if for any $i,j\in\{1,\dots,I\}$, there is $g_{i,j}\in G$ with $g_{i,j}\cdot U_j=U_i$. 
We say $U$ is a {\em $G$-component} of $V$ if $U$ is a $G$-connected union of components of $V$. 

Moreover, given $p\in M\subset \mb R^L$ and $r,s,t >0$, we also use the following notations:
\begin{itemize}
    \item $B_r(p), \mb B^L_r(p)$: the geodesic $r$-ball in $M$ and the Euclidean $r$-ball in $\mb R^L$ respectively;
    \item $B_r(G\cdot p)$: the geodesic $r$-tube around $G\cdot p$ in $M$;
    \item $A_{s,t}(G\cdot p)$: the open annulus $B_t(G\cdot p)\setminus B_s(G\cdot p)$;
    \item $\closure(A)$: the closure of a set $A\subset \mb R^L$; 
    \item $T_q(G\cdot p)$: the tangent space of the orbit $G \cdot p$ at $q \in G \cdot p$;
    \item $N_q(G \cdot p)$: the normal vector space of the orbit $G \cdot p$ in $M$ at $q \in G \cdot p$;
    \item $N(G \cdot p)$: the normal vector bundle of the orbit $G \cdot p$ in $M$; 
    \item $\exp_p,\exp^\perp_{G\cdot p}$: the exponential map in $M$ at $p$ and the normal exponential map at $G\cdot p$. 
    \item $\inj(p),\inj(G\cdot p)$: the injectivity radius of $\exp_p$ and $\exp^\perp_{G\cdot p}$ respectively;
    \item $\mk X(M), \mk X(U)$: the space smooth vector fields supported in $M$ or $U$;
    \item $\mk X^G(M):=\{X\in \mk X(M): dg(X)=X\mbox{ for all $g\in G$} \}$, and $\mk X^G(U):=\mk X(U)\cap \mk X^G(M)$. 
\end{itemize}
Sometimes, we would also consider 
\begin{itemize}
    \item $G_+$: an index $2$ Lie subgroup of $G$;
    \item $G_-:=G\setminus G_+$; 
    \item $C^k_G(M):=\{f\in C^k(M): f(g\cdot p)=f(p)\mbox{ for all $g\in G$}\}$;
    \item $C_{G_{ \pm}}^{k}(M):=\left\{f \in C^{k}(M): \begin{array}{ll}f(p)=f(g \cdot p), & \forall p \in M, g \in G_{+} \\ f(p)=-f(g \cdot p), & \forall p \in M, g \in G_{-}\end{array}\right\}$,
\end{itemize}
where $k\in\mb N\cup\{\infty\}$. 
We say $f\in C^k_G(M)$ is $G$-invariant and $f\in C_{G_{ \pm}}^{k}(M)$ is {\em $G_\pm$-signed symmetric}.

\subsection{Notations for geometric measure theory}

We collect some notations in geometric measure theory (see \cite{simon1983lectures} and \cite{pitts2014existence}*{\S 2.1}). 
\begin{itemize}
    \item $\mc H^k$: the $k$-dimensional Hausdorff measure in $\mb R^L$; 
    \item $\mf I_k(M; \mb Z_2)$: the space of $k$-dimensional mod $2$ flat chains in $\mb R^L$ supported in $M$.
    \item $\mc Z_n(M; \mb Z_2)$: the space of $T\in \mf I_n(M;\mb Z_2)$ with $\bd T=0$. 
    \item $\mc C(M), \mc C(U)$: the space of all Caccioppoli sets $\Omega\subset M$ or $\Omega\subset U\subset M$. 
    \item $\llbracket M \rrbracket$: the mod $2$ flat chain associated with $M$. 
    \item $\bd\Omega$: the element in $\mc Z_n(M; \mb Z_2)$ reduced by the boundary of $\Omega\in \mc C(M)$.
    \item $\mc B(M;\mb Z_2):=\{T\in\mc Z_n(M;\mb Z_2): T=\bd \Omega \mbox{ for some $\Omega\in \mc C(M)$}\}$. 
    \item $\mc V_k(M)$: the closure, in the weak topology, of the space of $k$-dimensional rectifiable varifolds in $R^L$ supported in $M$. 
    \item $|T|, \|T\|$: the integer rectifiable varifold and the Radon measure induced by $T\in \mf I_k(M;\mb Z_2)$. 
    \item $\|V\|$: the Radon measure induced by $V\in \mc V_k(M)$. 
    \item $\mf F$: the $\mf F$-metric on $\mc V_k(M)$ that induces the varifolds weak topology on any mass bounded subset of $\mc V_k ( M )$ (\cite{pitts2014existence}*{p.66}).  
    \item $\mc F, \mf F, \mf M$: the flat (semi-)norm, the $\mf F$-metric, and the mass norm on $\mf I_k(M;\mb Z_2)$, where $\mf F(S,T):=\mc F(S-T) + \mf F(|S|,|T|)$ for all $S,T\in \mf I_k(M;\mb Z_2)$. 
\end{itemize}

Since $\mc C(M)$ can be identified with $\mf I_{n+1}(M;\mb Z_2)$ (see for instance \cite{marques2021morse}*{\S 5}), the flat $\mc F$-norm and the mass $\mf M$-norm are naturally defined on $\mc C(M)$. 
In addition, we define 
\[\mf F(\Omega_1,\Omega_2):=\mc F(\Omega_1-\Omega_2) + \mf F(|\bd\Omega_1|,|\bd\Omega_2|) \qquad\forall\Omega_1,\Omega_2\in\mc C(M),\]
and denote $\Bar{B}_r^{\mf F}(\Omega):=\{\Omega'\in \mc C(M): \mf F(\Omega,\Omega')\leq r\}$ for any $\Omega\in\mc C(M)$.

Given $T\in \mf I_k(M;\mb Z_2)$ (resp. $V\in\mc V_k(M)$), we say $T$ (resp. $V$) is {\em $G$-invariant} if $g_\# T=T$ (resp. $g_\#V=V$) for all $g\in G$. 
We can also define the following subspaces of $G$-invariant objects. 
\begin{itemize}
    \item $\mf I_k^G(M;\mb Z_2):=\{T\in \mf I_k(M;\mb Z_2): g_\#T=T,~\forall g\in G\}$;
    \item $\mc Z_n^G(M;\mb Z_2):=\{T\in \mc Z_n(M;\mb Z_2): g_\#T=T,~\forall g\in G\}$;
    \item $\mc C^G(M):=\{\Omega\in\mc C(M): g\cdot \Omega=\Omega, ~\forall g\in G\}$, and $\mc C^G(U):=\mc C^G(M)\cap\mc C(U)$;
    \item $\mc B^G(M;\mb Z_2):=\{T\in \mc B(M;\mb Z_2): T=\bd\Omega, {\rm ~for~some~}\Omega\in \mc C^G(M)\}$;
    \item $\mc {LB}^G(M;\mb Z_2)$ is the space of $T\in \mc Z_n^G(M;\mb Z_2)$ satisfying that for any $p\in M$, there exist $r\in (\inj(G\cdot p)/2,\inj(G\cdot p))$ and $\Omega\in \mc C^G(B_r(G\cdot p))$ so that $T=\bd\Omega$ in $B_r(G\cdot p)$; 
    \item $\mc V^G_n(M):=\{V\in\mc V_n(M): g_\#V=V,~\forall g\in G\}$.
\end{itemize}
The metrics $\mc F,\mf F, \mf M$ can be naturally induced to the above subspaces. 
Additionally, one notices that if $G=\{id\}$, then $B_r(G\cdot p)=B_r(p)$ is simply connected for $0<r<\inj(p)$, and thus $\mc {LB}^{\{id\}}(M;\mb Z_2) = \mc Z_n(M;\mb Z_2)$.

Given a varifold $V\in\mc V_k(M)$, we say $V$ is {\em stationary in an open subset $U\subset M$}, if 
\[0 = \delta V(X):=\left.\frac{d}{d t}\right|_{t=0}\left\|\left(\phi_{t}\right)_{\#} V\right\|(M)=\int \operatorname{div}_{S} X(x) d V(x, S)\quad \mbox{for all $X\in \mk X(U)$,}\]
where $\{\phi_t\}$ are the flow generated by $X$. 
In addition, given $c>0$, a varifold $V\in\mc V_k(M)$ is said to have {\em $c$-bounded first variation in an open subset $U\subset M$}, if 
\[|\delta V(X)|\leq c \int_M |X| d\mu_V \quad \mbox{for all $X\in\mk X(U)$}.\]
Using $G$-invariant objects, we also have the following definitions:
\begin{definition}\label{Def: Gc bounded 1st variation}
    Given an open $G$-set $U\subset M$ and $c>0$, a $G$-varifold $V\in \mc V^G_k(M)$ is said to be {\em $G$-stationary in $U$} if $\delta V(X)=0$ for all $X\in\mk X^G(U)$; and $V$ is said to have {\em $(G,c)$-bounded first variation in $U$} if $|\delta V(X)|\leq c \int_M |X| d\mu_V$ for all $X\in\mk X^G(U)$. 
\end{definition}

The above definitions with $G$-invariant constraints are equivalent to the usual definitions. 
Indeed, for any $X\in \mk X(M)$ supported in an open $G$-set $U\subset M$, the averaged vector field 
\begin{align}\label{Eq: averaged vector field}
   X_G:=\int_G d(g^{-1})(X) d\mu(g) 
\end{align}
is $G$-invariant with compact support in $U$ so that $\delta V(X)= \delta V(X_G)$ for any $V\in \mc V^G_k(M)$ (see \cite{liu2021existence}*{Lemma 3.2}). 
Hence, we have the following lemma (see also \cite{wang2022Gcmc}*{Lemma 2.9})
\begin{lemma}\label{Lem: Gc-bounded 1st variation and c-bounded}
    Given any $G$-varifold $V \in \mc V^G(M)$ and an open $G$-set $U \subset M$, we have that 
    \begin{itemize}
        \item[(i)] $V$ is stationary in $U$ if and only if $V$ is $G$-stationary in $U$; 
        \item[(ii)] $V$ has $c$-bounded first variation in $U$ if and only if $V$ has $(G,c)$-bounded first variation in $U$. 
    \end{itemize}
\end{lemma}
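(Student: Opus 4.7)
The plan is to prove both directions of each equivalence, where the ``only if'' directions are immediate from $\mk X^G(U)\subset \mk X(U)$, and the ``if'' directions follow from the averaging construction \eqref{Eq: averaged vector field}.

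First, if $V$ is stationary in $U$, then $\delta V(X)=0$ for every $X\in\mk X(U)$; restricting to $\mk X^G(U)$ yields $G$-stationarity. Similarly, the mass bound $|\delta V(X)|\le c\int|X|\,d\mu_V$ on all of $\mk X(U)$ forces the same bound on $\mk X^G(U)$.

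For the converse, I would fix $X\in\mk X(U)$ and form $X_G\in\mk X^G(U)$ as in \eqref{Eq: averaged vector field}. The central computation, which is cited from \cite{liu2021existence}*{Lemma 3.2}, is that for any $V\in\mc V_k^G(M)$ one has
\[
    \delta V(X)=\delta V(X_G).
\]
The reason is that by linearity and Fubini,
\[
    \delta V(X)=\int_G \delta V\bigl(d(g^{-1})(X)\bigr)\,d\mu(g),
\]
and for each fixed $g\in G$ one has $\delta V(d(g^{-1})(X))=\delta(g^{-1}_\# V)(X)=\delta V(X)$ after using the $G$-invariance $g_\# V=V$ and the change-of-variables formula for the first variation under an isometry. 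Integrating over $G$ with $\mu(G)=1$ both recovers $\delta V(X)$ and, by swapping the order of integration, equals $\delta V(X_G)$.

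Granting this identity, part (i) is immediate: if $V$ is $G$-stationary, then $\delta V(X)=\delta V(X_G)=0$ for every $X\in\mk X(U)$, giving stationarity. For part (ii), I would additionally invoke the pointwise inequality
\[
    |X_G(x)|\le \int_G |d(g^{-1})(X)(x)|\,d\mu(g)=\int_G |X(g\cdot x)|\,d\mu(g),
\]
where the last equality uses that $G$ acts by isometries. Integrating against $d\mu_V$, the $G$-invariance of $\|V\|$ (a consequence of $g_\# V=V$) combined with Fubini yields
\[
    \int_M |X_G|\,d\mu_V \le \int_G\!\!\int_M |X(g\cdot x)|\,d\mu_V(x)\,d\mu(g)=\int_M|X|\,d\mu_V.
\]
Therefore $|\delta V(X)|=|\delta V(X_G)|\le c\int|X_G|\,d\mu_V\le c\int|X|\,d\mu_V$, finishing the proof.

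The only subtle point is justifying $\delta V(d(g^{-1})(X))=\delta V(X)$ via the $G$-invariance of $V$, which is a routine isometric change-of-variables; everything else is a clean application of averaging and Fubini.
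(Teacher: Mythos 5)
Your proof is correct and follows essentially the same route the paper takes: the paper states the lemma as an immediate consequence of the identity $\delta V(X)=\delta V(X_G)$ for the averaged field $X_G=\int_G d(g^{-1})(X)\,d\mu(g)$, citing \cite{liu2021existence}*{Lemma 3.2} for that identity. You simply unwind the citation, spelling out the isometric change-of-variables that gives $\delta V(d(g^{-1})(X))=\delta V(X)$ and the Fubini/isometry computation showing $\int_M|X_G|\,d\mu_V\le\int_M|X|\,d\mu_V$, both of which are exactly what makes the cited lemma and part (ii) go through.
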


\subsection{Lift to be null-homologous with $G_\pm$-signed symmetries}\label{Subsec: lift to signed symmetric}
Note that $\mc B^G(M;\mb Z_2)\subsetneq \mc Z_n^G(M;\mb Z_2)$ in general, because it is also possible that $T\in \mc Z_n^G(M;\mb Z_2)$ represents a non-trivial $\mb Z_2$-homology class or $T=\bd \Omega$ for some $\Omega\in \mc C(M)\setminus \mc C^G(M)$. 
This leads to some difficulties in the construction of equivariant min-max theory for prescribed mean curvature hypersurfaces, since for $T\in \mc Z_n^G(M;\mb Z_2) \setminus \mc B^G(M;\mb Z_2)$ with a smooth support, the unit normal and the mean curvature function on $\spt(\|T\|)$ is either not well-defined or not $G$-invariant. 

Nevertheless, we can involve the {\em $G_\pm$-signed symmetries} and use the following proposition to make the equivariant min-max applicable for the whole $\mc Z_n^G(M;\mb Z_2)$ space in a variant way. 

\begin{definition}\label{Def: Gpm-signed symmetry}
    Given a connected closed manifold $M$ with a compact Lie group $G$ acting effectively by isometries, let $G_+$ be an $2$-index Lie subgroup of $G$ and $G_-:=G\setminus G_+$. 
    Then we say $\Omega\in\mc C(M)$ is {\em $G_\pm$-signed symmetric} if $G_+\cdot \Omega=\Omega$ and $G_-\cdot \Omega=M\setminus\Omega$ as Caccioppoli sets. 
    Denote by 
    \begin{itemize}
        \item $\mc C^{G_\pm}(M)$: the collection of all $G_\pm$-signed symmetric Caccioppoli sets in $M$,
        \item $\mc B^{G_\pm}(M;\mb Z_2):=\{T\in\mc Z^G_n(M;\mb Z_2): T=\bd\Omega \mbox{ for some }\Omega\in\mc C^{G_\pm}(M)\}$.
    \end{itemize}
\end{definition}

\begin{proposition}\label{Prop: lift to 2-cover}
    Given $T\in\mc Z_n^G(M;\mb Z_2)\setminus \mc B^G(M;\mb Z_2)$, 
    \begin{itemize}
        \item[(1)] if $T\in \mc B(M;\mb Z_2)$, then there exists $\Omega\in\mc C(M)\setminus\mc C^G(M)$ and an $2$-index Lie subgroup $G_+$ of $G$ with $G_-:=G\setminus G_+$ so that $T=\bd\Omega$, $G_\pm$ is the union of some connected components of $G$, and $\Omega\in\mc C^{G_\pm}(M)$;
        \item[(2)] if $T\notin \mc B(M;\mb Z_2)$, then there exists a double cover $\tau:\wti M\to M$, a trivial Lie group $2$-cover $p:\wti G\cong G\times\mb Z_2 \to G$ with $p^{-1}(e)\cong\mb Z_2$, and $\wti\Omega\in\mc C(\wti M)$ so that 
        \begin{itemize}
            \item[(a)] $\wti G$ acts by isometries on $\wti M$ with $\tau\circ \tilde{g} = p(\tilde{g})\circ\tau$ for all $\tilde{g}\in\wti G$,
            \item[(b)] $p^{-1}(e)=\{\tilde{e},\tilde{g}_-^0\}$ is the deck transformation group of $\tau$, where $\tilde{e}\in \wti G$ is the identity, 
            \item[(c)] $\wti\Omega\in \mc C^{\wti G_\pm}(\wti M)$, where $\wti G_+\lhd\wti G$ is a $2$-index Lie subgroup of $\wti G$ and $\wti G_-=\tilde{g}_-^0\cdot \wti G_+$ so that $p|_{\wti G_+}:\wti G_+\to G$ is a Lie group isomorphism. 
            \item[(d)] $\wti T:=\bd\wti \Omega=\bd(\wti M\setminus\wti\Omega)\in\mc B^{\wti G_\pm}(\wti M;\mb Z_2)$ satisfies $\tau_\#(|\wti T|) = 2|T|$.
        \end{itemize}
    \end{itemize}
\end{proposition}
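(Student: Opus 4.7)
The plan is to treat the two cases separately, with part (1) handled by a direct connectedness argument and part (2) by building an equivariant double cover in which the splitting of the Lie-group extension is forced by the failure of $T$ to bound in $M$.

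For part (1), I would write $T=\bd\Omega$ with $\Omega\in\mc C(M)$. Since $T$ is $G$-invariant, for every $g\in G$ we have $\bd(g\cdot\Omega)=g\cdot T=T=\bd\Omega$. Because $M$ is connected, two Caccioppoli sets sharing a boundary must coincide or be complementary, so $g\cdot\Omega\in\{\Omega,M\setminus\Omega\}$. Define $G_+:=\{g\in G:g\cdot\Omega=\Omega\}$, a subgroup of $G$; the hypothesis $T\notin\mc B^G(M;\mb Z_2)$ prevents $G_+=G$, so $G_+$ has index exactly $2$. To see $G_\pm$ are unions of connected components of $G$, I would observe that the map $g\mapsto g\cdot\Omega$ takes values in the two-point discrete subset $\{\Omega,M\setminus\Omega\}\subset\mc C(M)$ and is continuous in the $\mc F$-topology (since $G$ acts by diffeomorphisms), hence locally constant; in particular the identity component $G^{0}$ lies in $G_+$, and thus $G_+$ and $G_-:=G\setminus G_+$ are unions of $G^{0}$-cosets, i.e., of connected components of $G$. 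By construction $\Omega\in\mc C^{G_\pm}(M)$.

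For part (2), $[T]$ is nonzero in $H_n(M;\mb Z_2)$, and since $M$ is closed and orientable, its Poincar\'e dual $\omega\in H^1(M;\mb Z_2)=\mathrm{Hom}(\pi_1(M),\mb Z_2)$ is nonzero. Let $\tau:\wti M\to M$ be the connected double cover corresponding to $\ker\omega$, equipped with the pulled-back metric; then $\tau^*\omega=0$, so in $\wti M$ the transferred cycle $\tau^{\#}T$ is null-homologous, and I pick $\wti\Omega\in\mc C(\wti M)$ with $\bd\wti\Omega=\tau^{\#}T$. Since the $G$-action preserves $[T]$ and Poincar\'e duality is $G$-equivariant, each $g\in G$ preserves $\omega$, hence admits exactly two lifts to $\wti M$ by elementary covering theory; the group $\wti G$ of all such lifts sits in a central extension $1\to\mb Z_2\to\wti G\xrightarrow{p}G\to 1$ where the central $\mb Z_2=\{\tilde e,\tilde g_-^{0}\}$ is the deck group of $\tau$.

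The key step is to split this extension using $\wti\Omega$. Let $\sigma:=\tilde g_-^{0}$. Since $\tau\circ\sigma=\tau$ we have $\sigma\cdot\tau^{\#}T=\tau^{\#}T$, so $\bd(\sigma\cdot\wti\Omega)=\bd\wti\Omega$ and therefore $\sigma\cdot\wti\Omega\in\{\wti\Omega,\wti M\setminus\wti\Omega\}$. If $\sigma\cdot\wti\Omega=\wti\Omega$, then $\wti\Omega$ descends to $\Omega\in\mc C(M)$ with $\bd\Omega=T$, contradicting $T\notin\mc B(M;\mb Z_2)$; hence $\sigma\cdot\wti\Omega=\wti M\setminus\wti\Omega$. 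Consequently, for each $g\in G$ exactly one of its two lifts preserves $\wti\Omega$, and I define $\wti G_+:=\{\tilde g\in\wti G:\tilde g\cdot\wti\Omega=\wti\Omega\}$. This is the kernel of the continuous homomorphism $\wti G\to\mb Z_2$ that records the action on the discrete pair $\{\wti\Omega,\wti M\setminus\wti\Omega\}$, so it is a closed normal index-$2$ Lie subgroup, and $p|_{\wti G_+}$ is a bijective Lie group morphism, hence an isomorphism. This yields $\wti G\cong\wti G_+\times\langle\sigma\rangle\cong G\times\mb Z_2$ with $\wti G_-=\sigma\cdot\wti G_+$, and conclusions (a)--(d) then follow: (a) is immediate from the pulled-back metric, (b) and (c) are by construction, and (d) follows from $\tau$ being a degree-$2$ local isometry together with $\wti T=\tau^{\#}T$. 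The principal obstacle I anticipate is precisely this splitting: in general central $\mb Z_2$-extensions of a Lie group $G$ are classified by $H^{2}(BG;\mb Z_2)$ and need not split, and the essential geometric input that bypasses this cohomological obstruction is the existence of $\wti\Omega$ with $\sigma\cdot\wti\Omega=\wti M\setminus\wti\Omega$, which produces a canonical section of $p$.
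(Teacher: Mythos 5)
Your proposal is correct and follows essentially the same two-step strategy as the paper, but there are a few genuine stylistic differences in part (2) worth noting. For part (1) the argument is identical to the paper's: apply the Constancy Theorem to conclude $g\cdot\Omega\in\{\Omega,M\setminus\Omega\}$, define $G_+$ as the stabilizer, observe $[G:G_+]=2$ because $\Omega\notin\mc C^G(M)$, and use local constancy of $g\mapsto g\cdot\Omega$ to see that $G_\pm$ are unions of components. For part (2) the overall architecture is again the same — build the double cover from the degree-one cohomology class dual to $[T]$, lift the $G$-action, and split the resulting $\mb Z_2$-extension via the signed action on a Caccioppoli set $\wti\Omega$ — but you reach several intermediate conclusions by different means. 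The paper takes a smooth $G$-hypersurface representative $\Sigma$ of the class (via an equivariant area minimization using Theorem~\ref{Thm: regularity of G,Ah-minimizers} with $h=0$) and shows $G$ preserves $\ker([\Sigma]^*)$ by a concrete intersection-number-parity argument for curves against $\Sigma$; you instead invoke $G$-equivariance of $\mb Z_2$-Poincar\'e duality directly, which is cleaner but trades the geometric representative for an abstract argument (and the paper still needs $\Sigma$ later to manufacture its $\wti\Omega_\Sigma$, whereas you only need any $\wti\Omega$ with $\bd\wti\Omega=\tau^{\#}T$, which is a small simplification). For the direct-product conclusion, you observe the standard fact that the deck group of a double cover is central in the group of lifts and then produce an explicit section $\wti G_+\hookrightarrow\wti G$ via the stabilizer of $\wti\Omega$; the paper instead proves centrality by showing each fiber $p^{-1}(g)$ contains exactly one element of $\wti G_+$ and one of $\wti G_-$, which is logically equivalent but more hands-on. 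Your framing of the splitting as overcoming the $H^2(BG;\mb Z_2)$ obstruction via the geometric datum $\wti\Omega$ with $\sigma\cdot\wti\Omega=\wti M\setminus\wti\Omega$ is a nice conceptual point the paper does not make explicit. One place you are lighter than the paper is the assertion that the group $\wti G$ of all lifts is a Lie group acting smoothly with $\tau\circ\tilde g=p(\tilde g)\circ\tau$: the paper cites \cite{bredon1972introduction}*{Chapter 1, Theorem 9.3} for this, and you should too, since constructing the Lie group structure (and not merely the abstract group of lifts) is a genuine, if standard, step. Finally, a minor remark: your invocation of orientability for $\mb Z_2$-Poincar\'e duality is unnecessary (mod-$2$ duality holds without it), though harmless since the paper assumes $M$ orientable anyway.
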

\begin{proof}

    Suppose firstly that $T\in (\mc Z_n^G(M;\mb Z_2)\setminus\mc B^G(M;\mb Z_2)) \cap  \mc B(M;\mb Z_2)$. 
    By the assumption, $T=\bd\Omega$ for some $\Omega\in\mc C(M)\setminus \mc C^G(M)$. 
    For any $g\in G$, since $\bd(g\cdot\Omega) = g_\#T=T=\bd\Omega$, we have $g\cdot \Omega $ coincides with $ \Omega$ or $M\setminus\Omega$ as Caccioppoli sets by the Constancy Theorem \cite{simon1983lectures}*{26.27}. 
    Define 
    \[G_+:=\{g\in G: g\cdot \Omega=\Omega\in\mc C(M)\} \quad{\rm and}\quad  G_-:=\{g\in G: g\cdot \Omega=M\setminus\Omega\in\mc C(M)\}.\]
    One can then easily verify that $G_-\neq \emptyset$ and $G_+$ is a Lie subgroup of $G$. 
    In addition, for any $g_1,g_2\in G_-$, we have $g_1\cdot g_2 \in G_+$, which implies $g_1\cdot G_+=g_2\cdot G_+$ and $[G:G_+]=2$. 
    Using a continuity argument, we also know that every connected component of $G$ must be contained entirely in $G_+$ or $G_-$. 

    Next, we consider the case that $T\in \mc Z_n^G(M;\mb Z_2) \setminus\mc B(M;\mb Z_2)$. 
    For simplicity, we take a $G$-hypersurface $\Sigma\subset M$ so that $T-\llbracket\Sigma\rrbracket = \bd\Omega_0$ for some $\Omega_0\in\mc C^G(M)$, where $\llbracket\Sigma\rrbracket\in \mc Z_n^G(M;\mb Z_2)$ is induced by $\Sigma$. 
    (This is possible by taking the area minimizer in $\{T+\bd\Omega:\Omega\in\mc C^G(M)\}$ with optimal regularity given by the first part of Theorem \ref{Thm: regularity of G,Ah-minimizers} with $h=0$.)
    
    Note that $\llbracket\Sigma\rrbracket\notin \mc B(M;\mb Z_2)$. 
    Hence, $\Sigma$ and $T$ induce the same non-trivial $\mb Z_2$-homology class $0\neq [\Sigma]=[T]\in H_n(M;\mb Z_2)$. 
    Let $[\Sigma]^*\in H^1(M;\mb Z_2)$ be the dual of $[\Sigma]$, which naturally induces a non-trivial homomorphism $[\Sigma]^*: \pi_1(M)\to \mb Z_2$. 
    Consider the double cover $\tau:\wti M\to M$ corresponding to the $2$-index (normal) subgroup $\ker([\Sigma]^*)\lhd \pi_1(M)$. 
    Then we have 
    \begin{itemize}
        \item the deck transformation group of $\tau$ is $\mb Z_2=\pi_1(M)/\ker([\Sigma]^*)$;
        \item $\tau^{-1}(\Sigma)=\bd\wti\Omega _\Sigma$ for some $\wti\Omega _\Sigma\in \mc C^{\mb Z_2^\pm}(\wti M)$, where $\mb Z_2^+:=\{[0]=id\}$ and $\mb Z_2^-:=\{[1]=\mbox{the involution map}\}$. 
    \end{itemize}
    To lift the Lie group $G$ actions into $\wti M$, we firstly notice that if $[\gamma]\in\ker([\Sigma]^*)$, then we may assume the closed curve $\gamma$ meets $\Sigma$ transversally at an even number of points. 
    Thus, for any $g\in G$, both $g\cdot \gamma$ and $g^{-1}\cdot\gamma$ have an even number of intersection points with the $G$-hypersurface $\Sigma$, which implies $g\cdot \ker([\Sigma]^*)=\ker([\Sigma]^*)$. 
    Now, we can apply \cite{bredon1972introduction}*{Chapter 1, Theorem 9.3} to obtain a covering Lie group $p:\wti G\to G$ so that 
    \begin{itemize}
        \item $\wti G$ acts isometrically effectively on $\wti M$ with $\tau\circ \tilde{g} = p(\tilde{g})\circ\tau$ for all $\tilde{g}\in\wti G$;
        \item $\ker(p)=p^{-1}(e)$ is the deck transformation group $\mb Z_2$ of $\tau$. 
    \end{itemize}
    In particular, we know $p:\wti G\to G$ is a double cover. 
    Additionally, by the above constructions, we see $\llbracket\tau^{-1}(\Sigma)\rrbracket\in \mc Z_n^{\wti G}(\wti M;\mb Z_2) \cap ( \mc B(\wti M;\mb Z_2)\setminus\mc B^{\wti G}(\wti M;\mb Z_2))$, and thus the conclusions in (1) indicate that $\wti G= \wti G_+ \sqcup \wti G_-$, where $\wti G_+:=\{\tilde{g}\in\wti G: \tilde{g}\cdot\wti\Omega _\Sigma=\wti\Omega _\Sigma\in\mc C(\wti M)\}$ and $\wti G_-:=\{\tilde{g}\in\wti G: \tilde{g}\cdot\wti\Omega _\Sigma=\wti M\setminus\wti\Omega _\Sigma\in\mc C(\wti M)\}$. 
    Moreover, note that $p^{-1}(e)=\{\tilde e, \tilde g^0_-\}$ with $\tilde e\in \wti G_+$ and $\tilde g^0_-\in\wti G_-$. 
    Hence, $\wti G=\{\tilde{g}_+\cdot \wti{g}^0: \tilde{g}_+\in \wti G_+, \wti{g}^0\in p^{-1}(e)\}$ and $\wti G_+\cap p^{-1}(e) = \{\tilde{e}\}$, which implies $\wti G= \wti G_+ \rtimes \mb Z_2$ is a semidirect product. 

    Next, we claim that $G\cong G_+$ and $\wti G=\wti G_+\times \mb Z_2$ is a direct product. 
    Indeed, given any $g\in G$, let $p^{-1}(g)=\{\tilde{g}_1,\tilde{g}_2\}\subset\wti G$. 
    Suppose $\tilde{g}_1$ and $\tilde{g}_2$ are both in $\wti G_+$ or both in $\wti G_-$. 
    Then $\tilde{g}_1\cdot\tilde{g}_2^{-1}\in\wti G_+$ and $p(\tilde{g}_1\cdot\tilde{g}_2^{-1})=p(\tilde{g}_1)\cdot p(\tilde{g}_2^{-1})=e\in G$. 
    However, since $p^{-1}(e):= \{\tilde{e},\tilde{g}_-^{0}\}$ is the deck transformation group of $\tau$ and $\tilde{g}_-^{0}\in \wti G_-$, we see $\tilde{g}_1\cdot\tilde{g}_2^{-1}$ is the identity $\tilde{e}$ of $\wti G$, which contradicts $\tilde{g}_1\neq\tilde{g}_2$. 
    Therefore, for any $g\in G$, $p^{-1}(g)$ has one element in $\wti G_+$ and the other one in $\wti G_-$. 
    In particular, this implies that $\tilde{g}_-^0 \cdot \tilde{g}_+\cdot (\tilde{g}_-^0)^{-1} = \tilde{g}_+$ for all $\tilde{g}_+\in \wti G_+$, and thus $\wti G=\wti G_+\times \mb Z_2$ is a direct product. 
    Additionally, since $\wti G_+$ and $\wti G_-$ are disjoint unions of components of $\wti G$, we see $p|_{\wti G_+}$ is an isomorphism between $\wti G_+$ and $G$. Hence, $p:\wti G\to G$ is a trivial group $2$-cover. 

    Finally, take $\wti\Omega := \wti\Omega_\Sigma - \tau^{-1}(\Omega_0)\in \mc C(\wti M)$. 
    Since $\tau^{-1}(\Omega_0)$ is $\wti G$-invariant and $\wti\Omega_\Sigma$ is $\wti G_\pm$-signed symmetric, we know $\wti\Omega\in \mc C^{\wti G_\pm}(\wti M)$. 
    Moreover, because $\tau$ is a double cover and a local isometry, we see $\tau(\spt(\bd\wti\Omega))=\spt(T)$ and $\tau_\#(|\bd\wti\Omega|) = \underline{\underline{v}}(\tau(\spt(\bd\wti\Omega)), 2)=2|T|$, where $\underline{\underline{v}}$ is the notation for rectifiable varifolds (see \cite{simon1983lectures}*{Chapter 4}). 
    This proved (3). 
\end{proof}

In Proposition \ref{Prop: lift to 2-cover}, if we further assume $T\in \mc {LB}^G(M;\mb Z_2)$, then the actions of $G_+$ (resp. $\wti G_+$) are {\em free} from $G_-$ (resp. $\wti G_-$) in the following sense:
\begin{proposition}\label{Prop: free Gpm}
    Given $T\in \mc {LB}^G(M;\mb Z_2)\setminus\mc B^G(M;\mb Z_2)$, if $T\in\mc B(M;\mb Z_2)$, then for all $p\in M$,
    \begin{align}\label{Eq: free Gpm}
        G\cdot p=G_+\cdot p \sqcup G_-\cdot p \qquad{\rm and}\qquad \mbox{$G_-$ permutes $\{G_+\cdot p,G_-\cdot p\}$},
    \end{align}
    where $G_\pm$ are given in Proposition \ref{Prop: lift to 2-cover}(1). 
    Similarly, if $T\notin\mc B(M;\mb Z_2)$, then the same result holds in $\wti M$ with $\wti G_\pm$ in place of $G_\pm$, where $\wti M$ and $\wti G_\pm$ are given in Proposition \ref{Prop: lift to 2-cover}(2). 
\end{proposition}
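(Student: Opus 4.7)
The plan is to prove both cases by the same local comparison argument driven by the $\mc{LB}^G$ hypothesis and the Constancy Theorem, with the permutation claim reduced to elementary coset bookkeeping. Note first that \eqref{Eq: free Gpm} is equivalent to demanding $G_p \subset G_+$ at every $p \in M$: indeed, $G_+\cdot p \cap G_-\cdot p \neq \emptyset$ if and only if $G_p\cap G_- \neq \emptyset$. Once this is in hand, the permutation statement follows at once from the identities $g_- G_+ = G_-$ and $g_- G_- = G_+$ for any $g_-\in G_-$, both immediate consequences of $G_+$ having index $2$ in $G$.

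For Case 1, fix $p\in M$ and assume, for contradiction, that some $g\in G_p\cap G_-$ exists. Using $T\in\mc{LB}^G(M;\mb Z_2)$, choose $r\in(\inj(G\cdot p)/2,\inj(G\cdot p))$ and a $G$-invariant $\Omega'\in\mc C^G(U)$ with $\bd\Omega' = T$ in $U := B_r(G\cdot p)$. The mod $2$ chain $\Omega + \Omega'$ satisfies $\bd(\Omega + \Omega') = T+T = 0$ in $U$, so the Constancy Theorem forces, on each connected component $W$ of $U$, either $\Omega\cap W = \Omega'\cap W$ or $\Omega\cap W = W\setminus \Omega'$ as Caccioppoli sets. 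Let $V$ be the component of $U$ containing $p$; since $g$ is an isometry preserving the $G$-set $U$ and fixes $p\in V$, continuity yields $g\cdot V = V$. In either alternative, the $G$-invariance of $\Omega'$ gives $g\cdot(\Omega\cap V) = \Omega\cap V$, while $g\in G_-$ gives $g\cdot(\Omega\cap V) = V\setminus(\Omega\cap V)$. These two identities force $|V|=0$, contradicting that $V$ is open and nonempty. Hence $G_p\cap G_- = \emptyset$, finishing Case 1.

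For Case 2, I would replay the identical argument inside $\wti M$ applied to the pair $(\wti T,\wti\Omega)$ produced by Proposition \ref{Prop: lift to 2-cover}(2), with $\wti G_\pm$ in the roles of $G_\pm$. The only additional fact required is that $\wti T \in \mc{LB}^{\wti G}(\wti M;\mb Z_2)\setminus\mc B^{\wti G}(\wti M;\mb Z_2)$: the first membership follows by pulling back local $G$-invariant representatives of $T$ along the local isometry $\tau$, since the lift of a $G$-invariant Caccioppoli set is $\wti G$-invariant; and if $\wti T = \bd\hat\Omega$ for some $\hat\Omega\in\mc C^{\wti G}(\wti M)$, then comparing $\hat\Omega$ with $\wti\Omega$ via global constancy on $\wti M$ and then pushing forward through $\tau$ (exploiting that the deck transformation $\tilde g_-^0$ swaps $\wti\Omega$ with its complement while fixing $\hat\Omega$) would produce a $G$-invariant Caccioppoli set in $M$ whose boundary equals $T$, contradicting $T \notin \mc B^G(M;\mb Z_2)$.

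The main obstacle I anticipate lies precisely in this Case 2 descent, where one must carefully track how the direct-product splitting $\wti G = \wti G_+\times\mb Z_2$ interacts with the deck action $\{\tilde e,\tilde g_-^0\}$ when lifting local data up to $\wti M$ and projecting global data back down to $M$. The geometric heart of the proof, however, is entirely in Case 1: the $\mc{LB}^G$ hypothesis is exactly strong enough to transport local $G$-invariance of \emph{some} representative of $T$ into a rigid constraint on the (non-$G$-invariant) global $\Omega$, at which point the self-complementary identity $\Omega\cap V = V\setminus(\Omega\cap V)$ supplies the contradiction cleanly.
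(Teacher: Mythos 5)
Your proof is correct and follows essentially the same route as the paper's: use the $\mc{LB}^G$ hypothesis to obtain a local $G$-invariant representative of $T$ near the offending orbit, compare it via the Constancy Theorem with the global $\mc G_\pm$-signed-symmetric $\Omega$, and derive the self-complementary contradiction $\Omega\cap V = V\setminus(\Omega\cap V)$ from the existence of a stabilizer element in $G_-$. The only cosmetic differences are that you make the Constancy dichotomy and the coset bookkeeping for the permutation claim explicit where the paper says "WLOG," and you separately verify $\wti T\notin\mc B^{\wti G}(\wti M;\mb Z_2)$, which is a valid observation but not strictly needed since the paper's Case 2 only invokes $\wti T\in\mc{LB}^{\wti G}(\wti M;\mb Z_2)\cap\mc B^{\wti G_\pm}(\wti M;\mb Z_2)$ to rerun the argument.
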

\begin{proof}
    Suppose by contradiction that there exist $p_0\in M$, $g_+\in G_+$, and $g_-\in G_-$ so that $g_+\cdot p_0= g_-\cdot p_0$. 
    Then, $g_+^{-1} g_-\in G_{p_0}\cap G_-$. 
    
    By Proposition \ref{Prop: lift to 2-cover}(1), we also have $\Omega\in\mc C^{G_\pm}(M)$ with $T=\bd\Omega$. 
    Since $T\in \mc {LB}^G(M;\mb Z_2)$, we have an $r\in (\inj(G\cdot p)/2,\inj(G\cdot p))$ associated with $p_0$ so that $T=\bd\Omega_0$ in $B_r(G\cdot p_0)$ for some $\Omega_0\in\mc C^G(B_r(G\cdot p_0))$. 
    Take the connected component $U$ of $B_r(G\cdot p_0)$ containing $p_0$. 
    Using the Constancy Theorem, we can assume without loss of generality that $\Omega\cap U$ coincides with $\Omega_0$. 
    However, since $g_+^{-1} g_-\in G_{p_0}\cap G_-$, we have $(g_+^{-1} g_-)\cdot U=U$ and $(g_+^{-1} g_-)\cdot\Omega=M\setminus\Omega$. Hence, $\Omega_0=(g_+^{-1} g_-)\cdot\Omega_0=(g_+^{-1} g_-)\cdot(\Omega\cap U) = ((g_+^{-1} g_-)\cdot\Omega)\cap U=(M\setminus\Omega)\cap U = U\setminus\Omega_0$, which is a contradiction. 

    If $T\in \mc {LB}^G(M;\mb Z_2)\setminus\mc B(M;\mb Z_2)$, then $\wti T$ in Proposition \ref{Prop: lift to 2-cover}(2) belongs to $\mc {LB}^{\wti G}(\wti M;\mb Z_2)\cap\mc B^{\wti G_\pm}(\wti M;\mb Z_2)$, and thus the above argument would carry over. 
\end{proof}

We mention that \eqref{Eq: free Gpm} may not hold in general. 
For example, let $\tau:\mb S^3\to \mb {RP}^3$ be the standard double cover, and $p:\wti G=\mb Z_2\times\mb Z_2 \to G=\mb Z_2$ is given by $p(([0],[0]))=p(([1],[1]))=[0]$, $p(([0],[1]))=p(([1],[0]))=[1]$. 
Let the $\wti G$-action on $\mb S^3\subset\mb R^4$ satisfies that $([0],[0])=id$, $([1],[1])$ is the antipodal map, $([1],[0])\cdot(x_1,x_2,x_3,x_4)=(-x_1,-x_2,x_3,x_4)$, $([0],[1])\cdot(x_1,x_2,x_3,x_4)=(x_1,x_2,-x_3,-x_4)$, which naturally induces the $G$-action on $\mb {RP}^3$ by the commutativity. 
Then for $\wti G_+:=\{([0],[0]),([1],[0])\}$ and $\wti G_-=\{([0],[1]),([1],[1])\}$, we have $\wti G_+\cdot (\sin\theta,\cos\theta,0,0)=\{(\sin\theta,\cos\theta,0,0), (-\sin\theta,-\cos\theta,0,0)\}=\wti G_-\cdot (\sin\theta,\cos\theta,0,0)$. 

Combining the above results, we make the following remark. 
\begin{remark}\label{Rem: lift to 2-cover}
    In our equivariant min-max constructions, we would consider $\mc F$-continuous maps $\Phi:X\to\mc {LB}^G(M;\mb Z_2)$ for some cubical complex $X$. 
    Note that for any $T\in \mc Z_n^G(M;\mb Z_2)$, the set $T+\mc B^G(M;\mb Z_2):=\{T+\bd\Omega: \Omega\in\mc C^G(M)\}$ is the connected component of $\mc Z_n^G(M;\mb Z_2)$ containing $T$ (by the isoperimetric lemma \cite{wang2023G-index}*{Lemma 2.1}). 
    Additionally, in the above propositions, $G_\pm$, $\tau:\wti M\to M$ and $p:\wti G\to G$ can be chosen uniformly for all $S\in T+\mc B^G(M;\mb Z_2)$. 
    Therefore, if $X$ is connected and $\Phi(x_0)\in\mc {LB}^G(M;\mb Z_2)\setminus\mc B(M;\mb Z_2)$ for some $x_0\in X$, we can take $\tau:\wti M\to M$ and $p:\wti G\to G$ in Proposition \ref{Prop: lift to 2-cover}(2) for $\Phi(x_0)$ so that the corresponding statements also hold for all $\Phi(x)$, $x\in X$, and thus $\Phi:X\to \mc {LB}^G(M;\mb Z_2)\setminus\mc B(M;\mb Z_2)$ can be lifted to 
    \[\mbox{an $\mc F$-continuous map $\wti\Phi: X \to \mc {LB}^{\wti G}(\wti M;\mb Z_2)\cap\mc B^{\wti G_\pm}(\wti M;\mb Z_2)$ with $\tau_\#|\wti\Phi(x)| = 2|\Phi(x)|$.}\]
    In particular, using this lifting construction, it's sufficient to consider $\Phi:X\to \mc {LB}^G(M;\mb Z_2)\cap\mc B(M;\mb Z_2)$ in our equivariant min-max constructions, which has two sub-cases:
    \begin{itemize}
        \item {\bf Case I.} $\Phi: X\to \mc B^G(M;\mb Z_2)\subset \mc {LB}^G(M;\mb Z_2)$;
        \item {\bf Case II.} $\Phi: X\to \mc B^{G_\pm}(M;\mb Z_2)\cap \mc {LB}^G(M;\mb Z_2)$ with $G\cdot p=G_+\cdot p\sqcup G_-\cdot p$ for all $p\in M$. 
    \end{itemize}
    Moreover, in {\bf Case II}, for any $h\in C^\infty_{G_\pm}$ and $\Omega\in\mc C^{G_\pm}(M)$, we have $h\cdot \nu_{\bd\Omega}$ is $G$-invariant, where $\nu_{\bd\Omega}$ is the outward unit normal on $\bd\Omega$. 
\end{remark}

\subsection{Weighted area functional $\mc A^h$}\label{Subsec: functional}
Given a smooth function $h:M\to \mb R$, we consider the following weighted area functional $\mc A^h$ on $\mc C(M)$ defined by 
\begin{align}\label{Eq: Ah functional}
    \mc A^h(\Omega) := \mc H^n(\bd\Omega) - \int_\Omega h d\mc H^{n+1}. 
\end{align}
The {\em first variation formula} of the $\mc A^h$-functional along $X\in \mk X(M)$ is 
\begin{align}\label{Eq: 1st variation of Ah}
    \delta \mathcal{A}^{h}|_{\Omega}(X)=\int_{\partial \Omega} \operatorname{div}_{\partial \Omega} X d \mu_{\partial \Omega}-\int_{\partial \Omega} h\langle X, \nu\rangle d \mu_{\partial \Omega},
\end{align}
where $\nu=\nu_{\bd\Omega}$ is the outward unit normal on $\bd\Omega$. 
The following result is similar to Lemma \ref{Lem: Gc-bounded 1st variation and c-bounded}. 
\begin{lemma}\label{Lem: 1st variation for G-boundary}
    Suppose $h\in C^\infty(M)$ and $\Omega\in \mc C(M)$ are both $G$-invariant or both $G_\pm$-signed symmetric. 
    Then $\delta \mathcal{A}^{h}|_{\Omega}(X)=\delta \mathcal{A}^{h}|_{\Omega}(X_G)$ for any $X\in \mk X(M)$ and $X_G\in \mk X^G(M)$ given by \eqref{Eq: averaged vector field}. 
\end{lemma}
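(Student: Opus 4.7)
The plan is to decompose the first variation \eqref{Eq: 1st variation of Ah} as
\[\delta\mc A^h|_\Omega(X) \;=\; \delta|\bd\Omega|(X) \;-\; \int_{\bd\Omega} h\langle X,\nu\rangle\, d\mu_{\bd\Omega},\]
and to verify separately that each summand is unchanged when $X$ is replaced by its $G$-average $X_G$. The first summand is immediate: in both scenarios $|\bd\Omega|$ is a $G$-invariant varifold (if $\Omega\in\mc C^G(M)$ this is obvious; if $\Omega\in\mc C^{G_\pm}(M)$ one uses $\bd\Omega=\bd(M\setminus\Omega)$ to handle those $g\in G_-$ which swap $\Omega$ with its complement), so Lemma \ref{Lem: Gc-bounded 1st variation and c-bounded} yields $\delta|\bd\Omega|(X)=\delta|\bd\Omega|(X_G)$ at once.

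For the weighted summand the key observation---already flagged at the end of Remark \ref{Rem: lift to 2-cover}---is that the vector field $p\mapsto h(p)\nu(p)$ along $\bd\Omega$ is genuinely $G$-equivariant in both scenarios, i.e.\ $h(g\cdot p)\,\nu(g\cdot p)=dg_p\bigl(h(p)\nu(p)\bigr)$ for every $g\in G$. In the fully $G$-invariant case both $h$ and $\nu$ are separately equivariant, so this is automatic. In the $G_\pm$-signed symmetric case with $g\in G_-$, the diffeomorphism $g$ swaps $\Omega$ with $M\setminus\Omega$, forcing $dg_p(\nu(p))=-\nu(g\cdot p)$; since simultaneously $h(g\cdot p)=-h(p)$, the two sign flips cancel and the same equivariance relation holds.

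Given this equivariance, I would finish by the averaging trick underlying Lemma \ref{Lem: Gc-bounded 1st variation and c-bounded}. Using $\mu(G)=1$, insert a dummy Haar integral over $G$, then substitute $p\mapsto g\cdot p$ in the inner boundary integral (admissible since each $g\in G$ is an isometry preserving $\bd\Omega$ as a set), apply the equivariance of $h\nu$ to transfer $dg_p$ onto the $X$-factor via the isometry identity $\langle A,dg_p B\rangle=\langle dg_p^{-1}A,B\rangle$, and finally swap the two integrals by Fubini. What emerges inside the inner product is exactly
\[ \int_G d(g^{-1})_{g\cdot p}\bigl(X(g\cdot p)\bigr)\, d\mu(g)\;=\;X_G(p), \]
so that $\int_{\bd\Omega}h\langle X,\nu\rangle\,d\mu_{\bd\Omega}=\int_{\bd\Omega}h\langle X_G,\nu\rangle\,d\mu_{\bd\Omega}$, completing the proof. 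The only delicate point in the whole argument is the sign bookkeeping in the $G_-$ case, but this is precisely what the $G_\pm$-signed symmetric formalism was designed to accommodate; everything else is formal manipulation.
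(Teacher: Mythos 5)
Your proof is correct and follows essentially the same route as the paper: decompose the first variation into the area term and the weighted term, handle the area term via Lemma~\ref{Lem: Gc-bounded 1st variation and c-bounded}, and observe that $h\nu$ is $G$-equivariant in both scenarios (with the two sign flips cancelling in the $G_\pm$ case) so the averaging trick applies to the weighted term. The paper states this more tersely but the substance is identical.
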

\begin{proof}
    Since $\bd\Omega$ and $h \nu$ are always $G$-invariant, we see $\int_{\partial \Omega} \langle X, h\nu\rangle d \mu_{\partial \Omega}=\int_{\partial \Omega} \langle X_G, h\nu\rangle d \mu_{\partial \Omega}$. 
    Combining with Lemma \ref{Lem: Gc-bounded 1st variation and c-bounded} and \eqref{Eq: 1st variation of Ah}, we get $\delta \mathcal{A}^{h}|_{\Omega}(X)=\delta \mathcal{A}^{h}|_{\Omega}(X_G)$. 
\end{proof}

Let $\phi:\Sigma^n\to M$ be an immersion. We often abuse notation and write $\Sigma\subset M$ by identifying $\Sigma$ with its image $\phi(\Sigma)$ (unless otherwise stated). 
If $\bd\Omega=\Sigma$ is a smooth immersed hypersurface, then the first variation formula can be rewritten by $\delta \mathcal{A}^{h}|_{\Omega}(X)=\int_{\partial \Omega}(H-h)\langle X, \nu\rangle d \mu_{\partial \Omega}$, 
where $H$ is the mean curvature of $\Sigma$ with respect to $\nu$. 
Hence, if $\Omega$ is a critical point of $\mc A^h$, then $\Sigma$ must have mean curvature $H=h|_\Sigma$. 
In this case, we say $\Sigma$ is an {\em $h$-hypersurface}, and can compute the {\em second variation formula for $\mc A^h$} along normal vector field $X=\varphi\nu\in\mk X(M)$:
\begin{align}\label{Eq: 2nd variation of Ah for hypersurface}
    \delta^{2} \mathcal{A}^{h}|_{\Omega}(X, X)  =II^h_{\Sigma}(\varphi, \varphi) =\int_{\Sigma}\left(|\nabla \varphi|^{2}-\left(\operatorname{Ric}_{M}(\nu, \nu)+\left|A_{\Sigma}\right|^{2}+\partial_{\nu} h\right) \varphi^{2}\right) d \mu_{\Sigma},
\end{align}
where $\nabla\varphi$ is the gradient of $\varphi\in C^\infty(\Sigma)$ on $\Sigma$, $\Ric_M$ is the Ricci curvature of $M$, $A_\Sigma$ is the second fundamental form of $\Sigma$. 
For any open set $U\subset M$, we say an $h$-hypersurface $\Sigma=\bd\Omega$ is {\em stable in $U$} if 
\[\delta^{2} \mathcal{A}^{h}|_{\Omega}(X, X) \geq 0\]
for all ambient vector field $X\in\mk X(U)$ with $X=\varphi\nu$ on $\Sigma$. 
Note that $II_\Sigma$ can be intrinsically defined as a quadratic form on the space of $C^\infty$-functions on the preimage $\Sigma$ (not $\phi(\Sigma)$). 
Hence, we define the {\em Jacobi field} of the immersed $h$-hypersurface $\phi:\Sigma\to M$ to be a smooth function $\varphi$ on $\Sigma$ (not $\phi(\Sigma)$) so that
\begin{align}\label{Eq: Jacobi field}
    0=L_\Sigma\varphi := (\Delta + |A_\Sigma|^2 + |\Ric_M(\nu,\nu)| + \bd_\nu h) \varphi \qquad \mbox{on $\Sigma$}. 
\end{align}
The {\em Morse index $\Index(\Sigma)$} of $\Sigma$ is defined as the number of negative eigenvalues (counting multiplicities) of the (intrinsic) quadratic form $II_\Sigma$. 

Moreover, given an open $G$-set $U$, $h\in C^\infty_G(M)$, and $\hat{h}\in C^\infty_{G_\pm}(M)$, we define that
\begin{itemize}
    \item an {\em almost embedded} $G$-hypersurface is an immersed $G$-invariant hypersurfaces that can locally decompose into smooth embedded sheets that touch but do not cross. 
    \item a {\em $(G,h)$-hypersurface} $\Sigma$ is an almost embedded $G$-hypersurface with $H_\Sigma=h|_\Sigma$; 
    \item a {\em $(G,h)$-boundary $\Sigma$ in $U$} is a $(G,h)$-hypersurface $\Sigma\subset U$ such that $\llbracket \Sigma \rrbracket=\partial \Omega\llcorner U$ for some $\Omega\in \mc C^G(M)$ with $\delta\Ah |_\Omega (X) = 0$, $\forall X\in \mk X(U)$; 
    \item a {\em $(G_\pm,\hat{h})$-boundary $\Sigma$ in $U$} is a $(G,\hat{h})$-hypersurface $\Sigma\subset U$ such that $\llbracket \Sigma \rrbracket=\partial \Omega\llcorner U$ for some $\Omega\in \mc C^{G_\pm}(M)$ with $\delta\Ah |_\Omega (X) = 0$, $\forall X\in \mk X(U)$; 
    \item $\mc P^{G,h}(U)$ and $\mc P^{G_\pm,\hat{h}}(U)$ are the sets of $(G,h)$-boundaries and $(G_\pm,\hat{h})$-boundaries in $U$ respectively; for simplicity, we denote by $\mc P^{G,h}:=\mc P^{G,h}(M)$ and $\mc P^{G_\pm,\hat{h}}:=\mc P^{G_\pm,\hat{h}}(M)$.  
\end{itemize}
We can now define the stability and index under certain symmetric constraints. 
\begin{definition}\label{Def: G-stable}
    For any $(h,\bd\Omega)\in (C^\infty_G(M)\times \mc P^{G,h}) \cup (C^\infty_{G_\pm}(M)\times\mc P^{G_\pm,\hat{h}})$ and open $G$-set $U\subset M$, we say $\Sigma=\bd\Omega$ is {\em $G$-stable in $U$} if $\delta^{2} \mathcal{A}^{h}|_{\Omega}(X, X)\geq 0$ for any $X\in \mk X^G(U)$ with $X=\varphi\nu$ on $\Sigma$. 
    Additionally, the {\em equivariant Morse index $\Index_G(\Sigma)$} of $\Sigma$ is defined as the number of negative eigenvalues (counting multiplicities) of the (intrinsic) restricted quadratic form $II_\Sigma \llcorner C^\infty_G(\Sigma)$.  
\end{definition}
\begin{remark}
    If $\Sigma\in \mc P^{G,h}$ is embedded, then it is $G$-stable in $M$ if and only if $II^h_\Sigma\geq 0$ as a quadratic form on $C^\infty_G(\Sigma)$ (i.e. $\Index_G(\Sigma)=0$). 
    But for almost embedded $\Sigma\in \mc P^{G,h}$ with non-empty touching set, (intrinsic) $\Index_G(\Sigma)=0$ is slightly stronger than (extrinsic) $G$-stable. 
    This is also true for $\Hat{\Sigma}\in \mc P^{G_\pm,\hat{h}}$. 
\end{remark}

\begin{lemma}\label{Lem: stable and G-stable}
    For any open $G$-set $U\subset M$, let $h\in C^\infty_G(M)$ and $\Sigma=\bd\Omega\in\mc P^{G,h}$ be embedded. 
    Then $\Sigma$ is stable in $U$ if and only if it is $G$-stable in $U$. 
    In addition, for any $\hat{h}\in C^\infty_{G_\pm}(M)$, $p\in M$, and $0<r<\inj(G\cdot p)$, the same equivalence result also holds for embedded $\Hat\Sigma=\bd\Hat{\Omega}\in\mc P^{ G_\pm,\hat{h}}$ in any open $G$-set $U\subset B_{r}(G\cdot p)$ provided \eqref{Eq: free Gpm}. 
\end{lemma}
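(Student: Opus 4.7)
The inclusion $\mk X^G(U)\subset\mk X(U)$ gives immediately that stability implies $G$-stability; the work lies in the converse. Since $\Sigma$ (resp.\ $\Hat\Sigma$) is embedded, the outward unit normal $\nu$ is well-defined, and at the critical point $\Omega$ the second variation $\delta^2\mc A^h|_\Omega(X,X)$ depends only on the scalar $\varphi=\langle X,\nu\rangle$ on $\Sigma\cap U$ through the form $II^h_\Sigma(\varphi,\varphi)$ of \eqref{Eq: 2nd variation of Ah for hypersurface}. An extension-by-cutoff argument shows every $\varphi\in C^\infty_c(\Sigma\cap U)$ arises from some $X\in\mk X(U)$, so stability in $U$ amounts to $II^h_\Sigma\geq 0$ on $C^\infty_c(\Sigma\cap U)$. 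A direct computation using $dg(\nu)=\nu$ in the $G$-case, and $dg(\nu)=\pm\nu$ according as $g\in G_\pm$ in the $G_\pm$-case, identifies the admissible test functions for $G$-stability as $C^\infty_{G,c}(\Sigma\cap U)$ and $C^\infty_{G_\pm,c}(\Hat\Sigma\cap U)$ respectively.

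\textbf{$G$-case.} The Jacobi operator $L_\Sigma=\Delta+|A_\Sigma|^2+\Ric_M(\nu,\nu)+\bd_\nu h$ has $G$-invariant coefficients and hence commutes with the $G$-action. Let $\lambda_1$ and $\lambda_1^G$ be the principal Dirichlet eigenvalues of $-L_\Sigma$ on $C^\infty_c(\Sigma\cap U)$ and on $C^\infty_{G,c}(\Sigma\cap U)$ respectively (after a standard $G$-invariant compact exhaustion of $\Sigma\cap U$). Trivially $\lambda_1\leq\lambda_1^G$. On the unrestricted side the principal eigenfunction $\varphi_1$ is positive and simple; since $L_\Sigma$ is $G$-equivariant, $g^\ast\varphi_1$ is again a positive principal eigenfunction, so $g^\ast\varphi_1=c_g\varphi_1$ with $c_g>0$. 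The continuous homomorphism $g\mapsto c_g$ from the compact $G$ into $(\mb R_{>0},\cdot)$ must be trivial, so $\varphi_1\in C^\infty_G$ and $\lambda_1^G=\lambda_1$. In particular $\lambda_1^G\geq 0\Leftrightarrow\lambda_1\geq 0$, proving the equivalence.

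\textbf{$G_\pm$-case.} Under \eqref{Eq: free Gpm} and $r<\inj(G\cdot p)$, the two tubes $B_r(G_+\cdot p)$ and $B_r(G_-\cdot p)$ are disjoint; fixing $g_-\in G_-$ and setting $U_\pm:=U\cap B_r(G_\pm\cdot p)$ yields $U=U_+\sqcup U_-=U_+\sqcup g_-\cdot U_+$. Then $\Hat\Omega\cap U_+\in\mc C^{G_+}(U_+)$ and $\hat h|_{U_+}\in C^\infty_{G_+}(U_+)$, so $\Hat\Sigma\cap U_+\in\mc P^{G_+,\hat h}(U_+)$; moreover $\bd_\nu\hat h$ is globally $G$-invariant because both $\hat h$ and $\nu$ flip sign under $G_-$, whence $L_{\Hat\Sigma}$ is $G$-invariant. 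For $\varphi\in C^\infty_c(\Hat\Sigma\cap U)$ with pieces $\varphi_\pm:=\varphi|_{\Hat\Sigma\cap U_\pm}$, the isometric invariance of $II^{\hat h}_{\Hat\Sigma}$ under $g_-$ gives
\[
II^{\hat h}_{\Hat\Sigma\cap U}(\varphi,\varphi)=II^{\hat h}_{\Hat\Sigma\cap U_+}(\varphi_+,\varphi_+)+II^{\hat h}_{\Hat\Sigma\cap U_+}(g_-^\ast\varphi_-,g_-^\ast\varphi_-).
\]
Since $\varphi_+$ and $\psi:=g_-^\ast\varphi_-$ can be chosen independently, stability of $\Hat\Sigma$ in $U$ is equivalent to $II^{\hat h}_{\Hat\Sigma\cap U_+}\geq 0$ on all of $C^\infty_c(\Hat\Sigma\cap U_+)$. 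The $G_\pm$-signed constraint forces $\psi=-\varphi_+$ with $\varphi_+\in C^\infty_{G_+,c}$, so $G$-stability of $\Hat\Sigma$ in $U$ is equivalent to $G_+$-stability of $\Hat\Sigma\cap U_+$ in $U_+$. Applying the already-established $G$-case with $G_+$ in place of $G$ to $(\hat h|_{U_+},\Hat\Sigma\cap U_+)$ closes the argument.

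\textbf{Main obstacle.} The delicate point is the $G_\pm$-case: because the $G$-invariant Jacobi operator $L_{\Hat\Sigma}$ has a strictly positive, hence $G$-invariant, principal eigenfunction, and this eigenfunction is \emph{not} $G_\pm$-signed, the direct eigenvalue argument of the $G$-case cannot be applied. The hypotheses \eqref{Eq: free Gpm} and $U\subset B_r(G\cdot p)$ are used precisely to split $U$ into two disjoint $G_+$-invariant tubes permuted by $G_-$, thereby reducing the $G_\pm$-signed problem to the $G_+$-version of the $G$-case already proved.
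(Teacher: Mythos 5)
Your proof is correct and follows essentially the same route as the paper's: in the $G$-case both proofs hinge on the fact that the first (Dirichlet) eigenfunction of the Jacobi operator is positive and simple, hence automatically $G$-invariant (you rule out $c_g\neq 1$ via triviality of a homomorphism into $(\mathbb{R}_{>0},\cdot)$ from a compact group, while the paper simply invokes sign and simplicity — both are fine), and in the $G_\pm$-case both split $U=U_+\sqcup U_-$ using \eqref{Eq: free Gpm} and reduce to the $G_+$-version of the first case via the identification $C^\infty_{G_+}(\Hat\Sigma\cap U_+)=\{f|_{U_+}:f\in C^\infty_{G_\pm}(\Hat\Sigma\cap U)\}$.
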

\begin{proof}
    For $h\in C^\infty_G(M)$ and $\Sigma=\bd\Omega\in\mc P^{G,h}$, the `only if' part is clear. 
    For the `if' part, the embeddedness of $\Sigma$ implies that we can take the first eigenvector field $X$ and the first eigenfunction $f$ of the Jacobi operator $L_\Sigma^h$ of $\Sigma$ so that $X=f\nu$. 
    Since $\bd_\nu h$ and $\Sigma$ are $G$-invariant, we know $f\circ g$ is also the first eigenfunction of the Jacobi operator $L_\Sigma^h$. 
    Noting that $f$ has a sign and is of multiplicity one, we see $f\circ g=f$ for any isometry $g\in G$, and thus $f\in C^\infty_G(M)$. 
    Using the $G$-invariance of the outward unit normal $\nu$ on $\bd\Omega$, we know $X=f\nu$ is $G$-invariant, which implies the `if' part. 

    Given $p\in M$, $0<r<\inj(G\cdot p)$, and open $G$-set $U\subset B_r(G\cdot p)$, let $U_\pm:=U\cap B_r(G_\pm\cdot p)$ be two disjoint open $G_+$-sets by \eqref{Eq: free Gpm}. 
    Then for $\hat{h}\in C^\infty_{G_\pm}(M)$ and embedded $\Hat\Sigma=\bd\Hat{\Omega}\in\mc P^{G_\pm,\hat{h}}$, we know $\Hat\Sigma$ is stable in $B_r(G\cdot p)$ if and only if it is $G_+$-stable in $U_+$ and $U_-$ respectively. 
    Hence, the desired equivalence follows from the fact that $C^\infty_{G_+}(U_\kappa)=\{f\llcorner U_\kappa: f\in C^\infty_{G_\pm}(U)\}$ for $\kappa\in\{+,-\}$. 
\end{proof}

Given $\Omega\in\mc C(M)$ and an open set $U$, we say $\Omega$ is {\em $\mc A^h$-minimizing in $U$} if for any other $\Lambda\in\mc C(M)$ with $\spt\|\Lambda-\Omega\|\subset U$, we have $\mc A^h(\Lambda)\geq \mc A^h(\Omega)$. 
The regularity of $\mc A^h$-minimizing boundaries can be obtained by Morgan's regularity result \cite{morgan2003regularity} for isoperimetric hypersurfaces (see \cite{zhou2020pmc}*{Theorem 2.2}). 
Using \cite{wang2023s1-stability}*{Proposition 3.4}, we also have the following regularity theorem. 

\begin{theorem}\label{Thm: regularity of G,Ah-minimizers}
    Let $(h,\Omega)\in C^\infty_G(M)\times \mc C^G(M)$, 
    $p\in\spt\|\bd\Omega\|$, and $0<s<\inj(G\cdot p)$. 
    Suppose $\Omega\llcorner B_s(G\cdot p)$ $G$-equivariantly minimizes the $\mc A^h$-functional: that is, $\mc A^h(\Lambda)\geq\mc A^h(\Omega)$ for any $\Lambda\in\mc C^G(M)$ with $\spt\|\Lambda-\Omega\|\subset B_s(G\cdot p)$. 
    Then $\Omega$ is $\mc A^h$-minimizing in $B_s(G\cdot p)$, and $\bd\Omega\llcorner B_s(G\cdot p)$ is a smoothly embedded $G$-hypersurface except for a $G$-set of Hausdorff dimension at most $n-7$. 

    In addition, suppose the $G_+$-action is free from $G_-$ in the sense of \eqref{Eq: free Gpm} and $(h,\Omega)\in C^\infty_{G_\pm}(M)\times \mc C^{G_\pm}(M)$ satisfies that $\mc A^h(\Lambda)\geq\mc A^h(\Omega)$ for any $\Lambda\in\mc C^{G_\pm}(M)$ with $\spt\|\Lambda-\Omega\|\subset B_s(G\cdot p)$. 
    Then the same regularity result also holds for $\bd\Omega\llcorner B_s(G\cdot p)$. 
\end{theorem}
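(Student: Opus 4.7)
For the $G$-invariant case, the plan is to first upgrade the $G$-equivariant minimality to ordinary $\mc A^h$-minimality, and then invoke Morgan's regularity theorem. Concretely, \cite{wang2023s1-stability}*{Proposition 3.4} (or a straightforward extension of its proof to the $\mc A^h$-functional) gives that every $G$-equivariant $\mc A^h$-minimizer is also an unconditional $\mc A^h$-minimizer, so $\mc A^h(\Lambda)\ge\mc A^h(\Omega)$ for all $\Lambda\in\mc C(M)$ with $\spt\|\Lambda-\Omega\|\subset B_s(G\cdot p)$. Morgan's regularity theorem \cite{morgan2003regularity} (see \cite{zhou2020pmc}*{Theorem 2.2}) then yields that $\bd\Omega\llcorner B_s(G\cdot p)$ is smoothly embedded away from a singular set of Hausdorff dimension at most $n-7$; because $\Omega$, $g_{_M}$ and $h$ are all $G$-invariant, this singular set is automatically $G$-invariant.

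For the $G_\pm$ case, the strategy is to reduce to the first part with $G$ replaced by the subgroup $G_+$. The freeness assumption \eqref{Eq: free Gpm} and $s<\inj(G\cdot p)$ guarantee the disjoint decomposition
\[
B_s(G\cdot p)=B_s(G_+\cdot p)\sqcup B_s(G_-\cdot p),
\]
with $g_-^0\cdot B_s(G_+\cdot p)=B_s(G_-\cdot p)$ for any fixed $g_-^0\in G_-$. Restricted to $B_s(G_+\cdot p)$, the data $(h,\Omega)$ is a $G_+$-invariant pair. The key step is to show that $\Omega\llcorner B_s(G_+\cdot p)$ is $G_+$-equivariantly $\mc A^h$-minimizing in $B_s(G_+\cdot p)$, after which Part~1 (with $G_+$ in place of $G$) finishes the job, and the $G_-$-action transports the conclusion to $B_s(G_-\cdot p)$.

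To establish this, given any $G_+$-invariant competitor $\Lambda''\in\mc C^{G_+}(M)$ with $\spt\|\Lambda''-\Omega\|\Subset B_s(G_+\cdot p)$, I build a $G_\pm$-signed symmetric extension $\Lambda\in\mc C^{G_\pm}(M)$ by setting
\[
\Lambda\cap B_s(G_+\cdot p)=\Lambda'',\qquad \Lambda\cap B_s(G_-\cdot p)=g_-^0\cdot\bigl(B_s(G_+\cdot p)\setminus\Lambda''\bigr),\qquad \Lambda=\Omega \text{ elsewhere.}
\]
Using that $G_+$ is normal of index two in $G$ and $(g_-^0)^2\in G_+$, one verifies $G_+\cdot\Lambda=\Lambda$ and $g_-^0\cdot\Lambda=M\setminus\Lambda$, so $\Lambda\in\mc C^{G_\pm}(M)$. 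Because $h\circ g_-^0=-h$ on $M$, the $\int h$-terms from $B_s(G_-\cdot p)$ pair up exactly with those on the $G_+$-side, while the mass terms double by isometry; a direct computation then yields the cancellation identity
\[
\mc A^h(\Lambda)-\mc A^h(\Omega)=2\Bigl(\mc A^h(\Lambda'')-\mc A^h(\Omega)\Bigr),
\]
where both differences on the right are supported in $B_s(G_+\cdot p)$. The $G_\pm$-minimizing hypothesis on $\Omega$ thus forces $\mc A^h(\Lambda'')\ge\mc A^h(\Omega)$, which is precisely the $G_+$-equivariant minimality needed to invoke Part~1.

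The main obstacle I expect is verifying the cancellation identity above with correct signs: the sign-change of $h$ under $G_-$ must be tracked through $\int_{\Lambda\cap B_s(G_-\cdot p)}h$, and one has to be careful that the complementation $B_s(G_+\cdot p)\setminus\Lambda''$ used in building $\Lambda$ combines with the sign-reversal of $h$ to produce, rather than cancel, the factor of two. The verification that $\Lambda$ is genuinely $G_\pm$-signed symmetric is a bookkeeping check, but it is essential in order to apply the hypothesis to $\Lambda$.
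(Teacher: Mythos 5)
Your proposal is correct and follows essentially the same route as the paper: upgrade $G$-equivariant minimality to unconditional $\mc A^h$-minimality (the paper cites \cite{wang2023s1-stability}*{Proposition 4.3, Claim 1} rather than Proposition 3.4) and then invoke Morgan regularity, and for the $G_\pm$ case, build the signed-symmetric competitor, derive the factor-of-two identity, and reduce to the $G_+$-equivariant case. The cancellation bookkeeping and the construction of $\Lambda$ match the paper's computation.
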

For simplicity, we say such an $\Omega\in\mc C^G(M)$ (resp. $\Omega\in\mc C^{G_\pm}(M)$) in the above theorem is {\em $(G,\mc A^h)$-minimizing in $B_s(G\cdot p)$} (resp. {\em $(G_\pm,\mc A^h)$-minimizing in $B_s(G\cdot p)$)}. 
\begin{proof}
    For $(h,\Omega)\in C^\infty_G(M)\times \mc C^G(M)$, the $\mc A^h$-minimizing property of $\Omega$ is given by \cite{wang2023s1-stability}*{Proposition 4.3, Claim 1}, and the regularity follows immediately from \cite{zhou2020pmc}*{Theorem 2.2} and the $G$-invariance of $\Omega$. 

    If \eqref{Eq: free Gpm} is satisfied and $(h,\Omega)\in C^\infty_{G_\pm}(M)\times \mc C^{G_\pm}(M)$ is $(G_\pm,\mc A^h)$-minimizing in $B_s(G\cdot p)$. 
    Let $U_\pm := B_s(G_\pm\cdot p)$ be two disjoint open $G$-set. 
    Then, for any $\Lambda\in\mc C^{G_\pm}(M)$ with $\spt\|\Lambda-\Omega\|\subset B_s(G\cdot p)$, since $h\in C^\infty_{G_\pm}(M)$ and $G_-\cdot (\Omega\cap U_+) = U_-\setminus \Omega$ as Caccioppoli sets in $U_-$, we have 
    \begin{align}
        \mc H^n(\bd\Lambda\cap U_-) - \int_{\Lambda\cap U_-} h d\mc H^{n+1} = \mc H^n(\bd\Lambda\cap U_+) - \int_{\Lambda\cap U_+} h d\mc H^{n+1} + \int_{U_+} h d\mc H^{n+1},
    \end{align}
    and 
    \[\mc A^h(\Lambda) = 2\left( \mc H^n(\bd\Lambda\cap U_+) - \int_{\Lambda\cap U_+} h d\mc H^{n+1}\right) + C(\Omega,h,U_+),\]
    where $C(\Omega,h,U_+)$ is a constant given by $\int_{U_+} h d\mc H^{n+1} + \mc H^n(\bd\Omega\setminus B_s(G\cdot p)) - \int_{\Omega\setminus B_s(G\cdot p)} h d\mc H^{n+1}$. 
    For any $\Lambda_0'\in\mc C^{G_+}(M)$ with $\spt\|\Lambda_0'-\Omega\|\subset U_+$, set $\Lambda_0:=(\Lambda_0'\setminus U_-) \cup (U_-\setminus G_-\cdot \Lambda_0')\in \mc C^{G_\pm}(M)$ satisfying $\spt\|\Lambda_0-\Omega\|\subset B_s(G\cdot p)$. 
    Then the above formula indicates that $\mc A^h(\Lambda_0')-\mc A^h(\Omega) = \frac{1}{2}(\mc A^h(\Lambda_0)-\mc A^h(\Omega)) \geq 0$.
    Hence, $\Omega\in\mc C^{G_+}(M)$ is $(G_+,\mc A^h)$-minimizing in $U_+=B_s(G_+\cdot p)$, and the desired regularity of $\bd\Omega\llcorner B_s(G\cdot p)$ follows from the first part of the theorem. 
\end{proof}


\subsection{Prescription functions and generic good pairs}\label{Subsec: prescription G-functions}


The prescription functions $h,\hat{h}\in C^\infty(M)$ we considered satisfies the following properties:
\begin{enumerate}
    \item $h\in C^\infty_G(M)$ (resp. $\hat{h}\in C^\infty_{G_\pm}(M)$) so that the critical locus of $h$ (resp. $\hat{h}$) is a union of non-degenerate critical $G$-orbits (in the sense of \cite{wasserman1969equivariant}); 
    \item the zero set $\Sigma_0:=\{h=0\}$ (resp. $\Hat\Sigma_0:=\{\hat h = 0\}$) is a smooth $G$-hypersurface whose mean curvature vanishes at most finite order. 
\end{enumerate}
Let $\mc S_G=\mc S_G(g_{_M})$ and $\mc S_{G_\pm}=\mc S_{G_\pm}(g_{_M})$ be the set of all smooth functions in $C^\infty_G(M)$ and $C^\infty_{G_\pm}(M)$ with the above properties respectively. 
The following result is similar to \cite{zhou2020pmc}*{Proposition 3.7}. 

\begin{proposition}\label{Prop: generic PMC G-functions}
    $\mc S_G$ is open and dense in $C^\infty_G(M)$. 
    In addition, $\mc S_{G_\pm}$ is open and dense in $C^\infty_{G_\pm}(M)$ provided \eqref{Eq: free Gpm}. 
\end{proposition}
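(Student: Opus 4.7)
The plan is to adapt the argument of \cite{zhou2020pmc}*{Proposition 3.7} to the equivariant setting, using Wasserman's equivariant Morse theory \cite{wasserman1969equivariant} together with $G$-invariant perturbations. I will first treat the case of $C^\infty_G(M)$ and then reduce the $C^\infty_{G_\pm}(M)$ case to it via the freeness hypothesis \eqref{Eq: free Gpm}.

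\textbf{Openness.} Both defining conditions should be $C^\infty$-open. Condition (1) is the equivariant Morse condition: non-degeneracy of a critical $G$-orbit is equivalent to non-degeneracy of the Hessian on the normal slice, a stable open condition, and the compactness of $M/G$ upgrades this to uniform openness on $M$. Condition (2) decomposes into $0$ being a regular value of $h$, which is a $C^1$-open condition, and the mean curvature $H_{\Sigma_0}$ of $\Sigma_0 := \{h=0\}$ vanishing to at most finite order. The latter is open because the maximal finite order of vanishing is upper semi-continuous and uniformly bounded on the compact quotient $\Sigma_0/G$.

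\textbf{Density.} Given arbitrary $h_0 \in C^\infty_G(M)$, I would proceed in three steps. First, apply Wasserman's theorem to obtain a $C^\infty$-close $G$-invariant Morse function $h_1$ satisfying (1). Second, apply the equivariant Sard theorem on each orbit-type stratum to produce an arbitrarily small constant $c$ so that $0$ becomes a regular value of $h_1+c$ on every stratum, thereby making $\{h_1 + c = 0\}$ a smooth $G$-hypersurface. Third, to guarantee that the mean curvature of this hypersurface vanishes only to finite order, add a further small $G$-invariant perturbation: whenever $H_{\Sigma_0}$ is flat on some $G$-orbit $\mc O$, subtract a $G$-invariant bump obtained by averaging a localized smooth bump over $G$ in a tube around $\mc O$; a finite iteration destroys all flat points, mirroring the final step of \cite{zhou2020pmc}*{Proposition 3.7} in the $G$-averaged framework. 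Some care is needed to check that each perturbation preserves the already-achieved properties (1) and regularity of $0$, which follows from taking perturbations sufficiently small in a suitable $C^k$-norm.

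\textbf{The $G_\pm$-signed case.} Under \eqref{Eq: free Gpm}, the coset $G_-$ has no fixed point in $M$ and permutes $G_+$-orbits in disjoint pairs, so any $h \in C^\infty_{G_\pm}(M)$ is freely determined by its restriction to a $G_+$-fundamental domain for the $G_-$-action; equivalently, $h$ descends to a $G_+$-invariant function on the quotient double cover $M/G_- \to M/G$. Thus, the entire argument transfers: perturbations are performed in a $G_+$-invariant manner on one fundamental domain and extended by the sign rule. The freeness prevents $h$ from being forced to vanish on any fixed locus of $G_-$, so neither openness nor density acquires additional obstructions. The main technical obstacle throughout will be the finite-order vanishing of $H_{\Sigma_0}$ since only symmetric perturbations are available, requiring a careful stratified induction over the orbit types within the flat locus to ensure the averaging procedure does not undo the preceding generic conditions.
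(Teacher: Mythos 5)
Your overall outline (Wasserman for equivariant Morse functions, reduction of the $G_\pm$-signed case to one half of $B_{2r}(G\cdot p)$ using \eqref{Eq: free Gpm}) is aligned with the paper's, but the density step for condition (2) has a genuine gap.

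The crucial step is forcing the mean curvature $H_{\Sigma_0}$ of $\Sigma_0 = \{h=0\}$ to vanish only to finite order, and your bump-function iteration does not carry this. The flat locus of $H_{\Sigma_0}$ need not consist of finitely many orbits (it can be a nontrivial closed $G$-set), and $H_{\Sigma_0}$ depends in a genuinely nonlinear way on the $2$-jet of $h$ because perturbing $h$ moves $\Sigma_0$ itself; a bump that kills flatness near one orbit can create new flat points nearby, with no control on termination of the iteration. This is precisely the difficulty that \cite{zhou2020pmc}*{Proposition 3.7} avoids by running the level set flow: starting from $h \in \mc S_G^0$, one flows $\{h=0\}$ for a short time, and parabolic regularity makes the evolved level set real analytic, so its mean curvature vanishes to at most finite order unless it is minimal (which it is not, as $\{h = \nabla h = 0\} = \emptyset$). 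What makes this work in the equivariant setting is the observation (which the paper records explicitly via \cite{zhou2020pmc}*{Proposition 3.8} and \cite{evans1991motion}) that the level set flow starting from a $G$-invariant (resp.\ $G_\pm$-signed) function stays $G$-invariant (resp.\ $G_\pm$-signed) by uniqueness of the flow. Your proposal never invokes the level set flow, and without it or an equivalent analyticity mechanism there is no visible way to make the bump-perturbation argument close.

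For the $G_\pm$-case, your reduction via a $G_+$-fundamental domain for $G_-$ is morally the same as the paper's Claim \ref{Claim: density of Gpm Morse functions}, which glues a $G_+$-Morse approximation on $B_{2r}(G_+\cdot p)$ with its sign-reversed image on $B_{2r}(G_-\cdot p)$ using a $G$-invariant cutoff, then applies Baire's theorem; that part of your outline is sound, though you would still need to patch in the level set flow step as above.
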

\begin{proof}
    Firstly, consider $\mc S_G\subset C^\infty_G(M)$. 
    By \cite{wasserman1969equivariant}, $G$-equivariant Morse functions on $M$ are open and dense in $C^\infty_G(M)$, and a $G$-equivariant Morse function on $M$ has only finitely many critical orbits. 
    Hence, the set 
    \begin{align}\label{Eq: S_G^0}
        \mc S_G^0:=\{\mbox{$G$-equivariant Morse functions $h$ with $\{h=\nabla h =0\}=\emptyset$}\}
    \end{align}
    is also open and dense in $C^\infty_G(M)$. 
    In addition, combining \cite{zhou2020pmc}*{Proposition 3.8} with the uniqueness of the mean curvature flow/level set flow (see \cite{evans1991motion}), we know the level set flow $\{\tilde{h}^t\}_{t\in[0,t_0]}$ starting from $h\in\mc S^0_G$ keeps to be $G$-invariant. 
    With these facts, the arguments in \cite{zhou2020pmc}*{Proposition 3.7} would carry over to show that $\mc S_G$ is open and dense in $C^\infty_G(M)$. 

    Next, consider $\mc S_{G_\pm}\subset C^\infty_{G_\pm}(M)$. 
    For any $G$-subset $A\subset M$, let $\mc M_{G_\pm}(A,M)$ be the set of functions $\hat h\in C^\infty_{G_\pm}(M)$ whose critical locus in $A$ is a union of non-degenerate critical $G$-orbits. 

    \begin{claim}\label{Claim: density of Gpm Morse functions}
        $\mc M_{G_\pm}(M,M)$ is open and dense in $C^\infty_{G_\pm}(M)$ provided \eqref{Eq: free Gpm}. 
    \end{claim}
    \begin{proof}[Proof of Claim \ref{Claim: density of Gpm Morse functions}]
        The openness is clear. 
        As for the density, we need \eqref{Eq: free Gpm} to get $B_{2r}(G\cdot p)=B_{2r}(G_+\cdot p)\sqcup B_{2r}(G_-\cdot p)$ for any $p\in M$ and $0<r<\inj(G\cdot p)/2$. 
        By the density of $G_+$-equivariant Morse functions (\cite{wasserman1969equivariant}*{Lemma 4.8}), every $f\in C^\infty_{G_\pm}(M)\subset C^\infty_{G_+}(M)$ can be arbitrarily approximated by some $G_+$-equivariant Morse function $f'\in C^\infty_{G_+}(M)$. 
        Take a $G$-invariant cut-off function $\varphi\in C^\infty_G(M)$ so that $\varphi\llcorner B_r(G\cdot p)=1$ and $\varphi\llcorner (M\setminus B_{2r}(G\cdot p))=0$. 
        Define 
        \[ F:=\left\{
        \begin{array}{ll}
             f' & \mbox{in $B_{2r}(G_+\cdot p)$}  \\
             -f'\circ g_-  & \mbox{in $B_{2r}(G_-\cdot p)$}
        \end{array}
        \right. ,
        \]
        where $g_-\in G_-$. 
        One easily verifies that $F$ is well-defined and $\varphi\cdot F$ as well as $(1-\varphi)\cdot f$ is $G_\pm$-signed symmetric smooth functions. 
        Thus, $f'':= \varphi\cdot F+(1-\varphi)f \in C^\infty_{G_\pm}(M)$ satisfies
        \[\|f''-f\|_{C^k(M)} = \|\varphi\cdot(F-f)\|_{C^k(M)}=\|\varphi\cdot(F-f)\|_{C^k(B_{2r}(G\cdot p))} = \|\varphi\cdot (f'-f)\|_{C^k(B_{2r}(G_+\cdot p))},\]
        for all $k\in\mb N$. 
        Therefore, $f\in C^\infty_{G_\pm}(M)$ can be arbitrarily approximated by some $f''\in C^\infty_{G_\pm}(M)$ whose critical locus in $B_{r}(G\cdot p)$ is a union of non-degenerate critical $G$-orbits. 
        In particular, $\mc M_{G_\pm}(\closure(B_{r/2}(G\cdot p)),M)$ is open and dense in $C^\infty_{G_\pm}(M)$. 
        Together with the compactness of $M$ and Baire’s theorem, we know $\mc M_{G_\pm}(M,M)$ is dense in $C^\infty_{G_\pm}(M)$. 
    \end{proof}
    Consider the set $\mc S^0_{G_\pm}$ 
    given by 
    \begin{align}\label{Eq: S_Gpm^0}
        \mc S^0_{G_\pm}:=\{\mbox{$\hat h\in\mc M_{G_\pm}(M,M)$: $\{\hat h=\nabla \hat h=0\}=\emptyset$}\}.
    \end{align}
    Similar to the proof of Claim \ref{Claim: density of Gpm Morse functions}, we know $\mc S^0_{G_\pm}$ is open and dense in $C^\infty_{G_\pm}(M)$ provided \eqref{Eq: free Gpm}. 
    Then, the proof of \cite{zhou2020pmc}*{Proposition 3.7} would carry over as the previous case. 
\end{proof}

\begin{remark}\label{Rem: small touching and no minimal}
    Notice that every function $h$ in $\mc S_G$ or $\mc S_{G_\pm}$ also satisfies \cite{zhou2020pmc}*{\S 3.2 ($\dagger$)}. 
    Hence, it follows directly from \cite{zhou2020pmc}*{Proposition 3.16} that any almost embedded $G$-hypersurface $\Sigma$ of prescribed mean curvature $h\in \mc S_G$ or $\mc S_{G_\pm}$ has its touching set (i.e. the non-embedding part of $\Sigma$) contained in a countable union of connected, embedded $(n-1)$-dimensional submanifolds. 
    Moreover, the definitions of $\mc S_G$ and $\mc S_{G_\pm}$ also imply that such a $\Sigma$ has no minimal component. 
\end{remark}


Next, we denote by 
\begin{align}\label{Eq: G-metrics space}
    \Gamma_G:=\{\mbox{$G$-invariant Riemannian metrics on $M$}\},
\end{align}
and make the following useful definitions.

\begin{definition}\label{Def: good pairs}
    Let $M$ be a closed connected $(n+1)$-manifold and $G$ be a compact Lie group acting effectively by diffeomorphisms on $M$ so that $3\leq \codim(G\cdot p)\leq 7$ for all $p\in M$.
    The pair $(g_{_M}, h)\in\Gamma_G\times C^\infty_G(M)$ 
    is called a {\em $G$-equivariant good pair} (or a {\em good $G$-pair} for short) if 
    \begin{itemize}
        \item[(1)] $h\in\mc S_G(g_{_M})$; and 
        \item[(2)] $g_{_M}$ is {\em $G$-bumpy} for $\mc P^{G,h}$, i.e., every $\Sigma\in\mc P^{G,h}$ under the metric $g_{_M}$ is non-degenerate (nullity equal to zero). 
    \end{itemize}
    Additionally, suppose $G$ has a $2$-index Lie subgroup $G_+$ with co-set $G_-=G\setminus G_+$ satisfying \eqref{Eq: free Gpm}. 
    Then the pair $(g_{_M}, \hat h)\in \Gamma_G\times C^\infty_{G_\pm}(M)$ 
    is called a {\em good $G_\pm$-pair} if the above (1)(2) hold with $\mc S_{G_\pm}(g_{_M})$ and $ \mc P^{G_\pm,\hat h}$ in place of $\mc S_{G}(g_{_M})$ and $ \mc P^{G,h}$ respectively. 
\end{definition}

Recall that the set $\mc S_G^0$ defined in \eqref{Eq: S_G^0} (resp. $\mc S_{G_\pm}^0$ in \eqref{Eq: S_Gpm^0}) is open and dense in $C^\infty_G(M)$ (resp. in $C^\infty_{G_\pm}(M)$ provided \eqref{Eq: free Gpm}), which is independent to the choice of the $G$-metric. 
Then we have the following lemma generalized from \cite{zhou2020multiplicity}*{Lemma 3.5}

\begin{lemma}\label{Lem: generic good pairs}
    Given $h\in\mc S^0_G$, the set $\{g_{_M}\in\Gamma_G: (g_{_M}, h)\mbox{ is a good $G$-pair}\}$ is generic in the Baire sense. 
    Additionally, for $\hat h\in\mc S^0_{G_\pm}$, 
    the set $\{g_{_M}\in\Gamma_G: (g_{_M}, h)\mbox{ is a good $G_\pm$-pair}\}$ is generic in the Baire sense provided \eqref{Eq: free Gpm}. 
\end{lemma}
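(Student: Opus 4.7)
The plan is to decouple the two requirements in Definition \ref{Def: good pairs} that define a good $G$-pair, show each holds on a residual set of $\Gamma_G$, and intersect. Fix $h \in \mc S_G^0$ throughout; I will mirror the non-equivariant argument of \cite{zhou2020multiplicity}*{Lemma 3.5}, inserting $G$-invariance at every step and leveraging the already-established equivariant bumpy theorem of \cite{wang2023G-index}.

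For condition (1), note that since $h$ is a $G$-Morse function with $\{h=\nabla h = 0\} = \emptyset$, the zero set $\Sigma_0 = \{h=0\}$ is automatically a smooth $G$-hypersurface independent of $g_{_M}$ (because $\nabla h \neq 0$ on it is a metric-independent statement). The only hypothesis in the definition of $\mc S_G(g_{_M})$ that depends on $g_{_M}$ is that the mean curvature of $\Sigma_0$ vanish to at most finite order. The condition \emph{``$H_{\Sigma_0}$ vanishes to infinite order somewhere''} is closed and nowhere dense in $\Gamma_G$: a small $G$-invariant conformal perturbation, built by averaging a local bump via the Haar measure $\mu$ and supported near any chosen orbit $G\cdot p \subset \Sigma_0$, modifies the local mean curvature to an essentially arbitrary $G$-invariant profile. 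A standard countable-cover argument over the compact quotient $\Sigma_0/G$ then yields a residual set of $\Gamma_G$ on which condition (1) holds.

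For condition (2), I would invoke an equivariant PMC extension of the bumpy metrics theorem. The minimal case \cite{wang2023G-index}*{Theorem 1.3} shows that for a generic $g_{_M}\in \Gamma_G$, every embedded $G$-invariant minimal hypersurface is non-degenerate, via a $G$-equivariant version of White's universal moduli space together with Sard--Smale applied to the projection $(g_{_M},\Sigma)\mapsto g_{_M}$. That argument transfers to $\mc P^{G,h}$ with $h$ fixed: the Jacobi operator $L_\Sigma^h$ of \eqref{Eq: Jacobi field} differs from the minimal Jacobi operator only by the zeroth-order term $\partial_\nu h$, and $G$-bumpiness is precisely $\ker(L_\Sigma^h|_{C^\infty_G(\Sigma)}) = 0$ for every $\Sigma \in \mc P^{G,h}$. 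Remark \ref{Rem: small touching and no minimal} ensures that the touching set of each such $\Sigma$ is contained in a countable union of $(n-1)$-dimensional submanifolds, which is the regularity needed to run White's moduli-space construction. Intersecting this residual set with the one from condition (1) proves the first claim.

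For the $G_\pm$ statement the argument is formally identical, with one adjustment: local perturbations must respect $G_\pm$-signed symmetry. Using \eqref{Eq: free Gpm} exactly as in the proof of Claim \ref{Claim: density of Gpm Morse functions}, one chooses a conformal bump on the $G_+$-side near an orbit $G\cdot p = G_+\cdot p \sqcup G_-\cdot p$ and extends it by the $G_-$-action on the disjoint $G_-$-translate, producing a genuine $G$-invariant metric perturbation; the same device works for perturbations needed in the Sard--Smale step. The main obstacle I foresee is not transversality itself but the bookkeeping to verify that the equivariant White machinery of \cite{wang2023G-index} transfers to PMC $\mc G$-boundaries; however, since $h$ is fixed while $g_{_M}$ varies and the touching set is controlled by Remark \ref{Rem: small touching and no minimal}, the linearization, Fredholm theory, and compactness steps carry over essentially unchanged, so the genericity conclusion follows.
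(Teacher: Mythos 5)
Your proposal is correct and mirrors the paper's own argument: decouple the two conditions in Definition \ref{Def: good pairs}, show that $h\in\mc S_G(g_{_M})$ holds on an open dense subset of $\Gamma_G$ via $G$-invariant conformal perturbation of the mean curvature of $\{h=0\}$, extend White's $\S 7$ CMC bumpy argument to PMC $\mc G$-boundaries using the equivariant moduli-space machinery of \cite{wang2023G-index} for condition (2), and intersect. The only cosmetic difference is that the paper achieves density in a single global conformal step $\phi = d\,\eta\,\varphi(\bar n)$ (citing \cite{irie2018density}) rather than the localized countable-cover scheme you sketch, and the paper spells out the exact modifications to $\omega_\gamma$ on \cite{white1991space}*{P.197--198} that you defer to ``bookkeeping,'' but the structure and key lemmas are the same.
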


\begin{proof}
    Firstly, we show $\Gamma_G^1(h):=\{g_{_M}\in\Gamma_G: h\in \mc S_G(g_{_M})\}$ is open and dense in $\Gamma_G$. 
    Since the vanishing order of $H_{\{h=0\}}$ is bounded under a small perturbation of $g_{_M}$, we see $\Gamma_G^1(h)$ is open. 
    Next, fix any $g_{_M}\in \Gamma_G$ and denote by $H_0$ the mean curvature of $\{h=0\}$ under $g_{_M}$ with respect to a unit normal $\nu_{\{h=0\}}$. 
    Note $H_0$ and $\nu_{\{h=0\}}$ are $G$-invariant. 
    By \cite{wasserman1969equivariant}, we can take an arbitrarily small $\varphi\in C^\infty_{G}(\{h=0\})$ so that the critical locus of $H_0-n\varphi$ on $\{h=0\}$ is a union of non-degenerate critical orbits. 
    Let $\phi(x)=d(x)\eta(x)\varphi(\bar{n}(x)) \in C^\infty_G(M)$, where $d$ is the signed $g_{_M}$-distance function to $\{h=0\}$ so that $(\nabla d) \llcorner\{h=0\} = \nu_{\{h=0\}}$, $\eta(x)$ is a $G$-invariant cut-off function on $M$ so that $\eta\llcorner B_r(\{h=0\}) = 1$ and $\eta\llcorner M\setminus B_{2r}(\{h=0\}) = 0$ for some small $r>0$, and $\bar{n}:B_{2r}(\{h=0\}) \to \{h=0\}$ is the $g_{_M}$-geodesic nearest projection. 
    Then the metric $g_{_M}':=e^{2\phi}g_{_M}$ is close to $g_{_M}$, and the mean curvature of $\{h=0\}$ under $g_{_M}'$ with respect to $\nu_{\{h=0\}}$ is given by $H_0-n\varphi$ (see the proof of \cite{irie2018density}*{Proposition 2.3}), which vanishes to at most finite order. 
    This proved the density of $\Gamma_G^1(h)$. 

    If $\hat{h}\in \mc S_{G_\pm}^0$, then $H_0$, $\nu_{h=0}$, and $d$ are all $G_\pm$-signed symmetric. 
    Using Claim \ref{Claim: density of Gpm Morse functions} in place of \cite{wasserman1969equivariant}, we also make $\varphi\in C^\infty_{G_\pm}(\{h=0\})$, which implies $\phi$ is still $G$-invariant. 
    Hence, the above arguments also show that $\Gamma_G^1(\hat{h}):=\{g\in\Gamma_G: \hat{h}\in\mc S_{G_\pm}(g_{_M})\}$ is open and dense. 

    Next, we show condition (2) in Definition \ref{Def: good pairs} is generic. 
    In \cite{white1991space}*{\S 7}, White has shown that the bumpy metric theorem \cite{white1991space}*{Theorem 2.2} also holds for simple immersed CMC hypersurfaces. 
    This proof can extends to the case of simple immersed (see \cite{white1991space}*{P. 197}) PMC hypersurfaces. 
    Indeed, using the notations in \cite{white1991space}*{\S 7}, one only needs to modify $\omega_\gamma(x,y)$ on P.197 so that $d\omega_\gamma = E^*(h\cdot \Omega_\gamma)$ for $h\in C^\infty(M)$, and remove `$h$' in the first two formulas on P.198, where $\Omega_\gamma$ is the $\gamma$-volume form of the ambient manifold. 
    In addition, suppose $h,\gamma$ and the immersion $u$ are $G$-invariant so that $E\circ u(\Sigma)=\bd\Omega$ for some $\Omega\in \mc C^G(M)$. 
    Then, on \cite{white1991space}*{P. 198}, $H(\gamma,u)$ and $H_0(\gamma,u)$ in the 2nd formula are $G$-invariant. 
    Hence, by Lemma \ref{Lem: 1st variation for G-boundary}, $[E\circ u]$ has mean curvature $h$ if and only if $u$ is {\em $G$-stationary}  for the $A^*$-functional on P.198 (e.g. take $G$-invariant $v=H(\gamma,u)-H_0(\gamma,u)$). 
    Combining with the equivariant generalization in \cite{wang2023G-index}*{\S 3}, we know the set of $G$-metrics admitting no degenerate simple immersed elements in $\mc P^{G,h}$ is generic in the Baire sense for a fixed $h\in \mc S^0_G$. 
    Moreover, one notices that $\Gamma_G^1(h)$ is open and dense, and every $\Sigma\in \mc P^{G,h}$ under the metric $g_{_M}\in \Gamma_G^1(h)$ is simple immersed (by Remark \ref{Rem: small touching and no minimal}). 
    Hence, we get the desired result for $h\in \mc S_G^0$.

    If $\hat{h}\in \mc S_{G_\pm}^0$, $u$ is $G$-equivariant, and $E\circ u(\Sigma)=\bd\Omega$ for some $\Omega\in\mc C^{G_\pm}(M)$, then using the notations in \cite{white1991space}*{\S 7}, $g\in G_\pm$ acts on the normal bundle $V\cong \Sigma\times \mb R$ by $g\cdot (x,y)=(g\cdot x, \pm y)$. 
    Hence, by a similar modification using $\hat{h}$, $H(\gamma,u)$ and $H_0(\gamma,u)$ in \cite{white1991space}*{P.198} are still $G$-invariant since $\hat{h}$ and the outer unit normal of $\Omega$ will change/fix sign simultaneously under $G$-actions. 
    The proof can be finished similarly to the previous case. 
\end{proof}

\subsection{Compactness of stable $(G,h)$-hypersurfaces}
At the end of this section, we introduce the following compactness and regularity theorem for stable (not $G$-stable) $(G,h)$-hypersurfaces. 

\begin{theorem}\label{Thm: compactness for stable (G,h)-hypersurfaces}
	Let $3\leq \codim(G\cdot p)\leq 7$ for all $p\in M$, $U\subset M$ be an open $G$-subset, and $\{h_k\}_{k\in\mb N}\subset \mc S_G$ with $\lim_{k\to\infty} h_k = h_\infty$ in the $C^\infty$-topology, where $h_\infty\in\mc S_G$ or $h_\infty = 0$. 
	Suppose $\Sigma_k\subset U$ is a sequence of smooth, almost embedded, stable $(G,h_k)$-hypersurfaces in $U$ satisfying $\sup_k \Area(\Sigma_k)<+\infty$ and $\Sigma_k=\bd\Omega_k$ in $U$ for some $\Omega_k\in\mc C^G(U)$. 
	Then up to a subsequence, $\Sigma_k$ converges (possibly with multiplicity) in the varifold sense 
	to a smooth, stable, almost embedded $(G,h_\infty)$-hypersurface $\Sigma_\infty\subset U$ so that 
	\begin{itemize}
		\item[(i)] the convergence $\Sigma_k\to\Sigma_\infty$ is also locally smooth and graphical in $U$;
		\item[(ii)] if $h_\infty\in \mc S_G$, then $\Sigma_\infty$ has multiplicity $1$, and $\Sigma_\infty = \bd\Omega_\infty$ in $U$ for some $\Omega_\infty\in \mc C^G(U)$.
	\end{itemize}
	The same result also holds with $\mc S_{G_\pm}$ and $\mc C^{G_\pm}(U)$ in place of $\mc S_{G}$ and $\mc C^{G}(U)$ respectively.
\end{theorem}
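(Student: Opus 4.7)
The plan is to combine the classical compactness theory for stable almost-embedded PMC hypersurfaces with preservation of $G$-invariance under weak and flat limits, and then upgrade to multiplicity one using the genericity built into $\mc S_G$ and $\mc S_{G_\pm}$.

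First I would extract subsequential limits at both the varifold and the Caccioppoli level. The uniform area bound $\sup_k \Area(\Sigma_k)<+\infty$ together with $\sup_k \|h_k\|_{C^0(M)}<+\infty$ yields uniform first variation bounds for $|\Sigma_k|$, so by Allard's compactness a subsequence of $|\Sigma_k|$ converges as varifolds in $\mc V_n(M)$ to some $V_\infty$ that is $G$-invariant (weak limits preserve $g_\#$-invariance) and whose generalized mean curvature is prescribed by $h_\infty$ on $\spt\|V_\infty\|$. Simultaneously, the uniform perimeter bound in $U$ lets me extract an $\mc F$-limit $\Omega_\infty \in \mc C^G(U)$ (resp. $\mc C^{G_\pm}(U)$ in the signed-symmetric case) of $\Omega_k$, with the $\mc G$-symmetry preserved under $\mc F$-limits.

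Next I would invoke the interior curvature estimates and compactness for stable almost-embedded PMC hypersurfaces in the ambient dimension $n+1\le 8$ regime, as developed in Zhou--Zhu \cite{zhou2020pmc}. Crucially, each $\Sigma_k$ is stable in the \emph{classical} (non-equivariant) sense, so no reduction between stability and $G$-stability is required here. These estimates give uniform $C^{2,\alpha}$ bounds on the sheets of $\Sigma_k$ on any compact $K\Subset U$, hence locally smooth graphical convergence (possibly with an integer multiplicity function) to a smooth almost embedded $(G,h_\infty)$-hypersurface $\Sigma_\infty\subset U$. Stability passes to the limit, giving (i). The $G$-invariance of $\Sigma_\infty$ is inherited from $V_\infty$.

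For (ii), suppose towards a contradiction that the multiplicity of $\Sigma_k \to \Sigma_\infty$ is $\ge 2$ along some $G$-component $\Sigma_\infty'$. A standard two-sheet Hopf-type analysis (applied to the normalized difference of two converging graphs over $\Sigma_\infty'$, as in \cite{zhou2020pmc}) forces $h_\infty \equiv 0$ on $\Sigma_\infty'$, so $\Sigma_\infty'$ is a component of $\{h_\infty=0\}$ with mean curvature identically zero. But $h_\infty\in\mc S_G$ (resp. $\mc S_{G_\pm}$) means precisely that the mean curvature of $\{h_\infty=0\}$ vanishes to at most finite order, which by Remark \ref{Rem: small touching and no minimal} rules out any minimal component of $\Sigma_\infty$. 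Hence the multiplicity is one, and combined with $\Omega_k \to \Omega_\infty$ in flat topology this gives $\Sigma_\infty = \bd\Omega_\infty$ in $U$. The $G_\pm$ case follows the same outline: using \eqref{Eq: free Gpm} and Lemma \ref{Lem: stable and G-stable}, I would localize on the disjoint tubes $B_r(G_+\cdot p)$ and $B_r(G_-\cdot p)$, where the $G_\pm$-problem decouples into two standard $G_+$-problems (with $h_\infty$ of definite sign on each), then reassemble the conclusion.

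The main obstacle is the multiplicity-one step: one must verify that when sheets collapse, the limiting configuration is coherently two-sided on a $G$-component so that the Hopf argument applies, and then argue carefully that the genericity in $\mc S_G$ truly excludes the limiting minimal component and not merely its embedded part, which is the content of Remark \ref{Rem: small touching and no minimal}. A secondary subtlety is that in the $G_\pm$ case $h_\infty$ changes sign across $G_-$, so the decoupling argument must keep track of outward normal orientation and of the fact that under $G_-$ the interior $\Omega_k$ swaps with its complement.
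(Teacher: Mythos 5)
Your outline covers the right ingredients (varifold compactness, preservation of $G$-invariance under limits, Allard-type estimates for stable PMC sheets, a Hopf-type argument to rule out multiplicity $\geq 2$ via the genericity baked into $\mc S_G$), but it contains a genuine gap at the very first regularity step that the paper goes to explicit lengths to avoid.

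You write that you would invoke "interior curvature estimates and compactness for stable almost-embedded PMC hypersurfaces in the ambient dimension $n+1\le 8$ regime, as developed in Zhou--Zhu." However, the hypothesis of the theorem is $3\le\codim(G\cdot p)\le 7$ for all $p$, \emph{not} $n+1\le 8$. Since $\codim(G\cdot p)=n+1-\dim(G\cdot p)$ and the orbits can have positive (indeed arbitrarily large) dimension, the ambient dimension $n+1$ can far exceed $8$; the Zhou--Zhu curvature estimates then do not apply directly and the stable-limit regularity theory produces a singular set of Hausdorff dimension up to $n-7$, which could be nonempty. Your argument, as written, would break down here. The paper circumvents this by instead quoting the higher-dimensional compactness of Bellettini--Wickramasekera (valid for $n+1\ge 3$, allowing a singular set of dimension $\le n-7$), and then using $G$-equivariance to kill the singular set: the singular set of $\Sigma_\infty$ is $G$-invariant, so if nonempty it contains an orbit, and every orbit has dimension $n+1-\codim(G\cdot p)\ge n-6 > n-7$, a contradiction. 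This dimensional trick — comparing the orbit dimension lower bound against the singular-set dimension upper bound — is the key equivariant step missing from your argument, and it is precisely what makes the hypothesis $\codim(G\cdot p)\le 7$ (rather than $n+1\le 8$) the right one. The multiplicity-one step you describe is essentially consistent with the paper's, which cites Remark \ref{Rem: small touching and no minimal} and the Zhou--Zhu sheet analysis; once regularity is secured, that part of your plan would go through.
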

\begin{proof}
	Without group actions, the regularity of $\Sigma_\infty$ and the locally smooth graphical convergence have been proved in \cite{zhou2020pmc}*{Theorem 3.19} for $3\leq n+1\leq 7$ and also in \cite{bellettini2020stablePMC}*{Corollary 1.3, Remark 7.1} for $n+1\geq 3$, where a singular set is allowed in $\Sigma_\infty$ with Hausdorff dimension at most $n-7$. 
	Since each $\Sigma_k$ as well as $\Omega_k$ is $G$-invariant, we know $\Sigma_\infty$ and $\Omega_\infty=\lim_{k\to\infty}\Omega_k$ (as Caccioppoli sets) are $G$-invariant. 
	In particular, the singular set of $\Sigma_\infty$ is $G$-invariant, which is either empty or has dimension at least $n+1-\max\{\codim(G\cdot p): p\in M\}\geq n-6$. 
	Hence, the regularity result in \cite{bellettini2020stablePMC}*{Corollary 1.3, Theorem 1.5(1)} implies $\Sigma_\infty$ is smooth and almost embedded everywhere. 
	Using Remark \ref{Rem: small touching and no minimal} and the arguments in \cite{zhou2020pmc}*{Theorem 3.19} (see also \cite{zhou2020multiplicity}*{Theorem 2.6 (iii)}), we know $\Sigma_\infty=\bd\Omega_\infty$ in $U$ provided $h_\infty\in\mc S_G$. 
	The proof is the same in the $G_\pm$-setting. 
\end{proof}

\section{Relative equivariant min-max theory for prescribing mean curvature hypersurfaces}\label{Sec: PMC min-max}

We now show the multi-parameter min-max theory for PMC hypersurface to certain symmetric settings. 
Specifically, let 
\begin{align}\label{Eq: mc G}
    \mc G=G \qquad{\rm or}\qquad \mc G= G_\pm \mbox{ satisfying \eqref{Eq: free Gpm}}. 
\end{align}
Then in this section we consider the subspace $\cCcG(M)$ of $\mc G$-elements in $\mc C(M)$, and the set $\cScG=\cScG(g_{_M})\subset C^\infty_{\mc G}(M)$ (see Section \ref{Subsec: prescription G-functions}) of the smooth prescription $\mc G$-functions $h$. 

\subsection{Equivariant min-max constructions for $(X,Z)$-homotopy class}\label{Subsec: min-max setup}

For $m,j\in\mb Z_+$, define $I(1,j)$ as the cubical complex on $[0,1]$ with $0$-cells $\{k\cdot 3^{-j}\}$ and $1$-cells $\{[k\cdot 3^{-1}, (k+1)\cdot 3^{-1}]\}$, $k=0,1,\dots,3^j$. Define $I(m,j):= I(1,j)^{\otimes m}$ as the $m$-dimensional cubical complex. 

Let $X^k$ be a $k$-dimensional cubical complex in some $I(m,j)$ and $Z\subset X$ be a cubical subcomplex. 
For $q\leq l\leq m \in \mb N$, denote by $X(l)$ the union of cells of $I(m,j+l)$ with support contained in some cell of $X$, and by $X(l)_q$ the set of all $q$-cells in $X(l)$. 
Similarly, for an $p$-cell $\alpha\in X(l)_p$, denote by $\alpha_q$ the set of $q$-cells supported in $\alpha$. 
Then for any $i,j\in\mb N$, define $\mf n(i,j):X(i)_0\to X(j)_0$ as the nearest projection. 

\begin{definition}\label{Def: relative homotopy sequence of mappings into C}
    Let $(X,Z)$ be a pair of cubical complex as above, and $\Phi_0:X\to (\cCcG(M), \mf F)$ be a continuous map (under the $\mf F$-topology on $\cCcG(M)$). 
    Then a sequence of continuous maps $\{\Phi_i:X\to (\cCcG(M),\mf F)\}_{i\in\mb N}$ is said to be a {\em $(X,Z)$-homotopy sequence of mappings into $\cCcG(M)$} if for each $i\in\mb N$, there exists a flat $\mc F$-continuous homotopy map $\Psi_i:[0,1]\times X\to \cCcG(M)$ so that $\Psi_i(0,\cdot)=\Phi_i,\Psi(1,\cdot)=\Phi_0$, and 
    \begin{align}
        \limsup_{i\to\infty}\sup\{\mf F(\Psi_i(t,x),\Phi_0(x)) : t\in [0,1], x\in Z\} = 0.
    \end{align}
    The set of all such sequences is called the {\em $(X,Z)$-homotopy $\mc G$-class} of $\Phi_0$ and denoted by $\Pi$. 
\end{definition}

\begin{definition}\label{Def: h-width and minimizing sequence}
    The {\em $\cGh$-width} of $\Pi$ is defined by 
    \[\mf L^h=\mf L^h(\Pi):=\inf_{\{\Phi_i\}\in\Pi}\limsup_{i\to\infty}\sup_{x\in X}\{\mc A^h(\Phi_i(x))\}.\]
    A sequence $\{\Phi_i\}_{i\in\mb N}\in\Pi$ is called a {\em minimizing sequence} if 
    \[\mf L^h(\Pi) = \mf L^h(\{\Phi_i\}_{i\in\mb N}):=\limsup_{i\to\infty}\sup_{x\in X}\mc A^h(\Phi_i(x)).\]
\end{definition}

By a diagonalization process similar to \cite{zhou2020multiplicity}*{Lemma 1.5}, the min-max sequence exists for any $\Phi_0$ and $\Pi$. 
If $\{\Phi_i\}_{i\in\mb N}$ is a minimizing sequence of $\Pi$, then we define the {\em critical set} of $\{\Phi_i\}$ by 
\[\mf C(\{\Phi_i \}_{i\in\mb N}) := \{V=\lim_{j\to\infty}|\bd\Phi_{i_j}(x_j)| \in \mc V^G(M) : {\rm with~} \mc A^h(\Phi_{i_j}(x_j)) = \mf L(\Pi) \}  .\]

We can now state the main result of this section, that is a relative equivariant min-max theorem for symmetric PMC hypersurfaces. 
We will have several subsections for preparation, and the proof is left in Section \ref{Subsec: proof of PMC Equivariant min-max}.

\begin{theorem}[Equivariant min-max theorem]\label{Thm: Equivariant min-max for PMC}
    Let $(M^{n+1},g_{_M})$ be a closed Riemannian manifold with a compact Lie group $G$ acting by isometries so that $3\leq \codim(G\cdot p)\leq 7$ for all $p\in M$, and let $h\in\cScG$ with $\int_M h\geq 0$. 
    Given cubical complexes $Z\subset X^m$ and an $\mf F$-continuous map $\Phi_0: X\to (\cCcG(M),\mf F)$, suppose the associated $(X,Z)$-homotopy $\mc G$-class $\Pi$ satisfies that 
    \begin{align}\label{Eq: width > relative}
        \mf L^h(\Pi) > \max\left\{ \max_{x\in Z}\mc A^h(\Phi_0(x), 0) \right\},
    \end{align}
    and $\{\Phi_i\}_{i\in\mb N}\in \Pi$ is a minimizing sequence. 
    Then there exists $V=\lim_{j\to\infty}|\bd\Phi_{i_j}(x_j)|\in \mf C(\{\Phi_i\})$ for some $\{i_j\}\subset\{i\}$ and $\{x_j\}\subset X\setminus Z$ with $\lim_{j\to\infty}\mc A^h(\Phi_{i_j}(x_i))=\mf L^h(\Pi)$ so that 
    \begin{itemize}
        \item[(i)] $\Phi_{i_j}(x_j)$ converges in the $\mf F$-topology to some $\Omega\in \cCcG(M)$ with $\mc A^h(\Omega)=\mf L^h(\Pi)$;
        \item[(ii)] $V=|\bd\Omega|$, and $\Sigma=\bd\Omega$ is a nontrivial smooth, closed, almost embedded $G$-hypersurface with prescribed mean curvature $h$ (with respect to the outer unit normal $\nu_{\bd\Omega}$). 
    \end{itemize}
\end{theorem}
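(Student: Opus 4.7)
The plan is to adapt the Zhou--Zhu PMC min-max framework (and its equivariant CMC extension by the author and Wu) to the relative $(X,Z)$-homotopy setting under the $\mc G$-symmetric constraint, proceeding in five main steps.

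\textbf{Step 1: Pull-tight.} Using the strict inequality \eqref{Eq: width > relative}, I would perform an $\mc F$-continuous pull-tight deformation in $\cCcG(M)$ on the minimizing sequence $\{\Phi_i\}$. The deformation is constructed from a $G$-equivariant vector field depending continuously on $\Omega\in\cCcG(M)$ (averaging the non-equivariant construction via \eqref{Eq: averaged vector field} and Lemma \ref{Lem: 1st variation for G-boundary}); this keeps us inside $\cCcG(M)$ (in the $G_\pm$ case, because $h\nu$ is $G$-invariant by Remark \ref{Rem: lift to 2-cover}). The relative condition $\mf L^h(\Pi)>\sup_Z\mc A^h(\Phi_0(\cdot))$ ensures the deformation is trivial on $Z$, so $\Pi$ is preserved. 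The outcome is a new minimizing sequence whose every critical varifold $V\in\mf C(\{\Phi_i\})$ is $G$-stationary with $(G,c_0)$-bounded first variation for $c_0=\max_M|h|$, by Lemma \ref{Lem: Gc-bounded 1st variation and c-bounded}.

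\textbf{Step 2: Almost minimizing in $G$-annuli.} Next, I would run a combinatorial Almgren--Pitts argument, adapted to the relative $(X,Z)$-class and to $\mc G$-equivariant deformations, to produce $V\in\mf C(\{\Phi_i\})$ that is $\mc G$-almost minimizing in sufficiently small $G$-annuli $A_{s,t}(G\cdot p)$ for every $p\in M$ (with respect to the $\mc A^h$-functional, in the sense of Zhou--Zhu modified to $\cCcG$). Crucially, because of \eqref{Eq: width > relative}, the concentration point $x_j\in X\setminus Z$ away from the subcomplex $Z$; this is where the strict inequality enters again.

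\textbf{Step 3: $\mc G$-replacements with optimal regularity.} For such a $V=\lim_j|\bd\Phi_{i_j}(x_j)|$, I would run the standard replacement procedure inside $A_{s,t}(G\cdot p)$ by taking $(\mc G,\mc A^h)$-minimizers in $\cCcG$ with prescribed boundary values. Theorem \ref{Thm: regularity of G,Ah-minimizers} applies (in both the $G$ and the $G_\pm$ cases under \eqref{Eq: free Gpm}), giving smooth embedded support of the replacement up to a $G$-set of dimension $\le n-7$, which is empty by \eqref{Eq: dimension assuption}. By construction, each replacement $V^*$ is still $G$-stationary with $(G,c_0)$-bounded first variation and lies in the same mass class.

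\textbf{Step 4: Good $\cGh$-replacement property and regularity of $V$.} Iterating replacements on overlapping $G$-annuli and gluing via a maximum principle / unique continuation argument for the $h$-PMC equation, I would show $V=|\bd\Omega|$ for some $\Omega\in\cCcG(M)$ and that $\spt\|V\|$ is a smooth, closed, almost embedded $\mc G$-hypersurface with mean curvature $h$ along the $G$-invariant outer unit normal $\nu_{\bd\Omega}$. Here Remark \ref{Rem: small touching and no minimal} ensures the touching set of $\spt\|V\|$ is $(n-1)$-rectifiable and that no minimal sheet appears (essential to apply the stable compactness Theorem \ref{Thm: compactness for stable (G,h)-hypersurfaces} when passing to limits of replacements), and Lemma \ref{Lem: stable and G-stable} lets me upgrade $\mc G$-stability of replacements to full stability in the relevant balls.

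\textbf{Step 5: $\mf F$-convergence and nontriviality.} Finally, the mass bound $\mf M(\bd\Phi_{i_j}(x_j))\to\mf M(\bd\Omega)$ together with the $\mf F$-precompactness gives $\Phi_{i_j}(x_j)\to\Omega$ in $\mf F$ with $\mc A^h(\Omega)=\mf L^h(\Pi)$; nontriviality of $V$ follows once more from \eqref{Eq: width > relative}, which forbids $\mf L^h(\Pi)=0$ being realized by a point in $Z$ or by the trivial cycle.

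The main obstacle, as in the non-equivariant PMC theory, is the regularity step: showing that the almost minimizing replacements and their iterates produce a \emph{single} almost embedded limit hypersurface in $\cCcG$ with the correct mean curvature $h$, without developing spurious singularities or extra multiplicity. The equivariant layer adds two complications, namely the preservation of the $G_\pm$-signed symmetric structure under every averaging/deformation (handled through \eqref{Eq: free Gpm} and Remark \ref{Rem: lift to 2-cover}) and the need that every singular set of an $\mc A^h$-minimizer in $\cCcG$ be empty — this is exactly what \eqref{Eq: dimension assuption} guarantees through Theorem \ref{Thm: regularity of G,Ah-minimizers}.
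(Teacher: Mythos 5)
Your five-step plan follows the same architecture as the paper's proof: pull-tight (Lemma \ref{Lem: tightening}), production of a $\cGh$-almost minimizing varifold $V$ via a relative combinatorial deformation (Theorem \ref{Thm: existence of amv}), $\cGh$-replacements via constrained $(\mc G,\mc A^h)$-minimization and the regularity Theorem \ref{Thm: regularity of G,Ah-minimizers} (Proposition \ref{Prop: amv implies good replacement}), and iteration/gluing/unique continuation to obtain a closed almost embedded $\cGh$-boundary (Theorem \ref{Thm: main regularity}, Corollary \ref{Cor: min-max boundary}). The role you attribute to \eqref{Eq: free Gpm}, Remark \ref{Rem: lift to 2-cover}, Lemma \ref{Lem: stable and G-stable}, Remark \ref{Rem: small touching and no minimal}, and Theorem \ref{Thm: compactness for stable (G,h)-hypersurfaces} in preserving the $\mc G$-structure and in the regularity step matches the paper.

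There is, however, a genuine gap in Step 1. You assert that the pull-tight deformation alone yields critical varifolds with $(G,c_0)$-bounded first variation. That is not what the tightening lemma gives in this relative setting: the map $H$ in Lemma \ref{Lem: tightening} is required to fix everything in $A^c_\infty$, which by design contains not only varifolds with $c$-bounded first variation but also the whole set $|\bd\Phi_0|(Z)$. Hence after tightening one only knows that each $V\in\mf C(\{\Phi_i\})$ either has $c$-bounded first variation \emph{or} belongs to $|\bd\Phi_0|(Z)$, and the strict inequality \eqref{Eq: width > relative} does not by itself eliminate the second alternative at the level of varifolds (the $\mc A^h$-value is a functional of the Caccioppoli set, not of the varifold). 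The paper closes this gap by a separate argument in the proof of Theorem \ref{Thm: Equivariant min-max for PMC}: the $\cGh$-almost-minimizing-in-annuli property gives $c$-bounded first variation only away from finitely many orbits $\{G\cdot p_i\}_{i=1}^l$, and then a Harvey--Lawson cut-off trick using $\dist(G\cdot p_i,\cdot)$ produces the density estimate $\|V\|(B_\rho(G\cdot p_i))\leq C\rho^{n-\dim(G\cdot p_i)-1/2}$, which lets the first variation extend across each $G\cdot p_i$. Without this extension step one cannot even invoke the replacement machinery of your Steps 3--4, since Definition \ref{Def: replacement} and Theorem \ref{Thm: main regularity} take $c$-bounded first variation on all of $M$ as a standing hypothesis. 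You should either carry out this extension explicitly or justify why the relative pull-tight in $\cCcG(M)$ already forces the stronger conclusion.
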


\subsection{Discretization and interpolation results}\label{Subsec: discret/interpolate}

We record several discretization and interpolation results developed by Marques-Neves \cite{marques2014willmore}\cite{marques2017existence} (for closed manifolds), and extended by the author \cite{wang2022min}\cite{wang2023free} (in equivariant settings). 
We here only point out some modifications to adapt for sweepouts in $\cCcG(M)$. 

Firstly, we refer to \cite{zhou2020pmc}*{\S 4} for the notations of discrete sweepouts, where the domain $[0,1]$ and the target space $\mc C(M)$ are replaced by $X$ and $\cCcG$ respectively. 
Next, for any discrete map $\phi: X(k)_0\to\cCcG$, the {\em fineness} of $\phi$ is defined as 
\[\mf f(\phi) :=\{\mc F(\phi(x)-\phi(y)) + \mf M(\bd\phi(x)-\bd\phi(y)) : \{x,y\}=\alpha_0,\alpha\in X(k)_1\}.\]
In addition, given a flat $\mc F$-continuous map $\Phi: X\to \cCcG(M)$, we say $\Phi$ has {\em no concentration of mass on orbits} if 
\[\lim_{r\to 0}\sup\{\|\bd\Phi(x)\|(B_r(G\cdot p)) : p\in M, x\in X\} = 0.\]
This is a mild technical condition, which is satisfied if $\bd\circ\Phi$ is $\mf F$-continuous (\cite{wang2022min}*{Lemma 8}). 

Using the following discretization theorem, we can generate
a homotopy sequence of discrete maps into $\cCcG(M)$ from certain continuous maps $\Phi$. 

\begin{theorem}[Discretization, \cite{zhou2020multiplicity}*{Theorem 1.11},\cite{wang2022min}*{Theorem 2}]\label{Thm: discretization}
Let $\Phi: X\to \cCcG(M)$ be a flat $\mc F$-continuous map with no concentration of mass on orbits and $\sup_{x\in X}\mf M(\bd\Phi(x))<+\infty$. 
Suppose $\Phi\llcorner Z$ is continuous in the $\mf F$-topology. 
Then there exist sequences of maps 
\[\phi_i: X(k_i)_0\to\cCcG(M),\qquad \psi_i: I(k_i)_0\times X(k_i)_0\to\cCcG(M),\]
with $k_i<k_{i+1}$, $\psi_i(0,\cdot)=\phi_{i-1}\circ\mf n(k_i,k_{i-1})$, $\psi_i(1,\cdot)=\phi_i$, and a sequence of $\delta_i\to 0$ so that 
\begin{enumerate}[label=(\roman*)]
    \item 
    $\mf f(\psi_i)<\delta_i$;
    \item $\sup\{\mc F(\psi_i(t,x)-\Phi(x)) : t\in I(k_i)_0, x\in X(k_i)_0\}\leq\delta_i$;
    \item for some sequence $l_i\to\infty$ with $l_i<k_i$, 
    \[
    \mf M(\bd \psi_i(t,x)) \leq \sup\{\mf M(\bd\Phi(y)): x,y\in\alpha,\alpha\in X(l_i)\} + \delta_i,
    \]
    which directly implies 
    \[\sup\{\mf M(\bd\phi_i(x)): x\in X(k_i)_0\}\leq \sup\{\mf M(\bd\Phi(x)): x\in X\} + \delta_i.\]
\end{enumerate}
In addition, it follows from (iii) and the $\mf F$-continuity of $\Phi\llcorner Z$ that 
\[ \mf M(\bd\psi_i(t,x)) \leq \mf M(\bd\Phi(x)) + \eta_i, \qquad\forall t\in I(k_i)_0, x\in Z(k_i)_0, \]
for some $\eta_i\to 0$ as $i\to\infty$. 
Combining with \cite{marques2014willmore}*{Lemma 4.1} and (ii), we get 
\begin{enumerate}
    \item[(iv)] $\sup\{\mf F(\psi_i(t,x), \Phi(x)): t\in I(k_i)_0, x\in Z(k_i)_0\} \to 0$ as $i\to\infty$;
    \item[(v)] for any $h\in C^\infty_{\mc G}(M)$ with $c=\sup_M |h|$, we have from (ii)(iii) that 
    \[\Ah(\phi_i(x))\leq \sup\{\Ah(\Phi(y)): x,y\in\alpha, \alpha\in X(l_i)\} + (1+c)\delta_i,\]
    and thus, $\sup\{\Ah(\phi_i(x)) : x\in X(k_i)_0\}\leq \sup\{\Ah(\Phi(x)): x\in X\} + (1+c)\delta_i$. 
\end{enumerate}
\end{theorem}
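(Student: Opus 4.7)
The plan is to reduce items (i)--(iii) directly to the previously established equivariant discretization of \cite{wang2022min}*{Theorem 2} (which is itself an equivariant upgrade of the Marques--Neves pipeline), and then derive (iv)--(v) as corollaries of (ii), (iii), and the $\mf F$-continuity of $\Phi\llcorner Z$. The overall strategy of the cited theorem proceeds in three stages: use the no-concentration-of-mass-on-orbits hypothesis together with $\sup_{x\in X}\mf M(\bd\Phi(x))<+\infty$ to find, for any $\varepsilon>0$, a subdivision $X(l)$ on which $\mc F$ and $\mf M\circ\bd$ oscillate by at most $\varepsilon$ on each cell; fill the resulting discrete grid $X(k_i)_0$ via an equivariant Almgren-type isoperimetric/interpolation construction, yielding $\phi_i$ with $\mf f(\phi_i)<\delta_i$; then construct the discrete homotopy $\psi_i$ between $\phi_{i-1}\circ\mf n(k_i,k_{i-1})$ and $\phi_i$ on the product grid by the same filling scheme. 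To keep values in $\cCcG(M)$, one uses the $G$-averaging operator from \cite{wang2022min}*{\S 3} in the case $\mc G=G$; in the case $\mc G=G_\pm$, the freeness hypothesis \eqref{Eq: free Gpm} splits local tubes as $B_r(G\cdot p)=B_r(G_+\cdot p)\sqcup B_r(G_-\cdot p)$, so the same $G_+$-averaging applies on one piece and is then extended by sign-reversal to the other piece, preserving the $G_\pm$-signed symmetry of each $\phi_i(x)$ and $\psi_i(t,x)$.

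For (iv), I would combine (iii) with the $\mf F$-continuity of $\Phi\llcorner Z$. The compactness of $Z$ together with $\mf F$-continuity implies that $x\mapsto\mf M(\bd\Phi(x))$ is uniformly continuous on $Z$, so as $l_i\to\infty$ the fineness of cells in $X(l_i)$ forces
\[\sup\bigl\{\mf M(\bd\Phi(y)): x,y\in\alpha,\ \alpha\in X(l_i)\bigr\}\ \le\ \mf M(\bd\Phi(x))+\eta_i\]
uniformly on $Z(k_i)_0$, for some $\eta_i\to 0$. Combined with (iii) this yields the stated mass bound. Item (ii) simultaneously provides $\mc F(\psi_i(t,x)-\Phi(x))\le\delta_i\to 0$; then invoking \cite{marques2014willmore}*{Lemma 4.1} to convert flat convergence with matching mass bounds into $\mf F$-convergence gives (iv). For (v), write $\Ah(\phi_i(x))=\mf M(\bd\phi_i(x))-\int_{\phi_i(x)}h$ and apply (iii) to the first term; for the second, (ii) gives $\bigl|\int_{\phi_i(x)}h-\int_{\Phi(y)}h\bigr|\le c\,\mc F(\phi_i(x)-\Phi(y))\le c\delta_i$ for any $y$ in a common $X(l_i)$-cell with $x$, and summing the two estimates yields (v) with the constant $(1+c)$.

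The main obstacle is not in verifying (iv)--(v), which are essentially bookkeeping, but in guaranteeing that the local Almgren fillings used in the pipeline remain $\mc G$-invariant with values in $\cCcG(M)$. In the $\mc G=G$ case this is handled by the equivariant isoperimetric lemma already in \cite{wang2022min}*{\S 3}. In the $\mc G=G_\pm$ case, the delicate point is that the averaging operator must respect the sign change across $G_-$; here one crucially uses \eqref{Eq: free Gpm} (exactly as in Remark \ref{Rem: lift to 2-cover}) to localize the averaging on each $G_+$-orbit and then reflect it to the corresponding $G_-$-orbit, ensuring that the outputs $\phi_i(x)$ and $\psi_i(t,x)$ remain $G_\pm$-signed symmetric Caccioppoli sets. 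Once this invariance is secured, the quantitative estimates (i)--(v) carry over unchanged from the non-equivariant Marques--Neves theory.
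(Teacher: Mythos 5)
Your proposal takes essentially the same route as the paper, whose own proof is a brief reduction to the earlier discretization theorems \cite{marques2014willmore}*{Theorem 13.1}, \cite{marques2017existence}*{Theorem 3.9}, and \cite{wang2022min}*{Theorem 2}, together with an equivariant isoperimetric lemma (the paper cites \cite{wang2024Ricci}*{Lemma 2.2}) to keep the Almgren-type fillings inside $\cCcG(M)$. Your elaboration of the $G_\pm$-signed case via the freeness hypothesis \eqref{Eq: free Gpm} and your derivations of (iv)--(v) from (ii), (iii), the $\mf F$-continuity of $\Phi\llcorner Z$, and \cite{marques2014willmore}*{Lemma 4.1} follow the intended argument.
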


\begin{proof}
    This theorem was proved in \cite{marques2014willmore}*{Theorem 13.1}\cite{marques2017existence}*{Theorem 3.9} and \cite{wang2022min}*{Theorem 2} for mappings into $\mc Z_n(M;\mb Z_2)$ and $\mc B^G(M;\mb Z_2)$ respectively. 
    Using the isoperimetric lemma \cite{wang2024Ricci}*{Lemma 2.2}, the arguments would also carry over in our setting. 
\end{proof}

We can also get an $\mf F$-continuous map into $\cCcG(M)$ from a discrete map with small fineness. 
\begin{theorem}[Interpolation, \cite{zhou2020multiplicity}*{Theorem 1.12},\cite{wang2022min}*{Theorem 3}]\label{Thm: interpolation}
    There are positive constants $C_0=C_0(M,G,m)$ and $\delta_0=\delta_0(M)$ so that if $Y$ is a cubical subcomplex of $I(m,k)$ and 
    \[\phi: Y_0\to \cCcG(M)\]
    has $\mf f(\phi)<\delta_0$, then there exists a map (called the {\em Almgren $\mc G$-extension} of $\phi$)
    \[\Phi: Y\to\cCcG(M)\]
    continuous in the $\mf F$-topology and satisfying
    \begin{enumerate}[label=(\roman*)]
        \item $\Phi(x)=\phi(x)$ for all $x\in Y_0$;
        \item for any $j$-cell $\alpha\in Y_j$, $\Phi\llcorner\alpha$ depends only on $\phi\llcorner\alpha_0$;
        \item $\sup\{\mf F(\Phi(x),\Phi(y)) : x,y \mbox{ lie in a common cell of } Y\}\leq C_0\mf f(\phi)$.
    \end{enumerate}
\end{theorem}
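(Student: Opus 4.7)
The plan is to construct the Almgren $\mc G$-extension by induction on the skeleta of $Y$, mirroring the Marques--Neves interpolation argument \cite{marques2014willmore}*{Theorem 13.1} as adapted to the $G$-invariant setting in \cite{wang2022min}*{Theorem 3}, and pushing it through the $G_\pm$-signed symmetric case. The central analytic input is an equivariant isoperimetric lemma in $\cCcG(M)$, which the paper has already invoked (\cite{wang2023G-index}*{Lemma 2.1}): there exist $\delta_0,C>0$ so that if $\Omega_1,\Omega_2\in \cCcG(M)$ satisfy $\mc F(\Omega_1-\Omega_2)<\delta_0$, then there is $Q\in\cCcG(M)$ with $\partial Q=\Omega_1-\Omega_2$ and $\mf M(\partial Q)\le C\,\mc F(\Omega_1-\Omega_2)$. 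This is the only place where the structure of $\cCcG(M)$ enters; everything else is combinatorial.

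On the $0$-skeleton, set $\Phi:=\phi$. Suppose $\Phi$ has been defined, $\mf F$-continuously, on the $(j-1)$-skeleton with cell-wise oscillation at most $C_{j-1}\,\mf f(\phi)$. For each $j$-cell $\alpha\in Y_j$, pick a distinguished vertex $v_\alpha\in\alpha_0$ and realize $\alpha$ as a cone $v_\alpha * \partial\alpha$. Using the isoperimetric lemma cell by cell, I interpolate $\mf F$-continuously along each radial segment from $\Phi(v_\alpha)$ to $\Phi(x)$ for $x\in\partial\alpha$, with $\mf F$-length bounded by $C\cdot\mathrm{osc}_\alpha\Phi\le C\cdot C_{j-1}\mf f(\phi)$; concatenating in the fiber direction and then in the cone direction gives $\Phi|_\alpha$. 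Property (ii) is built in, since the filling is constructed purely from the values $\phi|_{\alpha_0}$ via a canonical combinatorial recipe on the cell; property (iii) follows by the triangle inequality with a constant $C_0=C_0(M,G,m)$ depending only on $C$ and on $m$ (the number of iterated cone interpolations). The smallness threshold $\delta_0$ propagates: one must choose $\mf f(\phi)$ small enough that all intermediate $\mc F$-distances remain below the isoperimetric threshold, and since each stage multiplies by a fixed dimensional factor this only costs a constant depending on $m$.

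The main obstacle is keeping every interpolant inside $\cCcG(M)$, which is precisely what forces the passage through an equivariant isoperimetric filling. For $\mc G=G$ one can simply average a generic isoperimetric filling by the Haar measure, as in \cite{wang2022min}. For $\mc G=G_\pm$, however, $G$-averaging destroys signed-symmetry, so a different device is needed. My plan is to exploit the freeness condition \eqref{Eq: free Gpm}: it implies $M$ admits a closed $G_+$-set $D$ with $M=D\cup g_-\cdot D$ and $D\cap(g_-\cdot D)$ of measure zero, and any $\Omega\in\mc C^{G_\pm}(M)$ is determined by the $G_+$-invariant restriction $\Omega\cap D$ via $\Omega\cap(g_-\cdot D)=g_-\cdot(D\setminus\Omega)$. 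One can then run the $G_+$-equivariant Almgren interpolation on $D$ and extend by the signed involution $g_-$, which doubles the constants but preserves both $\cCcG$-membership and the linear-in-$\mf f(\phi)$ estimates. Once this reduction is in place, the remainder is essentially bookkeeping along the skeletal induction, and the claimed $C_0=C_0(M,G,m)$, $\delta_0=\delta_0(M)$ follow.
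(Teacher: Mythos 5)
Your approach departs from the paper's in a substantive way. Rather than re-running a skeletal induction inside $\cCcG(M)$, the paper first observes that $\bd\colon\cCcG(M)\to\cBcG(M;\mb Z_2)$ is a double cover, applies the already-established Almgren extension in the cycle space to $\bd\circ\phi$ (the results of \cite{marques2014willmore}*{Theorem 14.1} and \cite{wang2022min}*{Theorem 3} carry over verbatim to $\cBcG(M;\mb Z_2)$), and then lifts the resulting $\mf M$-continuous map $\Phi^\bd$ uniquely back to $\cCcG(M)$ via the isoperimetric lemma \cite{wang2023G-index}*{Lemma 2.1} and the procedure of \cite{marques2021morse}*{Claim 5.2}. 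This route makes the cases $\mc G=G$ and $\mc G=G_\pm$ indistinguishable, since $\bd\Omega$ is $G$-invariant in both, and it avoids re-deriving the delicate Almgren cell-filling construction in the Caccioppoli-set space.

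Your direct sketch has concrete gaps. First, the isoperimetric statement you invoke is dimensionally impossible: $\Omega_1-\Omega_2$ is an $(n+1)$-chain in an $(n+1)$-manifold, so no $Q\in\cCcG(M)$ can have it as a boundary; the correct lemma fills the $n$-cycle $\bd\Omega_1-\bd\Omega_2$ by a small-mass $Q\in\cCcG(M)$. Second, once stated correctly, equivariance of the filling is automatic, so your averaging and fundamental-domain devices are both unnecessary and not well-defined. A top-dimensional filling of a $G$-invariant $n$-cycle is $G$-invariant by the Constancy Theorem whenever its mass is below $\Vol(M)/2$, and in the $G_\pm$ case $\Omega_1-\Omega_2$ is already $G$-invariant since $g_-\cdot(\Omega_1-\Omega_2)=(M-\Omega_1)-(M-\Omega_2)=\Omega_1-\Omega_2$ mod $2$; moreover one cannot Haar-average a Caccioppoli set, since the average of characteristic functions is not a characteristic function. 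Third, the phrase ``interpolate $\mf F$-continuously along each radial segment'' compresses the entire Marques--Neves cell-filling machinery into one clause; that machinery, not the isoperimetric lemma, is where the constants $C_0,\delta_0$ and the mass control along the path actually come from, and it is exactly what the paper's lifting trick avoids having to re-derive in $\cCcG(M)$.
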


\begin{proof}
    This theorem was proved in \cite{marques2014willmore}*{Theorem 14.1} and \cite{wang2022min}*{Theorem 3} for mappings into $\mc Z_n(M;\mb Z_2)$ and $\mc B^G(M;\mb Z_2)$ respectively. 
    The arguments would also carry over for maps into $\mc B^{\mc G}(M;\mb Z_2)$. 
    Hence, the map $\bd\circ\phi : Y_0\to \cBcG(M;\mb Z_2)$ can be extended to a mass $\mf M$-continuous map $\Phi^\bd: Y\to \cBcG(M;\mb Z_2)$ satisfying (i)-(iii) with $\Phi^\bd,\bd \circ\phi,\mf M$ in place of $\Phi,\phi,\mf F$ respectively. 
    Next, we notice that $\bd: \cCcG(M)\to\cBcG(M;\mb Z_2)$ is a double cover (see \cite{marques2021morse}*{\S 5}, \cite{wang2023G-index}*{\S 2.1}), and using the isoperimetric lemma \cite{wang2023G-index}*{Lemma 2.1}, the procedure in \cite{marques2021morse}*{Claim 5.2} would lift $\Phi^\bd$ uniquely to a continuous map $\Phi: Y\to \cCcG(M)$ so that $\bd\circ\Phi=\Phi^\bd$ and $\Phi\llcorner Y_0=\phi$. 
    Finally, since $|\bd\Phi(x)|=|\Phi^\bd(x)|$, we know $\Phi$ is $\mf F$-continuous, and thus (i) and (ii) hold. 
    For (iii), we can use $\bd\circ\Phi=\Phi^\bd$ to see $\mf F(\Phi(x),\Phi(y)) = \mc F(\Phi^\bd(x)-\Phi^\bd(y))+\mf F(|\Phi^\bd(x)|,|\Phi^\bd(y)|)\leq 2C_0\mf f(\phi)$. 
\end{proof}

Combining the above interpolation theorem with the proof of \cite{zhou2020multiplicity}*{Proposition 1.14}, we immediately obtain the following homotopy equivalent property of Almgren $\mc G$-extensions. 
\begin{proposition}\label{Prop: homotopy equivalent Almgren extension}
    Let $Y$ be a cubical complex of $I(m,k)$, $\eta>0$, and $\phi_i: Y(l_i)_0\to\cCcG(M)$ ($i=1,2$) be two discrete maps. 
    Suppose $\phi_1$ and $\phi_2$ are {\em homotopic in $\cCcG(M)$ with fineness less than $\eta$}, that is, there exists $l>l_1,l_2$, and a map 
    \begin{align}\label{Eq: def discrete homotopy}
        \psi: I(1,k+l)_0\times Y(l)_0 \to \cCcG(M)
    \end{align}
    with $\mf f(\psi)<\eta$ and $\psi([i-1],)=\phi_i\circ \mf n(k+l,k+l_i)$ for $i=1,2$. 
    If $\eta<\delta_0$ in Theorem \ref{Thm: interpolation}, then the Almgren $\mc G$-extensions 
    \[\Phi_1,\Phi_2 : Y\to \cCcG(M)\]
    of $\phi_1,\phi_2$, respectively, are homotopic to each other in $(\cCcG(M),\mf F)$. 
\end{proposition}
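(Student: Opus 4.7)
The plan is to apply the Almgren $\mc G$-extension theorem (Theorem \ref{Thm: interpolation}) directly to the discrete homotopy $\psi$, regarded as a discrete map on the product cubical complex $\wti Y := I(1,k+l) \otimes Y(l)$, which embeds naturally in $I(m+1, k+l)$. Since $\mf f(\psi) < \eta < \delta_0$, Theorem \ref{Thm: interpolation} produces an $\mf F$-continuous map
\[
\Psi : [0,1] \times |Y| \to \cCcG(M), \qquad \Psi\llcorner (\wti Y)_0 = \psi,
\]
so that $t \mapsto \Psi(t, \cdot)$ is an $\mf F$-continuous homotopy of maps $|Y| \to \cCcG(M)$.

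The next step is to identify $\Psi([i-1], \cdot)$ with $\Phi_i$ up to $\mf F$-continuous homotopy, for $i = 1, 2$. By property (ii) of Theorem \ref{Thm: interpolation} applied to $\Psi$, the restriction $\Psi([i-1], \cdot)$ depends only on the 0-cell values of $\psi$ on $\{[i-1]\} \times Y(l)_0$, which are precisely $\phi_i \circ \mf n(k+l, k+l_i)$. Hence $\Psi([i-1], \cdot)$ coincides with the Almgren $\mc G$-extension $\wti \Phi_i$ of $\phi_i \circ \mf n(k+l, k+l_i)$ built over the refined subdivision $Y(l)$, whereas $\Phi_i$ is the Almgren $\mc G$-extension of $\phi_i$ built over the coarser $Y(l_i)$. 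The two agree at every vertex of $Y(l_i)_0$ but differ a priori on the interior of cells because Almgren extensions are assembled cell-by-cell and hence are sensitive to the cellular structure.

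To bridge $\Phi_i$ and $\wti \Phi_i$, I would construct a short auxiliary discrete homotopy $\tau_i : I(1,1)_0 \times Y(l)_0 \to \cCcG(M)$ linearly interpolating between the two subdivision conventions on $\phi_i$ (namely, $\tau_i([0], x) = \phi_i \circ \mf n(k+l, k+l_i)(x)$ and $\tau_i([1], x)$ given by a refinement of $\Phi_i$ on $Y(l)_0$). Because each $\tau_i$-step moves values only within a single $Y(l_i)$-cell on whose corners $\Phi_i$ and $\wti\Phi_i$ agree, one can arrange that $\mf f(\tau_i) < \eta < \delta_0$ by choosing the intermediate subdivision fine enough; Theorem \ref{Thm: interpolation} then upgrades each $\tau_i$ to an $\mf F$-continuous homotopy between $\Phi_i$ and $\wti\Phi_i$. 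Concatenating the three $\mf F$-continuous homotopies
\[
\Phi_1 \;\stackrel{\tau_1}{\sim}\; \wti\Phi_1 = \Psi([0], \cdot) \;\stackrel{\Psi}{\sim}\; \Psi([1], \cdot) = \wti\Phi_2 \;\stackrel{\tau_2}{\sim}\; \Phi_2
\]
yields the desired homotopy in $(\cCcG(M), \mf F)$.

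The main technical obstacle is the subdivision-mismatch bookkeeping between $Y(l_i)$ and $Y(l)$: one must ensure that the auxiliary $\tau_i$ really does have fineness below $\delta_0$, using that $\mf n(k+l, k+l_i)$ projects only within a single $Y(l_i)$-cell so that all intermediate comparisons are among vertices already controlled by $\mf f(\psi) < \eta$. This is a standard bookkeeping step in the Marques--Neves framework, and once it is established the rest of the proof is an immediate consequence of the Almgren $\mc G$-extension theorem and its property (ii).
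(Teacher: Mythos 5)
Your overall strategy is the right one and matches the approach the paper inherits from \cite{zhou2020multiplicity}*{Proposition 1.14}: apply Theorem \ref{Thm: interpolation} to $\psi$ on the product complex $I(1,k+l)\times Y(l)\subset I(m+1,k+l)$ to produce an $\mf F$-continuous $\Psi$, and observe via property (ii) of Theorem \ref{Thm: interpolation} that $\Psi([i-1],\cdot)$ is the Almgren $\mc G$-extension over the refined grid $Y(l)$ of $\phi_i\circ\mf n(k+l,k+l_i)$. You also correctly flag that this a priori need not coincide pointwise with $\Phi_i$, the extension of $\phi_i$ over the coarser grid $Y(l_i)$.

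The gap is in how you bridge $\Phi_i$ and $\Psi([i-1],\cdot)$. Your auxiliary discrete homotopy $\tau_i$ is not well-founded on two counts. First, on the $[1]$-side you take ``a refinement of $\Phi_i$ on $Y(l)_0$'', i.e.\ the sampled values $\Phi_i(x)$ for $x\in Y(l)_0$; but the Almgren $\mc G$-extension of that discrete map over $Y(l)$ is a new map assembled over the finer grid and is not, in general, $\Phi_i$ itself, so the homotopy produced by applying Theorem \ref{Thm: interpolation} to $\tau_i$ would not terminate at $\Phi_i$. Second, the asserted fineness bound $\mf f(\tau_i)<\eta<\delta_0$ requires controlling the mass-norm $\mf M$ difference of consecutive values, whereas Theorem \ref{Thm: interpolation}(iii) only controls the $\mf F$-oscillation of $\Phi_i$ within a cell by $C_0\mf f(\phi_i)$; $\mf F$-control does not yield $\mf M$-control, so $\mf f(\tau_i)$ is not a priori below $\delta_0$ and the interpolation theorem cannot be invoked. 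The argument this proposition actually relies on (supplied by the cited proof of \cite{zhou2020multiplicity}*{Proposition 1.14}) instead uses the naturality of the Almgren extension under refinement by nearest projection: since $\phi_i\circ\mf n(k+l,k+l_i)$ is constant on the outer subcells and, by property (ii), reproduces the same filling data on the middle subcells as $\phi_i$ does on each cell $\alpha\in Y(l_i)$, the extension over $Y(l)$ equals $\Phi_i\circ\sigma$ for a cell-preserving reparametrization $\sigma:|Y|\to|Y|$ fixing $Y(l_i)_0$ and homotopic to the identity. This gives $\Psi([i-1],\cdot)\simeq\Phi_i$ directly by a reparametrization homotopy, with no second application of the interpolation theorem and hence no fineness issue. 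Once this step is in place, your concatenation goes through.
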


Now, the proof of \cite{zhou2020multiplicity}*{Proposition 1.15} can be taken almost verbatim to our settings, and give the following result analogue to \cite{marques2017existence}*{Corollary 3.12}. 
\begin{proposition}\label{Prop: dis/inter homotopy}
    Given $\Phi: X\to (\cCcG(M),\mf F)$, let $\{\phi_i\}_{i\in\mb N}$ and $\{\psi_i\}_{i\in\mb N}$ be given by Theorem \ref{Thm: discretization} applied to $\Phi$. 
    Then the Almgren $\mc G$-extension $\Phi_i$ of $\phi_i$ is homotopic to $\Phi$ in $(\cCcG(M),\mf F)$ for $i$ large enough. 
    In particular, for $i$ large enough, there exist homotopy maps $\Psi_i: [0,1]\times X\to (\cCcG(M), \mf F)$ with $\Psi_i(0,\cdot)=\Phi_i$, $\Psi_i(1,\cdot)=\Phi$, and 
    \[\limsup_{i\to\infty}\sup_{t\in[0,1],x\in X} \mf F(\Psi_i(t,x),\Phi(x)) = 0.\]
    Hence, given $h\in C^\infty_{\mc G}(M)$, we have 
    $\overline{\lim}_{i\to\infty}\sup_{x\in X} \Ah(\Phi_i(x))\leq \sup_{x\in X}\Ah(\Phi(x))$.
\end{proposition}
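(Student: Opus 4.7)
The plan is to mimic the proofs of \cite{marques2017existence}*{Corollary 3.12} and \cite{zhou2020multiplicity}*{Proposition 1.15}, carried out in the $\cCcG(M)$-setting. First, note that since $\Phi$ is $\mf F$-continuous on the compact complex $X$, it is uniformly $\mf F$-continuous. Thus for $i$ sufficiently large, the discrete restriction $\Phi^i:= \Phi|_{X(k_i)_0} : X(k_i)_0 \to \cCcG(M)$ has fineness $\mf f(\Phi^i) < \delta_0/4$, where $\delta_0$ is the constant from Theorem \ref{Thm: interpolation}. Moreover, combining Theorem \ref{Thm: discretization}(ii) with the $\mf F$-continuity of $\Phi$ (which also forces $\bd\circ\Phi$ to have no concentration of mass on orbits, cf.\ \cite{wang2022min}*{Lemma 8}), we upgrade the $\mc F$-closeness to $\mf F$-closeness, so that $\sup_{x\in X(k_i)_0}\mf F(\phi_i(x), \Phi^i(x))\to 0$.

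Second, construct a discrete homotopy between $\phi_i$ and $\Phi^i$ (after trivially refining to a common grid via $\mf n(k_i,k_i)$): define $\theta_i : I(1,k_i+1)_0 \times X(k_i)_0 \to \cCcG(M)$ with $\theta_i(0,\cdot) = \phi_i$ and $\theta_i(1,\cdot) = \Phi^i$, which is a discrete homotopy of fineness tending to zero. For large $i$ this fineness is below $\delta_0$, so Proposition \ref{Prop: homotopy equivalent Almgren extension} applies and gives an $\mf F$-continuous homotopy between the Almgren $\mc G$-extensions $\Phi_i$ of $\phi_i$ and $\widetilde\Phi_i$ of $\Phi^i$. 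Next, compare $\widetilde\Phi_i$ with $\Phi$: for any $x\in X$ lying in a cell $\alpha$ of $X(k_i)$ and any vertex $v\in\alpha_0$, Theorem \ref{Thm: interpolation}(iii) yields $\mf F(\widetilde\Phi_i(x),\widetilde\Phi_i(v)) \leq C_0 \mf f(\Phi^i)$, and $\widetilde\Phi_i(v) = \Phi(v)$. Combined with the uniform $\mf F$-continuity of $\Phi$ on the shrinking cells of $X(k_i)$, this produces $\sup_{x\in X}\mf F(\widetilde\Phi_i(x), \Phi(x)) \to 0$. A cellwise Almgren-type interpolation on $X\times[0,1]$ (or, more directly, the convex combination of the discretizations of $\widetilde\Phi_i$ and $\Phi$ at a yet finer scale, followed by another application of Proposition \ref{Prop: homotopy equivalent Almgren extension}) gives an $\mf F$-continuous homotopy between $\widetilde\Phi_i$ and $\Phi$ whose sup-distance to $\Phi$ tends to zero with $i$. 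Concatenating this with the homotopy between $\Phi_i$ and $\widetilde\Phi_i$ produces the desired $\Psi_i$ with $\limsup_{i\to\infty}\sup_{(t,x)\in[0,1]\times X}\mf F(\Psi_i(t,x),\Phi(x)) = 0$.

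For the final $\mc A^h$ estimate, recall that the Almgren $\mc G$-extension on any cell $\alpha$ (see the construction in \cite{marques2014willmore}*{\S 14} and Theorem \ref{Thm: interpolation}) is built from the vertex values by isoperimetric choices, giving the mass bound $\mf M(\bd\Phi_i(x)) \leq \max_{v\in\alpha_0}\mf M(\bd\phi_i(v)) + C_0\,\mf f(\phi_i)$ for $x\in\alpha$. Combined with Theorem \ref{Thm: discretization}(iii)(v) and the $\mc F$-closeness (which controls $\int_{\Phi_i(x)} h$ via boundedness of $h$), we obtain $\sup_{x\in X}\Ah(\Phi_i(x)) \leq \sup_{x\in X}\Ah(\Phi(x)) + o(1)$, yielding the $\overline{\lim}$ inequality. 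The main obstacle is not conceptual but bookkeeping: one must verify at each step that the discrete maps, the interpolating homotopies, and the Almgren extensions all stay within $\cCcG(M)$, and that the mass/fineness estimates from Theorems \ref{Thm: discretization} and \ref{Thm: interpolation} propagate through the double application of Proposition \ref{Prop: homotopy equivalent Almgren extension}. The $\mc G$-equivariance is automatic since both Theorem \ref{Thm: interpolation} and Proposition \ref{Prop: homotopy equivalent Almgren extension} produce $\mc G$-invariant outputs from $\mc G$-invariant inputs.
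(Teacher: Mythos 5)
Your opening step has a genuine gap. You claim that because $\Phi$ is uniformly $\mf F$-continuous on the compact complex $X$, the restriction $\Phi^i := \Phi|_{X(k_i)_0}$ has fineness $\mf f(\Phi^i) < \delta_0/4$ for $i$ large. But the fineness is defined as
\[
\mf f(\phi) = \sup\bigl\{\mc F(\phi(x)-\phi(y)) + \mf M\bigl(\bd\phi(x)-\bd\phi(y)\bigr) : \{x,y\}=\alpha_0,\ \alpha\in X(k)_1\bigr\},
\]
i.e.\ it measures the \emph{mass} of the mod-$2$ difference of the boundaries, whereas $\mf F$-continuity of $\Phi$ only controls $\mc F(\Phi(x)-\Phi(y))$ and the varifold distance $\mf F(|\bd\Phi(x)|,|\bd\Phi(y)|)$. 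Two cycles can be arbitrarily $\mf F$-close as varifolds while their mod-$2$ sum has mass comparable to the sum of their masses (e.g.\ two nearby but non-coincident hypersurfaces), so $\mf F$-continuity does \emph{not} yield a bound on $\mf M(\bd\Phi(x)-\bd\Phi(y))$. The whole point of the discretization theorem (Theorem~\ref{Thm: discretization}) is that producing a discrete map with small fineness from an $\mc F$- or $\mf F$-continuous $\Phi$ requires the nontrivial isoperimetric construction; one cannot simply restrict $\Phi$ to a grid. Consequently the discrete homotopy $\theta_i$ between $\phi_i$ and $\Phi^i$ of small fineness is also unsubstantiated, and the subsequent appeal to Proposition~\ref{Prop: homotopy equivalent Almgren extension} does not apply.

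The paper's actual argument (citing \cite{zhou2020multiplicity}*{Proposition 1.15} and \cite{marques2017existence}*{Corollary 3.12}) avoids discretizing $\Phi$ directly. Instead it uses the discrete homotopies $\psi_i$ already furnished by Theorem~\ref{Thm: discretization}: each $\psi_i$ has fineness $\delta_i\to 0$ by construction, so its Almgren $\mc G$-extension gives an $\mf F$-continuous homotopy between $\Phi_{i-1}\circ\mf n(k_i,k_{i-1})$ and $\Phi_i$ (via Proposition~\ref{Prop: homotopy equivalent Almgren extension}). One then concatenates the homotopies for $j\geq i$, rescaling the time parameter, and shows the concatenation converges to $\Phi$ at $t=1$ using Theorem~\ref{Thm: discretization}(ii)--(iv) together with Theorem~\ref{Thm: interpolation}(iii). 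This produces $\Psi_i$ with the stated uniform closeness to $\Phi$. Your final paragraph on the $\mc A^h$-estimate is essentially correct and matches the deduction in Theorem~\ref{Thm: discretization}(v), but the homotopy construction itself needs to be redone along the lines above rather than through the restriction $\Phi^i$.
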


\subsection{Pull-tight}
Given $\Phi_0,\Pi,$ and $h$ as in Theorem \ref{Thm: Equivariant min-max for PMC}, define $L^c:=2\mf L^h(\Pi) + c\Vol(M)$ and 
\[A^c_\infty = \{V\in\mc V^G_n(M): \|V\|(M)\leq L^c, V \mbox{ has $c$-bounded first variation or }V\in|\bd\Phi_0|(Z)\}.\]
By the constructions in \cite{wang2022Gcmc}*{\S 4} (see also \cite{zhou2020pmc}*{\S 5}), we have a continuous map 
\[H: [0,1]\times (\cCcG(M),\mf F)\cap \{\mf M(\bd\Omega)\leq L^c\} \to (\cCcG(M),\mf F)\cap \{\mf M(\bd\Omega)\leq L^c\}\]
so that $H(0,\cdot)=\Id$; $H(t,\Omega)=\Omega$ if $|\bd\Omega|\in A^c_\infty$; and $\Ah(H(1,\Omega)) < \Ah(\Omega)$ if $|\bd\Omega|\notin A^c_\infty$.
Therefore, the proof of \cite{zhou2020multiplicity}*{Lemma 1.8} would carry over to get the following lemma.
\begin{lemma}[Tightening]\label{Lem: tightening}
    For any minimizing sequence $\{\Phi_i^*\}_{i\in\mb N}\in \Pi$, the sequence $\{\Phi_i:=H(1,\Phi_i^*(\cdot))\}_{i\in\mb N}$ is also a minimizing sequence in $\Pi$ so that $\mf C(\{\Phi_i\})\subset \mf C(\{\Phi_i^*\})$ and every element of $\mf C(\{\Phi_i\})$ either has $c$-bounded first variation, or belongs to $|\bd\Phi_0|(Z)$. 
\end{lemma}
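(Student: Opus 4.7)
The plan is to adapt the standard pull-tight argument of Pitts and Marques--Neves, powered by the continuous deformation $H$ constructed above. I would split the work into three checks: (a) the new sequence $\{\Phi_i\}$ still lies in $\Pi$, (b) $\{\Phi_i\}$ remains minimizing, and (c) the critical set inclusion together with the dichotomy for elements of $\mf C(\{\Phi_i\})$.

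For (a), I would concatenate the deformation $t\mapsto H(t,\Phi_i^*(\cdot))$ with the existing homotopy $\Psi_i^*:[0,1]\times X\to\cCcG(M)$ witnessing $\{\Phi_i^*\}\in\Pi$, producing a homotopy $\Psi_i$ from $\Phi_i=H(1,\Phi_i^*(\cdot))$ to $\Phi_0$. For the uniform $\mf F$-decay on $Z$ required by Definition~\ref{Def: relative homotopy sequence of mappings into C}, the key point is that for every $z\in Z$ we have $|\bd\Phi_0(z)|\in|\bd\Phi_0|(Z)\subset A^c_\infty$, so $H(t,\Phi_0(z))\equiv\Phi_0(z)$; combined with the continuity (hence uniform continuity on the $\mf F$-compact mass-bounded domain) of $H$ and the uniform $\mf F$-convergence $\Psi_i^*|_{[0,1]\times Z}\to\Phi_0$, the composed homotopy $\Psi_i$ also satisfies $\sup_{[0,1]\times Z}\mf F(\Psi_i,\Phi_0)\to 0$. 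Step (b) is then immediate: $\Ah(H(1,\Omega))\leq \Ah(\Omega)$ gives $\sup_x\Ah(\Phi_i(x))\leq\sup_x\Ah(\Phi_i^*(x))$, so $\limsup_i\sup_x\Ah(\Phi_i(x))\leq\mf L^h(\Pi)$, while $\{\Phi_i\}\in\Pi$ supplies the reverse inequality.

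For (c), take $V=\lim_j|\bd\Phi_{i_j}(x_j)|\in\mf C(\{\Phi_i\})$ with $\Ah(\Phi_{i_j}(x_j))\to\mf L^h(\Pi)$. Sandwiching $\Ah(\Phi_{i_j}(x_j))\leq \Ah(\Phi_{i_j}^*(x_j))\leq \sup_{x}\Ah(\Phi_{i_j}^*(x))$, both outer terms tend to $\mf L^h(\Pi)$, forcing $\Ah(\Phi_{i_j}^*(x_j))\to\mf L^h(\Pi)$. Using the $\mf F$-compactness of mass-bounded Caccioppoli sets, a further subsequence yields $\Phi_{i_j}^*(x_j)\to\Omega^*$ in $\mf F$; by continuity of $H$, $\Phi_{i_j}(x_j)\to H(1,\Omega^*)$, so $V=|\bd H(1,\Omega^*)|$ and $V^*:=|\bd\Omega^*|\in\mf C(\{\Phi_i^*\})$. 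The equality $\Ah(H(1,\Omega^*))=\Ah(\Omega^*)=\mf L^h(\Pi)$ combined with the strict-decrease property of $H$ off $A^c_\infty$ rules out $V^*\notin A^c_\infty$; hence $H(1,\Omega^*)=\Omega^*$, so $V=V^*\in \mf C(\{\Phi_i^*\})\cap A^c_\infty$, giving both the critical-set inclusion and the desired dichotomy.

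The main obstacle lies in (c): one must track the subsequence through two nested limits (for the $*$-sequence and for its image under $H(1,\cdot)$) in a compatible way, and then apply the strict-decrease property of $H$ at the limit. This relies crucially on the continuity of $H$ (so that varifold limits commute with the deformation) and on $A^c_\infty$ being closed in mass-bounded subsets of $\mc V^G_n(M)$, which itself combines lower semicontinuity of the first-variation norm under varifold convergence with the compactness of $|\bd\Phi_0|(Z)$ coming from the $\mf F$-continuity of $\Phi_0|_Z$.
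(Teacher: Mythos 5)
Your parts (a) and (b) take essentially the paper's route and are sound: the concatenation of homotopies, the identity $H(t,\Phi_0(z))=\Phi_0(z)$ on $Z$ (since $|\bd\Phi_0|(Z)\subset A^c_\infty$), and the monotonicity $\Ah(H(1,\Omega))\leq\Ah(\Omega)$ handle the membership in $\Pi$ and the minimizing property. The remark about closedness of $A^c_\infty$ in the mass-bounded varifold space is also correct.

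Part (c) has a genuine gap, however. You invoke ``the $\mf F$-compactness of mass-bounded Caccioppoli sets'' to extract a subsequential $\mf F$-limit $\Omega^*$ of $\Phi_{i_j}^*(x_j)$. This is false: the set $\{\Omega\in\mc C(M):\mf M(\bd\Omega)\leq L^c\}$ is compact in the flat topology, and $\{|\bd\Omega|:\mf M(\bd\Omega)\leq L^c\}$ is compact in the varifold topology, but these two limits need not match. Perimeter mass can concentrate and disappear in the flat limit, so the varifold limit of $|\bd\Phi_{i_j}^*(x_j)|$ need not equal $|\bd\Omega^*|$, and hence $\mf F(\Phi_{i_j}^*(x_j),\Omega^*)$ need not tend to $0$. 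Without $\mf F$-convergence of the input, you cannot apply the stated $\mf F$-to-$\mf F$ continuity of $H$ to deduce $\Phi_{i_j}(x_j)\to H(1,\Omega^*)$, nor can you conclude $\Ah(\Omega^*)=\mf L^h(\Pi)$ or identify $|\bd\Omega^*|$ with an element of $\mf C(\{\Phi_i^*\})$.

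The argument that \cite{zhou2020multiplicity}*{Lemma 1.8} (and hence this lemma) actually runs stays entirely at the varifold level and uses two quantitative properties built into the construction of $H$ (present in \cite{wang2022Gcmc}*{\S 4}, \cite{zhou2020pmc}*{\S 5}, though not restated in the paper): the $\Ah$-decrease $\Ah(\Omega)-\Ah(H(1,\Omega))$ is bounded below by a continuous function of $\mf F(|\bd\Omega|, A^c_\infty)$ that is strictly positive away from $A^c_\infty$, and the varifold displacement $\mf F(|\bd H(1,\Omega)|,|\bd\Omega|)$ is controlled by the same quantity. Your sandwich gives $\Ah(\Phi_{i_j}^*(x_j))-\Ah(\Phi_{i_j}(x_j))\to 0$; feeding this into the two estimates yields $\mf F(|\bd\Phi_{i_j}^*(x_j)|, A^c_\infty)\to 0$ and $\mf F(|\bd\Phi_{i_j}(x_j)|,|\bd\Phi_{i_j}^*(x_j)|)\to 0$ simultaneously, so $V=\lim_j|\bd\Phi_{i_j}(x_j)|=\lim_j|\bd\Phi_{i_j}^*(x_j)|\in\mf C(\{\Phi_i^*\})\cap A^c_\infty$, with no need for a Caccioppoli-set limit $\Omega^*$ at all. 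Replacing your appeal to $\mf F$-compactness of $\mc C(M)$ with these quantitative bounds on $H$ closes the gap.
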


\subsection{$\cGh$-almost minimizing}
In this subsection, we introduce the concept of $(\mc G,h)$-almost minimizing varifolds, and show the existence of such varifolds in equivariant min-max constructions.

\begin{definition}\label{Def: almost minimizing varifolds}
    Let $\mf \nu$ be the $\mc F$-norm, or the $\mf M$-norm, or the $\mf F$-metric. 
    For any given $\epsilon,\delta>0$ and an open $G$-set $U \subset M$, we define $\mk a^{\mc G,h}(U;\epsilon,\delta;\nu)$ to be the set of all $\Omega\in \cCcG(M)$ such that if $\Omega= \Omega_0,\Omega_1,\Omega_2,\dots,\Omega_m \in\cCcG(M)$ is a sequence with:
    \begin{itemize}
        \item[(i)] $\spt(\Omega_i - \Omega)\subset U$, 
        \item[(ii)] $\nu(\bd\Omega_{i+1}-\bd\Omega_i)\leq \delta$, 
        \item[(iii)] $\Ah(\Omega_i)\leq\Ah(\Omega)+\delta$, for $i=1,\dots,m$, 
    \end{itemize}
    then $\Ah(\Omega_m)\geq \Ah(\Omega) - \epsilon$.
    In addition, 
    a $G$-varifold $V\in\mc V^G_n(M)$ is said to be {\em $(\mc G,h)$-almost minimizing in $U$} if for any $\epsilon>0$, there exist $\delta>0$ and $\Omega\in\mk a^{\mc G,h}(U;\epsilon,\delta;\mc F)$ with $\mf F(|\bd\Omega|, V)\leq \epsilon$.
\end{definition}

\begin{definition}\label{Def: almost minimizing in annuli}
    We say a $G$-varifold $V \in \mc V_n^G(M)$ is {\em $(\mc G, h)$-almost minimizing in annuli} if for every $p \in M$, there exists $r_{am}(G \cdot p) > 0$ such that $V$ is $(\mc G, h)$-almost minimizing in $A_{s,t}(G\cdot p)$ for any $0 < s < t \leq r_{am}(G \cdot p)$. 
\end{definition}

\begin{remark}\label{Rem: almost minimizing in annuli}
    In Definition \ref{Def: almost minimizing varifolds} and thus \ref{Def: almost minimizing in annuli}, we used $\nu = \mc F$-norm to define $\cGh$-almost minimizing. 
    Nevertheless, it is equivalent to use $\nu=\mf F$ or $\mf M$ in Definition \ref{Def: almost minimizing in annuli} by shrinking $r_{am}$ if necessary. 
    Indeed, for $\mc G=G$ and $h=0$, the desired equivalence follows from \cite{wang2023free}*{Theorem 3.14} since $B_r(G\cdot p)$ has no {\em isolated orbit} (in the sense of \cite{wang2023free}*{Definition 2.1}) provided $r<\inj(G\cdot p)$. 
    Moreover, the proof of \cite{wang2023free}*{Theorem 3.14} is mainly based on a combinatorial argument using the isoperimetric lemma and the slicing trick, which also works in our $(\mc G,h)$-setting. 
\end{remark}

We also need the following helpful notations. 
\begin{definition}\label{Def: admissible family of annuli}
    For any $p\in M$ and $\mk c\geq 1$, let $\mk A:=\{A_{s_i,t_i} (G\cdot p)\}_{i=1}^{\mk c}$ be a set of $G$-annuli with $0< s_i < t_i < \inj(G\cdot p)$ for all $i=1,\dots,\mk c$. 
	We say $\mk A$ is a {\em $\mk c$-admissible} family of $G$-annuli, if $0<s_i<t_i<\frac{1}{2}s_{i+1} < \frac{1}{2}t_{i+1}$, for all $i=1,\dots,\mk c-1$. 
\end{definition}
By a contradiction argument, one easily get the following lemma. 
\begin{lemma}\label{Lem: admissible to all annuli amv}
	Let $V\in\mc V^G_n(M)$ be a $G$-varifold, $\mk c\geq 1$ be an integer. 
	If for any $\mk c$-admissible family of $G$-annuli $\mk A$, $V$ is $(\mc G,h)$-almost minimizing in some $A\in\mk A$. 
	Then $V$ is $(\mc G,h)$-almost minimizing in annuli.
\end{lemma}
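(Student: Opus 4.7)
The plan is a direct proof by contradiction, constructing a $\mk c$-admissible family that violates the hypothesis. Suppose $V$ is not $(\mc G,h)$-almost minimizing in annuli. Then, by Definition \ref{Def: almost minimizing in annuli}, there exists a point $p\in M$ such that for every $r>0$ there exist radii $0<s<t\leq r$ with $V$ failing to be $(\mc G,h)$-almost minimizing in $A_{s,t}(G\cdot p)$. The goal is to exploit this failure at arbitrarily small scales to produce $\mk c$ nested ``bad'' annuli satisfying the admissibility condition of Definition \ref{Def: admissible family of annuli}, which will contradict the assumption of the lemma.

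I would construct the radii $\{(s_i,t_i)\}_{i=1}^{\mk c}$ by \emph{downward induction} on $i$. For the base step $i=\mk c$, apply the failure assumption with $r=\inj(G\cdot p)$ to get $0<s_{\mk c}<t_{\mk c}<\inj(G\cdot p)$ such that $V$ is not $(\mc G,h)$-almost minimizing in $A_{s_{\mk c},t_{\mk c}}(G\cdot p)$. For the inductive step, suppose $s_{i+1}$ has been chosen with $0<s_{i+1}<\inj(G\cdot p)$. Apply the failure assumption once more with $r=s_{i+1}/3$ to obtain $0<s_i<t_i\leq s_{i+1}/3<s_{i+1}/2$ such that $V$ is not $(\mc G,h)$-almost minimizing in $A_{s_i,t_i}(G\cdot p)$. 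The strict bound $t_i<s_{i+1}/2$ guarantees the admissibility relation required in Definition \ref{Def: admissible family of annuli}.

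By construction, $\mk A:=\{A_{s_i,t_i}(G\cdot p)\}_{i=1}^{\mk c}$ is a $\mk c$-admissible family of $G$-annuli with the property that $V$ is not $(\mc G,h)$-almost minimizing in any $A\in \mk A$. This directly contradicts the hypothesis of the lemma and concludes the proof.

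There is no real obstacle here: the whole content of the lemma is the observation that, because the admissibility conditions only force successive inner radii to shrink by a factor of at least two, one can always realize $\mk c$ disjoint nested ``bad'' annuli whenever failures occur at arbitrarily small scales near a single orbit. The argument is purely combinatorial in the radii and uses nothing about the structure of $V$, $G$, or $h$ beyond the definitions themselves.
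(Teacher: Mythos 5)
Your proof is correct and follows the same downward-induction contradiction argument the paper has in mind (which it merely asserts ``by a contradiction argument'' without giving details). One cosmetic slip: in the base step, applying the failure of almost-minimizing with $r=\inj(G\cdot p)$ only yields $t_{\mk c}\leq\inj(G\cdot p)$, so take $r=\inj(G\cdot p)/2$ (say) to secure the strict inequality $t_{\mk c}<\inj(G\cdot p)$ required by Definition~\ref{Def: admissible family of annuli}.
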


Next, we show the existence of $\cGh$-almost minimizing varifolds. 
\begin{theorem}\label{Thm: existence of amv}
    Let $\Phi_0$, $\Pi$, and $h$ be given as in Theorem \ref{Thm: Equivariant min-max for PMC}. 
    Then for any minimizing sequence $\{\Phi_i\}_{i\in\mb N}\in\Pi$, there is a non-trivial $V\in\mf C(\{\Phi_i\}) \subset \mc V^G_n(M)$ so that 
    \begin{itemize}
        \item[(i)] $V$ has $c$-bounded first variation; 
        \item[(ii)] for any $\mk c$-admissible family of $G$-annuli $\mk A$, $V$ is $\cGh$-almost minimizing in some $A\in\mk A$, and thus $V$ is $\cGh$-almost minimizing in annuli by Lemma \ref{Lem: admissible to all annuli amv}, 
    \end{itemize}
    where $c=\sup_M|h|$ and $\mk c=(3^m)^{3^m}$. 
\end{theorem}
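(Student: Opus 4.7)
The plan is to combine the tightening lemma with a combinatorial Almgren--Pitts argument adapted to the $\mc G$-equivariant $\Ah$-setting, following the blueprints of \cite{zhou2020multiplicity}*{\S 1.6} and \cite{wang2022Gcmc}*{\S 5}. First I would replace $\{\Phi_i\}$ by the tightened minimizing sequence produced by Lemma \ref{Lem: tightening}; the hypothesis \eqref{Eq: width > relative} guarantees no element of $\mf C(\{\Phi_i\})$ can arise from $|\bd\Phi_0|(Z)$, because any such $V$ would have $\Ah$-value at most $\max_{x\in Z}\Ah(\Phi_0(x))<\mf L^h(\Pi)$. Hence every $V\in\mf C(\{\Phi_i\})$ has $c$-bounded first variation and is non-trivial, giving (i).

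For (ii) I would argue by contradiction. Assuming no $V\in\mf C(\{\Phi_i\})$ has property (ii), to each $V$ one attaches a $\mk c$-admissible family $\mk A_V=\{A_{s_k,t_k}(G\cdot p_k)\}_{k=1}^{\mk c}$ on every annulus of which $V$ fails to be $\cGh$-almost minimizing. Negating Definition \ref{Def: almost minimizing varifolds} in each annulus furnishes, for each $V$ and each $k$, some $\epsilon_{V,k}>0$ so that every competitor $\Omega\in\cCcG(M)$ with $\mf F(|\bd\Omega|,V)$ sufficiently small admits a finite $\cCcG$-deformation supported in $A_{s_k,t_k}(G\cdot p_k)$ that strictly decreases $\Ah$ by more than $\epsilon_{V,k}$ along all but the final step.

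Next I would discretize $\{\Phi_i\}$ via Theorem \ref{Thm: discretization} and localize these deformations to individual cells of $X(l_i)$ near the $\Ah$-maximal vertices. Here the choice $\mk c=(3^m)^{3^m}$ is essential: at any vertex of $X\subset I(m,j)$ at most $3^m$ top-dimensional cells meet, so along the $\mk c$ well-separated scales of any admissible family a pigeonhole iteration yields a scale at which the $G$-tubes $B_{t_k}(G\cdot p_k)$ carrying the deformations at neighbouring vertices are pairwise disjoint and compatible with the cell structure. Gluing the cell-wise deformations via the Almgren $\mc G$-extension (Theorem \ref{Thm: interpolation}, Proposition \ref{Prop: homotopy equivalent Almgren extension}) and reinterpolating back to a continuous map produces $\widetilde\Phi_i\in\Pi$ with
\begin{equation*}
\sup_{x\in X}\Ah(\widetilde\Phi_i(x))\le \mf L^h(\Pi)-\eta
\end{equation*}
for some $\eta>0$ independent of large $i$, contradicting the minimizing property of $\{\Phi_i\}$.

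The main obstacle, I expect, is keeping the deformations inside $\cCcG(M)$ throughout the combinatorial argument: every localized deformation in $B_r(G\cdot p)$ is forced by the averaging \eqref{Eq: averaged vector field} to be performed $G$-equivariantly on the full tube, and in the $\mc G=G_\pm$ case a deformation in $B_r(G_+\cdot p)$ must be mirrored in $B_r(G_-\cdot p)$ with the sign flip dictated by $\hat h\in C^\infty_{G_\pm}(M)$. Verifying that \eqref{Eq: free Gpm} keeps the two mirrored tubes disjoint for $r<\inj(G\cdot p)$, that the isoperimetric lemma from \cite{wang2023G-index}*{Lemma 2.1} lifts area-decreasing competitors from $\cBcG$ back to $\cCcG$ at small flat distance, and that the pigeonhole bound $\mk c=(3^m)^{3^m}$ still suffices after this $G_\pm$-doubling is the delicate part of the argument.
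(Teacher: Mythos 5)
Your proposal matches the paper's approach: part (i) via the tightening lemma together with the gap condition \eqref{Eq: width > relative}, and part (ii) via a contradiction argument that discretizes $\{\Phi_i\}$ (Theorem \ref{Thm: discretization}), runs the Almgren--Pitts combinatorial deformation adapted to $\cCcG(M)$ (the $\mc G$-generalization of \cite{zhou2020multiplicity}*{Theorem 1.16}), and re-interpolates (Theorem \ref{Thm: interpolation}, Propositions \ref{Prop: homotopy equivalent Almgren extension}, \ref{Prop: dis/inter homotopy}) to produce a competitor with strictly smaller $h$-width. The equivariance concerns you flag in the last paragraph are real but already built into the framework: \eqref{Eq: free Gpm} makes $B_r(G_+\cdot p)$ and $B_r(G_-\cdot p)$ disjoint, the $G_\pm$-signed symmetry of $\hat h$ together with $\Omega\in\mc C^{G_\pm}$ keeps $h\cdot\nu_{\bd\Omega}$ $G$-invariant (Remark \ref{Rem: lift to 2-cover}), and the combinatorial bound $\mk c=(3^m)^{3^m}$ is determined by the parameter space $X\subset I(m,j)$ and is unaffected by the doubling on the manifold side.
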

\begin{proof}
    Firstly, (i) follows from the tightening lemma (Lemma \ref{Lem: tightening}) and \eqref{Eq: width > relative}. 
    Next, we notice that \cite{zhou2020multiplicity}*{Theorem 1.16} can be adapted into our $\cGh$-setting using the equivariant modifications in \cite{wang2023G-index}*{Theorem 4.12}. 
    Finally, with Theorem \ref{Thm: discretization}, \ref{Thm: interpolation} and Proposition \ref{Prop: homotopy equivalent Almgren extension}, \ref{Prop: dis/inter homotopy} in place of \cite{zhou2020multiplicity}*{Theorem 1.11, 1.12, Proposition 1.14, 1.15} respectively, the proof in \cite{zhou2020multiplicity}*{P. 783-784} would also carry over to give (ii). 
    The main idea is that if no $V\in\mf C(\{\Phi_i\})$ satisfies (ii), then we can discretize each $\Phi_i$ to $\{\phi_i^j\}_{j\in\mb N}$ by Theorem \ref{Thm: discretization} and obtain a diagonal subsequence $\varphi_i=\phi_i^{j(i)}$ so that $\mf L^h(\Pi)= \mf L^h(\{\varphi_i\}):=\overline{\lim}_{i\to\infty}\sup_{{\rm dmn}(\varphi_i)}\Ah(\varphi_i(x))$. 
    Then by the $\mc G$-generalized \cite{zhou2020multiplicity}*{Theorem 1.16}, $\{\varphi_i\}_{i\in\mb N}$ can be deformed homotopically in $\cCcG(M)$ to a new sequence $\{\tilde{\varphi}_i\}_{i\in\mb N}$ with $\mf L^h(\{\tilde{\varphi}_i\})<\mf L^h(\{\varphi_i\})$, whose Almgren extensions $\{\tilde{\Phi}_i\}$ (Theorem \ref{Thm: interpolation}) belongs to $\Pi$ by Proposition \ref{Prop: homotopy equivalent Almgren extension}, and $\mf L(\{\tilde{\Phi}_i\})<\mf L^h(\Pi)$ as a contradiction. 
    See also \cite{sun2024multiplicity}*{Theorem 3.11, Step A, B} for a free boundary version. 
\end{proof}

\subsection{Regularity results}

To show the regularity of the min-max $G$-varifolds, we introduce the following definition of $\cGh$-replacements. 

\begin{definition}\label{Def: replacement}
	Let $U\subset M$ be an open $G$-subset, $h\in \cScG$ or $h\equiv 0$ with $c:=\sup_M|h|$, $\Omega\in\cCcG(M)$, and $V\in\mc V^G_n(M)$ be a $G$-varifold with $c$-bounded first variation in $U$ so that $\|\bd\Omega\|\leq \|V\|$. 
	For a compact $G$-set $K\subset U$, a pair $(V^*,\Omega^*)\in \mc V^G_n(M)\times \cCcG(M)$ with $\|\bd\Omega^*\|\leq \|V^*\|$ is said to be a {\em $\cGh$-replacement of $(V,\Omega)$ in $K$} if 
	\begin{enumerate}[label=(\roman*)]
		\item $V\llcorner(M\setminus K ) = V^* \llcorner (M\setminus K )$ and $\Omega \llcorner(M\setminus K ) = \Omega^*\llcorner(M\setminus K )$;
		\item $-c\Vol(K)\leq \|V\|(M) - \|V^*\|(M)\leq c\Vol(K)$;
		\item $V^*$ has $c$-bounded first variations in $U$;
		\item $V^*\llcorner\interior(K) = \sum_{i=1}^l m_i|\Sigma_i|$, where $\{m_i\}_{i=1}^l\subset\mb Z_+$ and $\{\Sigma_i\}_{i=1}^l$ are disjoint, smooth, almost embedded, stable $(G,h)$-hypersurfaces; 
		\item if $h\in\cScG$, then each $\Sigma_i$ is non-minimal with $m_i=1$, and 
			\begin{itemize}
				\item[(v.1)] $V^*=|\bd\Omega^* |$ in $\interior(K)$, and $\sum_{i=1}^l \llbracket \Sigma_i \rrbracket =\bd\Omega^*\llcorner \interior(K)$ is a $\cGh$-boundary in $\interior(K)$;
				\item[(v.2)] $V^{*}=|\bd\Omega^*|$ in $ K$ provided $\|V^*\|(\bd K)= 0$. 
			\end{itemize}
		\item[(vi)] if $h\equiv0$, then each $\Sigma_i$ is smoothly embedded, and (v.1), (v.2) also hold in any small open $G$-sets of $\interior(K),K$ respectively where $V^*$ has multiplicity one.  
	\end{enumerate}
	For simplicity, we also say $V^*$ is a $\cGh$-replacement of $V$ in $K$ (with respect to $\Omega$) and $\Omega^*$ is the associated region. 
\end{definition}

Note that $\Omega,\Omega^*$ represent the `orientation' of $V, V^*$ respectively, and Definition \ref{Def: replacement}(i)(v) indicate the matching of the orientations. 
If $h=0$, then we do not need the associated $\Omega,\Omega^*$. 

One also notices that if $V$ has $c$-bounded first variation in $M$ (not only in $U$), then the $G$-varifold $V^*$ in the above definition also has $c$-bounded first variation in $M$ by Definition \ref{Def: replacement}(i)(iii). 

\begin{definition}\label{Def: good replacement}
	We say $V\in\mc V^G_n(M)$ has {\em good $\cGh$-replacement property in $U$ w.r.t. $\Omega\in \cCcG(M)$}, if for any finite sequence of compact $G$-sets $\{K_i\subset U\}_{i=1}^l$, there exist $\{(V^{(i)}, \Omega^{(i)})\}_{i=0}^l\subset \mc V^G_n(M)\times\cCcG(M)$ so that $(V^{(0)},\Omega^{(0)})=(V,\Omega)$ and 
	\begin{itemize}
		\item $\|\bd\Omega^{(i)}\|\leq \|V^{(i)}\|$ for every $i=0,1,\dots,l$;
		\item $(V^{(i)},\Omega^{(i)})$ is a $\cGh$-replacement of $(V^{(i-1)},\Omega^{(i-1)})$ in $K_{i}$ for every $i=1,\dots, q$.
	\end{itemize}
	Moreover, $V$ is said to have {\em good $\cGh$-replacement property in annuli (w.r.t. $\Omega\in \cCcG(M))$}, if for any $p\in M$, there exists $r_{g.r.}(G\cdot p)>0$ so that $V$ has good $\cGh$-replacement property in every $A_{s,t}(G\cdot p)$, $0<s<t<r_{g.r.}(G\cdot p)$, (w.r.t. $\Omega$).
\end{definition}

\begin{remark}\label{Rem: replacement with bounded 1st variation}
	If $V$ has $c$-bounded first variation in $M$ and has good $\cGh$-replacement property in annuli (w.r.t. $\Omega$). Then for any $\cGh$-replacement $V^*$ of $V$ in $\closure(A_{s,t}(G\cdot p))$, $0<s<t<r_{g.r.}(G\cdot p)$, $V^*$ also has good $\cGh$-replacement property in annuli (w.r.t. the associated $\Omega^*$). 
\end{remark}

In the following two results, we see that the $G$-varifold $V$ in Theorem \ref{Thm: existence of amv} has good $\cGh$-replacement property in annuli if it has $c$-bounded first variation. 
\begin{proposition}\label{Prop: amv implies good replacement}
	Let $h\in \cScG\cup\{0\}$  with $c=\sup_M|h|$, and $3\leq \codim(G\cdot p)\leq 7$ for all $p\in M$. 
	If $V\in\mc V^G_n(M)$ is $\cGh$-almost minimizing in an open $G$-set $U\subset M$, then $V$ has good $\cGh$-replacement property in $U$ with respect to some $\Omega\in\cCcG(M)$. 
\end{proposition}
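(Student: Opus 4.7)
The plan is to adapt the Pitts-type replacement construction to the $\cGh$-setting, feeding an iterative $\Ah$-minimization scheme with the $(\mc G, h)$-almost minimizing hypothesis and extracting regularity from Theorem~\ref{Thm: regularity of G,Ah-minimizers} for $(\mc G, \Ah)$-minimizers and Theorem~\ref{Thm: compactness for stable (G,h)-hypersurfaces} for stable $(G, h)$-hypersurfaces. First, from Definition~\ref{Def: almost minimizing varifolds} I will extract a sequence $\{\Omega_j\} \subset \mk a^{\mc G, h}(U; \epsilon_j, \delta_j; \mc F)$ with $\epsilon_j, \delta_j \to 0$ and $\mf F(|\bd\Omega_j|, V) \to 0$. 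A subsequential $\mf F$-limit $\Omega \in \cCcG(M)$ will serve as the initial region of the proposition, with $\|\bd\Omega\| \leq \|V\|$ by lower semicontinuity.

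Given a finite sequence $\{K_i \subset U\}_{i=1}^{l}$ of compact $G$-sets, I will set $\Omega_j^{(0)} := \Omega_j$ and, for each $i = 1, \dots, l$, let $\Omega_j^{(i)}$ minimize $\Ah$ among all $\Lambda \in \cCcG(M)$ reachable from $\Omega_j^{(i-1)}$ by a finite $\mc F$-chain of step size $\leq \delta_j$, supported in $K_i$, with $\Ah$-increments $\leq \delta_j$; existence follows from mass compactness of Caccioppoli sets and lower semicontinuity of $\Ah$. Diagonal limits will yield $\Omega_j^{(i)} \to \Omega^{(i)}$ in $\mf F$ and $V_j^{(i)} := |\bd\Omega_j^{(i)}| \to V^{(i)}$ in the varifold sense. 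The regularity clauses of Definition~\ref{Def: replacement}(iv)--(vi) are the crux: in any geodesic $G$-tube $B_s(G \cdot p) \subset \interior(K_i)$, each $\Omega_j^{(i)}$ is $(\mc G, \Ah)$-minimizing for $j$ large (since any competitor supported in such a tube arises as a single step of the defining chain), hence so does $\Omega^{(i)}$, and Theorem~\ref{Thm: regularity of G,Ah-minimizers} provides smooth embedded $\mc G$-regularity of $\bd\Omega^{(i)}$ there (the codimension hypothesis eliminates the allowed singular set). Any excess multiplicity carried by $V^{(i)}$ beyond $|\bd\Omega^{(i)}|$ is captured as almost embedded stable $(G, h)$-sheets via Theorem~\ref{Thm: compactness for stable (G,h)-hypersurfaces}. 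When $h \in \cScG$, Remark~\ref{Rem: small touching and no minimal} excludes minimal components and forces multiplicity one, so $V^{(i)} = |\bd\Omega^{(i)}|$ in $\interior(K_i)$ as required by (v); when $h \equiv 0$, higher multiplicities are admitted and (vi) is obtained locally at multiplicity-one points. The mass bound (ii) comes from the $\delta_j$-step $\Ah$-control, the $c$-bounded first variation (iii) from $\Ah$-criticality inside $K_i$ together with coincidence with $V^{(i-1)}$ outside, and (i) from the chain constraint itself.

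The principal technical obstacle is propagating almost minimality along the iteration: one must verify that each $\Omega_j^{(i)}$ itself belongs to $\mk a^{\mc G, h}(U; \epsilon_j', \delta_j'; \mc F)$ for some $\epsilon_j', \delta_j' \to 0$, so that the same minimization procedure is well-posed at step $i + 1$. This propagation will follow by concatenating the defining chain of $\Omega_j^{(i)}$ with any competitor chain for $\Omega_j^{(i+1)}$ and invoking the almost-minimizing property of $\Omega_j^{(0)}$, a triangle-type argument in the spirit of \cite{pitts2014existence}*{\S 3.11} and \cite{zhou2020pmc}*{\S 5}. Keeping all chains inside $\cCcG(M)$ is automatic for $\mc G = G$ via the averaging \eqref{Eq: averaged vector field}; for $\mc G = G_\pm$, the freeness \eqref{Eq: free Gpm} together with the $G_\pm$-part of Theorem~\ref{Thm: regularity of G,Ah-minimizers} ensures both closure of $\mc C^{G_\pm}(M)$ under the chained deformations and the correct smooth structure of the minimizers. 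Combined with the orientation matching $\Omega^{(i)} = \Omega^{(i-1)}$ outside $K_i$ (enforced by the chain being supported in $K_i$), this completes the verification of Definition~\ref{Def: replacement} for each $(V^{(i)}, \Omega^{(i)})$, and hence of Definition~\ref{Def: good replacement}.
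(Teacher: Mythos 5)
Your overall route matches the paper's: extract an approximating sequence $\{\Omega_j\}$ from the almost-minimizing hypothesis, run iterated constrained $\Ah$-minimization within $\mc F$-chains supported in each $K_i$, and pass to diagonal limits, with regularity supplied by Theorem~\ref{Thm: regularity of G,Ah-minimizers} and Theorem~\ref{Thm: compactness for stable (G,h)-hypersurfaces}. Two steps in your outline, however, have real gaps.

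First, Theorem~\ref{Thm: compactness for stable (G,h)-hypersurfaces} requires the approximating hypersurfaces $\bd\Omega_j^{(i)}\llcorner\interior(K_i)$ to be \emph{stable}, not merely $G$-stable. Constrained minimization over $\cCcG(M)$ only yields $G$-stability (equivariant isotopies cannot decrease $\Ah$); the upgrade to honest stability is via Lemma~\ref{Lem: stable and G-stable}, which applies precisely because Theorem~\ref{Thm: regularity of G,Ah-minimizers} shows these boundaries are \emph{embedded}. You never make this transition, and without it the compactness theorem does not apply, so the claimed structure of $V^{(i)}\llcorner\interior(K_i)$ does not follow. Relatedly, your phrasing that ``excess multiplicity beyond $|\bd\Omega^{(i)}|$'' is captured by Theorem~\ref{Thm: compactness for stable (G,h)-hypersurfaces} inverts the logic: the compactness theorem is applied to the whole sequence $|\bd\Omega_j^{(i)}|\to V^{(i)}$, yielding the stable $(G,h)$-hypersurface and its multiplicity in one stroke; when $h\in\cScG$ the multiplicity-one clause of that theorem is what forces $V^{(i)}=|\bd\Omega^{(i)}|$ on $\interior(K_i)$.

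Second, your derivation of Definition~\ref{Def: replacement}(iii) --- $c$-bounded first variation throughout $U$ --- from ``criticality inside $K_i$ plus coincidence with $V^{(i-1)}$ outside'' does not exclude first variation concentrating on $\bd K_i$. The paper's mechanism coincides with what you call the ``principal technical obstacle'': the constrained minimizer $\Omega_j^{(i)}$ remains in $\mk a^{\mc G,h}(U;\epsilon_j,\delta_j;\mc F)$, so $V^{(i)}$ is itself $\cGh$-almost minimizing in $U$; this simultaneously gives~(iii) (via Lemma~\ref{Lem: Gc-bounded 1st variation and c-bounded} and the argument of \cite{zhou2019cmc}*{Lemma 5.2}) and makes the next iteration step well-posed. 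Your triangle/concatenation idea is the right tool for both purposes; recognize that it supplies~(iii) as well, rather than attempting a separate and incomplete argument. A smaller imprecision in the same vein: competitors supported in a tube $B_s(G\cdot p)\subset\interior(K_i)$ do not ``arise as a single step of the defining chain'' since a single step must have $\mc F$-distance $\leq\delta_j$; one needs the Pitts-style cut-and-paste interpolation contained in the constrained-minimization lemma to connect an arbitrary such competitor to $\Omega_j^{(i)}$ by a $\delta_j$-fine chain.
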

\begin{proof}
	This result is a generalization of \cite{zhou2020pmc}*{Proposition 6.8, 6.9} and \cite{wang2022Gcmc}*{Proposition 6.3, 6.4}, so we only points out the modifications. Let $K\subset U$ be a compact $G$-set. 
	
	Firstly, the constrained minimizing result \cite{wang2022Gcmc}*{Lemma 6.1} is also valid with $\Ah$ and $\cCcG(M)$ in place of $\mc A^c$ and $\mc C^G(M)$ respectively, which generates a constrained $\Ah$-minimizer $\Omega^*\in \cCcG(M)$ from a given $\Omega\in \mk a^{\mc G,h}(U;\epsilon,\delta;\mc F)$ so that 
	\begin{itemize}
		\item $\spt(\Omega-\Omega^*)\subset K$;
		\item $\Omega^*$ is locally $(\mc G, \Ah)$-minimizing in $\interior(K)$. 
	\end{itemize}
	Next, by Theorem \ref{Thm: regularity of G,Ah-minimizers}, we know $\Omega^*$ is indeed locally $\Ah$-minimizing in $\interior(K)$, and $\bd\Omega^*\llcorner \interior(K)$ is a smoothly {\em embedded} $\cGh$-boundary except for a $G$-set of Hausdorff dimension at most $n-7$. 
	Note that the singular set is $G$-invariant, and any $G$-set in $M$ has dimension at least $n+1-7=n-6$. 
	Hence, $\bd\Omega^*\llcorner \interior(K)$ has no singularity. 
	
	Next, let $V=\lim|\bd\Omega_i|$, where $\Omega_{i}\in \mk a^{\mc G,h}(U;\epsilon_i,\delta_i;\mc F)$ is the approximating sequence with $\epsilon_i,\delta_i\to 0$ as in Definition \ref{Def: almost minimizing varifolds}. 
	We apply the generalized \cite{wang2022Gcmc}*{Lemma 6.1} (or \cite{zhou2020pmc}*{Lemma 6.7}) in $K$ to $\Omega_{i}\in \mk a^{\mc G,h}(U;\epsilon_i,\delta_i;\mc F)$, and obtain $\Omega_i^*\in \mk a^{\mc G,h}(U;\epsilon_i,\delta_i;\mc F)$ and $V^*=\lim |\bd\Omega_i^*|$. 
	By the proof of \cite{wang2022Gcmc}*{Proposition 6.3} (or \cite{zhou2019cmc}*{Proposition 5.8}), we know $V^*$ satisfies (i)(ii) in Definition \ref{Def: replacement} and is also $\cGh$-almost minimizing in $U$, which implies Definition \ref{Def: replacement}(iii) by Lemma \ref{Lem: Gc-bounded 1st variation and c-bounded} and the arguments in \cite{zhou2019cmc}*{Lemma 5.2}. 
	
	Additionally, $\bd\Omega_i^*$ is $G$-stable in $\interior(K)$ in the sense of Definition \ref{Def: G-stable}, because otherwise, one can deform $\Omega_i^*$ by ambient $G$-equivariant isotopies in $\interior(K)$ to strictly decrease the $\Ah$ values, which contradicts the constrained $\Ah$-minimization of $\Omega_i^*$ \cite{wang2022Gcmc}*{Lemma 6.1(i)}. 
	Now, by Lemma \ref{Lem: stable and G-stable}, we know $\bd\Omega_i^*\llcorner \interior(K)$ is a smoothly embedded {\em stable} $\cGh$-boundary. 
	Thus, the regularity of $V^*=\lim |\bd\Omega_i^*|$ in $\interior(K)$ follows from the compactness theorem (Theorem \ref{Thm: compactness for stable (G,h)-hypersurfaces}). 
	
	Furthermore, up to a subsequence, $\Omega_i, \Omega^*_i$ weakly converge to some $\Omega,\Omega^*\in \cCcG(M)$ respectively, indicating that $\|\bd\Omega\|\leq \|V\|$, $\|\bd\Omega^*\|\leq \|V^*\|$, and $\spt(\Omega-\Omega^*)\subset K$. 
	Then by the compactness theorem \ref{Thm: compactness for stable (G,h)-hypersurfaces}, we know $\Omega^*$ satisfies Definition \ref{Def: replacement}(v.1) and (vi). 
	Additionally, if $h\in\cScG$ and $\|V^*\|(\bd K)=0$, then one can use the local smooth convergence $|\bd\Omega_i^*|\to V^*$ in $\interior(K)$ to see that $\|V^*\|(K)=\|\bd\Omega^* \|(K)$, which implies $V^* = \bd\Omega^*$ in $K$ by \cite{pitts2014existence}*{2.1(18)(f)}. 
	
	Hence, we see that $(V^*,\Omega^*)$ is a $\cGh$-replacement of $(V,\Omega)$. 
	Finally, in another compact $G$-set $K_2\subset U$, we can apply the generalized \cite{wang2022Gcmc}*{Lemma 6.1} to $\Omega_i^{(1)}:=\Omega_i^*$, and get $\Omega_i^{(2)}\in \mk a^{\mc G,h}(U;\epsilon_i,\delta_i;\mc F)$ with $\spt(\Omega_i^{(2)}-\Omega_i^{(1)} )\subset K_2$. 
	The above arguments then give the $\cGh$-replacement $(V^{(2)}, \Omega^{(2)}):=(\lim|\bd\Omega_i^{(2)}|, \lim\Omega_i^{(2)})$ of $(V^{(1)},\Omega^{(1)}):=(V^*,\Omega^*)$ in $K_2$. 
	The proof is finished since this construction is repeatable. 
\end{proof}

 Similar to Lemma \ref{Lem: admissible to all annuli amv}, the result below follows from a simple contradiction argument. 
\begin{lemma}\label{Lem: admissible to all annuli good replace}
	Let $\Omega\in\cCcG(M)$, $V\in\mc V^G_n(M)$ be a $G$-varifold, and $\mk c\geq 1$ be an integer. 
	If for any $\mk c$-admissible family of $G$-annuli $\mk A$, $V$ has good $\cGh$-replacement property in some $A\in\mk A$ w.r.t. $\Omega$. 
	Then $V$ has good $\cGh$-replacement property in annuli w.r.t. $\Omega$. 
\end{lemma}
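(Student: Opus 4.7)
The plan is to argue by contradiction, mirroring the standard reduction from admissible families to all small annuli as used for Lemma \ref{Lem: admissible to all annuli amv}. Assume the conclusion fails. Then there exists some $p\in M$ such that for every $r>0$ one can find radii $0<s<t<r$ for which $V$ does \emph{not} have the good $\cGh$-replacement property in $A_{s,t}(G\cdot p)$ with respect to $\Omega$. The goal is to use this to manufacture a single $\mk c$-admissible family all of whose members fail the property, contradicting the hypothesis.

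The construction is by a finite downward induction producing the radii of the admissible family from the outermost annulus inward. First choose $r_{\mk c}>0$ to be any number less than $\inj(G\cdot p)$, and use the failure of the conclusion to pick $0<s_{\mk c}<t_{\mk c}<r_{\mk c}$ such that $V$ does not have the good $\cGh$-replacement property in $A_{s_{\mk c},t_{\mk c}}(G\cdot p)$ w.r.t.\ $\Omega$. Having chosen $s_i,t_i$, set $r_{i-1}:=s_i/2$ and pick $0<s_{i-1}<t_{i-1}<r_{i-1}$ with the same failure; by construction $t_{i-1}<s_i/2<t_i/2$, so the admissibility inequality $0<s_{i-1}<t_{i-1}<\tfrac12 s_i<\tfrac12 t_i$ holds. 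Iterating down to $i=1$ produces the family $\mk A:=\{A_{s_i,t_i}(G\cdot p)\}_{i=1}^{\mk c}$, which is $\mk c$-admissible in the sense of Definition \ref{Def: admissible family of annuli}.

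By the very choice of each $(s_i,t_i)$, $V$ fails to have the good $\cGh$-replacement property in $A_{s_i,t_i}(G\cdot p)$ w.r.t.\ $\Omega$ for every $i\in\{1,\dots,\mk c\}$. This directly contradicts the hypothesis, which asserts that for any $\mk c$-admissible family $\mk A$ there is at least one $A\in\mk A$ in which $V$ does have the good $\cGh$-replacement property w.r.t.\ $\Omega$. Hence the supposed $p$ cannot exist, and an $r_{g.r.}(G\cdot p)>0$ of the required kind exists for every $p\in M$, completing the proof.

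There is essentially no analytic obstacle here: the entire argument is combinatorial bookkeeping of radii, analogous to Lemma \ref{Lem: admissible to all annuli amv}. The only point requiring minor care is ensuring that the strict admissibility inequalities are preserved at every inductive step, which is handled by the uniform choice $r_{i-1}=s_i/2$ above.
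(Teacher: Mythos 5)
Your proof is correct and is exactly the ``simple contradiction argument'' the paper invokes by analogy with Lemma~\ref{Lem: admissible to all annuli amv}: negate the conclusion to find a $p$ where arbitrarily small bad annuli exist, build a $\mathfrak{c}$-admissible family at $p$ from the outside in (your uniform choice $r_{i-1}=s_i/2$ correctly preserves the strict inequalities $0<s_{i-1}<t_{i-1}<\tfrac12 s_i<\tfrac12 t_i$), and derive a contradiction with the hypothesis that some annulus in every admissible family must be good. This matches the paper's intent, which states the lemma without spelling out the argument.
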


Therefore, it is sufficient to show the regularity of $V\in\mc V^G_n(M)$ with $c$-bounded first variation and good $\cGh$-replacement property in annuli. 
We first classify their tangent cones. 
\begin{lemma}\label{Lem: tangent cone}
	Let $h\in \cScG\cup\{0\}$ with $c=\sup_M|h|$, and $3\leq \codim(G\cdot p)\leq 7$ for all $p\in M$. 
	Suppose $V \in \mc V_n^G(M)$ has $c$-bounded first variation in $M$ and good $\cGh$-replacement property in annuli w.r.t. some $\Omega\in\cCcG(M)$. 
	Then $V$ is integer rectifiable. Moreover, for any $C \in  \VarTan(V,p)$ with $p \in \spt\|V\|$, 
	\begin{align}\label{E:tangent cones are planes}
		C= \Theta^n (\|V\|, p) |S| \mbox{ for some $n$-plane $S \subset T_p M$, where $\Theta^n(\|V\|, p)\in\mb N$}.
	\end{align}
\end{lemma}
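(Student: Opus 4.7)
The plan is to follow the standard blow-up argument from Pitts--Schoen--Simon regularity theory, adapted to the equivariant PMC setting and relying on Theorem \ref{Thm: compactness for stable (G,h)-hypersurfaces} in place of the classical Schoen--Simon compactness. Since $V$ has $c$-bounded first variation on $M$, the usual monotonicity formula (with the standard lower-order correction) yields existence of the density $\Theta^n(\|V\|,p)$ at every $p$ and shows that any $C\in\VarTan(V,p)$ is a rectifiable cone in $T_pM\cong\mb R^{n+1}$. Under the blow-up $(\eta_{p,r})_\#V$ the $h$-contribution to the first variation scales like $r\sup_M|h|\to 0$, so $C$ is actually \emph{stationary} rather than merely of bounded mean curvature.

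Next I would transfer the good $\cGh$-replacement property to a minimal replacement property for $C$. Fix a blow-up sequence $r_i\to 0$ with $(\eta_{p,r_i})_\#V\to C$ and a Euclidean annulus $\closure(A_{\sigma,\tau}(0))\subset T_pM$. For $i$ large, $\closure(A_{r_i\sigma,r_i\tau}(G\cdot p))$ sits inside the good-replacement radius $r_{g.r.}(G\cdot p)$, so Definition \ref{Def: good replacement} provides pairs $(V_i^{*},\Omega_i^{*})$ whose restriction to the interior is a smooth, almost embedded, stable $(G,h)$-hypersurface $\Sigma_i$. Rescaling by $r_i$ turns $\Sigma_i$ into a stable $(G,h_i)$-hypersurface for $h_i(x):=r_i\cdot h(\exp_p(r_ix))\to 0$ in $C^\infty$, living in a region Hausdorff-converging to $A_{\sigma,\tau}(0)$. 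Theorem \ref{Thm: compactness for stable (G,h)-hypersurfaces} applied with $h_\infty\equiv 0$ then produces a subsequential smooth, stable, embedded minimal limit, giving a genuine $0$-replacement $C^{*}$ of $C$ on $\closure(A_{\sigma,\tau}(0))$. The $G$-invariance of each $V_i^{*}$, combined with the fact that $r_i^{-1}(G\cdot p)$ exhausts the affine subspace $T_p(G\cdot p)\subset T_pM$, also shows that $C$ and its replacements are invariant under translations by $T_p(G\cdot p)$, so the essential regularity takes place in the transverse slice $N_p(G\cdot p)$, whose dimension $\codim(G\cdot p)\in[3,7]$ is precisely the range in which Schoen--Simon applies.

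With this minimal replacement property in hand, the classical two-annulus iteration of Pitts (as in \cite{wang2022min}, \cite{wang2023G-index}) applies to $C$: replacing on two overlapping annuli around the singular stratum $T_p(G\cdot p)$ and invoking the maximum principle together with unique continuation for stable minimal hypersurfaces in the slice, one concludes that $\spt\|C\|\setminus T_p(G\cdot p)$ is a smoothly embedded stable minimal hypersurface of $T_pM$. Since $C$ is a cone whose singular stratum has codimension $\geq 3$, the Bernstein/Simons theorem applied in the slice (of dimension $\leq 7$) forces $\spt\|C\|$ to be a single hyperplane $S\subset T_pM$ containing $T_p(G\cdot p)$. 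The constancy theorem gives $C=m|S|$ for some $m\in\mb N$, and monotonicity identifies $m=\Theta^n(\|V\|,p)$. Integer rectifiability of $V$ itself then follows from Allard's rectifiability criterion, using $\Theta^n(\|V\|,\cdot)\geq 1$ $\|V\|$-a.e.\ together with the fact that every tangent varifold is an integer-multiplicity hyperplane.

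The main obstacle will be carrying out the passage from orbital annuli $A_{s,t}(G\cdot p)\subset M$ to Euclidean annuli around the tangent subspace $T_p(G\cdot p)\subset T_pM$ fully rigorously: one must justify the $T_p(G\cdot p)$-translation invariance of $C$ from the smooth convergence of rescaled replacements, control the (possibly non-trivial) action of the isotropy $G_p$ on the slice, and verify that the two-annulus iteration localized around a positive-dimensional singular stratum really forces smoothness on all of $T_pM\setminus T_p(G\cdot p)$ so that Bernstein/Simons in the slice suffices to upgrade the cone to a single hyperplane with integer multiplicity.
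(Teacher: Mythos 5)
Your high-level strategy --- blow up, transfer the $\cGh$-replacement property to the cone, reduce to the slice $N_p(G\cdot p)$ whose dimension lies in $[3,7]$, and classify stable minimal cones there --- is the same one the paper follows, except that the paper delegates essentially all of it to its earlier work \cite{wang2023free} (Lemma~5.6, Lemma~5.7, Proposition~5.10), which proves the minimal ($h=0$) case of every step you sketch.

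The one piece of substantive new work in the paper's proof, and the step your proposal leaves out, is the modification of the uniform volume-ratio bound. In the minimal case a $G$-replacement preserves mass, $\|V\|(M)=\|V^*\|(M)$, and this equality is what \cite{wang2023free}*{Lemma 5.6} uses to get a uniform mass bound in tubes $B_r(G\cdot p)$; that bound is exactly what supplies the hypothesis $\sup_i\Area(\Sigma_i)<\infty$ when you invoke Theorem~\ref{Thm: compactness for stable (G,h)-hypersurfaces} on the rescaled replacement sheets. With $\cGh$-replacements, Definition~\ref{Def: replacement}(ii) only gives $\|V\|(M)\geq\|V^*\|(M)-c\Vol(K)$, so the volume-ratio argument acquires an error term, and one must verify that $c\Vol(A_{r,2r}(G\cdot p))$, normalized by the covering number $N\gtrsim\Vol(B_{4r}(G\cdot p))/\Vol(B_{20r}(p))$ times $r^n$, tends to zero as $r\to 0$; this is the computation the paper carries out. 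Your observation that the $h$-term in the first variation scales away controls the \emph{mean curvature} of the rescaled replacements, not their \emph{area}, so it does not substitute for the mass-comparison estimate, and without it your appeal to the compactness theorem in the blow-up step is unjustified. The remaining difficulties you flag at the end (translation-invariance of $C$ along $T_p(G\cdot p)$, the $G_p$-action on the slice, the two-annulus iteration near a positive-dimensional stratum) are genuine but are precisely what \cite{wang2023free}*{Lemma 5.7, Proposition 5.10} already handle, and the paper simply cites them rather than redoing them.
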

\begin{proof}
	Firstly, by the assumption on $V$, we can get a uniform volume ratio bound as \cite{wang2023free}*{Lemma 5.6}. 
	Indeed, the main difference is that Definition \ref{Def: replacement}(ii) does not imply $\|V\|(M) = \|V^*\|(M)$ for the $\cGh$-replacement $V^*$ of $V$ in $\closure(A_{r,2r}(G\cdot p))$. 
	Nevertheless, given $B_{4r}(G\cdot p)\subset\cup_{i=1}^N B_{20r}(p_i)$ 
	for some $\{p_i\}_{i=1}^N\subset G\cdot p$, we have $N\geq \Vol(B_{4r}(G\cdot p))/\Vol(B_{20r}(p))$, and 
	\[-\frac{c\Vol(A_{r,2r}(G\cdot p))}{N r^n}\geq -\frac{c\Vol(A_{r,2r}(G\cdot p))}{\Vol(B_{4r}(G\cdot p))} \cdot \frac{B_{20r}(p)}{r^n} \to 0 \qquad {\rm as ~}r\to 0. \]
	Thus, using $\|V\|(M)\geq \|V^*\|(M) - c\Vol(A_{r,2r}(G\cdot p))$, the proof in \cite{wang2023free}*{Lemma 5.6} still works. 
	
	This together with the proof of \cite{wang2023free}*{Lemma 5.7} imply that V as well as $C$ is rectifiable, and  $C$ is a cone of the form $T_p(G\cdot p)\times W$ for some $W\in \mc V^{G_p}_{n-\dim(G\cdot p)}( N_p(G\cdot p))$. 
	Finally, note $3\leq \dim(N_p(G\cdot p))\leq 7$, and $\lim_{r\to 0}\Vol(A_{sr, tr}(G\cdot p))/r^n = 0$ for any fixed $0<s<t$. 
	Hence, the desired result follows from the arguments in \cite{wang2023free}*{Proposition 5.10} with Definition \ref{Def: replacement}(ii) in place of the condition $\|V\|(M) = \|V^*\|(M)$. 
\end{proof}

Now, we are ready to show the following main regularity theorem. 
\begin{theorem}[Main regularity]\label{Thm: main regularity}
	Let $h\in \cScG\cup\{0\}$ with $c:=\sup_M|h|$, and $3\leq \codim(G\cdot p)\leq 7$ for all $p\in M$. 
	Suppose $V\in \mc V^G_n(M)$ is a $G$-varifold that has $c$-bounded first variation in $M$, and has good $\cGh$-replacement property in annuli (w.r.t. some $\Omega\in\cCcG(M)$). 
	Then $V$ is induced by a closed, almost embedded, $(G,h)$-hypersurface $\Sigma$ (possibly disconnected) so that 
	\begin{itemize}
		\item if $h\in\cScG$, then each component of $\Sigma$ has multiplicity one, i.e. $V=|\Sigma|$;
		\item if $h=0$, then $\Sigma$ is embedded, and $V$ has an integer density on each component of $\Sigma$. 
	\end{itemize}
\end{theorem}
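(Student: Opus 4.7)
The plan is to follow the classical Pitts--Schoen--Simon--Zhou regularity strategy, adapted equivariantly as in \cite{wang2022Gcmc} and \cite{zhou2020pmc}: combine the tangent cone classification from Lemma \ref{Lem: tangent cone} with iterated $\cGh$-replacements to produce smooth hypersurface pieces in every $G$-annulus around each $p\in\spt\|V\|$, match them across annulus boundaries to obtain a global smooth almost embedded $(G,h)$-hypersurface, and finally assemble the associated regions $\Omega^{(i)}$ to recover $V=|\bd\Omega|$.

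\textbf{Step 1 (local smooth pieces via replacements).} Fix $p\in\spt\|V\|$ and $0<s<t<r_{g.r.}(G\cdot p)$. Definition \ref{Def: good replacement} produces a $\cGh$-replacement $(V^{*},\Omega^{*})$ of $(V,\Omega)$ on $\closure(A_{s,t}(G\cdot p))$, and by Definition \ref{Def: replacement}(iv)--(vi) the restriction $V^{*}\llcorner A_{s,t}(G\cdot p)$ is a smooth stable almost embedded $(G,h)$-hypersurface. By Remark \ref{Rem: replacement with bounded 1st variation}, $V^{*}$ again has good $\cGh$-replacement property in annuli, so I can iterate: take a second replacement $V^{**}$ of $V^{*}$ on an annulus $\closure(A_{s',t'}(G\cdot p))$ that strictly overlaps $\closure(A_{s,t}(G\cdot p))$. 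On the overlap both $V^{*}$ and $V^{**}$ are smooth stable $(G,h)$-hypersurfaces, and $V^{**}=V^{*}$ outside $\closure(A_{s',t'})$.

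\textbf{Step 2 (matching and global smoothness).} This is the heart of the argument. On the overlap, the two smooth stable $(G,h)$-hypersurfaces $V^{*}$ and $V^{**}$ must coincide, and the union extends smoothly across $\bd B_s(G\cdot p)$ and $\bd B_t(G\cdot p)$. Where orbit types are constant this is the standard unique-continuation / strong maximum principle argument for PMC equations; I would apply Lemma \ref{Lem: tangent cone} to assert that at every point of $\spt\|V\|$ the tangent varifold is a multiplicity $\Theta$ $n$-plane with $\Theta\in\mb N$, invoke Allard's regularity theorem on the principal stratum, and then use the equivariant maximum principle and the $C^{1,\alpha}$ regularity to force the smooth hypersurfaces produced by Step 1 to glue to a single smooth almost embedded $(G,h)$-hypersurface in a $G$-neighborhood of $p$. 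Points on lower-dimensional orbit strata are handled by the codimension bound $\codim(G\cdot p)\geq 3$: any would-be singular $G$-set has dimension at least $n-6$, but the regularity theory gives a singular set of dimension at most $n-7$, hence empty. Varying $p\in\spt\|V\|$ and patching, one obtains a closed almost embedded $(G,h)$-hypersurface $\Sigma\subset M$ with $V=\sum_{i}\Theta_i|\Sigma_i|$, the sum running over $G$-components of $\Sigma$ with constant integer multiplicity $\Theta_i$.

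\textbf{Step 3 (multiplicity and orientation).} If $h\equiv 0$, Definition \ref{Def: replacement}(vi) upgrades the almost embeddedness of $\Sigma$ to embeddedness and the multiplicity is the integer tangent density, giving the second bullet. If $h\in\cScG$, Remark \ref{Rem: small touching and no minimal} ensures $\Sigma$ has no minimal component, and Definition \ref{Def: replacement}(v.1) forces $\Theta_i\equiv 1$ on every component. Finally, covering $M$ by finitely many $G$-annuli around representative orbits and concatenating the associated regions $\Omega^{(i)}$ from Definition \ref{Def: good replacement} (using Definition \ref{Def: replacement}(i) to ensure they agree on overlaps), I obtain a global $\Omega\in\cCcG(M)$ with $\bd\Omega=\Sigma$ and $V=|\bd\Omega|$ on the open set where $V^{*}$ has multiplicity one at the boundary $\bd K$ of each annulus, which is a generic choice of radii.

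The main obstacle I anticipate is the matching step in Step 2, specifically showing that two replacements on overlapping annuli, each equal to $V$ outside its own annulus, actually agree on the overlap and extend smoothly across the spherical-tube interfaces $\bd B_{s}(G\cdot p)$. For $h=0$ this is the standard Pitts/Schoen--Simon argument via Allard plus maximum principle, but for $h\in\cScG$ the outward unit normal carries a sign that in \textbf{Case II} of Remark \ref{Rem: lift to 2-cover} is only $G$-invariant through the combination $h\cdot\nu_{\bd\Omega^{*}}$; tracking this sign consistently across annuli where orbit types may jump, together with the need to rule out self-touching at the interfaces (using Remark \ref{Rem: small touching and no minimal}), is the delicate point, and I expect it to require combining the PMC unique-continuation argument of \cite{zhou2020pmc}*{\S6} with the $G_{\pm}$-signed symmetry machinery of Section \ref{Subsec: lift to signed symmetric}.
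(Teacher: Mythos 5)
Your Steps 1--2 follow the same annulus-replacement approach as the paper (which adapts \cite{zhou2020pmc}*{Theorem 7.1}), but the proposal stops short of the crux. The central logical issue is this: a $\cGh$-replacement $V^{*}$ agrees with $V$ only \emph{outside} the compact set $K$ where it is performed. Inside the annulus, $V^{*}$ is a genuinely different varifold. So the fact that $V^{*}$, $V^{**}$, and their gluings are smooth in an annulus does not, by itself, say anything about $V$ in that region. Your ``varying $p$ and patching'' conclusion that $V=\sum_i\Theta_i|\Sigma_i|$ is therefore unsupported by Steps 1--2. The paper needs three further steps precisely for this reason: (Step 3) send the inner radius $s_1\to 0$ of the second replacement and pass to a limiting varifold $\widetilde V$ smooth on the \emph{punctured} tube $B_{s_2}(G\cdot p)\setminus G\cdot p$; (Step 4) remove the singularity of $\widetilde V$ at the center orbit; and (Step 5) use the maximum principle for mean-convex tubes, the tangent-cone classification, and unique continuation to prove that the \emph{original} varifold $V$ coincides with $\widetilde V$ near $G\cdot p$. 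Step 5 is where $V$ itself finally acquires regularity, and it has no analogue in your proposal.

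Two secondary issues. First, your removal of the center-orbit singularity by a codimension count (``singular $G$-set has dimension at least $n-6$, regularity gives $\leq n-7$, hence empty'') is the right heuristic for the $(G,\Ah)$-minimizing boundaries appearing \emph{inside} each replacement (Proposition \ref{Prop: amv implies good replacement}), but it does not apply directly to $\widetilde V$: the limit of replacements is stable and has $c$-bounded first variation, but it is not a priori an $\Ah$-minimizer, so one cannot invoke \cite{morgan2003regularity}-type regularity. The paper's Step 4 instead runs a removable-singularity argument tailored to stable $\cGh$-boundaries in annuli, relying on the dimension bound $n-6\leq\dim(G\cdot p)\leq n-2$ and the tangent-cone lemma \ref{Lem: tangent cone} for $\widetilde V$ (which itself requires checking $\widetilde V$ retains the good replacement property, via Remark \ref{Rem: replacement with bounded 1st variation}). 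Second, you conflate the conclusion $V=|\Sigma|$ (what the theorem states) with $V=|\bd\Omega|$ for a global $\Omega\in\cCcG(M)$: the latter is the separate Corollary \ref{Cor: min-max boundary}, which uses Proposition \ref{Prop: replacement V* and V} and the Constancy Theorem after the main regularity is established, so concatenating the $\Omega^{(i)}$'s at this stage is premature. Your worry about the $G_\pm$ sign tracking, by contrast, is a non-issue: it is absorbed into Definition \ref{Def: replacement}, which carries the Caccioppoli set $\Omega^{*}\in\cCcG(M)$ along with the varifold, and item (v.1) fixes the orientation matching automatically.
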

\begin{proof}
	For $h\equiv 0$, the above result has been shown in \cite{wang2023G-index}*{Theorem 4.18} with $\mc G=G$, which can be easily adapted to the case of $\mc G=G_\pm$ with \eqref{Eq: free Gpm} using the above ingredients. 
	For $h\in\cScG$, the proof is similar to \cite{zhou2020pmc}*{Theorem 7.1}, so we only point out some modifications.  
	Firstly, given $p\in \spt\|V\|$, take $0<r_0<r_{g.r.}(G\cdot p)$ so that the mean curvature of $\bd B_r(G\cdot p)$ is greater than $c$ for $r\in (0,r_0)$, where $r_{g.r.}$ is given in Definition \ref{Def: good replacement}. 
	 In particular, we have 
	 \begin{align}\label{Eq: regularity thm: maximum principle}
	 	\emptyset \neq \spt\|W\| \cap \partial B_{r}(G\cdot p)=\closure \left(\spt\|W\| \backslash \closure \left(B_{r}(G\cdot p)\right)\right) \cap \partial B_{r}(G\cdot p)
	 \end{align}
	 for any $W\in\mc V_n(M)$ that has $c$-bounded first variation in $B_r(G\cdot p)$ and $W\llcorner B_r(G\cdot p)\neq 0$. 
	 
	 {\bf Step 1:} {\it Construct successive $\cGh$-replacements $V^*$ and $V^{**}$ on two overlapping concentric $G$-annuli.} 
	 Firstly, take $0<s<t<r$. By the assumption, we have a replacement $V^*$ of $V$ in $\closure(A_{s,t}(G\cdot p))$ (w.r.t. $\Omega$) so that $\Sigma_1:=\spt\|V^*\|\cap A_{s,t}(G\cdot p)$ is an almost embedded stable $(G,h)$-hypersurface. 
	 In addition, since $h\in\cScG$, we know $V^*\llcorner A_{s,t}(G\cdot p) = |\Sigma_1|$, and $\Sigma_1=\bd\Omega^*$ in $A_{s,t}(G\cdot p)$ has a unit (outer) normal $\nu_1$ for some $\Omega^*\in \cCcG(M)$ associated with $V^*$. 
	 
	 Note the touching set of $\Sigma_1$ is contained in a countable union of $(n-1)$-dimensional connected submanifolds $\cup S_1^{(k)}$ (Remark \ref{Rem: small touching and no minimal}). 
	 Hence, by Sard's theorem, we can choose $s_2\in (s,t)$ so that $\bd B_{s_2}(G\cdot p)$ is transversal to $\Sigma_1$ and each $S_1^{(k)}$. 
	 Given any $s_1\in (0,s)$, we also have a $\cGh$-replacement $V^{**}$ of $V^*$ in $\closure(A_{s_1,s_2}(G\cdot p))$ (w.r.t. $\Omega^*$) and an associated $\Omega^{**}\in \cCcG(M)$ so that $V^{**}\llcorner A_{s_1,s_2}(G\cdot p)$ is an almost embedded $(G,h)$-hypersurface $\Sigma_2:=\spt\|V^{**}\|\cap A_{s_1,s_2}(G\cdot p)$ with multiplicity $1$, and $\Sigma_2=\bd\Omega^{**}$ in $A_{s_1,s_2}(G\cdot p)$ has a unit outer normal $\nu_2$. 
	 We mention that $V^*$ and $V^{**}$ both have $c$-bounded first variation in $M$, and $\spt(\Omega^*-\Omega^{**})\subset\closure(A_{s_1,s_2}(G\cdot p))$. 
	 
	 By Definition \ref{Def: replacement}(i)(v), one can get a generalization of \cite{zhou2020pmc}*{P.346, Claim 1}: 
	 \begin{itemize}
	 	\item[(a)]  $\Sigma_1=\bd\Omega^{**}$ in $A_{s_2,t}(G\cdot p)$, and $\Sigma_2=\bd\Omega^{**}$ in $A_{s_1,s_2}(G\cdot p)$;
	 	\item[(b)] $\nu_1=\nu_{\bd\Omega^{**}}$ in $A_{s_2,t}(G\cdot p)$, and $\nu_2=\nu_{\bd\Omega^{**}}$ in $A_{s_1,s_2}(G\cdot p)$;
	 	\item[(c)] if $\|V^{**}\|(\bd B_{s_2}(G\cdot p)) = 0$, then $V^{**}=|\bd\Omega^{**}|$ in $A_{s_1,t}(G\cdot p)$, 
	 \end{itemize}
	 where $\nu_{\bd\Omega^{**}}$ is the outer unit normal of $\Omega^{**}$. 
	 
	 {\bf Step 2:} {\it Gluing the $\cGh$-replacements $V^*,V^{**}$ smoothly across $\bd B_{s_2}(G\cdot p)$.}
	 Let 
	 \[\Gamma:= \Sigma_1\cap \bd B_{s_2}(G\cdot p) \qquad {\rm and }\qquad \mc S(\Gamma):=\{q\in\Gamma: \mbox{$q$ is a touching point of $\Sigma_1$} \}. \]
	 Note that by transversality, $\Gamma$ is an almost embedded $G$-hypersurface in $\bd B_{s_2}(G\cdot p)$ with a closed touching set $\mc S(\Gamma)$, and $\mc R(\Gamma)=\Gamma\setminus\mc S(\Gamma)$ is open dense in $\Gamma$. 
	 Using \eqref{Eq: regularity thm: maximum principle}, Definition \ref{Def: replacement}, and Lemma \ref{Lem: tangent cone}, the proof of \cite{zhou2020pmc}*{P. 347 Claim2 and (6.6)} would carry over to show that $\Sigma_1$ glues continuously with $\Sigma_2$ along $\Gamma$, and 
	 \begin{align}\label{Eq: regularity thm: no mass on boundary}
	 	\|V^{**}\|(\bd B_{s_2}(G\cdot p))=\|V^{*}\|(\bd B_{s_2}(G\cdot p))=0, \quad {\rm and}\quad V^{**}=|\bd\Omega^{**}| \mbox{ in } A_{s_1,t}(G\cdot p). 
	 \end{align} 
	 In addition, given any $x\in \mc R(\Gamma)\neq \emptyset$, one notices that $\Gamma$ and $\Sigma_1$ are embedded near $G\cdot x$. 
	 Hence, one can use Lemma \ref{Lem: tangent cone} to follow the proof in \cite{schoen1981regularity}*{(7.26)-(7.32)} (or \cite{li2021min}*{Step 2, Claim 1(A)}) and show that for a sequence $x_i\to x$ with $x_i\in \mc R(\Gamma)$ and $r_i\to 0$, the associated various blow up of $V^{**}$ at $x$ is $\Theta^n(\|V^*\|, x)|T_x\Sigma|$. 
	 Then the proof of \cite{zhou2020pmc}*{P. 348-349} would carry over, and thus $\Sigma_2$ glues smoothly with $\Sigma_1\cap A_{s_2,t}(G\cdot p)$ near $G\cdot x$ along $\mc R(\Gamma)$ with matching unit normals (by (a)-(c) in {\bf Step 1}). 
	 Using the unique continuation of $h$-hypersurfaces \cite{zhou2020pmc}*{Corollary 3.17}, we conclude that $\Sigma_1=\Sigma_2$ in $A_{s,s_2}(G\cdot p)$.  
	 This finished Step 2.
	 
	 {\bf Step 3:} {\it Extend the $\cGh$-replacements to $G\cdot p$ and get $\wti V$ on the punctured tube.}
	 Rewrite $V^{**}$ and $\Sigma_2$ by $V_{s_1}^{**}$ and $\Sigma_{s_1}$ respectively to indicate the dependence on $s_1$. 
	 Then $\Sigma := \cup_{0<s_1<s}\Sigma_{s_1}$ is a smooth, almost embedded, stable $(G,h)$-hypersurface in $B_{s_2}(G\cdot p)\setminus \{G\cdot p\}$, and $\wti V=\lim_{s_1\to 0} V^{**}_{s_1}$ has $c$-bounded first variation so that $\wti V:= |\Sigma|$ in $B_{s_2}(G\cdot p)\setminus \{G\cdot p\}$, $\wti V= V^*$ in $M\setminus B_{s_2}(G\cdot p) $, $\|\wti V\|(G\cdot p) = 0$, and $G\cdot p \subset \spt\|V\|$. 
	 We refer to 	\cite{zhou2020pmc}*{P.350, Step 3} for the details. 
	 
	 {\bf Step 4:} {\it Remove the singularity of $\wti V$ at $G\cdot p$.} 
	 The removing singularity result has been proved in \cite{wang2023free}*{P. 43-44} and \cite{zhou2020pmc}*{P. 351, Step 4} for the equivariant minimal setting and the PMC setting respectively. 
	 Combining the proof given there, $\wti V$ and $\Sigma$ also extend smoothly across $G\cdot p$ as an almost embedded hypersurface in $B_{s}(G\cdot p)$. 
	 Indeed, by Remark \ref{Rem: replacement with bounded 1st variation}, we know $\wti V$ also has $c$-bounded first variations in $M$ and good $\cGh$-replacement property in annuli, which implies Lemma \ref{Lem: tangent cone} holds for $\wti V$. 
	 Additionally, we know $n-6\leq \dim(G\cdot p)\leq n-2$, and $\wti V$ is a smooth, almost embedded, stable $\cGh$-boundary $\Sigma$ in annulus $A_{a,b}(G\cdot p)$ for all $0<a<b<s$. 
	This verified the key ingredients to mimic \cite{wang2023free}*{P. 43-44} and \cite{zhou2020pmc}*{P. 351, Step 4}. 
	One can also directly apply \cite{bellettini2020stablePMC}*{Corollary 1.1} to the $\cGh$-boundary $\Sigma$ with the above ingredients. 
	
	{\bf Step 5:} {\it Show $V$ coincides with $\wti V$ near $G\cdot p$.} 
	After replacing \cite{zhou2020pmc}*{(7.1) and Proposition 6.1} by \eqref{Eq: regularity thm: maximum principle} and Lemma \ref{Lem: tangent cone} respectively, the proof in \cite{zhou2020pmc}*{P. 352 Step 5} can be taken almost verbatim with symmetric objects to show $V=\wti V$ near $G\cdot p$. 
\end{proof}

Using the above proof, we also have the following result, which indicates the $\cGh$-replacement coincides with the original varifold $V$ if $V$ is a smooth almost embedded $(G,h)$-hypersurface. 

\begin{proposition}\label{Prop: replacement V* and V}
	Let $3\leq \codim(G\cdot p)\leq 7$ for all $p\in M$, and $h\in\cScG$ or $h\equiv 0$. 
	Given $p\in M$, let $0<r<\inj(G\cdot p)/2$ so that $B_t(G\cdot p)$ has mean convex boundary for all $t\in (0, 2r)$. 
	Suppose $V\in\mc V_n^G(M)$ has good $\cGh$-replacement property in $B_{2r}(G\cdot p)$ (w.r.t. $\Omega\in\cCcG(M)$), and 
	\begin{itemize}
		\item if $h\in\cScG$, then $V\llcorner B_{2r}(G\cdot p)$ is a smooth, almost embedded $\cGh$-boundary $\Sigma=\bd\Omega\llcorner B_{2r}(G\cdot p)$ with multiplicity $1$ and $\bd\Sigma\cap B_{2r}(G\cdot p)=\emptyset$; 
		\item if $h\equiv 0$, then $V\llcorner B_{2r}(G\cdot p)$ is a smooth embedded minimal hypersurface $\Sigma$ with integer multiplicity $m$ and $\bd\Sigma\cap B_{2r}(G\cdot p)=\emptyset$. 
	\end{itemize}
	Suppose also that $s\in (r/2,r)$ so that $\spt\|V\|$ is transversal to $\bd B_s(G\cdot p)$ (even at the touching set), and $V^*$ is a $\cGh$-replacement of $V$ in $\closure(B_s(G\cdot p))$ (w.r.t. $\Omega$). 
	Then $V=V^*$, and $V$ is stable in $B_s(G\cdot p)$. 
\end{proposition}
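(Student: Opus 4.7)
The plan is to exploit that $V$ is already a smooth almost embedded $(G,h)$-hypersurface on all of $B_{2r}(G\cdot p)$, while the replacement $V^*$ coincides with $V$ outside $\closure(B_s(G\cdot p))$ by Definition \ref{Def: replacement}(i). The whole proposition therefore reduces to proving $V^*\llcorner B_s(G\cdot p)=V\llcorner B_s(G\cdot p)$, after which stability of $V$ in $B_s(G\cdot p)$ is just Definition \ref{Def: replacement}(iv) transported back to $V$. The two main inputs will be (a) the mean-convex maximum principle, which prevents $V^*$ from containing any closed $(G,h)$-hypersurface trapped strictly inside $B_s(G\cdot p)$, and (b) the smooth gluing plus unique continuation argument already developed in Step 2 of Theorem \ref{Thm: main regularity}.

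First I would prove that every connected component of $\spt\|V^*\|\cap\closure(B_s(G\cdot p))$ must meet $\partial B_s(G\cdot p)$. If some closed component $\Sigma'$ of $V^*\llcorner\interior(B_s(G\cdot p))$ stayed strictly inside, picking $q\in\Sigma'$ maximizing the distance to $G\cdot p$ would make $\Sigma'$ tangent from the inside to $\partial B_{d(q,G\cdot p)}(G\cdot p)$; comparing the mean curvature $|H_{\Sigma'}(q)|=|h(q)|\le c$ with the mean-convex mean curvature of the ambient geodesic sphere (which strictly exceeds $c$ by hypothesis) yields a contradiction via the standard geometric maximum principle. Hence every connected piece of $V^*\llcorner\closure(B_s(G\cdot p))$ reaches $\partial B_s(G\cdot p)$.

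Next, the transversality hypothesis makes $\Sigma\cap\partial B_s(G\cdot p)$ an almost embedded $(n-1)$-submanifold of $\partial B_s(G\cdot p)$ that is transversal even at the touching strata of $\Sigma$, and both $V\llcorner A_{s,2r}(G\cdot p)$ and $V^*\llcorner\interior(B_s(G\cdot p))$ are smooth almost embedded $(G,h)$-hypersurfaces. By Remark \ref{Rem: replacement with bounded 1st variation}, $V^*$ also has $c$-bounded first variation and good $\cGh$-replacement property in annuli, so Lemma \ref{Lem: tangent cone} applies to it. Running the Step 2 argument of Theorem \ref{Thm: main regularity} in this setting (continuous matching across $\partial B_s(G\cdot p)$, then tangent-plane identification via Lemma \ref{Lem: tangent cone} and a blow-up at each regular gluing point, then smoothing across the lower-dimensional touching stratum by the unique continuation of $\cGh$-hypersurfaces) gives that $V^*\llcorner B_s(G\cdot p)$ joins smoothly to $V\llcorner A_{s,2r}(G\cdot p)$ with matching unit normals. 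Applying the unique continuation \cite{zhou2020pmc}*{Corollary 3.17} (respectively the classical unique continuation for minimal hypersurfaces in the $h\equiv 0$ case) sheet by sheet across $\partial B_s(G\cdot p)$ now forces $V^*=V$ throughout $B_s(G\cdot p)$. The stability of $V$ in $B_s(G\cdot p)$ is then immediate from Definition \ref{Def: replacement}(iv) applied to $V^*$.

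The main obstacle I expect is the $h\equiv 0$ case when $V=m|\Sigma|$ has multiplicity $m>1$: the replacement may a priori split into several sheets $\{\Sigma_i^*\}$ with distinct integer weights $\{m_i^*\}$, so the supports and the densities must both be matched across $\partial B_s(G\cdot p)$ before unique continuation can be invoked. The tangent-cone classification of Lemma \ref{Lem: tangent cone}, combined with the embeddedness of $\Sigma$ and the transversality of $\Sigma$ to $\partial B_s(G\cdot p)$, should force $l=1$ and $m_1^*=m$ by constancy of density along each connected regular piece; but writing this out carefully in the almost-embedded setting with multiplicity is the most delicate technical step.
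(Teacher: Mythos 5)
Your approach is correct and essentially the same as the paper's: both proofs use the mean-convex maximum principle to force every connected piece of $\spt\|V^*\|\cap B_s(G\cdot p)$ (and of $\spt\|V\|\cap B_s(G\cdot p)$) to reach $\partial B_s(G\cdot p)$, then apply the smooth gluing argument from Step~2 of Theorem~\ref{Thm: main regularity} across $\partial B_s(G\cdot p)$, and finally close with unique continuation \cite{zhou2020pmc}*{Corollary~3.17} and the multiplicity-one structure to conclude $V=V^*$, with stability read off from Definition~\ref{Def: replacement}(iv). The only difference is bookkeeping: the paper handles the $h\equiv 0$ case simply by citing \cite{wang2023G-index}*{Proposition~4.19}, so the density-matching concern you raise for $m>1$ is delegated to that prior result rather than re-proved here.
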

\begin{proof}
	The case $h\equiv 0$ has been shown in \cite{wang2023G-index}*{Proposition 4.19}. 
	For $h\in\cScG$, by maximum principle, we may assume without loss of generality that $V\llcorner B_{2r}(G\cdot p)=\sum_{k=1}^K|\Sigma_k|$ for some smooth, almost embedded $(G,h)$-hypersurfaces $\Sigma_k$ so that $\bd\Sigma_k\cap B_{2r}(G\cdot p)=\emptyset$, $\Sigma_k\cap B_{s}(G\cdot p)\neq \emptyset$, and $\Sigma_k$ transversal to $\bd B_s(G\cdot p)$. 
	Then after applying the smooth gluing result in {\bf Step 2} of Theorem \ref{Thm: main regularity} to $(V,\Omega)$ and $(V^*,\Omega^*)$, we know $V\llcorner A_{\frac{r}{2},s}(G\cdot p) = V^*\llcorner A_{\frac{r}{2},s}(G\cdot p)$ and $\|V\|(\bd B_{s(G\cdot p)}) = \|V^*\|(\bd B_{s(G\cdot p)}) = 0$ (by \eqref{Eq: regularity thm: no mass on boundary}). 
	Combining with Definition \ref{Def: replacement}(i), we see $V=V^*$ in $M\setminus B_{\frac{r}{2}}(G\cdot p)$. 
	Additionally, note every component of $\spt\|V\|\cap B_s(G\cdot p)$ and $\spt\|V^*\|\cap B_s(G\cdot p)$ must intersect with $A_{\frac{r}{2},s}(G\cdot p)$ by the maximum principle. 
	It then follows from the unique continuation result (see \cite{zhou2020pmc}*{Corollary 3.17}) and the $1$-multiplicity of $V,V^*$, that $V=V^*$ in $B_s(G\cdot p)$. 
	Together, we know $V=V^*$, and $V$ is stable in $B_s(G\cdot p)$. 
\end{proof}

Combined Proposition \ref{Prop: replacement V* and V} with the Constancy Theorem, one can follow the proof of \cite{zhou2020pmc}*{Proposition 7.3} to have the following corollary. 
\begin{corollary}\label{Cor: min-max boundary}
	In Theorem \ref{Thm: main regularity}, if $h\in\cScG$, then $\Sigma=\bd\Omega$ is a $\cGh$-boundary. 
\end{corollary}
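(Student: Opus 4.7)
The plan is to show that the Caccioppoli set $\Omega\in\cCcG(M)$ associated to $V$ by the good $\cGh$-replacement property already satisfies $\bd\Omega=\llbracket\Sigma\rrbracket$, where $\Sigma$ is the $(G,h)$-hypersurface produced by Theorem \ref{Thm: main regularity}. The strategy is to first use the Constancy Theorem to pin down $\bd\Omega$ as a sum of whole connected components of $\Sigma$, then rule out any missing component by a single $\cGh$-replacement in an annular region chosen transverse to $\Sigma$.

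Since $V=|\Sigma|$ has multiplicity one and $\|\bd\Omega\|\leq\|V\|=\|\Sigma\|$, the reduced boundary of $\Omega$ is supported in $\Sigma$. By Remark \ref{Rem: small touching and no minimal} the touching set of $\Sigma$ lies in a countable union of $(n-1)$-dimensional submanifolds. The Constancy Theorem applied on each regular connected component of $\Sigma$, together with continuity across the codimension-one touching set, gives $\bd\Omega=\sum_{k\in J}\llbracket\Sigma^{(k)}\rrbracket$ for some $J\subseteq\{1,\dots,K\}$, where $\{\Sigma^{(k)}\}_{k=1}^{K}$ enumerates the connected components of the almost embedded hypersurface $\Sigma$.

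Now I argue by contradiction that $J=\{1,\dots,K\}$. Suppose some component $\Sigma^{(k_0)}$ is not in $\bd\Omega$, and pick a regular point $q\in\Sigma^{(k_0)}$. Using Sard's theorem, choose radii $0<s<t<r_{g.r.}(G\cdot q)$ so that both $\bd B_s(G\cdot q)$ and $\bd B_t(G\cdot q)$ are transverse to $\Sigma$, and set $K:=\closure(A_{s,t}(G\cdot q))$. The good $\cGh$-replacement property produces $(V^*,\Omega^*)$ replacing $(V,\Omega)$ in $K$. By Remark \ref{Rem: replacement with bounded 1st variation} and Theorem \ref{Thm: main regularity} applied to $V^*$, we have $V^*=|\Sigma'|$ for a smooth almost embedded $(G,h)$-hypersurface $\Sigma'$; since $V^*=V$ outside $K$, unique continuation for $h$-hypersurfaces (\cite{zhou2020pmc}*{Corollary 3.17}) together with the transversality of $\bd K$ yield $\Sigma'\cap\bd K=\Sigma\cap\bd K$, which has $\mc H^n$-measure zero. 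Hence $\|V^*\|(\bd K)=0$, and Definition \ref{Def: replacement}(v.2) gives $V^*=|\bd\Omega^*|$ throughout $K$ with $\|\bd\Omega^*\|(\bd K)=0$.

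Finally, near a transverse intersection point $q''\in\bd A_{s,t}(G\cdot q)\cap\Sigma^{(k_0)}$, locally $\bd K$ and $\Sigma^{(k_0)}$ partition a neighborhood into four quadrants. Since $\Sigma^{(k_0)}\notin\bd\Omega$, the set $\Omega^*=\Omega$ takes the same value on the two quadrants outside $K$; meanwhile, the identity $\bd\Omega^*\llcorner\interior(K)=\sum_i\llbracket\Sigma^*_i\rrbracket$ from Definition \ref{Def: replacement}(v.1), together with $\Sigma^{(k_0)}\cap\interior(K)\subseteq\Sigma'$, forces $\Omega^*$ to take opposite values on the two quadrants inside $K$. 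This jump across $\bd K$ on one side of $\Sigma^{(k_0)}$ contributes positive mass to $\bd\Omega^*$ on $\bd K$, contradicting $\|\bd\Omega^*\|(\bd K)=0$. Therefore $J=\{1,\dots,K\}$ and $\Sigma=\bd\Omega$ is a $\cGh$-boundary. The hard part will be ruling out new closed components of $\Sigma'$ introduced by the replacement that might touch $\bd K$ tangentially; this is handled by combining unique continuation with the Sard choice of $\bd K$ transverse to $\Sigma$, together with the mass estimate in Definition \ref{Def: replacement}(ii) that forces any such extraneous component to be small when $\Vol(K)$ is small.
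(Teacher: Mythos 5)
Your high-level route is a reasonable reorganization of the ingredients the paper cites (Proposition \ref{Prop: replacement V* and V}, the Constancy Theorem, and the gluing step from Theorem \ref{Thm: main regularity}), and the quadrant argument at a transverse intersection $q''\in\bd K\cap\Sigma^{(k_0)}$ is a clean way to extract the contradiction once the gluing has been established. However, the writeup has two substantive problems.

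First, the step $\|V^*\|(\bd K)=0$ (and hence $\|\bd\Omega^*\|(\bd K)=0$) is the actual crux of the argument, and it does not follow from ``unique continuation together with transversality.'' Even if one knew $\Sigma'\cap\bd K=\Sigma\cap\bd K$ is a set of $\mc H^n$-measure zero, nothing a priori prevents $V^*$ from depositing extra $n$-dimensional mass on $\bd K$; the replacement is only required to agree with $V$ on the open set $M\setminus K$. What is actually needed is the full gluing argument across $\bd K$ --- the maximum principle \eqref{Eq: regularity thm: maximum principle}, the tangent-cone classification Lemma \ref{Lem: tangent cone} applied to $V^*$, and the density/blow-up argument establishing \eqref{Eq: regularity thm: no mass on boundary} (the analogue of \cite{zhou2020pmc}*{Claim 2 and (6.6)}) --- applied here to the pair $(V,\Omega)$ on the outside of $K$ and $(V^*,\Omega^*)$ on the inside. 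This is essentially the content of Proposition \ref{Prop: replacement V* and V}, which is precisely what the paper invokes. Your proposal gestures at this but does not carry it out, and the causal chain ``$\Sigma'\cap\bd K$ has measure zero, hence $\|V^*\|(\bd K)=0$'' as stated is a non sequitur.

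Second, the concluding paragraph misdiagnoses the difficulty and offers an incorrect fix. Extraneous closed components of $\Sigma'$ contained entirely in $\interior(K)$ are ruled out not by the mass estimate Definition \ref{Def: replacement}(ii) --- which only bounds $\bigl|\|V\|(M)-\|V^*\|(M)\bigr|$ and does not force any individual new component to be small or to vanish --- but by the mean-convexity maximum principle \eqref{Eq: regularity thm: maximum principle}, which forces every component of $\spt\|V^*\|$ meeting $K$ to reach $\bd K$. Moreover, such extraneous components would not threaten the quadrant argument, which is purely local at $q''\in\bd K$; the thing you actually need to guard against is $V^*$ placing mass directly on $\bd K$, and that is governed by the gluing step described above, not by Definition \ref{Def: replacement}(ii).
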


\subsection{Proof of Theorem \ref{Thm: Equivariant min-max for PMC}}\label{Subsec: proof of PMC Equivariant min-max}

\begin{proof}
	By Theorem \ref{Thm: existence of amv}, we have a non-trivial $V\in\mf C(\{\Phi_i\})\subset\mc V^G_n(M)$ that is $\cGh$-almost minimizing in annuli and either has $c$-bounded first variation or belongs to $|\bd\Phi_0|(Z)$. 
	Using Proposition \ref{Prop: amv implies good replacement}, Theorem \ref{Thm: main regularity}, and Corollary \ref{Cor: min-max boundary}, it is sufficient to show $V$ has $c$-bounded first variation. 
	Indeed, since $V$ is $\cGh$-almost minimizing in annuli, we know $V$ has $c$-bounded first variation away from a finite many orbits $\{G\cdot p_i\}_{i=1}^l$ by Lemma \ref{Lem: Gc-bounded 1st variation and c-bounded} and the proof of \cite{wang2022Gcmc}*{Lemma 5.2}. 
	Then by a cut-off trick in \cite{harvey1975extending}*{Theorem 4.1} using $\dist(G\cdot p_i, \cdot)$, we can show that $\|V\|(B_\rho(G\cdot p) ) \leq C \rho^{n - \dim(G\cdot p) - 1/2}$ for $\rho>0$ small enough. 
	Hence, the first variation extends across each $G\cdot p_i$, and $V$ has $c$-bounded first variation in $M$. 
\end{proof}

\section{Compactness of min-max $(\mc G,h)$-hypersurfaces and weak $\mc G$-index upper bounds}\label{Sec: compactness and index}

In this section, we first show a compactness theorem for the $(\mc G,h)$-hypersurfaces constructed by our equivariant min-max theory. 
More precisely, we consider the following hypersurfaces. 
\begin{definition}\label{Def: min-max Gh-hypersurface}
	Given $\mk c\in\mb Z_+$ and $h\in\cScG\cup\{0\}$, a $G$-varifold $\Sigma\in\mc V^G_n(M)$ is said to be a {\em min-max $(\mk c,\mc G, h)$-hypersurface (with multiplicity)} if 
	\begin{itemize}
		\item $\Sigma$ has $c$-bounded first variation in $M$, where $c=\sup_{M}|h|$;
		\item there exists $\Omega\in\cCcG(M)$ so that for any $\mk c$-admissible family of $G$-annuli $\mk A$, $\Sigma$ has good $\cGh$-replacement property in some $A\in\mk A$ w.r.t. $\Omega$.
	\end{itemize}
	In general, we say $\Sigma$ is a min-max $\cGh$-hypersurface (with multiplicity) if it is a min-max $(\mk c,\mc G, h)$-hypersurface for some $\mk c\in\mb Z_+$. 
\end{definition}

By the main regularity theorem \ref{Thm: main regularity} and Lemma \ref{Lem: admissible to all annuli good replace}, it is reasonable to call the $G$-varifold $\Sigma$ in the above definition a $\cGh$-hypersurface. 
In particular, if $h\in\cScG$, then $\Sigma$ is a $\cGh$-boundary with multiplicity $1$ (Corollary \ref{Cor: min-max boundary}), so we often abuse the notations and identify the varifold $\Sigma$ with its support. 
Also, by Section \ref{Subsec: proof of PMC Equivariant min-max}, we know the $G$-varifold $V$ in Theorem \ref{Thm: Equivariant min-max for PMC}(ii) is a min-max $((3^m)^{3^m}, \mc G, h)$-hypersurface. 
However, since $\Sigma$ may touch itself, it is useful to introduce the weak $G$-index generalized from \cite{marques2016morse}*{Definition 4.1}\cite{zhou2020multiplicity}*{Definition 2.1}.

\begin{definition}(Weak $G$-index)\label{Def: weak G-index}
	Given $\Sigma\in\mc P^{\mc G,h}$ with $\Sigma=\bd\Omega$, $k\in\mb N$ and $\epsilon \geq 0$, we say that $\Sigma$ is {\em $(G,k)$-unstable in an $\epsilon$-neighborhood} if there exists $c_0\in (0,1)$ and a smooth family $\{F_v\}_{v\in\overline{\mb{B}}^k_1}\subset \Diff_G(M)$ of $G$-equivariant diffeomorphisms with $F_0=id$, $F_{-v}=F_v^{-1}$, for all $v\in\overline{\mb B}^k$ such that for any $\Omega'\in \overline{\mf B}_{2\epsilon}^{\mf F}(\Omega)$, the smooth function 
	\[\mc A^h_{\Omega'}: \overline{\mb B}^k_1 \to [0,\infty),\quad\mc A^h_{\Omega'}(v):= \mc A^h(F_v(\Omega')) \]
	has a unique maximum at $m(\Omega')\in \mb B^k_{c_0/\sqrt{10}}(0)$ and $\frac{1}{c_0}id\leq D^2 \mc A^h_{\Omega'}(u)\leq -c_0 id$ for all $u\in\overline{\mb B}^k_1$. 
	In addition, we say {\em the weak Morse $G$-index of $\Sigma\in \mc P^{\mc G,h}$ is bounded (from above) by $k$}, denoted by 
	\[\Index_{G,w}(\Sigma)\leq k,\]
	if $\Sigma$ is not $(G,j)$-unstable in the $0$-neighborhood for any $j\geq k+1$. $\Sigma$ is said to be {\em weakly stable} if $\Index_{G,w}=0$.  
\end{definition}

\begin{remark}
	We have the following remarks on the above definitions. 
	\begin{itemize}
		\item For $h=0$ and stationary $\Sigma\in\mc V^G_n(M)$, we have a similar definition in \cite{wang2023G-index}*{Definition 6.1}. Also, the above concepts can be localized to an open $G$-set $U\subset M$ using $\Diff_G(U)$. 
		\item Using $\Diff(M)$ in place of $\Diff_G(M)$, the above concepts then reduce to the weak index $\Index_w$ defined in \cite{zhou2020multiplicity}*{Definition 2.1}.
		\item For smoothly embedded $\Sigma$ (without self-touching), $\Sigma$ is $(G,k)$-unstable if and only if $\Index_{G}(\Sigma)\geq k$ (see \cite{marques2016morse} or \cite{wang2023G-index}). 
		\item If $\Sigma$ is $(G,k)$-unstable in the $0$-neighborhood, then it is $(G,k)$-unstable in an $\epsilon$-neighborhood for some $\epsilon>0$. Thus, for simplicity, we also say $\Sigma$ is $(G,k)$-unstable if it is $(G,k)$-unstable in the $0$-neighborhood
	\end{itemize}
\end{remark}


\subsection{Compactness for min-max $\cGh$-hypersurfaces}

\begin{theorem}\label{Thm: compactness for min-max Gh-hypersurfaces}
	Let $(M^{n+1},g_{_M})$ be a closed Riemannian manifold with a compact Lie group $G$ acting by isometries so that $3\leq \codim(G\cdot p)\leq 7$ for all $p\in M$, and let $\mc G$ be given as in \eqref{Eq: mc G}. 
	Suppose $\{h_i\}_{i\in\mb N}\subset\cScG$ with $\lim_{i\to\infty}h_i=h_\infty\in\cScG\cup\{0\}$ in the smooth topology. 
	Given $\mk c\in\mb Z_+$ and $C>0$, let $\Sigma_i$ ($i\in\mb N$) be a min-max $(\mk c,\mc G, h_i)$-hypersurface with $\|\Sigma_i\|(M)\leq C$. 
	Then 
	\begin{itemize}
		\item[(i)] $\Sigma_i$ converges (up to a subsequence) to a min-max $(\mk c,\mc G, h_\infty)$-hypersurface $\Sigma_\infty\in\mc V^G_n(M)$ in the sense of varifold, and also in the Hausdorff distance by the monotonicity formula;
		\item[(ii)] there exists a finite union of orbits $\mc Y=\cup_{k=1}^KG\cdot p_k$ so that the convergence $\Sigma_i\to\Sigma_\infty$ is is locally smooth and graphical on $\spt(\|\Sigma_\infty\|)\setminus\mc Y$ with a constant integer multiplicity on each $G$-connected component of $\spt(\|\Sigma_\infty\|)$;
		\item[(iii)] if $h_\infty\in\cScG$, then the multiplicity of $\Sigma_\infty$ is $1$ and $\Sigma_\infty$ is a $\cGh$-boundary; moreover, $\Index_{G,w}(\Sigma_\infty)\leq I$ provided $\Index_{G,w}(\Sigma_i) \leq I$ for all $i\in\mb N$ large enough; 
		\item[(iv)] assume $\Sigma_i\neq\Sigma_\infty$ eventually and $h_i=h_\infty=h\in\cScG$, then 
		\begin{itemize}
			\item[$\bullet$] if 
			$\mc Y=\emptyset$, then $\Sigma_\infty$ has a non-trivial Jacobi $G$-field; 
			\item[$\bullet$] if 
			$\mc Y\neq\emptyset$, then $\Index_{w}(\Sigma_\infty)< \Index_{w}(\Sigma_i)$ for $i$ large enough;
		\end{itemize}
		\item[(v)] if $h=0$, then $\Index_G(\Sigma_\infty)\leq\limsup_{i\to\infty}\Index_{G,w}(\Sigma_i)$. 
	\end{itemize}
\end{theorem}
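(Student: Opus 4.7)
The strategy is to pass the good $\cGh$-replacement property of the $\Sigma_i$ to the limit, and then to feed the limit into the regularity and boundary results of Section~\ref{Sec: PMC min-max}. Using $\|\Sigma_i\|(M)\le C$ and $\sup_i\sup_M|h_i|<\infty$, extract a subsequential varifold limit $\Sigma_\infty\in\mc V_n^G(M)$ with $c_\infty$-bounded first variation, and simultaneously pass each Caccioppoli region $\Omega_i\in\cCcG(M)$ (from Definition~\ref{Def: min-max Gh-hypersurface}) to a weak limit $\Omega_\infty\in\cCcG(M)$ with $\|\bd\Omega_\infty\|\le\|\Sigma_\infty\|$. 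To verify $\Sigma_\infty$ is a min-max $(\mk c,\mc G,h_\infty)$-hypersurface, fix any $\mk c$-admissible family $\mk A$ of $G$-annuli; by pigeonhole and another passage to a subsequence, there is a common $A\in\mk A$ in which every $\Sigma_i$ enjoys the good $(\mc G,h_i)$-replacement property. Given compact $G$-sets $\{K_l\}\subset A$, iterate replacements to get $\{(\Sigma_i^{(l)},\Omega_i^{(l)})\}$ satisfying Definition~\ref{Def: replacement}. Since each $\Sigma_i^{(l)}\llcorner\interior(K_l)$ is an almost embedded stable $(G,h_i)$-hypersurface of bounded mass, Theorem~\ref{Thm: compactness for stable (G,h)-hypersurfaces} yields a subsequential limit $\Sigma_\infty^{(l)}$ of the same type for $h_\infty$, with associated Caccioppoli set $\Omega_\infty^{(l)}$. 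A diagonal extraction across $l$ produces the required good $(\mc G,h_\infty)$-replacement property for $\Sigma_\infty$, giving (i); Corollary~\ref{Cor: min-max boundary} and Theorem~\ref{Thm: main regularity} then upgrade $\Sigma_\infty$ to a $\cGh$-boundary of multiplicity one when $h_\infty\in\cScG$, proving the first half of (iii).

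For (ii), I would cover $M$ by small tubes $B_r(G\cdot p)$ and separate out orbits where curvature (equivalently, area-density) of $\Sigma_i$ fails to be uniformly controlled. The uniform monotonicity lower bound for $c$-bounded first variation varifolds, together with the mass cap $C$, forces this bad set $\mc Y$ to be a finite union of $G$-orbits. Away from $\mc Y$, the local almost embedded stable structure (inherited from the replacements) and Theorem~\ref{Thm: compactness for stable (G,h)-hypersurfaces} give locally smooth graphical convergence; the constancy theorem applied to each $G$-connected component of $\spt\|\Sigma_\infty\|\setminus\mc Y$ pins down the integer multiplicity. For the index upper bound in (iii): if $\Sigma_\infty$ were $(G,I{+}1)$-unstable in the $0$-neighborhood, Definition~\ref{Def: weak G-index} provides a $G$-equivariant family $\{F_v\}$ strictly concavifying $\mc A^h_{\Omega'}$ on an $\mc F$-neighborhood of $\Omega_\infty$; for $i\gg1$ the region $\Omega_i$ lies in this neighborhood, contradicting $\Index_{G,w}(\Sigma_i)\le I$.

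For (iv), assume $h_i=h_\infty=h\in\cScG$ and $\Sigma_i\ne\Sigma_\infty$ eventually. When $\mc Y=\emptyset$, the locally smooth graphical convergence lets me write, on each sheet, $\Sigma_i$ as a $G$-invariant normal graph $u_i\nu$ over $\Sigma_\infty$; both being $h$-hypersurfaces, $u_i$ solves the PMC equation which linearises to $L_{\Sigma_\infty}u_i=O(|u_i|^2+|\nabla u_i|^2)$, and dividing by $\|u_i\|_{C^1}$ extracts a non-trivial $G$-invariant Jacobi field in the limit. When $\mc Y\ne\emptyset$, the logarithmic cut-off trick of Schoen--Simon produces $G$-equivariant test deformations supported near $\mc Y$, orthogonal to the low eigenspace of $L_{\Sigma_\infty}$ restricted to $C^\infty_G$, delivering an extra negative direction on $\Sigma_i$ for large $i$ and so a strict drop in $\Index_w$. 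Statement (v) is parallel using the minimal-case compactness (Theorem~\ref{Thm: compactness for stable (G,h)-hypersurfaces} with $h_\infty=0$); the only new wrinkle is that the minimal limit may carry multiplicity, but $G$-invariant test functions can still be pulled back sheet-by-sheet to yield $\Index_G(\Sigma_\infty)\le\limsup_i\Index_{G,w}(\Sigma_i)$.

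The main obstacle I anticipate is the orientation-matching step inside the diagonal extraction for the good $(\mc G,h_\infty)$-replacement property: Definition~\ref{Def: replacement}(i)(v) requires the limit Caccioppoli sets $\Omega_\infty^{(l)}$ to coincide with $\Omega_\infty^{(l-1)}$ off $K_l$ and to induce the limit varifold $\Sigma_\infty^{(l)}$ inside $\interior(K_l)$. Coordinating the weak $\mc F$-limits of $\Omega_i^{(l)}$ across the nested family $\{K_l\}$ while preserving the $\cGh$-boundary structure requires a careful slicing argument to arrange $\|\Sigma_\infty^{(l)}\|(\bd K_l)=0$ and the compatibility of the outer unit normals across $\bd K_l$, so that the gluing of Theorem~\ref{Thm: main regularity} applies uniformly in $l$.
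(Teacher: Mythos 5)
Your overall route is the same as the paper's, which disposes of (i)--(iii) by pushing the good $\cGh$-replacement property of $\Sigma_i$ to the limit via Theorem~\ref{Thm: compactness for stable (G,h)-hypersurfaces} and then invoking Theorem~\ref{Thm: main regularity} and Corollary~\ref{Cor: min-max boundary}, and handles (iv)--(v) by adapting the sheeted/log-cutoff analysis from prior work. Your pigeonhole step to pin down a common annulus $A\in\mk A$ for the subsequence, the weak-$\mc F$ extraction of the associated Caccioppoli sets, and the contradiction argument for the $\Index_{G,w}$ upper bound in (iii) (via strict concavity of $\mc A^h_{\Omega'}$ on an $\mf F$-ball and putting $\Omega_i$ inside that ball for $i$ large) are all correct and match the intended argument.

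The one genuine issue is in (iv) when $\mc Y\neq\emptyset$. The theorem asserts $\Index_{w}(\Sigma_\infty)<\Index_{w}(\Sigma_i)$, i.e.\ a drop in the \emph{non-equivariant} weak index, and the remark following the theorem explains this is a deliberate weakening: stability and $G$-stability are only known to be equivalent for \emph{embedded} $\cGh$-hypersurfaces (Lemma~\ref{Lem: stable and G-stable}), whereas the $\Sigma_i$ here are merely almost embedded. Your argument produces a $G$-equivariant extra deformation orthogonal to the negative eigenspace of $L_{\Sigma_\infty}$ \emph{restricted to $C^\infty_G$}, which at best yields $\Index_{G,w}(\Sigma_\infty)<\Index_{G,w}(\Sigma_i)$. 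Since $\Index_{G,w}\leq\Index_{w}$ for each hypersurface but there is no uniform comparison between the two across different hypersurfaces, this does not imply the stated $\Index_w$ drop. You should instead run the log-cutoff argument with arbitrary (not necessarily $G$-equivariant) variations, requiring orthogonality to the full negative eigenspace of $L_{\Sigma_\infty}$; equivariance is simply not needed, and insisting on it loses the conclusion. The rest of (iv) --- the $G$-invariance of the Jacobi field when $\mc Y=\emptyset$, via the $\mc G$-symmetry of both the graph functions and the unit normal of the $\cGh$-boundaries --- is correctly handled. The orientation-matching obstacle you flag at the end is real but is a standard slicing technicality that Theorem~\ref{Thm: main regularity} and the mass-bound already equip you to resolve.
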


\begin{proof}
	Using the compactness result (Theorem \ref{Thm: compactness for stable (G,h)-hypersurfaces}) and the regularity results (Theorem \ref{Thm: main regularity}, Corollary \ref{Cor: min-max boundary}), the proof of \cite{wang2023G-index}*{Theorem 5.3} would carry over to show (i)(ii)(iii). 
	By Remark \ref{Rem: small touching and no minimal}, the statement in (iv) follows from \cite{sun2024multiplicity}*{Theorem 2.9, Part 2,3}. 
	In particular, since $\Sigma_i$ and $\Sigma_\infty$ are $\cGh$-boundaries, we know the function $\varphi$ in \cite{sun2024multiplicity}*{Theorem 2.9, Part 2} and the unit normal of $\Sigma_\infty$ are $\mc G$-symmetric, which implies the $G$-invariance of the Jacobi field. 
	Finally, the arguments in \cite{zhou2020multiplicity}*{Theorem 2.6(v)} and \cite{wang2023G-index}*{Proposition 6.4} also works here for (v). 
\end{proof}

\begin{remark}
	If $h_i\in\cScG(g_{_M}^i)$ and $\Sigma_i$ is a min-max $(\mk c,\mc G, h)$-hypersurface under the $G$-invariant metric $g_{_M}^i$ so that $\lim g_{_M}^i = g_{_M}$ in the smooth topology. 
	Then Theorem \ref{Thm: compactness for min-max Gh-hypersurfaces}(i)(ii)(iii) are also valid for min-max $(\mk c,\mc G, h)$-hypersurface $\Sigma_\infty$ under $g_{_M}$. 
\end{remark}

Note that in Theorem \ref{Thm: compactness for min-max Gh-hypersurfaces}(iv), we used $\Index_w$ instead of $\Index_{G,w}$, since the stability and $G$-stability are only known to be equivalent for {\em embedded} $\cGh$-hypersurfaces.
Nevertheless, combined Theorem \ref{Thm: compactness for min-max Gh-hypersurfaces}(iv) with Lemma \ref{Lem: generic good pairs}, we still have the following corollary by the proof of \cite{sun2024multiplicity}*{Lemma 2.12}. 

\begin{corollary}\label{Cor: countable min-max hypersurfaces}
	Let $(g_{_M}, h)$ be a good $\mc G$-pair. Then for $\Lambda>0$ and $I,\mk c\in\mb N$, the set 
	\begin{align}
		\mc P^{\mc G,h}(\mk c, \Lambda, I):=\{ \mbox{min-max $(\mk c,\mc G, h)$-hypersurfaces $\Sigma$ with }\Index_{w}(\Sigma)\leq I, \|\Sigma\|(M)\leq \Lambda \}
	\end{align}
	is a countable set, and thus $\cup_{I\geq 0} \mc P^{\mc G,h}(\mk c, \Lambda, I)$ is countable. 
\end{corollary}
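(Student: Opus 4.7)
The plan is to follow the strategy of \cite{sun2024multiplicity}*{Lemma 2.12} and prove countability by induction on $I$, combining the compactness theorem (Theorem \ref{Thm: compactness for min-max Gh-hypersurfaces}) with the $G$-bumpy condition built into the notion of a good $\mc G$-pair. The key topological fact I will invoke is that any mass-bounded subset of $\mc V^G_n(M)$ is a compact separable metric space under the $\mathbf F$-metric, so to conclude countability of $\mc P^{\mc G, h}(\mk c, \Lambda, I)$ it is enough to show that each of its elements is isolated in a suitable subset.

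The inductive scheme I have in mind treats the ``top layer'' $\mc P_I := \{\Sigma \in \mc P^{\mc G, h}(\mk c, \Lambda, I) : \Index_w(\Sigma) = I\}$. Given $\Sigma \in \mc P_I$, I will assume for contradiction that there exists a sequence of distinct $\Sigma_j \in \mc P^{\mc G, h}(\mk c, \Lambda, I)$ converging to $\Sigma$ in the varifold sense. By Theorem \ref{Thm: compactness for min-max Gh-hypersurfaces}(i),(iii), the limit $\Sigma$ is itself a min-max $(\mk c, \mc G, h)$-hypersurface of multiplicity one with $\|\Sigma\|(M) \leq \Lambda$, so part (iv) of the same theorem applies and leaves two mutually exclusive cases: either the concentration set $\mc Y$ is empty, in which case $\Sigma$ admits a non-trivial $G$-invariant Jacobi field — directly contradicting the nullity-zero requirement in the definition of a good $\mc G$-pair (Definition \ref{Def: good pairs}) — or $\mc Y \neq \emptyset$ and $\Index_w(\Sigma) < \Index_w(\Sigma_j) \leq I$, which contradicts $\Index_w(\Sigma) = I$. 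Either way $\Sigma$ is isolated in $\mc P^{\mc G, h}(\mk c, \Lambda, I)$, so separability forces $\mc P_I$ to be countable. The base case $I = 0$ runs identically, with the index-drop alternative immediately ruled out since $\Index_w$ is nonnegative; the induction then yields countability of $\mc P^{\mc G, h}(\mk c, \Lambda, I)$ as the union of $\mc P_I$ with the countable set $\mc P^{\mc G, h}(\mk c, \Lambda, I-1)$.

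Once this is in hand, the final assertion that $\bigcup_{I \geq 0} \mc P^{\mc G, h}(\mk c, \Lambda, I)$ is countable follows immediately as a countable union of countable sets. I do not expect a serious obstacle: the compactness theorem does the heavy geometric lifting, and the bumpy assumption kills the Jacobi-field alternative by design. The one point worth double-checking carefully is that the ``Jacobi $G$-field'' output of Theorem \ref{Thm: compactness for min-max Gh-hypersurfaces}(iv) is of the same type (that is, $G$-invariant and annihilated by the intrinsic PMC Jacobi operator) as the one excluded by $G$-bumpiness in Definition \ref{Def: good pairs} — but this matching is precisely what the equivariant extension of White's bumpy metrics theorem recorded in Lemma \ref{Lem: generic good pairs} guarantees.
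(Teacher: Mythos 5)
Your proposal is correct and matches the paper's intended approach: the paper cites the proof of \cite{sun2024multiplicity}*{Lemma 2.12}, which is precisely the argument you reconstruct --- induct on the weak index, use Theorem~\ref{Thm: compactness for min-max Gh-hypersurfaces}(iv) to extract either a non-trivial Jacobi $G$-field (killed by the bumpy part of Definition~\ref{Def: good pairs}) or a strict index drop (impossible on the top layer), conclude each such $\Sigma$ is isolated, and finish by separability of the mass-bounded varifold space. Your closing concern about the type of Jacobi field is legitimate but resolved exactly as you suspect: Theorem~\ref{Thm: compactness for min-max Gh-hypersurfaces}(iv) produces a $G$-invariant solution of the $h$-Jacobi operator \eqref{Eq: Jacobi field}, which is precisely what the non-degeneracy clause in Definition~\ref{Def: good pairs}(2), guaranteed generically by Lemma~\ref{Lem: generic good pairs}, rules out.
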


In light of \cite{zhou2020multiplicity}*{Lemma 3.5, Theorem 2.9} and \cite{wang2023G-index}*{Corollary 1.6}, one may expect  the $C^\infty_G$-generic finiteness for min-max $(\mk c,\mc G, h)$-hypersurfaces with bounded area. 
However, for our purpose, it is sufficient to have the above countablility result.

\subsection{Weak $G$-index upper bounds for min-max $\cGh$-hypersurfaces}

By the above compactness and finiteness results, we have the weak index upper bound as \cite{marques2016morse}*{Theorem 6.1}. 

\begin{theorem}\label{Thm: index upper bound - good pairs}
	Let $(M^{n+1},g_{_M})$ be a closed Riemannian manifold with a compact Lie group $G$ acting by isometries so that $3\leq \codim(G\cdot p)\leq 7$ for all $p\in M$, and let $\mc G$ be given as in \eqref{Eq: mc G}. 
	Suppose $(h,g_{_M})$ is a good $\mc G$-pair (Definition \ref{Def: good pairs}), and $\{\Phi_i:X^m\to \cCcG(M) \}_{i\in\mb N}\in\Pi$ is a minimizing sequence with $h$-width $\mf L^h=\mf L^h(\Pi)$ satisfying \eqref{Eq: width > relative}. 
	Then there exists a min-max $(\mk c,\mc G, h)$-hypersurface $\Sigma\in \mf C(\{\Phi_i\}_{i\in\mb N}) $ with $\mk c=(3^m)^{3^m}$ so that $\Sigma=\bd\Omega\in \mc P^{\mc G,h}$, 
	\[\mf L^h(\Pi) = \mc A^h(\Omega)\qquad {\rm and}\qquad \Index_{G,w}(\Sigma)\leq m. \]
\end{theorem}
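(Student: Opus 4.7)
The existence part is immediate: applying Theorem \ref{Thm: Equivariant min-max for PMC} to the minimizing sequence $\{\Phi_i\}$ gives $V = |\bd \Omega|$ with $\Omega \in \cCcG(M)$, $\mc A^h(\Omega) = \mf L^h(\Pi)$, and $\Sigma = \bd\Omega \in \mc P^{\mc G,h}$; the discussion in Section \ref{Subsec: proof of PMC Equivariant min-max} identifies $\Sigma$ as a min-max $((3^m)^{3^m}, \mc G, h)$-hypersurface. Only the weak $G$-index bound remains, which I would establish by the Marques--Neves/Zhou deformation scheme \cite{marques2016morse}\cite{zhou2020multiplicity} adapted to the $\mc G$-equivariant PMC setting.

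Suppose, for contradiction, that every min-max $(\mk c,\mc G,h)$-boundary $\Sigma' = \bd\Omega'$ in $\mf C(\{\Phi_i\})$ realizing $\mc A^h(\Omega') = \mf L^h(\Pi)$ satisfies $\Index_{G,w}(\Sigma') \geq m+1$. Denote by $\mc K \subset \cCcG(M)$ the set of such $\Omega'$. After the pull-tight procedure (Lemma \ref{Lem: tightening}), every element of $\mc K$ has mass bounded by some $\Lambda < \infty$. By Corollary \ref{Cor: countable min-max hypersurfaces} applied to the good $\mc G$-pair $(g_{_M}, h)$, the collection $\bigcup_{I\geq 0}\mc P^{\mc G,h}(\mk c,\Lambda,I)$ is countable, so $\mc K$ is at most countable; write $\mc K = \{\Omega^{(j)}\}_{j \in J}$. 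For each $j$, Definition \ref{Def: weak G-index} supplies $\epsilon_j > 0$, $c_0^{(j)} \in (0,1)$, and a smooth family $\{F^{(j)}_v\}_{v\in \overline{\mb B}^{m+1}_1} \subset \Diff_G(M)$ such that on every $\Omega' \in \overline{\mf B}^{\mf F}_{2\epsilon_j}(\Omega^{(j)})$, the map $v \mapsto \mc A^h(F^{(j)}_v(\Omega'))$ is strictly concave with unique maximum in $\mb B^{m+1}_{c_0^{(j)}/\sqrt{10}}$.

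The next step is the equivariant deformation gadget. For each $j$, the strict concavity together with the implicit function theorem yields a continuous function $w^{(j)}(v, \Omega')$ on $\overline{\mb B}^m_1 \times \overline{\mf B}^{\mf F}_{\epsilon_j}(\Omega^{(j)})$ zeroing out $\partial_{v_{m+1}} \mc A^h$, and the map
\[
\Psi^{(j)} : \overline{\mb B}^m_1 \times \overline{\mf B}^{\mf F}_{\epsilon_j}(\Omega^{(j)}) \to \cCcG(M), \qquad (v,\Omega') \mapsto F^{(j)}_{(v,\,w^{(j)}(v,\Omega'))}(\Omega')
\]
is $\mf F$-continuous, $\mc G$-preserving (because each $F^{(j)}_v \in \Diff_G(M)$), and satisfies a uniform strict drop
\[
\mc A^h\bigl(\Psi^{(j)}(v,\Omega')\bigr) \leq \mc A^h(\Omega') - \eta_j \qquad \forall\, v \in \partial \mb B^m_1
\]
for some $\eta_j > 0$. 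A small open $\mf F$-neighborhood $U_j$ of $\Omega^{(j)}$ inside $\overline{\mf B}^{\mf F}_{\epsilon_j}(\Omega^{(j)})$ thus has the property that any $\Omega' \in U_j$ can be cheaply replaced by an $m$-parameter family with strictly lower $\mc A^h$ on the boundary of the parameter disk.

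The final step is to patch the countable family $\{\Psi^{(j)}\}$ over $X$. Replace $\Phi_i$ by a fine discretization $\phi_i : X(k_i)_0 \to \cCcG(M)$ via Theorem \ref{Thm: discretization}, and let $l_i \to \infty$ be as there. Since $\mf C(\{\Phi_i\})$ is $\mf F$-compact and, by the tightening and the assumption, any $\phi_i(x)$ with $\mc A^h(\phi_i(x)) > \mf L^h(\Pi) - \tfrac{1}{i}$ must lie $\mf F$-close to $\mc K$, a standard compactness argument (combined with the finite-dimensional nature of $X(l_i)$) produces, for $i$ large, a finite subset $J_i \subset J$ with $\{x : \mc A^h(\phi_i(x)) > \mf L^h(\Pi) - \tfrac{1}{i}\} \subset \bigcup_{j \in J_i} \phi_i^{-1}(U_j)$. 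Over each such component of $\phi_i^{-1}(U_j)$, introduce an auxiliary $m$-parameter $v$ (absorbed into a cell refinement of $X(l_i)$) and post-compose $\phi_i$ with $\Psi^{(j)}(v,\cdot)$, taking $v$ to range over $\partial \mb B^m_1$ at the boundary of the component. Using a $G$-invariant partition of unity on $X$ vanishing on $Z$ (which is possible because $G$ acts trivially on the parameter space $X$), glue these local modifications equivariantly. Choosing the discretization fine enough so that the fineness of the patched discrete map is below $\delta_0$ of Theorem \ref{Thm: interpolation}, its Almgren $\mc G$-extension $\Phi_i'$ is, by Proposition \ref{Prop: homotopy equivalent Almgren extension}, homotopic to $\Phi_i$ in $(\cCcG(M), \mf F)$; hence $\{\Phi_i'\} \in \Pi$. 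The uniform drops $\eta_j$ on $\partial \mb B^m_1$, together with the fact that outside $\bigcup_j U_j$ we already have $\mc A^h(\phi_i(x)) \leq \mf L^h(\Pi) - \tfrac{1}{i}$, yield $\limsup_i \sup_X \mc A^h(\Phi_i'(x)) < \mf L^h(\Pi)$, contradicting the definition of $\mf L^h(\Pi)$.

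\textbf{Main obstacle.} The hardest step is the final patching: building a single $\mc G$-equivariant, $\mf F$-continuous modification of $\Phi_i$ that (a) stays in the homotopy $\mc G$-class $\Pi$, (b) respects the relative constraint on $Z$, and (c) produces the uniform strict drop on $X$ from countably many local gadgets with individually small $\eta_j$. Point (a) is handled by keeping the fineness below $\delta_0$ so Proposition \ref{Prop: homotopy equivalent Almgren extension} applies; (b) is automatic because \eqref{Eq: width > relative} forces every $\Omega \in \mc K$ to stay $\mf F$-bounded away from $\Phi_0(Z)$, so the deformation can be made to vanish near $Z$; (c) is the delicate point and is where the countability (Corollary \ref{Cor: countable min-max hypersurfaces}) together with compactness of $\mf C(\{\Phi_i\})$ reduces an a priori infinite gluing to a finite one at each fixed $i$.
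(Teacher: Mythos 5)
Your general scheme (countability of min-max $\cGh$-hypersurfaces + the $(G,m+1)$-unstable deformation gadget + patching over a discretization) is the right family of tools, and you correctly locate where the difficulty lies. However, the covering step in your final paragraph contains a genuine gap, and the overall contradiction structure does not quite close because of it.

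The claim ``any $\phi_i(x)$ with $\mc A^h(\phi_i(x)) > \mf L^h(\Pi) - \tfrac{1}{i}$ must lie $\mf F$-close to $\mc K$'' is false in general. The critical set $\mf C(\{\Phi_i\})$ is a set of varifolds, and Theorem~\ref{Thm: Equivariant min-max for PMC} guarantees only that \emph{some} element of it is a smooth almost embedded $\cGh$-boundary; the remaining near-maximal slices of $\phi_i$ may accumulate on varifolds that are not in $\mc K$ at all (not regular, not induced by any $\Omega' \in \cCcG(M)$, not almost minimizing). Consequently the open sets $U_j$ do not cover the near-maximal part of $X$, the complement has no uniform width drop, and you cannot conclude $\limsup_i \sup_X \mc A^h(\Phi_i'(x)) < \mf L^h(\Pi)$. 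A second, secondary problem is that the $\Omega^{(j)}$ can accumulate on the critical set, so even if they did cover it there is no lower bound on the drops $\eta_j$ and ``patching a finite subcover'' does not by itself produce a positive uniform drop.

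The paper's proof is structurally different precisely to avoid this. It does not argue by contradiction and does not try to push the width below $\mf L^h(\Pi)$. Instead it introduces the auxiliary set $\mc W_G(r)$ of min-max hypersurfaces lying $\mf F$-distance $\geq r$ from $\mf C(\{\Phi_i\})$, shows (as in \cite{zhou2020multiplicity}*{Lemma~3.7}) that the sweepout is eventually $\epsilon_0$-far from $\mc W_G(r)$, and then inductively deforms $\{\Phi_i\}$, one $\Omega_j$ at a time with $\epsilon_j$ shrinking, so that the new minimizing sequence $\{\Psi_l\}$ has $\mf L^h(\{\Psi_l\}) = \mf L^h(\Pi)$ (width unchanged) and $\mf C(\{\Psi_l\}) \cap (\mc W_G^{m+1}\cup\mc W_G(r)) = \emptyset$. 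Applying Theorem~\ref{Thm: Equivariant min-max for PMC} to $\{\Psi_l\}$ then \emph{produces} an element $\Omega_r$ of $\mc W_G \setminus (\mc W_G^{m+1}\cup\mc W_G(r))$, i.e.\ one that is not $(G,m+1)$-unstable, and the desired $\Omega$ is obtained as $r\to 0$ via the compactness Theorem~\ref{Thm: compactness for min-max Gh-hypersurfaces}. In short: where you try to show no element of the critical set survives, the paper shows the critical set of a new (equally high) minimizing sequence is forced into the low-index part of $\mc W_G$, which is where the min-max theorem then lands you. To repair your proposal you would essentially need to reorganize it along these lines.
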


\begin{proof}
	The proof is parallel to \cite{marques2016morse}*{Theorem 6.1} and \cite{zhou2020multiplicity}*{Theorem 3.6}, so we only point out some modifications. 
	Firstly, 
	\begin{itemize}
		\item $\mc W_G:=\{\Omega\in\cCcG(M): \mc A^h(\Omega)=\mf L^h {\rm ~and~} |\bd\Omega| \mbox{ is a min-max $(\mk c,\mc G, h)$-hypersurface} \}$,
		\item $\mc W_G^{m+1}:=\{ \Omega\in\mc W_G: \Sigma=\bd\Omega\in\mc P^{\mc G,h} \mbox{ is $(G,m+1)$-unstable} \} $,
		\item $\mc W_G(r) :=\{ \Omega\in\mc W_G: \mf F(|\bd\Omega|, \mf C(\{\Phi_i\}_{i\in\mb N})) \geq r\} $. 
	\end{itemize}
	Using the proof in \cite{zhou2020multiplicity}*{Lemma 3.7}, we have $i_0\in\mb N$ and $\epsilon_0>0$ so that $\mf F(\Phi_i(X), \mc W_G(r)) > \epsilon_0$ for all $i\geq i_0$. 
	Since $(h,g_{_M})$ is a good $\mc G$-pair, it then follows from Corollary \ref{Cor: countable min-max hypersurfaces} that $\mc W_G$ and
	\[ \mc W_G^{k+1}\setminus \overline{B}^{\mf F}_{\epsilon_0}(\mc W_G(r)) = \{\Omega_1,\Omega_2,\dots\} \]
	are countable sets. Denote by $\Sigma_j = \bd\Omega_j$ a min-max $(\mk c,\mc G, h)$-hypersurface, $j\geq 1$. 
	Then we take $\epsilon_1>0$ so that $\overline{B}^{\mf F}_{\epsilon_1}(\Omega_1)\cap \overline{B}^{\mf F}_{\epsilon_0}(\mc W_G(r))=\emptyset$. 
	
	Using Definition \ref{Def: weak G-index} in place of \cite{zhou2020multiplicity}*{Definition 2.1}, one can generalize the deformation result \cite{zhou2020multiplicity}*{Theorem 3.4} to an equivariant version (see also \cite{wang2023G-index}*{Theorem 6.7} for $h=0$, and \cite{marques2016morse}*{Theorem 5.1} for $h=0,G=\{id\}$). 
	Therefore, after shrinking $\epsilon_1>0$ and applying this equivariant deformation result to $\mc K=\overline{B}^{\mf F}_{\epsilon_0}(\mc W_G(r))$ and $\Omega=\Omega_1$, we obtain $i_1\in\mb N$ and $\{\Phi_i^1\}_{i\in\mb N}$ so that
	\begin{itemize}
		\item $\Phi_i^1$ is homotopic to $\Phi_i$ in $(\cCcG(M), \mf F)$ for all $i\in\mb N$, and $\Phi_i^1|_Z=\Phi_i|_Z$ for $i\geq i_1$;
		\item $\mf L^h(\{\Phi_i^1\}_{i\in\mb N}) \leq \mf L^h(\Pi)$;
		\item $\mf F(\Phi_i^1(X),  \overline{B}^{\mf F}_{\epsilon_1}(\Omega_1)\cup \overline{B}^{\mf F}_{\epsilon_0}(\mc W_G(r))) > 0$ for all $i\geq i_1$;
		\item no $\Omega_j$ belongs to $\bd \overline{B}^{\mf F}_{\epsilon_1}(\Omega_1)$.
	\end{itemize}
	Next, if $\mf F(\Omega_2,\Omega_1) < \epsilon_1$, we may skip to the procedure for $\Omega_3$. 
	Otherwise, we can apply the equivariant deformation result again to $\mc K= \overline{B}^{\mf F}_{\epsilon_1}(\Omega_1)\cup \overline{B}^{\mf F}_{\epsilon_0}(\mc W_G(r)) $ and $\Omega=\Omega_2$, which will give us the corresponding $\epsilon_2>0$, $i_2\in\mb N$, and $\{\Phi_i^2\}_{i\in \mb N}$. 
	
	By repeating this procedure, either we stop in finitely many steps at $\{\Phi_i^k\}_{i\in\mb N}$ with $\{\epsilon_l>0\}_{l=1}^k$ and $i_k\in\mb N$, or we inductively get a sequence of $\{\Phi_i^l\}_{i\in\mb N}$ with $\epsilon_l>0$ and $i_l\in\mb N$ for $l=1,2,\dots$. 
	Then after passing to a diagonal sequence on $l$, we can obtain a sequence $\{\Psi_l\}_{l\in\mb N}$ in both cases so that $\Psi_l^1$ is homotopic to $\Phi_l$ in $(\cCcG(M), \mf F)$ with $\Phi_l^1|_Z=\Phi_l|_Z$ for all $l$, $\mf L^h(\{\Psi_l\})=\mf L^h(\Pi)$, and $\mf C(\{\Psi_l\}) \cap ( \mc W_G^{m+1}\cup\mc W_G(r) ) = \emptyset$. 
	Using the equivariant min-max theorem \ref{Thm: Equivariant min-max for PMC}, there exists $\Omega_r \in \mf C(\{\Psi_l\}) \cap ( \mc W_G \setminus ( \mc W_G^{m+1}\cup\mc W_G(r) ) ) $. 
	Finally, the desired min-max $(\mk c, \mc G, h)$-hypersurface $|\bd\Omega|=\lim_{r\to 0}|\bd\Omega_r|$ can be obtained by the compactness theorem \ref{Thm: compactness for min-max Gh-hypersurfaces}
\end{proof}

\begin{corollary}\label{Cor: index upper bound}
	In Theorem \ref{Thm: Equivariant min-max for PMC}, $\Sigma=\bd\Omega\in\mc P^{\mc G,h}$ can be further chosen as a min-max $(\mk c,\mc G, h)$-hypersurface satisfying $\Index_{G,w}(\Sigma)\leq m$, where $m=\dim(X)$. 
\end{corollary}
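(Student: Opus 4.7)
The plan is to reduce Corollary \ref{Cor: index upper bound} to Theorem \ref{Thm: index upper bound - good pairs} by a perturbation-and-compactness argument. The obstruction to applying that theorem directly is that $(g_{_M}, h)$ need not be a good $\mc G$-pair. The strategy is therefore to deform the metric (keeping $h$ fixed) so that the pair becomes good, import the weak $\mc G$-index bound there, and then pass to the limit using the compactness theory developed in Section \ref{Sec: compactness and index}.

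First, I would note that $h \in \cScG$ automatically lies in $\mc S_G^0$ (resp.\ $\mc S_{G_\pm}^0$) by the definitions of Section \ref{Subsec: prescription G-functions}. Hence, Lemma \ref{Lem: generic good pairs} produces $G$-invariant metrics $g_j \to g_{_M}$ in the smooth topology such that $(g_j, h)$ is a good $\mc G$-pair for every $j$. Because the $\mc A^h$-functional depends continuously on the background metric through the area, for any $\mf F$-continuous map $\Phi : X \to \cCcG(M)$ we have $\sup_{x \in X} \mc A^h_{g_j}(\Phi(x)) \to \sup_{x \in X} \mc A^h_{g_{_M}}(\Phi(x))$ uniformly in $j$. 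Consequently the widths satisfy $\mf L^h_{g_j}(\Pi) \to \mf L^h_{g_{_M}}(\Pi)$, so the strict inequality \eqref{Eq: width > relative} persists under $g_j$ for all $j$ large enough.

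Next, for each such $j$, I would apply Theorem \ref{Thm: index upper bound - good pairs} to the homotopy class $\Pi$ under $g_j$, producing a min-max $(\mk c, \mc G, h)$-hypersurface $\Sigma_j = \bd \Omega_j \in \mc P^{\mc G, h}$ with $\mk c = (3^m)^{3^m}$, $\Omega_j \in \cCcG(M)$, $\mc A^h_{g_j}(\Omega_j) = \mf L^h_{g_j}(\Pi)$, and $\Index_{G,w}(\Sigma_j) \leq m$. In particular, the total masses $\|\bd\Omega_j\|_{g_j}(M) \leq \mf L^h_{g_j}(\Pi) + \int_M |h|\, d\mc H^{n+1}_{g_j}$ are uniformly bounded.

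Finally, I would invoke the compactness theorem \ref{Thm: compactness for min-max Gh-hypersurfaces} in the form indicated by the remark after it (which permits the ambient metric to vary), with $h_i \equiv h$ and metrics $g_j \to g_{_M}$. A subsequence of $\{\Sigma_j\}$ converges to a min-max $(\mk c, \mc G, h)$-hypersurface $\Sigma_\infty = \bd \Omega_\infty$ under $g_{_M}$; by part (iii), $\Sigma_\infty$ has multiplicity one and $\Index_{G,w}(\Sigma_\infty) \leq m$, and by varifold (Hausdorff) convergence together with continuity of $\mc A^h$ in the metric, $\mc A^h_{g_{_M}}(\Omega_\infty) = \lim_j \mc A^h_{g_j}(\Omega_j) = \mf L^h_{g_{_M}}(\Pi)$, so $\Sigma_\infty$ realizes the min-max width and can be chosen as the $\Sigma$ in Theorem \ref{Thm: Equivariant min-max for PMC}. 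The main obstacle I anticipate is the rigorous passage of the weak $G$-index bound to the limit under a varying metric, since Definition \ref{Def: weak G-index} is metric-dependent; this is handled by the compactness theorem but does require checking that the diffeomorphism families $\{F_v\}$ witnessing $(G,k)$-instability deform continuously with the metric, and that the upper semicontinuity in the compactness theorem indeed subsumes the varying-metric case covered by its concluding remark.
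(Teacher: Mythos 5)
Your proof is correct and follows essentially the same approach as the paper: perturb the metric via Lemma \ref{Lem: generic good pairs} to obtain good $\mc G$-pairs $(g_j,h)$, apply Theorem \ref{Thm: index upper bound - good pairs} under each $g_j$, and pass to the limit using the compactness theorem \ref{Thm: compactness for min-max Gh-hypersurfaces}(iii) together with the remark following it that accommodates a varying background metric. The extra care you take about the persistence of \eqref{Eq: width > relative} and the semicontinuity of $\Index_{G,w}$ under the metric limit is consistent with what the paper implicitly relies on.
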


\begin{proof}
	Given $(g_{_M},h)$ as in Theorem \ref{Thm: Equivariant min-max for PMC}, we know $h\in \cScG^0$ (see \eqref{Eq: S_G^0}\eqref{Eq: S_Gpm^0}) and there are $G$-invariant metrics $\{g^j_{_M}\}_{j\in\mb N}$ so that $(g^j_{_M},h)$, $j=1,2,\dots$, are good $\mc G$-pairs and $g^j_{_M}\to g_{_M}$ in the smooth topology. 
	Denote by $L^h_j$ the $h$-width of $\Pi$ under $g^j_{_M}$, which satisfies \eqref{Eq: width > relative} with $g^j_{_M}$ in place of $g_{_M}$ since $\lim_{j\to\infty} L^h_j = \mf L^h(\Pi)$. 
	Finally, consider the min-max $(\mk c, \mc G, h)$-hypersurface $\Sigma_j=\bd\Omega_j$ associated with $(g^j_{_M},h)$ and $\Pi$ by Theorem \ref{Thm: index upper bound - good pairs}. 
	The compactness theorem \ref{Thm: compactness for min-max Gh-hypersurfaces} then gives the desired min-max $(\mk c, \mc G, h)$-hypersurface $|\bd\Omega_\infty|= \lim_{j\to\infty}|\bd\Omega_j|$ (up to a subsequence).  
\end{proof}

\section{Multiplicity one min-max minimal $\mc G$-hypersurfaces}\label{Sec: multiplicity one}

\subsection{Multiplicity one for equivariant min-max}

Recall that a $G$-invariant Riemannian metric $g_{_M}$ is said to be {\em $G$-bumpy} if every finite cover of a smooth embedded minimal $G$-hypersurface is non-degenerate. 
By \cite{wang2023G-index}*{Theorem 1.3}, the set of $G$-bumpy metrics is generic. 

\begin{theorem}\label{Thm: main multiplicity one}
	Let $(M^{n+1},g_{_M})$ be a closed Riemannian manifold with a compact Lie group $G$ acting by isometries so that $3\leq \codim(G\cdot p)\leq 7$ for all $p\in M$. 
    Given a $k$-dimensional cubical complex $X$ of $I(m,j)$ with a subcomplex $Z\subset X$, let $\Phi_0: X\to (\cCcG(M),\mf F)$ be an $\mf F$-continuous map so that the associated $(X,Z)$-homotopy class $\Pi$ satisfies that 
    \begin{align}\label{Eq: width > relative}
        \mf L(\Pi) > \max\left\{ \max_{x\in Z}\mf M(\bd\Phi_0(x), 0) \right\},
    \end{align}
    where we let $h\equiv 0 $ in Section \ref{Subsec: min-max setup}. 
    
    If $g_{_M}$ is a $G$-bumpy metric, then there exists a disjoint collection of smooth, $G$-connected, closed, embedded, minimal $G$-hypersurfaces $\Sigma = \cup_{i=1}^N \Sigma_i$ with a unit normal $\mc G$-vector field $\nu$ so that 
    \[ \mf L(\Pi) = \sum_{i=1}^N \Area(\Sigma_i ) \quad {\rm and}\quad \Index_G(\Sigma )= \sum_{i=1}^N\Index_G(\Sigma_i ) \leq k. \]
    In particular, each component of $\Sigma$ is $2$-sided with multiplicity one. 
\end{theorem}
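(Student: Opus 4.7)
The strategy is to perturb the zero prescription function by a sequence $h_j \in \cScG$ with $h_j \to 0$ in $C^\infty$, apply the equivariant PMC min-max theorem (Corollary~\ref{Cor: index upper bound}) for each $h_j$, pass to the limit via the compactness theorem (Theorem~\ref{Thm: compactness for min-max Gh-hypersurfaces}), and use the $G$-bumpy hypothesis to rule out higher multiplicities in the limit. This mirrors Zhou's resolution of the multiplicity one conjecture \cite{zhou2020multiplicity}, but every step must respect the $\mc G$-symmetry.

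First, I would choose $h_j \in \cScG$ with $h_j \to 0$ smoothly and $\int_M h_j > 0$. Since $\mc A^{h_j}$ converges uniformly to $\mf M \circ \partial$ on bounded subsets of $\cCcG(M)$, both $\mf L^{h_j}(\Pi)$ and $\max_{x \in Z} \mc A^{h_j}(\Phi_0(x))$ converge to their $h = 0$ counterparts, so the strict inequality in the hypothesis is preserved for all large $j$. Corollary~\ref{Cor: index upper bound} then supplies $\Omega_j \in \cCcG(M)$ such that $\Sigma_j := \partial \Omega_j$ is a min-max $(\mk c, \mc G, h_j)$-hypersurface of multiplicity one with $\mc A^{h_j}(\Omega_j) = \mf L^{h_j}(\Pi)$ and $\Index_{G,w}(\Sigma_j) \leq k$. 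Applying Theorem~\ref{Thm: compactness for min-max Gh-hypersurfaces}(i)(ii)(v) with $h_\infty = 0$, a subsequence of $\Sigma_j$ converges as a $G$-varifold to a min-max $(\mk c, \mc G, 0)$-hypersurface $V = \sum_i m_i |\Sigma_\infty^i|$, where $\{\Sigma_\infty^i\}$ is a disjoint family of smooth, $G$-connected, closed, embedded minimal $G$-hypersurfaces, $m_i \in \mb Z_+$, $\sum_i m_i \Area(\Sigma_\infty^i) = \mf L(\Pi)$, and $\Index_G(V) \leq k$.

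The heart of the proof is to show every $m_i = 1$ and every $\Sigma_\infty^i$ is $2$-sided. Because each $\Sigma_j = \partial \Omega_j$ is a genuine boundary, it is $2$-sided with outer unit normal $\nu_j$, which is $G$-invariant in Case~I and $G_\pm$-signed symmetric in Case~II of Remark~\ref{Rem: lift to 2-cover}. If some $\Sigma_\infty^i$ were $1$-sided, its $2$-sided double cover is still a finite cover of a smooth embedded minimal $G$-hypersurface, hence non-degenerate by the $G$-bumpy hypothesis, and the smooth graphical convergence lifts $\Sigma_j$ onto this cover with effective multiplicity at least two. It therefore suffices to handle the $2$-sided case with some $m_i \geq 2$. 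In a tube around $\Sigma_\infty^i$ and away from the finite singular set of the convergence, Theorem~\ref{Thm: compactness for min-max Gh-hypersurfaces}(ii) expresses $\Sigma_j$ as $m_i$ ordered normal graphs $u_j^1 \leq \cdots \leq u_j^{m_i}$ over $\Sigma_\infty^i$. Applying Zhou's normalization $w_j := (u_j^{m_i} - u_j^1)/\|u_j^{m_i} - u_j^1\|_{C^1}$ and using that each graph satisfies the $h_j$-PMC equation with $h_j \to 0$, one extracts in the limit a nontrivial function $w_\infty$ solving the Jacobi equation on $\Sigma_\infty^i$ (or on its double cover in the $1$-sided reduction). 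The $\mc G$-symmetry of $w_\infty$ is automatic from the $\mc G$-symmetry of $\nu_j$ and of the $\Sigma_j$. This contradicts the $G$-bumpy hypothesis, forcing $m_i = 1$ and $\Sigma_\infty^i$ to be $2$-sided.

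The main obstacle I anticipate is extracting the limit Jacobi field cleanly through the touching sets of the almost-embedded $\Sigma_j$ and across the finite set of bad orbits where the convergence is merely varifold. By Remark~\ref{Rem: small touching and no minimal} the touching locus has codimension at least one, so the normal-graph decomposition and the Jacobi PDE limit of \cite{zhou2020multiplicity}*{Theorem A} go through on the dense regular part of $\Sigma_\infty^i$, and the extension across the singular orbits follows from the standard removable-singularity properties of the Jacobi operator combined with the $G$-invariance of $w_\infty$. Once multiplicity one and $2$-sidedness are established, $\Sigma = \bigcup_i \Sigma_\infty^i$ is properly embedded and $\Index_{G,w}$ coincides with $\Index_G$, so $\mf L(\Pi) = \sum_i \Area(\Sigma_\infty^i)$ and $\Index_G(\Sigma) = \sum_i \Index_G(\Sigma_\infty^i) \leq k$, as required.
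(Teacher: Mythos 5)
Your overall framework---perturbing to $h_j\in\mc S_{\mc G}$, invoking Corollary~\ref{Cor: index upper bound} for each $j$, and passing to a multiplicity-$m$ minimal limit via Theorem~\ref{Thm: compactness for min-max Gh-hypersurfaces}---is exactly the paper's route, as is the idea of lifting to the two-sided double cover when $\Sigma_\infty^i$ lacks a $\mc G$-invariant normal. But the core of the multiplicity-one argument has a genuine gap: you assert that the normalized top-minus-bottom difference $w_j := (u_j^{m_i}-u_j^1)/\|u_j^{m_i}-u_j^1\|$ always limits to a solution of the Jacobi equation, and this is false when $m_i$ is even. For $\Sigma_j=\bd\Omega_j$ the $m_i$ sheets carry alternating outward normals, so when $m_i$ is odd the $h_j$-terms on the top and bottom sheets cancel and the limit does solve $L_{\Sigma_\infty}w_\infty=0$; but when $m_i$ is even they add, and (depending on how $\|u_j^{m_i}-u_j^1\|$ compares to $\epsilon_j$) the limit satisfies either $L_{\Sigma_\infty}\varphi=0$ or $L_{\Sigma_\infty}\varphi=2h|_{\Sigma_\infty}$ for a fixed prescription $h$. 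The $G$-bumpy hypothesis only kills the first alternative. To rule out the second, the paper (following Zhou's Lemma~4.2 in \cite{zhou2020multiplicity}) uses the finiteness of min-max $(\mk c,G)$-hypersurfaces with bounded area under $G$-bumpy metrics (\cite{wang2023G-index}*{Corollary 1.6}) to choose $h\in\mc S_{\mc G}(g_{_M})$ so that the equation $L_{\Sigma_\infty}\psi=2h|_{\Sigma_\infty}$ has no one-signed solution on any of the finitely many candidate limits. Your proposal contains no such genericity selection of $h$ and therefore cannot exclude this alternative.

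Two smaller gaps: first, you treat the extension of the limit function across the bad orbits $\mc Y$ as a standard removable-singularity fact, whereas the paper builds the $G$-invariant local PMC foliation (Proposition~\ref{Prop: local PMC G-foliation} and Remark~\ref{Rem: local PMC signed-symmetric foliation}) precisely to carry out Zhou's barrier/comparison argument across $\mc Y$ in the equivariant setting; without an equivariant foliation the sup bound on $\varphi$ near $\mc Y$ does not come for free. Second, when you argue that $w_\infty$ is $\mc G$-symmetric "from the $\mc G$-symmetry of $\nu_j$ and of the $\Sigma_j$," note that the individual sheets $u_j^i$ need not be $G$-invariant (the group can permute sheets); the correct observation, which the paper makes explicit, is that the extremal difference $\mathfrak h_j=u_j^{m_i}-u_j^1$ is $G$-invariant because the ordering of the sheets is preserved by the $G$-action.
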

\begin{proof}
	The proof is essentially the same as \cite{zhou2020multiplicity}*{Theorem 4.1} using $\mc G$-symmetric objects, so we only point out some modifications. 
	
	Firstly, take $h\in\cScG(g_{_M})$ with $\int_M h\geq 0$. 
	Then for $\epsilon>0$ small enough, we have $\epsilon h \in \cScG(g_{_M})$ and $\mf L^{\epsilon h}(\Pi) > \max \{ \max_{x\in Z}\mc A^{\epsilon h}(\Phi_0(x)), 0\}$. 
	Using Theorem \ref{Thm: Equivariant min-max for PMC} and Corollary \ref{Cor: min-max boundary}, \ref{Cor: index upper bound}, we can take a sequence $\epsilon_j\to 0$ and construct smooth, almost embedded, $(\mc G,\epsilon_jh)$-boundaries $\Sigma_j=\bd\Omega_j$ ($j\in\mb N$) so that each $\Sigma_j$ is a min-max $(\mk c,\mc G, \epsilon_jh)$-hypersurface satisfying 
	\[\mc A^{\epsilon_j h}(\Omega_{j}) = \mf L^{\epsilon_jh } := \mf L^{\epsilon_jh}(\Pi) \quad {\rm and}\quad \Index_{G,w}(\Sigma)\leq k,\]
	where $k=\dim(X)$, $\mk c=(3^k)^{3^k}$. 
	Since $\mf L^{\epsilon_j h}\to \mf L(\Pi)$ as $j\to\infty$, we can apply Theorem \ref{Thm: compactness for min-max Gh-hypersurfaces} to have a subsequence $\Sigma_j$ converging to a smooth embedded minimal $G$-hypersurface $\Sigma_\infty$ (with integer multiplicity) so that $\Sigma_\infty$ is a min-max $(\mk c,G)$-hypersurface ($h=0$ in Definition \ref{Def: min-max Gh-hypersurface}, see also \cite{wang2023G-index}*{Definition 5.1}) with 
	\[ \mf M(\Sigma_\infty) = \mf L(\Pi) \quad{\rm and}\quad \Index_G(\Sigma)\leq k. \] 
	Let $\mc Y$ be the finite union of orbits where the convergence $\Sigma_j\to\Sigma_\infty$ fails to be smooth. 
	Without loss of generality, we assume $\Sigma_\infty$ has only one $G$-connected component with multiplicity $m\in\mb Z_+$. 
	
	Next, we consider the case that $\Sigma_\infty$ has a $\mc G$-invariant unit normal $\nu$. 
	Take any exhaustion by compact $G$-set $\{ U_j \subset \Sigma_\infty\setminus\mc Y\}$ and $\delta>0$. 
	For $j$ large enough, $\Sigma_j$ has a decomposition in the thickened $G$-neighborhood $U_j\times(-\delta,\delta)$ by $m$-normal graphs $\{u^1_j,\dots,u^m_j: u^i_j\in C^\infty(U_j)\}$ over $U_j$ so that $u^1_j\leq u^2_j\leq\dots \leq u^m_j $, and $u^i_j\to 0$ in the smooth topology as $j\to\infty$
	
	If $m\geq 3$ is odd. 
	Then the construction in \cite{zhou2020multiplicity}*{Theorem 4.1, Part 4} gives a positive function $\varphi$ on $\Sigma_\infty\setminus\mc Y$ so that $L_{\Sigma_\infty}\varphi = 0$ outside $\mc Y$. 
	In addition, although a single graph of $u^i_j$ may not be $G$-invariant, we still have the $G$-invariance of $\mf h_j:=u^m_j - u^1_j$.
	Hence, $\varphi$ is $G$-invariant by \cite{zhou2020multiplicity}*{Theorem 4.1, Part 4}. 
	Using the local $G$-invariant PMC foliations given by Proposition \ref{Prop: local PMC G-foliation} and Remark \ref{Rem: local PMC signed-symmetric foliation}, the proof in \cite{zhou2020multiplicity}*{Theorem 4.1, Part 5} would carry over to show $\varphi$ is uniformly bounded and hence extends smoothly across any $G\cdot p\subset\mc Y$. 
	Therefore, $\Sigma_\infty$ is degenerated stable, which contradicts the $G$-bumpiness of $g_{_M}$.

	If $m\geq 2$ is even. 
	Then the construction in \cite{zhou2020multiplicity}*{Theorem 4.1, Part 6,7} also gives a $G$-invariant smooth function $\varphi$ on $\Sigma_\infty$ with a sign so that either $L_{\Sigma_\infty}\varphi = 0$ or $L_{\Sigma_\infty}\varphi = 2h|_{\Sigma_\infty}$. 
	The $G$-bumpiness of $g_{_M}$ indicates that $L_{\Sigma_\infty}\varphi = 2h|_{\Sigma_\infty}$. 
	However, since there are only finitely many min-max minimal $(\mk c, G)$-hypersurface $\Sigma$ with $\Area(\Sigma)\leq \mf L(\Pi)$ by \cite{wang2023G-index}*{Corollary 1.6}, the proof of \cite{zhou2020multiplicity}*{Lemma 4.2} would carry over in the equivariant setting. 
	Hence, there exists $h\in\cScG(g_{_M})$ so that the solution of $L_{\Sigma_\infty}\psi = 2h|_{\Sigma_\infty}$ on $\Sigma_\infty$ must change sign, which is a contradiction. 
	
	Now, consider the case that $\Sigma_\infty$ doesn't admit a $\mc G$-invariant unit normal. 
	In this case, the convergence must have multiplicity at least $2$; otherwise, the Allard regularity implies the convergence is smooth, and thus $\Sigma_j$ does not admit $\mc G$-invariant unit normal for $j$ large enough, which is a contradiction. 
	Then define $E: S\Sigma_\infty\times (-r,r)\to B_r(\Sigma_\infty)$ by $E(v,t):=\exp_{\Sigma_\infty}^\perp(tv)$, where $S\Sigma_\infty$ is the unit normal bundle of $\Sigma_\infty$, and $\exp_{\Sigma_\infty}^\perp$ is the normal exponential map (see \cite{wang2023G-index}*{(5.1)}). 
	Note that $G$ naturally acts on $S\Sigma_\infty\times (-r,r)$ by $g\cdot (v,t)=(dg(v),t)$, and $E^{-1}(\Sigma_\infty)= S\Sigma_\infty\times \{0\}$ is a double cover of $\Sigma_\infty$ with a $G$-invariant unit normal (pointing outside of $S\Sigma_\infty\times [0,r)$). 
	Hence, after lifting to the double covers by $E^{-1}$, the above arguments can be taken almost verbatim to show a contradiction as $m\geq 2$. 
	
	Finally, since $\Sigma_\infty$ has multiplicity $1$, the Allard regularity theorem \cite{allard1972first} implies the convergence $\Sigma_j\to\Sigma_\infty$ is smooth. 
	Hence, $\Sigma_\infty$ also has a unit normal $\nu$ that is $\mc G$-symmetric as $\nu_{\bd\Omega_j}$. 
\end{proof}

\begin{remark}
	We mention that the minimal hypersurface $\Sigma$ in Theorem \ref{Thm: main multiplicity one} is a min-max $((3^k)^{3^k}, G)$-hypersurface. 
	Moreover, without the $G$-bumpiness assumption on $g_{_M}$, the above proof implies that each $G$-component $\Sigma_i$ of $\Sigma$ is either multiplicity one or degenerate stable. 
\end{remark}

\subsection{Application to equivariant volume spectrum}

In this subsection, we apply the above multiplicity one result to study a generalized version of volume spectrum, which was first introduced by Gromov \cite{gromov1988width}\cite{gromov2003waists}, Guth \cite{guth2009minimax}, and Marques-Neves \cite{marques2017existence}. 

In the following, fix an embedded $G$-hypersurface $\Sigma_0\subset M$ with $\llbracket \Sigma_0\rrbracket\in \mc{LB}^G(M;\mb Z_2)$. 
Define
\begin{align}
	[\Sigma_0]^G := \llbracket \Sigma_0\rrbracket + \mc B^G(M;\mb Z_2) = \{\llbracket \Sigma_0\rrbracket + \bd\Omega :  \Omega\in \mc C^G(M) \}
\end{align}
as the space of mod 2 $n$-cycles that are different from $\llbracket \Sigma_0\rrbracket$ by a $G$-boundary. 
Note $ [\Sigma_0]^G \subset \mc{LB}^G(M;\mb Z_2)\subset \mc Z^G_n(M;\mb Z_2)$. 
In particular, if $G=\{id\}$, then $[\Sigma_0]^G=[\Sigma_0]$ represents the $\mb Z_2$-homology class of $\Sigma_0$. 

Given $\Phi: X\to [\Sigma_0]^G$, let $\Phi': X\to \mc B^G(M;\mb Z_2)$ be given by $\Phi'(x)=\Phi(x) - \llbracket\Sigma_0\rrbracket$. 
Then $\Phi$ is continuous in the $\mc F$ (or $\mf M$) topology if and only if $\Phi'$ is continuous in the $\mc F$ (or $\mf M$) topology. 
Hence, by Almgren's isomorphism \cite{almgren1962homotopy} and its equivariant version \cite{wang2023G-index}*{Theorem 3.1}, 
\[  \pi_1([\Sigma_0]^G; \llbracket\Sigma_0\rrbracket )  \cong H_{n+1}(M^{n+1};\mb Z_2)\cong \mb Z_2, \]
and $ \pi_m([\Sigma_0]^G; \llbracket\Sigma_0\rrbracket ) =0$ for $m\geq 2$. 
In particular, we have 
\[ H^1([\Sigma_0]^G;\mb Z_2) = \mb Z_2 = \{0,\bar\lambda\}.\]
Additionally, since $[\Sigma_0]^G$ is weakly homotopically equivalent to $\mb{RP}^\infty$, we can generalize the volume spectrum of Marques-Neves \cite{marques2017existence}*{Definition 4.3} to the following equivariant version. 

\begin{definition}\label{Def: G-width}
	Given $k\in\mb N$, a $\mc F$-continuous map $\Phi: X\to [\Sigma_0]^G$ is called a {\em $(G,k)$-sweepout (in $[\Sigma_0]^G$)} if 
	\[\Phi^*(\bar\lambda^k)\neq 0 \in H^k(X;\mb Z_2), \]
	where $\bar\lambda^k$ is the cup product of $\bar\lambda$ with itself for $k$-times. 
	If $\Phi$ has no concentration of mass on orbits, then we say $\Phi$ is {\em admissible}. 
	Denote by ${\mc P}^{[\Sigma_0]^G}_k$ the set of all admissible $(G,k)$-sweepouts in $[\Sigma_0]^G$. 
	Then the {\em $(G,k)$-width} of $(M,g_{_M})$ in the class $[\Sigma_0]^G$ is 
	\[\omega_k([\Sigma_0]^G, g_{_M}) := \inf_{\Phi\in {\mc P}^{[\Sigma_0]^G}_k} \sup \{ \mf M(\Phi(x)) : x\in{\rm dmn}(\Phi) \} ,\]
	where ${\rm dmn}(\Phi)$ is the domain of $\Phi$. 
\end{definition}

\begin{remark}
	If we take $\Sigma_0=\bd\Omega$ for some $\Omega\in \mc C^G(M)$, then $\omega_k([\Sigma_0]^G,g_{_M})=\omega^G_k(M, g_{_M})$ is the $k$-th $G$-equivariant volume spectrum of $(M,g_{_M})$ defined in \cite{wang2025density}. 
	Assume further that $G=\{id\}$, then $\omega_k([\Sigma_0]^G,g_{_M})=\omega_k(M, g_{_M})$ is the $k$-th volume spectrum defined in \cite{marques2017existence}\cite{zhou2020multiplicity}. 
\end{remark}

Note the discretization/interpolation theorems in Section \ref{Subsec: discret/interpolate} has been proved for maps into $\mc Z^G_n(M;\mb Z_2)$ in \cite{wang2022min}, and thus are valid for $[\Sigma_0]^G$. 
Hence, we have the following results. 
\begin{theorem}\label{Lem: generalized volume spectrum}
	Let ${\mc P}^{[\Sigma_0]^G,\mf F}_k$ be the set of $\Phi\in {\mc P}^{[\Sigma_0]^G}_k$ that are continuous in the $\mf F$-topology with domain $X={\rm dmn}(\Phi)$ of dimension $k$ (and $X$ is identical to its $k$-skeleton). Then
	\[ \omega_k([\Sigma_0]^G, g_{_M}) = \inf_{\Phi\in {\mc P}^{[\Sigma_0]^G,\mf F}_k} \sup\{\mf M(\Phi(x)) : x\in{\rm dmn}(\Phi)\}.\]
	Additionally, there exists a disjoint collection of smooth, $G$-connected, closed, embedded minimal $G$-hypersurfaces $\{\Sigma^k_i: i=1,\dots, l_k\}$ with integer multiplicities $\{m^k_i: i=1\dots,l_k\}$ so that $\Sigma^k=\sum_{i=1}^{l_k}m^k_i|\Sigma^k_i|$ is a min-max $((3^k)^{3^k}, G)$-hypersurface ($h=0$ in Definition \ref{Def: min-max Gh-hypersurface}) with
	\[ \omega_k([\Sigma_0]^G, g_{_M})=  \sum_{i=1}^{l_k} m^k_i \Area(\Sigma^k_i) \quad{\rm and}\quad \Index_G(\spt(\Sigma^k))=\sum_{i=1}^{l_k}\Index_G(\Sigma^k_i) \leq k . \]
	Moreover, $\omega_k([\Sigma_0]^G, g_{_M}) \sim k^{\frac{1}{l+1}}$ as $k\to\infty$, where $l+1 = \min_{p\in M}\codim(G\cdot p)$. 
\end{theorem}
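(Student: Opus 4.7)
The statement packages three assertions: (a) the $(G,k)$-width is already realized over $\mf F$-continuous sweepouts on $k$-dimensional complexes; (b) existence of a min-max minimal $G$-hypersurface representative with area equal to $\omega_k$ and $G$-index at most $k$; and (c) the Weyl-type asymptotic. I would prove them in that order. For (a), the argument is the standard Marques--Neves reformulation carried over equivariantly. Since $[\Sigma_0]^G$ is the affine translate $\llbracket\Sigma_0\rrbracket + \mc B^G(M;\mb Z_2)$, any admissible $(G,k)$-sweepout $\Phi: X\to[\Sigma_0]^G$ produces a map $\Phi' := \Phi - \llbracket\Sigma_0\rrbracket$ into $\mc B^G(M;\mb Z_2)$ with no concentration of mass on orbits, to which Theorem~\ref{Thm: discretization} and Theorem~\ref{Thm: interpolation} apply. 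Discretizing $\Phi'$ and then applying the Almgren $\mc G$-extension and translating back by $\llbracket\Sigma_0\rrbracket$ gives $\mf F$-continuous $(G,k)$-sweepouts $\Phi_i : X(l_i)\to[\Sigma_0]^G$ homotopic to $\Phi$ in the flat topology by Proposition~\ref{Prop: dis/inter homotopy}. Flat homotopy preserves the cohomological condition $\Phi^*(\bar\lambda^k)\neq 0$, and the max-mass is controlled up to an arbitrarily small error. Restricting to the $k$-skeleton of $X(l_i)$ preserves degree-$k$ cohomology. The reverse inequality is trivial.

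For (b), take a minimizing sequence $\{\Phi_j\}$ from $\mc P^{[\Sigma_0]^G,\mf F}_k$. By the discretization/interpolation procedure, after relabeling I may assume all $\Phi_j$ lie in a single $(X,\emptyset)$-homotopy $\mc G$-class $\Pi$ (with $\mc G=G$) whose width satisfies $\mf L(\Pi)=\omega_k([\Sigma_0]^G, g_{_M})$. The hypothesis \eqref{Eq: width > relative} reduces to $\omega_k>0$, which follows from the cohomological constraint $\bar\lambda^k\neq 0$ via the usual lower-bound argument (a sweepout of vanishing mass would factor through a point in cohomology). Choose $h\in\cScG(g_{_M})$ with $\int_M h\geq 0$ (possible by Proposition~\ref{Prop: generic PMC G-functions}), and apply Theorem~\ref{Thm: Equivariant min-max for PMC} together with Corollary~\ref{Cor: index upper bound} to the perturbed functional $\mc A^{\epsilon h}$ for a sequence $\epsilon_j\to 0$, after approximating $g_{_M}$ by good $\mc G$-metrics via Lemma~\ref{Lem: generic good pairs} so that the weak $G$-index bound $\leq k$ is available. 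This yields min-max $((3^k)^{3^k}, \mc G, \epsilon_j h)$-hypersurfaces $\Sigma_{\epsilon_j}=\bd\Omega_{\epsilon_j}$ with $\mc A^{\epsilon_j h}(\Omega_{\epsilon_j})=\mf L^{\epsilon_j h}(\Pi)\to\omega_k$ and $\Index_{G,w}(\Sigma_{\epsilon_j})\leq k$. Passing to the limit via the compactness theorem~\ref{Thm: compactness for min-max Gh-hypersurfaces} (parts (i)(ii)(v)) produces a min-max $((3^k)^{3^k}, G)$-hypersurface $\Sigma^k=\sum m^k_i |\Sigma^k_i|$ with the stated mass identity and $\Index_G(\spt\Sigma^k)=\sum\Index_G(\Sigma^k_i)\leq k$. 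Splitting the support into $G$-connected components gives the collection $\{\Sigma^k_i\}$.

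For (c), the asymptotic $\omega_k\sim k^{1/(l+1)}$ is quoted from the equivariant Weyl law of \cite{wang2025density}, which is established for the volume spectrum in $\mc B^G(M;\mb Z_2)$. I would transfer it to $[\Sigma_0]^G$ using the bijection $\Phi\leftrightarrow \Phi-\llbracket\Sigma_0\rrbracket$ between admissible $(G,k)$-sweepouts: it preserves the cohomological condition on $\bar\lambda^k$ (both classes being affine images of $\mc B^G(M;\mb Z_2)$, weakly homotopic to $\mb{RP}^\infty$ with the same generator on cohomology under the identification), and changes max-mass by at most the constant $\mf M(\llbracket\Sigma_0\rrbracket)$, which is absorbed in the asymptotic $k^{1/(l+1)}$. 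The main obstacle I expect is the index passage in stage (b): Corollary~\ref{Cor: index upper bound} is stated for $h\in\cScG$, and one must ensure that as $\epsilon\to 0$ the weak $G$-index bound degenerates to the genuine equivariant index bound on the smooth embedded minimal limit rather than being lost to multiplicity. This is precisely what Theorem~\ref{Thm: compactness for min-max Gh-hypersurfaces}(v) provides, but its use requires first perturbing to a good $\mc G$-pair via Lemma~\ref{Lem: generic good pairs} and then taking a diagonal limit over metric perturbations and $\epsilon\to 0$. A secondary subtlety is ensuring the flat homotopies in $\Pi$ remain inside $[\Sigma_0]^G$ rather than drifting to a neighboring component of $\mc Z^G_n(M;\mb Z_2)$, which follows from the equivariant isoperimetric lemma of \cite{wang2023G-index}*{Lemma 2.1}.
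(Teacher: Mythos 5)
Your proposal is correct in outline but takes a genuinely different path for the middle assertion (existence of $\Sigma^k$ with area $\omega_k$ and $\Index_G \leq k$). The paper simply observes that $[\Sigma_0]^G\subset\mc{LB}^G(M;\mb Z_2)\subset\mc Z^G_n(M;\mb Z_2)$ and cites \cite{wang2022min}*{Corollary 1} together with the $h=0$ equivariant min-max index bound \cite{wang2023G-index}*{Corollary 1.7}, noting only that the boundary-type hypothesis there enters exclusively in the regularity theory and adapts verbatim to $\mc{LB}^G$. No PMC approximation is needed at this stage, precisely because the statement allows integer multiplicities and does not assert multiplicity one. Your route instead re-derives the existence through the $\mc A^{\epsilon_j h}$-functional and then passes $\epsilon_j\to 0$ via Theorem~\ref{Thm: compactness for min-max Gh-hypersurfaces}. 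That works, but it carries overhead the paper avoids: it forces you to first put yourself in the Caccioppoli-set picture $\cCcG(M)$, which when $\llbracket\Sigma_0\rrbracket\notin\mc B(M;\mb Z_2)$ requires invoking the lifting construction of Proposition~\ref{Prop: lift to 2-cover} and Remark~\ref{Rem: lift to 2-cover} (a step you do not mention), and it introduces a triple diagonal over the prescription-function scaling, the bumpy-metric approximation from Lemma~\ref{Lem: generic good pairs}, and the minimizing sequence of sweepouts. Working directly at $h=0$ with cycles, as the paper does, sidesteps all of this; the PMC machinery only becomes necessary in Theorem~\ref{Thm: main multiplicity one for spectrum}, where multiplicity one is the actual goal.

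One inaccuracy worth flagging: you claim that after relabeling all $\Phi_j$ "lie in a single $(X,\emptyset)$-homotopy class $\Pi$ with $\mf L(\Pi)=\omega_k$." That is not available — the $\Phi_j$ in a minimizing sequence of admissible $(G,k)$-sweepouts in general have different domains $X_j$ and are not mutually homotopic, and for any fixed $\Pi$ one only knows $\mf L(\Pi)\geq\omega_k$. The correct statement (the one underlying \cite{wang2023G-index}*{Corollary 1.7}, and also Marques--Neves) is that one applies the index-bound min-max theorem to each homotopy class $\Pi_j$ separately, getting $\Sigma_j$ with area $\mf L(\Pi_j)\in[\omega_k,\sup_x\mf M(\Phi_j(x))]$ and $\Index_{G,w}(\Sigma_j)\leq k$, and then lets $j\to\infty$ and applies the compactness theorem. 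This is easily repaired but should be stated. Your treatment of the first and third assertions (interpolation to $\mf F$-continuous $k$-dimensional sweepouts; the $\mf M(\llbracket\Sigma_0\rrbracket)$ translation bound transferring the Weyl asymptotic from $\mc B^G(M;\mb Z_2)$ to $[\Sigma_0]^G$) matches the paper.
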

\begin{proof}
	Since $[\Sigma_0]^G\subset \mc Z^G_n(M;\mb Z_2)$, the first part of the lemma follows from \cite{wang2022min}*{Corollary 1} and the proof of \cite{wang2023G-index}*{Corollary 1.7}. 
	Next, note that the {\em boundary type} assumption (i.e. $\Phi(x)\in \mc B^G(M;\mb Z_2)$) in \cite{wang2022min}\cite{wang2023free}\cite{wang2023G-index} was only used in the regularity theory of equivariant min-max constructions (see for instance \cite{wang2022min}*{\S 6}), which can be easily adapted to our case of $\Phi(x)\in \mc {LB}^G(M;\mb Z_2)$. 
	Hence, the results in \cite{wang2023G-index}, in particular \cite{wang2023G-index}*{Corollary 1.7}, are also valid for $\omega_k([\Sigma_0]^G, g_{_M})$, which gives the desired min-max $((3^k)^{3^k}, G)$-hypersurface $\Sigma^k$. 
	Finally, one notices that $\Phi\in {\mc P}^{[\Sigma_0]^G}_k$ if and only if $\Phi' := \Phi - \llbracket\Sigma_0\rrbracket $ is an admissible $(G,k)$-sweepout in $\mc B^G(M;\mb Z_2)$. 
	In addition, since $|\mf M(\llbracket\Sigma_0\rrbracket + \bd\Omega)  -  \mf M(\bd\Omega) | \leq \mf M(\llbracket\Sigma_0\rrbracket)$ for all $\Omega\in\mc C^G(M)$, we have $|\omega_k([\Sigma_0]^G, g_{_M}) - \omega^G_k(M, g_{_M})|\leq \mf M(\llbracket\Sigma_0\rrbracket)$, where $\omega^G_k(M, g_{_M})$ is the $(G,k)$-width in $\mc B^G(M;\mb Z_2)$. 
	Hence, $\omega_k([\Sigma_0]^G,g_{_M}) \sim k^{\frac{1}{l+1}}$ as $k\to\infty$ by \cite{wang2025density}. 
\end{proof}

Now, we can show the following multiplicity one result generalizing \cite{zhou2020multiplicity}*{Theorem A}.

\begin{theorem}\label{Thm: main multiplicity one for spectrum}
	If $g_{_M}$ is a $G$-bumpy metric and $3\leq \codim(G\cdot p)\leq 7$ for all $p\in M$. 
	Then for each $k\in\mb N$, there exists a disjoint collection of smooth, $G$-connected, embedded, minimal hypersurfaces  $\{\Sigma^k_i: i=1,\dots, l_k\}$ so that $\Sigma^k=\cup_{i=1}^{l_k}\Sigma^k_i$ is a min-max $((3^k)^{3^k},G)$-hypersurface (in the sense of Definition \ref{Def: min-max Gh-hypersurface} with $h=0$), $\llbracket\Sigma^k\rrbracket = \llbracket \Sigma_0 \rrbracket + \bd \Omega_k $ for some $\Omega_k\in\mc C^G(M)$, 
	\[ \omega_k([\Sigma_0]^G, g_{_M}) = \sum_{i=1}^{l_k} \Area(\Sigma^k_i) \quad{\rm and}\quad \Index_G(\Sigma^k)=\sum_{i=1}^{l_k}\Index_G(\Sigma^k_i) \leq k .  \]
\end{theorem}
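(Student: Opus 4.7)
The plan is to realize $\omega_k([\Sigma_0]^G,g_{_M})$ as the width of a single $(X,\emptyset)$-homotopy $\mc G$-class of maps into $\cCcG(M)$ and then invoke Theorem \ref{Thm: main multiplicity one} directly. First, applying Proposition \ref{Prop: lift to 2-cover} together with Remark \ref{Rem: lift to 2-cover}, I choose $\Omega_0\in\cCcG(M)$ with $\bd\Omega_0=\llbracket\Sigma_0\rrbracket$: take $\mc G=G$ in Case I ($\llbracket\Sigma_0\rrbracket\in\mc B^G$), $\mc G=G_\pm$ satisfying \eqref{Eq: free Gpm} (by Proposition \ref{Prop: free Gpm}) in Case II ($\llbracket\Sigma_0\rrbracket\in\mc B\setminus\mc B^G$), and in Case III first pass to the double cover $\tau:\wti M\to M$ of Proposition \ref{Prop: lift to 2-cover}(2), carry out the argument on $\wti M$ with $\mc G=\wti G_\pm$, then descend via $\tau$ using $\tau_\#|\cdot|=2|\cdot|$.

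For any admissible $\Phi\in\mc P^{[\Sigma_0]^G,\mf F}_k$, writing $\Phi(x)=\llbracket\Sigma_0\rrbracket+\bd\Omega(x)$ with $\Omega(x)\in\mc C^G(M)$, the symmetric-difference lift $\tilde\Phi(x):=\Omega_0\triangle\Omega(x)$ lies in $\cCcG(M)$ (in Case II a quick check using that $G_-$ inverts both $\Omega_0$ and the complement of $\Omega(x)$ gives the required $\mc G$-symmetry), satisfies $\bd\tilde\Phi(x)=\bd\Omega_0+\bd\Omega(x)=\Phi(x)$, hence $\mf M(\bd\tilde\Phi(x))=\mf M(\Phi(x))$, and inherits $\mf F$-continuity and the no-concentration-of-mass property from $\Phi$. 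Now choose a minimizing sequence $\{\Phi_j\}\subset\mc P^{[\Sigma_0]^G,\mf F}_k$ on a common $k$-dimensional cubical domain $X$ (after the discretization/interpolation of Section \ref{Subsec: discret/interpolate}) with $\sup_x\mf M(\Phi_j(x))\to\omega_k$, and let $\Pi$ be the $(X,\emptyset)$-homotopy $\mc G$-class of $\tilde\Phi_1$. By Almgren's isomorphism, $\cCcG(M)$ is weakly contractible (as the double cover of $\mc B^{\mc G}(M;\mb Z_2)\simeq\mb{RP}^\infty$), so all $\tilde\Phi_j$ lie in $\Pi$, giving $\mf L(\Pi)\leq\omega_k$; conversely, any sequence in $\Pi$ projects under $\bd$ to cycle-valued maps $\mc F$-homotopic to $\Phi_1$ in $[\Sigma_0]^G$ via the flat-homeomorphism $\Omega\mapsto\bd\Omega+\llbracket\Sigma_0\rrbracket$, which still detect $\bar\lambda^k\in H^k([\Sigma_0]^G;\mb Z_2)$, so $\mf L(\Pi)\geq\omega_k$. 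Therefore $\mf L(\Pi)=\omega_k>0$ and \eqref{Eq: width > relative} holds vacuously with $Z=\emptyset$.

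Applying Theorem \ref{Thm: main multiplicity one} to $\Pi$ with $h=0$ and $G$-bumpy $g_{_M}$ produces the desired collection $\Sigma^k=\cup_{i=1}^{l_k}\Sigma^k_i$ of $G$-connected, two-sided, closed, embedded minimal $G$-hypersurfaces with $\sum_i\Area(\Sigma^k_i)=\omega_k$, multiplicity one, and $\sum_i\Index_G(\Sigma^k_i)\leq k$; the remark after that theorem identifies $\Sigma^k$ as a min-max $((3^k)^{3^k},G)$-hypersurface. Writing $\Sigma^k=\bd\Omega^*$ with $\Omega^*\in\cCcG(M)$ then yields $\llbracket\Sigma^k\rrbracket=\llbracket\Sigma_0\rrbracket+\bd\Omega_k$ with $\Omega_k:=\Omega^*\triangle\Omega_0\in\mc C^G(M)$ (in Case II the $G_-$-antisymmetries of $\Omega^*$ and $\Omega_0$ cancel to give full $G$-invariance; in Case III one pushes forward through $\tau$).

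The main obstacle is establishing $\mf L(\Pi)=\omega_k$ exactly. The upper bound requires the lifts of a minimizing $(G,k)$-sweepout sequence to populate a single flat-$\mc F$-homotopy class in $\cCcG(M)$, which follows by combining the isoperimetric lemma \cite{wang2023G-index}*{Lemma 2.1} with the discretization/interpolation machinery (Theorems \ref{Thm: discretization}, \ref{Thm: interpolation}). The lower bound requires homotopy invariance of the cup-product detection of $\bar\lambda^k$ under flat-$\mc F$ homotopies in $\cCcG(M)$, which transfers via the flat-homeomorphism between $\cCcG(M)$ and the appropriate component of $[\Sigma_0]^G$. In Case III, the only additional subtlety is matching indices across $\tau$: Jacobi $\wti G$-fields on $\wti M$ are precisely $\tau$-pullbacks of Jacobi $G$-fields on $M$, so the $G$-bumpiness and the index bound $\leq k$ descend correctly.
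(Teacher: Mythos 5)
Your proposal breaks down at its very first step, and the failure is topological, not technical. You attempt to lift a $(G,k)$-sweepout $\Phi\colon X\to[\Sigma_0]^G$ to a continuous map $\tilde\Phi\colon X\to\cCcG(M)$ by writing $\Phi(x)=\llbracket\Sigma_0\rrbracket+\bd\Omega(x)$ and declaring $\tilde\Phi(x)=\Omega_0\triangle\Omega(x)$. But the map $\Omega\mapsto\llbracket\Sigma_0\rrbracket+\bd\Omega$ from $\mc C^G(M)$ to $[\Sigma_0]^G$ is a connected double cover, not a homeomorphism: $\mc C^G(M)$ is weakly contractible while $[\Sigma_0]^G\simeq\mb{RP}^\infty$. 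The $k$-sweepout condition $\Phi^*(\bar\lambda^k)\neq 0$ forces $\Phi^*\bar\lambda\neq 0\in H^1(X;\mb Z_2)$, hence $\Phi_*\colon\pi_1(X)\to\pi_1([\Sigma_0]^G)=\mb Z_2$ is nontrivial, and the standard lifting criterion says $\Phi$ admits no continuous lift through this double cover. So a continuous choice $x\mapsto\Omega(x)$ does not exist, and $\tilde\Phi$ is undefined. This is not a gap that can be patched by an approximation: it is precisely the topological feature that makes the volume spectrum nontrivial.

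Even granting a lift, the $(X,\emptyset)$-homotopy class you build cannot have width $\omega_k$. With $Z=\emptyset$, Definition \ref{Def: relative homotopy sequence of mappings into C} places no boundary constraint, and since $\cCcG(M)$ is weakly contractible (as you yourself note), every $\mf F$-continuous map $X\to\cCcG(M)$ is $\mc F$-homotopic to a constant map. Hence $\Pi$ contains constant sequences, $\mf L(\Pi)$ collapses to essentially $\mf M(\bd\Omega_0)$, and your claimed lower bound $\mf L(\Pi)\geq\omega_k$ is false. The assertion that maps in $\Pi$ ``still detect $\bar\lambda^k$'' after composing with $\bd$ is the reverse of the truth: any map $X\to\cCcG(M)$ composed with $\bd$ pulls back $\bar\lambda$ to zero, since $H^1(\cCcG(M);\mb Z_2)=0$.

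The paper does not realize $\omega_k([\Sigma_0]^G,g_{_M})$ as the width of a single relative homotopy class directly and then invoke Theorem~\ref{Thm: main multiplicity one} once. Instead it follows the much more intricate inductive scheme of \cite{zhou2020multiplicity}*{Theorem 5.2}: one uses the $G$-bumpy finiteness of min-max $((3^k)^{3^k},G)$-hypersurfaces with bounded area (\cite{wang2023G-index}*{Corollary 1.6}), the continuity of the width, equivariant versions of Zhou's Lemmas 5.3, 5.4, 5.6, 5.7, the deformation theorem, and the equivariant discretization and interpolation machinery; the relative homotopy classes to which Theorem~\ref{Thm: main multiplicity one} is ultimately applied have nonempty $Z$ and are built cell-by-cell from a discretized sweepout, precisely because the nontrivial $\pi_1$-image forbids the global lift you assume. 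Your reduction to Cases I/II/III via Propositions~\ref{Prop: lift to 2-cover} and \ref{Prop: free Gpm} is the one correct ingredient and matches the paper, but the heart of the argument is missing.
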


\begin{proof}
	If $\llbracket \Sigma_0 \rrbracket\notin\mc  B(M;\mb Z_2)$, then we can use Proposition \ref{Prop: lift to 2-cover}(ii) to lift the constructions into the $2$-cover of $M$ with the $2$-cover of $G$ acting by isometries. 
	Hence, we only need to consider the following two cases:
	\begin{itemize}
		\item[(1)] $\llbracket \Sigma_0 \rrbracket\in\mc B^G(M;\mb Z_2)$, and $\mc G=G$;
		\item[(2)] $\llbracket \Sigma_0 \rrbracket\in\mc B^{G_\pm}(M;\mb Z_2)$, and $\mc G=G_\pm$ satisfies \eqref{Eq: free Gpm}. 
	\end{itemize} 
	As we mentioned in the proof of Lemma \ref{Lem: generalized volume spectrum}, all the results in \cite{wang2022min}\cite{wang2023free}\cite{wang2023G-index} for Case (1) can also be adapted for Case (2). 
	Therefore, the proof of \cite{zhou2020multiplicity}*{Theorem 5.2} can be taken almost verbatim to show the above theorem, so we only point out the necessary modifications. 
	
	Firstly, we replace `embedded minimal hypersurfaces' in the proof of \cite{zhou2020multiplicity}*{Theorem 5.2} by `min-max $((3^k)^{3^k}, G)$-hypersurfaces' (in the sense of Definition \ref{Def: min-max Gh-hypersurface} with $h=0$, see also \cite{wang2023G-index}*{Definition 1.4}). 
	Then the $G$-bumpiness of $g_{_M}$ implies that there are only finitely many min-max $((3^k)^{3^k}, G)$-hypersurfaces with $\Area \leq \Lambda$ for any given $\Lambda >0$ (\cite{wang2023G-index}*{Corollary 1.6}). 
	Combined with Lemma \ref{Lem: generalized volume spectrum} and \cite{wang2023G-index}*{Theorem 6.8}, one immediately obtains the equivariant version of \cite{zhou2020multiplicity}*{Lemma 5.3, 5.4}. 
	Next, noting \cite{wang2025density}*{Proposition 3.3} is also valid in $[\Sigma_0]^G$, we directly have an equivariant version of \cite{zhou2020multiplicity}*{Lemma 5.6} as a corollary. 
	Additionally, we replace `$\mb Z_2$-almost minimizing in small annuli' by `good $\mc G$-replacement property in some $A\in\mk A$ for any $\mk c$-admissible family of $G$-annuli $\mk A$' (in the sense of Definition \ref{Def: good replacement} with $h=0$, see also \cite{wang2023G-index}*{Definition 1.4}). 
	Then \cite{zhou2020multiplicity}*{Lemma 5.7} can be generalized to our settings. 
	Moreover, we can use the equivariant discretization/interpolation theorems in \cite{wang2022min}*{\S 4} in place of those in \cite{marques2017existence}*{\S 3}, and use Theorem \ref{Thm: main multiplicity one} in place of \cite{zhou2020multiplicity}*{Theorem 4.1}. 
	We also use $\omega_k([\Sigma_0]^G, g_{_M}), {\mc P}^{[\Sigma_0]^G,\mf F}_k, \cCcG(M)$ and $[\Sigma_0]^G$ in place of $\omega_k(M, g_{_M}), \mc P_k, \mc C(M)$ and $\mc Z_n(M;\mb Z_2)$ respectively. 
	Then, the rest part of the proof of \cite{zhou2020multiplicity}*{Theorem 5.2} would carry over using our equivariant objects. 
\end{proof}


\appendix
	\addcontentsline{toc}{section}{Appendices}
	\renewcommand{\thesection}{\Alph{section}}

\section{Local $G$-invariant PMC foliations}\label{Sec: foliations}

In this appendix, we show a generalization of White's local foliation result \cite{white1987curvature}*{Appendix}, where we add a subscript $G$, $G_p$ to $C^{2,\alpha}$, $C^{2,\alpha}_0$ indicating that the functions are $G$ or $G_p$ invariant. 

\begin{proposition}\label{Prop: local PMC G-foliation}
	Let $(M,g_{_M})$ be a closed Riemannian manifold with a compact Lie group $G$ acting by isometrics so that $3\leq \codim(G\cdot x)\leq 7$ for all $x\in M$. 
	Given $p\in M$ and $R>0$, suppose $\Sigma\subset B_{R}(G\cdot p)$ is an embedded $G$-invariant minimal hypersurface with a $G$-invariant unit normal $\nu$ satisfying $G\cdot p\subset \Sigma$ and $\bd\Sigma\cap B_{R}(G\cdot p)=\emptyset$. 
	Then for any $h\in C^\infty_G(B_{R}(G\cdot p))$, there exist $\epsilon>0$ and a $G$-neighborhood $U\subset M$ of $G\cdot p$ so that if 
	\begin{align}\label{Eq: appendix assumption}
		w\in C_G^{2,\alpha}(\Sigma\cap U) \mbox{ satisfies } \|w\|_{C^{2,\alpha}} < \epsilon,
	\end{align}
	then for any $t\in (-\epsilon,\epsilon)$, there exists a $C^{2,\alpha}_G$-function $v_t: U\cap\Sigma\to\mb R$ satisfying
	\begin{itemize}
		\item[(i)] $\Graph(v_t):=\exp^\perp_\Sigma(v_t\nu)$ is a $G$-invariant $C^{2,\alpha}$ hypersurface with mean curvature $h|_{\Graph(v_t)}$, where the mean curvature is evaluated w.r.t. the upward pointing unit normal of $\Graph(v_t)$;
		\item[(ii)] $v_t(x)=w(x) + t$ for $x\in \bd (U\cap\Sigma)$;
		\item[(iii)] $v_t$ depends on $t,h,w$ in a $C^1$ way, and $\{\Graph(v_t)\}_{t\in [-\epsilon,\epsilon]}$ forms a foliation.
	\end{itemize}
\end{proposition}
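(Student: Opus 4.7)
The plan is to follow White's strategy but carried out in the category of $G$-invariant functions, using the implicit function theorem around the minimal graph $v \equiv 0$.

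First, I would fix a tubular $G$-neighborhood $U \subset B_R(G\cdot p)$ of $G\cdot p$, identify a $G$-neighborhood of $\Sigma \cap U$ in $M$ with $(\Sigma \cap U) \times (-\delta,\delta)$ via the normal exponential map $\exp^\perp_\Sigma(\cdot\, \nu)$, and consider, for a $G$-invariant function $u$ on $\Sigma\cap U$, the mean curvature $H(u)$ of $\Graph(u)$ with respect to the upward pointing normal. Because $\nu$, $g_{_M}$, and the second fundamental form of $\Sigma$ are all $G$-invariant, $u \mapsto H(u)$ carries $C^{2,\alpha}_G$ to $C^{0,\alpha}_G$. I would then define
\begin{equation*}
F: C^{2,\alpha}_G(\Sigma\cap U) \times C^\infty_G(B_R(G\cdot p)) \times C^{2,\alpha}_G(\Sigma\cap U) \times \mathbb{R} \to C^{0,\alpha}_G(\Sigma\cap U) \times C^{2,\alpha}_G(\partial(\Sigma\cap U))
\end{equation*}
by $F(v,h,w,t) = \bigl(H(v) - h\circ \exp^\perp_\Sigma(v\nu),\ (v - w - t)\big|_{\partial(\Sigma\cap U)}\bigr)$. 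Since $\Sigma$ is minimal, $F(0,0,0,0) = 0$.

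Next, I would compute the linearization of $F$ in $v$ at $(0,0,0,0)$: it is the bounded operator $v \mapsto (L_\Sigma v,\ v|_{\partial(\Sigma\cap U)})$, where $L_\Sigma = \Delta_\Sigma + |A_\Sigma|^2 + \Ric_M(\nu,\nu)$ is the Jacobi operator. The key point is that if $U$ is chosen small enough, the Dirichlet first eigenvalue of $-L_\Sigma$ on $\Sigma \cap U$ is strictly positive; this follows because $\Sigma \cap U$ retracts to $G\cdot p$ and the transverse $G$-invariant slice $(\Sigma \cap U)/G$ shrinks to a point, so a standard domain-monotonicity argument combined with the fact that $G$-invariant test functions descend to the quotient forces a large first Dirichlet eigenvalue on the subspace of $G$-invariant functions. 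Consequently, $(L_\Sigma, \text{trace})$ is an isomorphism from $C^{2,\alpha}_G(\Sigma\cap U)$ onto $C^{0,\alpha}_G(\Sigma\cap U) \times C^{2,\alpha}_G(\partial(\Sigma\cap U))$. Applying the implicit function theorem in Banach spaces yields $\epsilon > 0$ and a $C^1$ map $(h,w,t) \mapsto v = v(h,w,t)$ solving $F(v,h,w,t) = 0$ for $\|h\|_{C^{1,\alpha}}, \|w\|_{C^{2,\alpha}}, |t| < \epsilon$. Setting $v_t := v(h,w,t)$ yields (i), (ii), and the $C^1$ dependence in (iii).

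For the foliation claim in (iii), I would differentiate the identity $F(v_t,h,w,t) \equiv 0$ in $t$ at the reference point $(v,h,w,t) = (0,0,0,0)$: the derivative $\phi := \partial_t v_t|_{t=0}$ satisfies $L_\Sigma \phi = 0$ on $\Sigma\cap U$ with $\phi \equiv 1$ on $\partial(\Sigma\cap U)$. Since $L_\Sigma$ has positive Dirichlet first eigenvalue on this small domain, the maximum principle forces $\phi > 0$ throughout $\Sigma\cap U$; $G$-invariance is automatic from the invariance of the equation and the boundary data. Shrinking $\epsilon$ by continuity then guarantees $\partial_t v_t > 0$ for all admissible $(h,w,t)$, giving the foliation property.

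The principal obstacle is showing the linearization is invertible on the $G$-invariant subspace after passing to a small $G$-tube around the orbit $G\cdot p$. Because $\dim(G\cdot p)$ may be as large as $n-2$, the set $\Sigma\cap U$ does \emph{not} shrink in all directions as $U$ shrinks; however the $G$-equivariance reduces the relevant eigenvalue problem to the transverse slice, which does shrink to a point, and this is exactly what makes the $G$-equivariant Dirichlet problem solvable. Everything else — the $C^{2,\alpha}_G \to C^{0,\alpha}_G$ mapping properties, the $G$-invariance of solutions obtained from the IFT, and the positivity of $\partial_t v_t$ — then follows by standard arguments adapted from \cite{white1987curvature}*{Appendix} in the equivariant category.
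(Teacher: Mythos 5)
Your overall strategy matches the paper's: set up the PMC graph equation as a map between Hölder spaces of $G$-invariant functions, verify that the linearization at the zero graph is invertible by exploiting the smallness of the domain in the transverse direction, and then invoke the implicit function theorem. The observation that $G$-invariance confines the Dirichlet energy to the transverse directions, so that the quotient (or slice) shrinking to a point drives the first Dirichlet eigenvalue of $-L_\Sigma$ up, is exactly the mechanism the paper encodes when it reduces the problem to the slice $S_{p,1}$ via the elliptic integrand $\varphi_r$ of \cite{white1991space}\cite{wang2023G-index}.

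However, there is a genuine gap in how you quantify over $h$. Your implicit function theorem is applied at the reference point $(v,h,w,t)=(0,0,0,0)$, and you obtain solutions only for $\|h\|_{C^{1,\alpha}} < \epsilon$. The proposition asserts solvability for \emph{any} fixed $h \in C^\infty_G(B_R(G\cdot p))$, not only small $h$, with $\epsilon$ and $U$ permitted to depend on $h$. Shrinking $U$ does not make $h|_{\Sigma\cap U}$ small in sup norm (unless $h(p)=0$), so the IFT around the zero solution does not apply directly once $|h(p)|$ is of order one. The paper handles this precisely via the dilation $\mu_r$ and the rescaled metric $g^r = r^{-2}\mu_r^* E^* g_{_M}$: under that rescaling the prescribed curvature becomes $\tilde h = r\cdot(h\circ E\circ\mu_r)$, which does tend to zero with $r$, and the limiting operator at $r=0$ is the flat Laplacian on the Euclidean slice, which is clearly invertible. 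The IFT is then applied with $r$ as an extra parameter at $(r,t,h,w,u)=(0,t,0,0,0)$, yielding solutions for all small $r>0$ and hence for any fixed $h$. In your framework, repairing the gap would require either introducing this rescaling or first constructing a reference solution $v_0 = v_0(h,U)$ to the linearized problem $L_\Sigma v_0 = h|_{\Sigma\cap U}$, $v_0|_{\partial}=0$, and then linearizing around $v_0$ rather than around $0$; the latter route in turn needs a quantitative estimate showing $\|v_0\|_{C^{2,\alpha}(\Sigma\cap U)} \to 0$ as $U$ shrinks, and because the $C^{2,\alpha}$ norm is not scale-invariant this estimate itself essentially forces the rescaling argument. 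As written, the proposal does not establish the proposition in the stated generality.

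A secondary, less serious point: your domain-monotonicity/quotient argument for the eigenvalue bound is informal. It is morally correct, but the paper makes it precise by passing to the $G_p$-invariant slice functional $A_r$ (weighted by the orbit-volume Jacobian $J^*_P$) coming from \cite{wang2023G-index}*{Lemma A.2}, which cleanly handles the fact that the $G$-orbits in $\Sigma\cap U$ may have varying volume. If you keep the direct route you should spell out the coarea decomposition and the resulting weighted Poincaré inequality on the transverse slice.
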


\begin{proof}
	Let $N^\Sigma(G\cdot p)$ be the normal bundle of $G\cdot p$ in $\Sigma$, $B^{N^\Sigma(G\cdot p)}_{s}(0):=\{v\in N^\Sigma(G\cdot p): |v|< r\}$ be the $r$-neighborhood of $0$ in the normal bundle ($r>0$), and $\mb B_s=B^{N_p^\Sigma(G\cdot p)}_{s}(0):=B^{N^\Sigma(G\cdot p)}_{s}(0) \cap N_p^\Sigma(G\cdot p)$. 
	Denote by $C_r(G\cdot p):= B^{N^\Sigma(G\cdot p)}_{r}(0) \times (-r,r)$. 
	Then for $r_0\in (0,R)$ small enough, we have a $G$-equivariant diffeomorphism in a neighborhood of $G\cdot p$: 
	\[ E: C_1(G\cdot p) \to M, \quad E(v, t):=\exp_q(r_0t\cdot \nu(q)) \mbox{ with }q=\exp^{\Sigma,\perp}_{G\cdot p}(r_0v), \]
	where 
	$\exp^{\Sigma,\perp}_{G\cdot p}$ is the normal exponential map at $G\cdot p$ in $\Sigma$. 
	Then we use $E$ to pull back the constructions to $C_1(G\cdot p)$. 
	Define $\mu_r: C_1(G\cdot p)\to C_r(G\cdot p)$, $\mu_r(v,t):=(rv, rt)$ as the $r$-dilation in $N^\Sigma(G\cdot p)$, and 
	\[ g^r:=\frac{1}{r^2}\mu_r^* E^* g_{_M}.  \]
	Note as $r\to 0$, $g^r$ tends to the locally trivial metric $g^0$ on the bundle $N^\Sigma(G\cdot p)\times \mb R$, i.e. $(N^\Sigma(G\cdot p)\times \mb R, g^0)$ is locally isometric to the product of $G\cdot p$ with the Euclidean fibers. 
	
	Denote by $S_{p,r}:=\{ (v, t) \in C_r(G\cdot p) : v\in N_p^\Sigma(G\cdot p)  \}\cong N_p(G\cdot p)$ the slice in $C_r(G\cdot p)$ at $p$, which is $G_p$-invariant.  
	For $r_0$ even smaller, it follows from \cite{wang2023G-index}*{Lemma A.2}, that there is a smooth $G_p$-invariant even elliptic integrand $\varphi_r: TS_{p,1}\setminus (S_{p,1}\times \{0\}) \to \mb R$ for all $r\in [0,1]$ so that 
	\[ \mc H^n_r(\Gamma) = \mc H_r^{\dim(G\cdot p)}(G\cdot p) \cdot  \int_{\Gamma\cap S_{p,1}} \varphi_r(q,v(q)) d\mc H_r^{n-\dim(G\cdot p)}(q) \]
	for any $G$-invariant hypersurface $\Gamma\subset C_1(G\cdot p)$, where $\mc H^k_r$ is the $k$-Hausdorff measure under the metric $g^r$, and $v(q)$ is a unit normal of $\Gamma\cap S_{p,1}$ at $q$. 
	In particular, if $\Gamma=\Graph(\tilde{f})$ for some $G$-function $\tilde{f}$ on $B^{N^\Sigma(G\cdot p)}_{1}(0)$, then by the area formula, $\varphi_r$ induces a $G_p$-invariant functional $A_r$ so that 
	\[\int_{\Gamma\cap S_{p,1}} \varphi_r(q,v(q)) d\mc H_r^{n-\dim(G\cdot p)}(q)  = \int_{\mb B_1} A_r(x,f(x),\nabla^rf(x) ) d\mc H_0^{n-\dim(G\cdot p)}(x), \]
	where $f=\tilde{f}\llcorner B^{N_p^\Sigma(G\cdot p)}_{1}(0)$. 
	Moreover, for any $\Omega\in \mc C^G(C_1(G\cdot p))$ and $G$-invariant smooth function $\tilde{h}$ in $C_1(G\cdot p)$, the co-area formula implies 
	\[ \int_\Omega \tilde{h} d\mc H^{n+1}_r = \mc H_r^{\dim(G\cdot p)}(G\cdot p) \cdot \int_{\Omega\cap S_{p,1}} \frac{\tilde{h}}{J^*_P} d\mc H^{n+1-\dim(G\cdot p)}_r, \]
	where $P:C_1(G\cdot p)\to G\cdot p$ is the bundle projection and $J^*_P$ is its Jacobian. 
	
	Suppose now that $\tilde f$ and $\tilde \eta$ are $C_G^{2,\alpha}$-functions on $B^{N^\Sigma(G\cdot p)}_{1}(0)$ so that $\tilde\eta\llcorner(\bd B^{N^\Sigma(G\cdot p)}_{1}(0))=0$. 
	Denote by $f=\tilde{f}\llcorner B^{N_p^\Sigma(G\cdot p)}_{1}(0)$ and $\eta=\tilde{\eta}\llcorner B^{N_p^\Sigma(G\cdot p)}_{1}(0)$ as $C_{G_p}^{2,\alpha}$-functions on $\mb B_1\cong B^{N_p^\Sigma(G\cdot p)}_{1}(0)$, and by $F_f:\mb B_1\to \Graph(f)$ the graph map $F_f(v):=(v,f(v))$. 
	For $s$ near $0$, define $\Omega_s=\{(v, t)\in C_1(G\cdot p): -1<t< \tilde{f}(v)+s\tilde{\eta}(t)\}$ with $\bd\Omega_s = \Graph(\tilde{f}+s\tilde{\eta})$ in $C_1(G\cdot p)$. 
	Then we see
	\begin{align*}
		\left. \frac{d}{ds}\right|_{s=0} \frac{\mc A^{\tilde{h}}_{r}(\Omega_s)}{\mc H_r^{\dim(G\cdot p)}(G\cdot p)} = \int_{\mb B_1} \left(H_r(f)(x) - \bar{h}_r(f)(x)\right) \cdot \eta(x) d\mc H_0^{n-\dim(G\cdot p)}(x), 
	\end{align*}
	where $\mc A^{\tilde{h}}_r$ is the $\mc A^{\tilde{h}}$-functional under the metric $g^r$,  
	$H_r(f)\in C_{G_p}^{0,\alpha}(\mb B_1)$ is given by \cite{white1991space}*{Theorem 1.1} associated to $A_r$, 
	and $\bar{h}_r(f) := J_{F_f} \cdot  \left(\frac{\tilde{h}}{J^*_P} \cdot \langle \frac{\bd}{\bd t},\nu_{\Graph(f)} \rangle_{g^r} \right)\circ F_f\in C_{G_p}^{2,\alpha}(\mb B_1)$. Here, $J_{F_f}$ is the Jacobian of $F_f$ in the area formula, and $\nu_{\Graph(f)}$ is the upper unit normal of $\Graph(f)$. 
	Therefore, $\Graph(\tilde{f})$ has mean curvature $\tilde{h}$ under the metric $g^r$ if and only if $H_r(f)=\bar{h}_r(f)$. 
	
	The rest of the proof is very similar to \cite{white1987curvature}*{Appendix} (see also \cite{wang2023G-index}*{Proposition A.3}). 
	Indeed, we can define 
	\[ \Psi: [0,1]\times \mb R\times C_{G_p}^{2,\alpha}(\mb B_1)\times C_{G_p}^{2,\alpha}(\mb B_1)\times C_{0,G_p}^{2,\alpha}(\mb B_1) \to C_{G_p}^{0,\alpha}(\mb B_1)\]
	by $\Psi(r,t,h,w,u) = H_r(t+w+u) - \bar{h}_r(t+w+u)$. 
	Then $\Psi$ is $C^1$ and $D_5\Psi(0,t,0,0,0)$ is an isomorphism between $C_{0,G_p}^{2,\alpha}(\mb B_1)$ and $ C_{G_p}^{0,\alpha}(\mb B_1)$. 
	Hence, given $h\in C^\infty_G(B_R(G\cdot p))$, the implicit function theorem gives an $\epsilon>0$ and a $G$-neighborhood $U=E(C_r(G\cdot p))$ of $G\cdot p$ with $r<\epsilon$ so that if $w$ is given as in \eqref{Eq: appendix assumption}, then there exists $\{v_t\}_{t\in (-\epsilon,\epsilon)}\subset C^{2,\alpha}_{G_p}(\Sigma\cap U\cap E(S_{p,1}))$ with $v_t=t+w$ on $\bd(\Sigma\cap U \cap E(S_{p,1}))$ satisfying that $\{\Graph(v_t)\}_{t\in(-\epsilon,\epsilon)}$ forms a $G_p$-invariant local foliation in the slice $E(S_{p,1})$, and $H_r(v_t\circ E\circ\mu_r)=\bar{h}_r(v_t\circ E\circ\mu_r)  $, where $\bar{h}_r(\cdot)$ is induced by $\tilde{h}:= r\cdot (h\circ E\circ \mu_r)\in C^\infty_{G}(C_1(G\cdot p))$. 
	Finally, define the $G$-equivariant extension of $v_t$ in $\Sigma\cap U$ by $v_t(g\cdot q):=v_t(q)$ for all $g\in G$ and $ q\in \Sigma\cap U\cap E(S_{p,1})$. 
	One easily checks that the extension is well defined and satisfies (i)(ii)(iii). 
\end{proof}

\begin{remark}\label{Rem: local PMC signed-symmetric foliation}
	In the above proposition, if $G=G_+\sqcup G_-$ satisfies \eqref{Eq: free Gpm}, $R<\inj(G\cdot p)$, and $\Sigma$ has a unit normal $G_\pm$-vector field. 
	Then the similar result are also valid for $G_\pm$-signed symmetric $h,w$ and $v_t$ by applying Proposition \ref{Prop: local PMC G-foliation} in $B_R(G_+\cdot p)$ and $B_R(G_-\cdot p)$ respectively. 
\end{remark}

\bibliographystyle{abbrv}

\bibliography{reference.bib}   
\end{document}